\numberwithin{equation}{section}
\newcommand{\C}{\mathbb{C}}
\newcommand{\cind}{\mathrm{ind}}
\newcommand{\coker}{\mathrm{Coker}}
\renewcommand{\d}{\mathrm{d}}
\newcommand{\End}{\mathrm{End}}
\newcommand{\Fil}{\mathrm{Fil}}
\newcommand{\FS}{\mathrm{FS}}
\newcommand{\frob}{\mathrm{Frob}}
\newcommand{\GL}{\mathrm{GL}}
\newcommand{\Hom}{\mathrm{Hom}}
\newcommand{\Id}{\mathrm{Id}}
\newcommand{\Ind}{\mathrm{Ind}}
\newcommand{\Q}{\mathbb{Q}}
\newcommand{\Qbar}{\bar{\mathbb{Q}}}
\newcommand{\rbar}{\bar{r}}
\newcommand{\rec}{\mathrm{rec}}
\newcommand{\Res}{\mathrm{Res}}
\newcommand{\rhobar}{\bar{\rho}}
\newcommand{\Sp}{\mathrm{Sp}}
\newcommand{\Spec}{\mathrm{Spec}}
\newcommand{\Spf}{\mathrm{Spf}}
\newcommand{\Z}{\mathbb{Z}}
\newcommand{\dbl}{{\mathchoice{\mbox{\rm [\hspace{-0.15em}[}}
                              {\mbox{\rm [\hspace{-0.15em}[}}
                              {\mbox{\scriptsize\rm [\hspace{-0.15em}[}}
                              {\mbox{\tiny\rm [\hspace{-0.15em}[}}}}
\newcommand{\dbr}{{\mathchoice{\mbox{\rm ]\hspace{-0.15em}]}}
                              {\mbox{\rm ]\hspace{-0.15em}]}}
                              {\mbox{\scriptsize\rm ]\hspace{-0.15em}]}}
                              {\mbox{\tiny\rm ]\hspace{-0.15em}]}}}}
\newtheorem{theo}{Th\'eor\`eme}[section]
\newtheorem{conj}[theo]{Conjecture}
\newtheorem{coro}[theo]{Corollaire}
\newtheorem{defi}[theo]{D\'efinition}
\newtheorem{lemm}[theo]{Lemme}
\newtheorem{prop}[theo]{Proposition}
\newtheorem{rema}[theo]{Remarque}
\newtheorem{ex}{Exemple}[section]
\newcounter{commentcounter}
\author{Christophe Breuil, Eugen Hellmann et Benjamin Schraen}
\address{Christophe Breuil \\
C.N.R.S.\\
Universit\'e Paris-Sud\\
Facult\'e d'Orsay\\
B\^atiment 425\\
F-91405 Orsay Cedex\\
France\\
Christophe.Breuil@math.u-psud.fr
}
\address{Eugen Hellmann\\
Mathematisches Institut\\
Universit\"at Bonn\\
Endenicher Allee 60\\
D-53115 Bonn \\
Germany\\
hellmann@math.uni-bonn.de}
\address{Benjamin Schraen\\
C.N.R.S.\\ 
Laboratoire de Math\'ematique\\
Universit\'e de Versailles St.~Quentin\\
45 Avenue des \'Etats-Unis\\
F-78035 Versailles\\
France\\
benjamin.schraen@uvsq.fr
}
\title{Une interpr\'etation modulaire de la vari\'et\'e trianguline}
\begin{document}
\begin{abstract}
En utilisant le syst\`eme de Taylor-Wiles-Kisin construit dans un travail r\'ecent de Caraiani, Emerton, Gee, Geraghty, Pa{\v{s}}k{\=u}nas et Shin, nous construisons un analogue de la vari\'et\'e de Hecke. Nous montrons que cette vari\'et\'e co\"incide avec une union de composantes irr\'eductibles de l'espace des repr\'esentations galoisiennes triangulines. Nous pr\'ecisons les relations de cette construction avec les conjectures de modularit\'e dans le cas cristallin ainsi qu'avec une conjecture de Breuil sur le socle des vecteurs localement analytiques de la cohomologie compl\'et\'ee. Nous donnons \'egalement une preuve d'une conjecture de Bella\"iche et Chenevier sur l'anneau local compl\'et\'e en certains points des vari\'et\'es de Hecke.

\bigskip\noindent
{\scshape Abstract.}
Using a patching module constructed in recent work of Caraiani, Emerton, Gee, Geraghty, Pa{\v{s}}k{\=u}nas and Shin we construct some kind of analogue of an eigenvariety. We can show that this \emph{patched eigenvariety} agrees with a union of irreducible components of a space of trianguline Galois representations. Building on this we discuss the relation with the modularity conjectures for the crystalline case, a conjecture of Breuil on the locally analytic socle of representations occurring in completed cohomology and with a conjecture of Bella\"iche and Chenevier on the complete local ring at certain points of eigenvarieties.
\end{abstract}

\maketitle
\tableofcontents

\vspace{1cm}

\section{Introduction}
Soit $p>2$ un nombre premier. Cet article a pour but d'utiliser la m\'ethode des syst\`emes de Taylor-Wiles telle que g\'en\'eralis\'ee par Kisin, puis par Caraiani-Emerton-Gee-Geraghty-Pa\v{s}k\=unas-Shin (\cite{CEGGPS}), pour \'etudier le probl\`eme des formes compagnons dans la th\'eorie des formes automorphes $p$-adiques sur les groupes unitaires compacts \`a l'infini.

Consid\'erons $F^+$ un corps de nombres totalement r\'eel, ainsi que $F$ une extension quadratique totalement imaginaire non ramifi\'ee de $F^+$, et $G$ un groupe unitaire sur $F^+$ d\'eploy\'e par $F$, compact \`a l'infini et isomorphe \`a $\GL_n$ au-dessus de toutes les places divisant $p$. Soit $L$ une extension finie de $\Q_p$.
Si $U^p$ d\'esigne un sous-groupe compact ouvert de $G(\mathbb{A}_{F^+}^{p\infty})$, on d\'esigne par $\widehat S(U^p,L)$ l'espace des fonctions continues $G(F^+)\backslash G(\mathbb{A}_{F^+}^\infty)/U^p\rightarrow L$. C'est un $L$-espace de Banach muni d'une action continue et unitaire du groupe $G(F^+\otimes_{\Q}\Q_p)$. On note $\widehat S(U^p,L)^{\rm an}\subset\widehat S(U^p,L)$ le sous-espace des vecteurs localement analytiques pour l'action du groupe de Lie $p$-adique $G(F^+\otimes_{\Q}\Q_p)$. Si $\mathfrak{p}$ est un id\'eal maximal de la \og grosse\fg\, alg\`ebre de Hecke $\widehat{\mathbb{T}}(U^p,\mathcal{O}_L)[\tfrac{1}{p}]$, et $\delta$ un caract\`ere continu du groupe $T(F^+\otimes_{\Q}\Q_p)$, on appelle forme surconvergente de poids $\delta$ associ\'ee \`a $\mathfrak{p}$ un vecteur propre, de caract\`ere propre $\delta$, du mono\"ide d'Atkin-Lehner $T(F^+\otimes_{\Q}\Q_p)^+$ agissant sur $\widehat S(U^p,L)[\mathfrak{p}]^{\rm an,N_0}$ o\`u $N_0$ est un sous-groupe compact ouvert stable par $T(F^+\otimes_{\Q}\Q_p)^+$ du groupe des matrices unipotentes sup\'erieures $N(F^+\otimes_{\Q}\Q_p)$. Soit $\rho$ la repr\'esentation $p$-adique, de dimension $n$, du groupe de Galois absolu de $F$ associ\'ee \`a l'id\'eal $\mathfrak{p}$. Dans \cite{BreuilAnalytiqueII}, l'un d'entre nous (C.~B.) donne une description conjecturale des caract\`eres $\delta$ pour lesquels il existe des formes surconvergentes associ\'ees \`a $\mathfrak{p}$ dans le cas o\`u toutes les repr\'esentations $\rho|_{\mathcal{G}_{F_{\tilde{v}}}}$ obtenues par restriction aux groupes de d\'ecomposition au-dessus de $p$ sont potentiellement cristallines et \og g\'en\'eriques\fg. Cette conjecture a \'et\'e par la suite \'etendue par Hansen (\cite{Hansen}) au cas o\`u certaines $\rho|_{\mathcal{G}_{F_{\tilde{v}}}}$ peuvent \^etre triangulines.

Un des r\'esultats de cet article est que l'existence des formes surconvergentes associ\'ees \`a tous les $\delta$ d\'ecrits dans \cite{BreuilAnalytiqueII} dans le cas cristallin est en fait une cons\'equence des conjectures de modularit\'e dans le cas cristallin.

D\'esignons par $\rhobar$ la r\'eduction de $\rho$ modulo l'id\'eal maximal de $\mathcal{O}_L$. La recherche des caract\`eres $\delta$ associ\'es \`a un id\'eal $\mathfrak{p}$ comme ci-dessus, ou de fa\c{c}on \'equivalente, \`a la repr\'esentation $\rho$ \'equivaut \`a d\'eterminer les points de la $\rhobar$-composante $\mathcal{E}(U^p)_{\rhobar}$ de la vari\'et\'e de Hecke de niveau mod\'er\'e $U^p$ (voir par exemple \cite{CheFougere}) sur lesquels la (pseudo-)repr\'esentation canonique est isomorphe \`a $\rho$. De tels points sont appel\'es \emph{points compagnons}. La vari\'et\'e de Hecke $\mathcal{E}(U^p)_{\rhobar}$ est une sous-vari\'et\'e rigide analytique ferm\'ee du produit $\mathfrak{X}_{\rhobar,S}\times\widehat T(F^+\otimes_{\Q}\Q_p)$ o\`u $\mathfrak{X}_{\rhobar,S}$ est un espace analytique rigide sur $L$ param\'etrant certains relev\'es de $\rhobar$ en caract\'eristique $0$ et $\widehat T(F^+\otimes_{\Q}\Q_p)$ est l'espace analytique rigide param\'etrant les caract\`eres continus du groupe $T(F^+\otimes_{\Q}\Q_p)$. La vari\'et\'e $\mathcal{E}(U^p)_{\rhobar}$ est ainsi munie d'une application \emph{poids} $\omega:\,\mathcal{E}(U^p)_{\rhobar}\rightarrow\widehat T(\mathcal{O}_{F^+}\otimes_{\Z}\Z_p)$. Le but de cet article est de construire, au moyen de syst\`emes de Taylor-Wiles-Kisin de niveau infini en $p$ construits dans \cite{CEGGPS}, un espace analytique rigide $X_p(\rhobar)$ sur $L$ muni d'une application \emph{poids} $\omega_\infty:\,X_p(\rhobar)\rightarrow \widehat T(\mathcal{O}_{F^+}\otimes_{\Z}\Z_p)\times\mathfrak{X}_{S_\infty}$, o\`u $\mathfrak{X}_{S_\infty}$ est isomorphe \`a un produit de boules unit\'e ouvertes, et un diagramme commutatif cart\'esien dans la cat\'egorie des espaces analytiques rigides r\'eduits
\begin{equation}\label{diagramme}
\begin{aligned}
\xymatrix{\mathcal{E}(U^p)_{\rhobar}\ar@{^{(}->}[r]\ar^{\omega}[d]&X_p(\rhobar)\ar^{\omega_\infty}[d]\\ \widehat T(\mathcal{O}_{F^+}\otimes_{\Z}\Z_p)\ar@{^{(}->}[r]& \widehat T(\mathcal{O}_{F^+}\otimes_{\Z}\Z_p)\times\mathfrak{X}_{S_\infty}}
\end{aligned}
\end{equation}
o\`u la fl\`eche horizontale du bas provient de l'inclusion d'un point ferm\'e dans $\mathfrak{X}_{S_\infty}$. Par construction, l'espace rigide analytique $X_p(\bar\rho)$ est un sous-espace analytique ferm\'e de l'espace produit
\[\prod_{v|p}(\mathfrak{X}^\square_{\rhobar_{\tilde{v}}}\times \widehat T(F_v^+))\times\mathbb{U}^g\]
o\`u $\mathfrak{X}^\square_{\rhobar_{\tilde{v}}}$ d\'esigne l'espace des rel\`evements (cadr\'es) de la repr\'esentation r\'esiduelle $\rhobar_{\tilde{v}}=\rhobar|_{\mathcal{G}_{F_{\tilde{v}}}}$ et $\mathbb{U}^g$ est un certain produit de boules unit\'es ouvertes. Il est en fait n\'ecessaire, dans tout ce qui pr\'ec\`ede de supposer $p>2n+1$ et $\rhobar|_{\mathcal{G}_{F(\zeta_p)}}$ absolument irr\'eductible dans l'utilisation des r\'esultats de \cite{CEGGPS}.

Si $v|p$ notons $X_{\rm tri}^\square(\rhobar_{\tilde{v}})$ le plus petit sous-espace analytique ferm\'e du produit $\mathfrak{X}^\square_{\rhobar_{\tilde{v}}}\times\widehat T(F_v^+)$ contenant tous les couples $(r,\delta)$ o\`u $r$ est un rel\`evement (cadr\'e) triangulin de $\rhobar_{\tilde{v}}$ et $\delta$ un param\`etre de $r$. Cet espace a \'et\'e introduit dans un cadre un peu diff\'erent par l'un d'entre nous (E.~H.) dans \cite{HellmannFS} suivant une id\'ee de Kisin (\cite{Kisinoverconvergent}). Nous prouvons alors le r\'esultat suivant (Th\'eor\`eme \ref{egalitecomposante} et Corollaire \ref{reduit}) :

\begin{theo}\label{principal}
L'espace $X_p(\rhobar)$ est r\'eduit et s'identifie \`a une union de composantes irr\'eductibles de $\prod_{v|p}X_{\rm tri}^\square(\rhobar_{\tilde{v}})\times\mathbb{U}^g$.
\end{theo}

Au passage nous prouvons \'egalement le r\'esultat suivant sur la structure de l'espace $X_{\rm tri}^\square(\rhobar_{\tilde{v}})$ (Th\'eor\`eme \ref{ouvertsature}).

\begin{theo}\label{equidimintro}
L'espace $X_{\rm tri}^\square(\rhobar_{\tilde{v}})$ est \'equidimensionnel de dimension $n^2+[F_v^+:\Q_p]\tfrac{n(n+1)}{2}$.
\end{theo}
Nous renvoyons le lecteur \`a l'\'enonc\'e du Th\'eor\`eme \ref{ouvertsature} pour des propri\'et\'es plus fines de l'espace $X_{\rm tri}^\square(\rhobar)$. La preuve de ce r\'esultat s'appuie essentiellement sur les travaux de Kedlaya, Pottharst et Xiao (\cite{KPX}).

Nous introduisons alors la notion de composante irr\'eductible \emph{automorphe} de l'espace $\prod_{v|p}X_{\rm tri}^\square(\rhobar_{\tilde{v}})$, il s'agit d'une composante irr\'eductible $X$ de ce produit telle que $X\times\mathbb{U}^g$ apparaisse dans $X_p(\rhobar)$. Il nous semble alors naturel de conjecturer (Conjecture \ref{conjcomp}) :

\begin{conj}\label{conjintro}
Une composante irr\'eductible de $\prod_{v|p}X_{\rm tri}^\square(\rhobar_{\tilde{v}})$ est une composante irr\'eductible automorphe si et seulement si son ouvert de lissit\'e (plus pr\'ecis\'ement son intersection avec l'ouvert $\prod_{v|p}U_{\rm tri}^\square(\rhobar_{\tilde{v}})^{\rm reg}$ de $\prod_{v|p}X_{\rm tri}^\square(\rhobar_{\tilde{v}})$ d\'efini dans la section 2) contient un point cristallin.
\end{conj}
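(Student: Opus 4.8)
Toward this conjecture, the plan would be to derive both implications from Theorem~\ref{principal} — which exhibits $X_p(\rhobar)$ as a union of irreducible components of $\prod_{v\mid p}X_{\rm tri}^\square(\rhobar_{\tilde v})\times\mathbb{U}^g$ — together with Theorem~\ref{ouvertsature}, by which $\prod_{v\mid p}U_{\rm tri}^\square(\rhobar_{\tilde v})^{\rm reg}$ is a Zariski-dense open subset of $\prod_{v\mid p}X_{\rm tri}^\square(\rhobar_{\tilde v})$ contained in its smooth locus. The underlying mechanism is this: as $\mathbb{U}^g$ is smooth and irreducible, a point $(x,y)$ with $x\in\prod_{v\mid p}U_{\rm tri}^\square(\rhobar_{\tilde v})^{\rm reg}$ is a smooth point of the ambient product, hence lies on a unique irreducible component $X\times\mathbb{U}^g$, where $X\subseteq\prod_{v\mid p}X_{\rm tri}^\square(\rhobar_{\tilde v})$ is the component through $x$; consequently, as soon as some such $(x,y)$ lies in the reduced space $X_p(\rhobar)$, Theorem~\ref{principal} forces $X\times\mathbb{U}^g\subseteq X_p(\rhobar)$, i.e., $X$ is automorphe. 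The conjecture would then follow from: (i) every automorphe component $X$ meets $\prod_{v\mid p}U_{\rm tri}^\square(\rhobar_{\tilde v})^{\rm reg}$ in a crystalline point; (ii) if $X$ meets $\prod_{v\mid p}U_{\rm tri}^\square(\rhobar_{\tilde v})^{\rm reg}$ in a crystalline point, that point lifts to a point of $X_p(\rhobar)$.

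For (i) I would use that the points of $X_p(\rhobar)$ coming from classical automorphic forms of regular cohomological weight that are unramified at every place above $p$ — whose attached Galois representations are crystalline at $p$ with pairwise distinct Hodge--Tate weights — are Zariski-dense in $X_p(\rhobar)$, this being built into the construction of $X_p(\rhobar)$ from patched completed cohomology together with the classicality of such forms. Since $X_p(\rhobar)$ is reduced and is a union of irreducible components of the ambient product, these classical crystalline points are then Zariski-dense in each of those components, in particular in $X\times\mathbb{U}^g$ when $X$ is automorphe; they therefore meet the dense open subset $\prod_{v\mid p}U_{\rm tri}^\square(\rhobar_{\tilde v})^{\rm reg}\times\mathbb{U}^g$, and their image in $\prod_{v\mid p}X_{\rm tri}^\square(\rhobar_{\tilde v})$ provides the desired crystalline point in the smooth locus of $X$.

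For (ii), let $x=(r_{\tilde v},\delta_{\tilde v})_{v\mid p}\in X\cap\prod_{v\mid p}U_{\rm tri}^\square(\rhobar_{\tilde v})^{\rm reg}$ with each $r_{\tilde v}$ crystalline; regularity of $x$ forces each $r_{\tilde v}$ to be generic — distinct Hodge--Tate weights, Frobenius eigenvalues in general position — and $\delta_{\tilde v}$ to be one of its refinements, a priori possibly critical. I would proceed in two stages. First, using that $\rhobar$ is automorphic, $p>2n+1$ and $\rhobar|_{\mathcal{G}_{F(\zeta_p)}}$ is irreducible, one shows that the Taylor--Wiles--Kisin patching module of \cite{CEGGPS} is supported at $(r_{\tilde v})_v$ over the relevant generic crystalline deformation ring, so that $(r_{\tilde v})_v$ equipped with its dominant refinement defines a point of $X_p(\rhobar)$; this is the automorphy-lifting input. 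Second, one must pass from the dominant refinement to the given $\underline\delta=(\delta_{\tilde v})_v$: when $\underline\delta$ is dominant there is nothing to prove, but for a critical $\underline\delta$ this is precisely the assertion that every companion point of a generic crystalline automorphic representation occurs on $X_p(\rhobar)$ — the content of Breuil's conjecture on the locally analytic socle of completed cohomology recalled in the introduction (cf.\ \cite{BreuilAnalytiqueII}). Granting both, $(x,y)\in X_p(\rhobar)$ for a suitable $y\in\mathbb{U}^g$ — for instance one in the image of the embedding $\mathcal{E}(U^p)_{\rhobar}\hookrightarrow X_p(\rhobar)$ — and the mechanism of the first paragraph concludes.

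The genuine difficulties are concentrated in (ii). The automorphy-lifting input is available unconditionally only when the crystalline representations involved are potentially diagonalizable — known in the Fontaine--Laffaille range and in the ordinary case, but not in general — so for a component all of whose crystalline points have large Hodge--Tate weights this first stage is itself conditional. The decisive obstacle, however, is the companion-point statement for critically refined crystalline representations: proving that each of the points attached to the various refinements of a generic crystalline automorphic representation lies on $X_p(\rhobar)$ is essentially equivalent to Breuil's conjecture and is not reached by the methods of this paper, which is why the statement is left conjectural.
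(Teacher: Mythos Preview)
The statement is a conjecture, not a theorem; the paper does not prove it but rather shows it is \emph{equivalent} to the crystalline modularity conjecture (Conjecture~\ref{modularite}) via Propositions~\ref{implicationconj} and~\ref{reciproqueconj}. Your direction (i) is correct and essentially matches the paper: Th\'eor\`eme~\ref{densite} gives Zariski-density of very classical non-critical points in $X_p(\rhobar)$, and Remarque~\ref{rema}(ii) draws exactly the conclusion you state.

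Your direction (ii), however, takes a detour that the paper avoids and that reverses the logical dependencies. You reduce (ii) to automorphy lifting plus the existence of companion points at critically refined crystalline representations, i.e.\ to Breuil's socle conjecture. But the paper shows in Corollaire~\ref{remBreuilAnalytiqueII} that Conjecture~\ref{conjcomp} \emph{implies} the companion-point statement, not the other way around; invoking companion points here is therefore not the right reduction. The paper's argument for (ii) (proof of Proposition~\ref{implicationconj}) sidesteps companion points entirely: starting from a crystalline $x\in X\cap U_{\rm tri}^\square(\rhobar_p)^{\rm reg}$, one uses smoothness of $\omega$ at $x$ (Th\'eor\`eme~\ref{ouvertsature}(iii)) to find, in a \emph{connected} smooth affinoid $U\ni x$, a point $y_1$ whose weight is strictly dominant and which is crystalline (\cite[Cor.~2.7]{HellmSchrDensity}). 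Conjecture~\ref{modularite} then produces nonzero locally algebraic vectors at $y_1$; the Jacquet module of the smooth principal series already contains \emph{all} refinements, so $(y_1,\text{its parameter})\in X_p(\rhobar)$ regardless of criticality. Connectedness of $U$ and smoothness at $y_1$ and $x$ force the whole component $X\times\mathfrak{X}^p\times\mathbb{U}^g$ into $X_p(\rhobar)$. Thus the only conditional input is Conjecture~\ref{modularite}, which is a cleaner and strictly weaker hypothesis than your route requires.

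A minor point: your assertion that regularity of $x$ forces each $r_{\tilde v}$ to be ``generic'' (all refinements non-critical) is not correct; being in $U_{\rm tri}^\square(\rhobar_{\tilde v})^{\rm reg}$ says the given parameter is regular, not that every refinement is non-critical (cf.\ D\'efinition~\ref{generique} and Lemme~\ref{parametregenerique}).
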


Nous prouvons alors le r\'esultat suivant (Propositions \ref{implicationconj} et \ref{reciproqueconj}) :

\begin{theo}\label{conjectures}
La Conjecture \ref{conjintro} est \'equivalente aux conjectures de modularit\'e dans le cas cristallin (plus pr\'ecis\'ement \`a la Conjecture \ref{modularite}).
\end{theo}

Nous prouvons par ailleurs que cette conjecture implique une partie d'une conjecture de l'un d'entre nous (C.~B.) concernant le module de Jacquet-Emerton des vecteurs localement analytiques de la cohomologie compl\'et\'ee (cf.~Corollaire \ref{remBreuilAnalytiqueII}).

Dans la derni\`ere partie, nous utilisons cette construction pour prouver un grand nombre de cas d'une conjecture de Bella\"iche et Chenevier sur la structure infinit\'esimale en certains points de la vari\'et\'e de Hecke $Y$ d'un groupe unitaire $G$. On peut voir cette conjecture comme une \emph{conjecture $R=T$ infinit\'esimale}. Nous renvoyons le lecteur au corps du texte pour tout ce qui concerne les notations et conventions relatives \`a ces vari\'et\'es de Hecke.

\begin{theo}\label{theoR=T}
Supposons $p>2n+1$ et $\rhobar|_{\mathcal{G}_{F(\zeta_p)}}$ absolument irr\'eductible. Soit $z=(\pi,\chi)$ un point de la vari\'et\'e de Hecke $\mathcal{E}(U^p)_{\rhobar}$ correspondant \`a une repr\'esentation automorphe $\pi$ de $G(\mathbb{A})$. Supposons qu'en restriction \`a tout groupe de d\'ecomposition au-dessus de $p$ la repr\'esentation galoisienne $\rho_\pi$ associ\'ee \`a $\pi$ soit cristalline et que les valeurs propres de son Frobenius cristallin $\varphi$ soient deux-\`a-deux distinctes. Sous ces hypoth\`eses, le caract\`ere $\chi$ d\'efinit un drapeau de $D_{\rm cris}(\rho_\pi)$ stable par $\varphi$ que nous supposons \^etre en position g\'en\'erale relativement \`a la filtration de de Rham. Alors l'anneau local compl\'et\'e $\hat{\mathcal{O}}_{\mathcal{E}(U^p)_{\rhobar},z}$ pro-repr\'esente le foncteur des d\'eformations triangulines de $\rho$ munies d'une triangulation d\'eformant la triangulation d\'efinie par le drapeau $\varphi$-stable de $D_{\rm cris}(\rho_\pi)$.
\end{theo}

Le plan de l'article est le suivant. Nous commen\c{c}ons dans la section \ref{deformations} par d\'efinir les espaces locaux $X_{\rm tri}^\square(\rbar)$ pour $\rbar$ une repr\'esentation continue du groupe de Galois d'une extension finie de $\Q_p$ sur un espace vectoriel de dimension finie et de caract\'eristique $p$. Nous prouvons alors le Th\'eor\`eme \ref{equidimintro} et donnons un certain nombre de r\'esultats additionnels sur les points cristallins qui apparaissent dans cet espace. La section \ref{HTW} contient la construction de l'espace $X_p(\rhobar)$ ainsi que la preuve de ses propri\'et\'es g\'eom\'etriques. Les Th\'eor\`emes \ref{principal} et \ref{conjectures}, font l'objet de la section \ref{sectioncomposantes}. La section \ref{Global} fait le lien avec les vari\'et\'es de Hecke. Nous y prouvons le caract\`ere cart\'esien du diagramme \ref{diagramme} ainsi que le Th\'eor\`eme \ref{theoR=T}.

\emph{Remerciements :} Nous remercions Ahmed Abbes pour avoir r\'epondu \`a nos questions. Une partie de cet article a \'et\'e \'ecrite lors de diff\'erents s\'ejours dans les institutions suivantes : I.H.\'E.S., Math.~Institut de l'Universit\'e de Bonn et M.S.R.I. En particulier B.~S. souhaite remercier l'I.H.\'E.S. pour son hospitalit\'e prolong\'ee lors de la r\'edaction de cet article, ainsi que le B.I.C.M.R. de P\'ekin. Par ailleurs, C.~B. et B.~S. sont membres du projet ANR Th\'eHopaD (ANR-2011-BS01-005), et E.~H. est membre du SFB -TR 45 de la DFG.

\emph{Notations :} 
On fixe un nombre premier $p$, une extension alg\'ebrique $\bar{\Q}_p$ de $\Q_p$, ainsi qu'un isomorphisme entre $\bar{\Q}_p$ et $\mathbb{C}$.

Si $L$ est un corps local non archim\'edien localement compact, on note $|\cdot|_L$ sa valeur absolue normalis\'ee, c'est-\`a-dire telle que la norme d'une uniformisante de $L$ soit l'inverse du cardinal du corps r\'esiduel de $L$. On note $|\cdot|_p$ la valeur absolue sur $\bar{\Q}_p$ telle que $|p|_p=p^{-1}$ (donc $|\cdot|_L=|\cdot|_p^{[L:\Q_p]}$ sur $L$).

Si $K$ est un corps, on note $\mathcal{G}_K=\mathrm{Gal}(\bar{K}/K)$ le groupe de Galois absolu de $K$. Lorsque $K$ est de plus une extension finie de $\Q_p$, on note $\rec_K:\, K^\times\rightarrow \mathcal{G}_K^{\rm ab}$ le morphisme de r\'eciprocit\'e de la th\'eorie du corps de classes normalis\'e de telle sorte que l'image d'une uniformisante soit un \'el\'ement de Frobenius g\'eom\'etrique. On note $\varepsilon$ le caract\`ere $p$-adique cyclotomique de $\mathcal{G}_K$, de sorte que $\varepsilon\circ\rec_K$ est le caract\`ere $x\mapsto N_{K/\Q_p}(x)|x|_K$ de $K^\times$. On adopte comme convention que le poids de Hodge-Tate de $\varepsilon$ est $1$.

Si un groupe alg\'ebrique est d\'esign\'e par une lettre majuscule romaine $(G,H,N,T,\dots)$ on d\'esigne par la lettre minuscule gothique correspondante $(\mathfrak{g},\mathfrak{h},\mathfrak{n},\mathfrak{t},\dots)$ son alg\`ebre de Lie.

Fixons bri\`evement quelques conventions concernant le groupe $\GL_n$. On note $B$ le sous-groupe de Borel des matrices triangulaires sup\'erieures, et $T$ son sous-tore maximal constitu\'e des matrices diagonales. Ce choix d\'etermine un ensemble $\Delta=\{\alpha_1,\cdots,\alpha_{n-1}\}$ de racines simples, $\alpha_i$ d\'esignant le caract\`ere alg\'ebrique $(t_1,\cdots,t_n)\mapsto t_it_{i+1}^{-1}$. Pour $1\leq i\leq n$, on note $\beta_i$ le cocaract\`ere $t\mapsto (\underbrace{t,\cdots,t}_i,1,\cdots,1)$, de telle sorte que l'on ait $\alpha_i\circ \beta_j(t)=t^{\delta_{i,j}}$. Nous convenons que la notion de dominance pour un caract\`ere du tore $T$ ou de son alg\`ebre de Lie $\mathfrak{t}$ est toujours relative \`a ce choix de racines simples. Si $I\subset\{1,\cdots,n-1\}$, on note $P_I$ le sous-groupe parabolique contenant $B$ correspondant \`a l'ensemble $\{\alpha_i\ |\ i\in I\}$, et $\overline{P}_I$ le sous-groupe parabolique oppos\'e, on d\'esigne par $L_I$ leur sous-groupe de Levi commun et $W_I\subseteq W$ son groupe de Weyl. Rappelons que si $M$ d\'esigne un module de plus haut poids de $U(\mathfrak{gl}_n)$, relativement \`a l'alg\`ebre de Lie $\mathfrak{b}$, le sous-groupe $P_I$ est dit adapt\'e \`a $M$ si l'action de $\mathfrak{p}_I$ sur $M$ se prolonge en une repr\'esentation alg\'ebrique de $P_I$. Si $\rho$ d\'esigne la demi-somme des racines positives, $\lambda$ un caract\`ere de $\mathfrak{t}$ et $w$ un \'el\'ement du groupe de Weyl de $G$, on d\'efinit $w\cdot\lambda=w(\lambda+\rho)-\rho$. Si $\lambda$ est un poids dominant entier de $\mathfrak{t}$, le $U(\mathfrak{g})$-module simple de plus haut poids $w\cdot\lambda$ est adapt\'e \`a $\mathfrak{p}_I$, si et seulement si $w\cdot \lambda$ est un poids dominant de $L_I$, ce qui est encore \'equivalent \`a ce que $w\in{}^IW$ l'ensemble des repr\'esentants de $W_I\backslash W$ de longueur minimale (\cite[\S\ 9.4]{HumBGG}).

Soit $L$ et $K$ deux extensions finies de $\Q_p$, on note $K_0$ l'extension maximale non ramifi\'ee sur $\Q_p$ contenue dans $K$ et $K'_0$ l'extension maximale non ramifi\'ee sur $\Q_p$ contenue dans $K(\{\zeta_{p^n}|\,n\geq 1\})$ o\`u $\zeta_{p^n}$ est un syst\`eme compatible de racines $p^n$-i\`emes de l'unit\'e. On note de plus $\mathcal{R}_K$ l'anneau de Robba de $K$ et $\mathcal{R}_{L,K}=L\otimes_{\Q_p}\mathcal{R}_K$. On rappelle que $\mathcal{R}_K$ est l'anneau des s\'eries formelles $f=\sum_{n\in\Z}a_nz^n$ avec $a_n\in K'_0$ convergeant sur une couronne $r(f)\leq |z|_p <1$ (\cite[\S\ 2.2]{KPX}). 

Si $H$ est un groupe de Lie $p$-adique ab\'elien, on note $\widehat{H}$ l'espace analytique rigide sur $\Q_p$ repr\'esentant le foncteur associant \`a un espace affino\"ide $X=\Sp(A)$ le groupe $\Hom_{\rm cont}(H,A^\times)$ des morphismes de groupes continus $H\rightarrow A^\times$ (voir par exemple \cite[\S\ 6.4]{Emertonlocan}). Si $X$ est un espace analytique rigide sur $\Q_p$ et $L$ une extension finie de $\Q_p$, on note $X_L=\Sp(L)\times_{\Sp(\Q_p)}X$.

\section{Espaces de d\'eformations}\label{deformations}

Soit $L$ une extension finie de $\Q_p$ dans $\Qbar_p$. On note $\mathcal{O}_L$ son anneau d'entiers et $k_L$ son corps r\'esiduel. On fixe $\varpi_L$ une uniformisante de $\mathcal{O}_L$.

\subsection{Quelques d\'efinitions de g\'eom\'etrie analytique rigide}

Soit $X$ un espace analytique rigide sur $L$. On appelle ferm\'e analytique de $X$ un ``analytic subset'' au sens de \cite[\S\ 9.5.2]{BGR} (appel\'e aussi ``analytic set'' dans \cite{ConradIrred}).

\begin{defi}
Une partie $U\subset X$ est un ouvert de Zariski si son compl\'ementaire est un ferm\'e analytique de $X$. 
\end{defi}

Un ouvert de Zariski est en particulier un ouvert admissible que l'on munit de la structure d'espace analytique rigide sur $L$ induite par $X$.

\begin{defi}
(i) Une partie $Z$ de $X$ est dense au sens de Zariski dans $X$, ou Zariski-dense dans $X$, si le plus petit ferm\'e analytique de $X$ contenant $Z$ est l'espace sous-jacent \`a $X$.\\
(ii) Si $A$ et $B$ sont deux parties de $X$, on dit que $A$ s'accumule en $B$ si pour tout $b\in B$ et pour tout voisinage affino\"ide $U$ de $b$, il existe un voisinage affino\"ide $V\subset U$ de $b$ tel que $A\cap V$ est Zariski-dense dans $V$. Un ensemble de $X$ est dit d'accumulation s'il s'accumule en lui-m\^eme (cf.~\cite[p.~979]{CheFougere}).
\end{defi}

Les composantes irr\'eductibles d'un espace analytique rigide sont d\'efinies en \cite[Def. 2.2.2]{ConradIrred}. On v\'erifie facilement en utilisant \cite[Lem. 2.2.3]{ConradIrred} que, si $X$ est un espace analytique rigide irr\'eductible et $U\subset X$ un ouvert de Zariski non vide de $X$, l'espace analytique rigide $U$ est connexe.

Soit $\FS_{\mathcal{O}_L}$ la cat\'egorie des sch\'emas formels $\mathcal{X}$ sur $\mathcal{O}_L$, localement noeth\'eriens et dont la r\'eduction modulo un id\'eal de d\'efinition est un sch\'ema localement de type fini sur $k_L$. On note $\mathcal{X}\mapsto\mathcal{X}^{\rm rig}$ le foncteur construit par Raynaud et Berthelot de la cat\'egorie $\FS_{\mathcal{O}_L}$ vers la cat\'egorie des espaces analytiques rigides sur $L$ (cf. \cite[\S\ 7]{DeJong}, \cite[\S\ 0.2]{Berthelot}).

Si $R$ est une $\mathcal{O}_L$-alg\`ebre locale, plate, compl\`ete, noeth\'erienne et de corps r\'esiduel fini (sur $k_L$), de sorte que le sch\'ema formel $\Spf(R)$ est un objet de $\FS_{\mathcal{O}_L}$, il existe une bijection canonique entre l'ensemble des composantes irr\'eductibles du sch\'ema $\Spec(R[\tfrac{1}{p}])$ et l'ensemble des composantes irr\'eductibles de l'espace analytique rigide $\Spf(R)^{\rm rig}$ d\'efinie de la fa\c con suivante. Une composante irr\'eductible de $\Spec(R[\tfrac{1}{p}])$ correspond \`a un id\'eal premier minimal $\mathfrak{p}$ de $R[\tfrac{1}{p}]$. On obtient alors une composante irr\'eductible de $\Spf(R)^{\rm rig}$ en consid\'erant l'image du morphisme $\Spf(R/(\mathfrak{p}\cap R))^{\rm rig}\rightarrow\Spf(R)^{\rm rig}$. Le caract\`ere bijectif de cette application est une cons\'equence de \cite[Thm. 2.3.1]{ConradIrred} et du fait que les id\'eaux premiers minimaux de $R$ sont en bijection avec les composantes connexes de la normalisation de $\Spf(R)$.

Si $R$ est une $\mathcal{O}_L$-alg\`ebre comme ci-dessus, mais suppos\'ee de plus normale, par \cite[Prop. 7.3.6]{DeJong} le morphisme $R[\tfrac{1}{p}]\rightarrow \mathcal{O}(\Spf(R)^{\rm rig})$ identifie l'anneau $R[\tfrac{1}{p}]$ \`a l'anneau des fonctions {\it born\'ees} sur $\Spf(R)^{\rm rig}$.

\begin{prop}\label{ouvertlisse}
Si $X$ est un espace analytique rigide r\'eduit, l'ensemble des points lisses de $X$ (\cite[\S\ 7.3.2]{BGR}) est un ouvert de Zariski de $X$ qui est de plus Zariski-dense dans $X$.
\end{prop}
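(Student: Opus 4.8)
The statement is a standard fact about rigid analytic spaces, and the plan is to reduce it to the corresponding algebraic statements about excellent (or, more modestly, Japanese/Nagata) noetherian rings via the affinoid local structure of $X$. First I would recall that $X$ admits an admissible covering by affinoid opens $\Sp(A)$ with $A$ a reduced affinoid $L$-algebra, and that the locus of smooth points is, by definition (\cite[\S\ 7.3.2]{BGR}), the locus where the stalk of the structure sheaf is a regular local ring; this is a local notion, so it suffices to treat each $\Sp(A)$ separately and then check the pieces glue, which is automatic since ``being an open of Zariski'' and ``being Zariski-dense'' can both be checked on an admissible affinoid cover (using that the complement of the smooth locus is cut out locally by an ideal, hence globally by a coherent ideal sheaf).

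For a fixed reduced affinoid algebra $A$, the key input is that $A$ is an \emph{excellent} noetherian ring: affinoid algebras over a non-archimedean field are quotients of Tate algebras $T_n$, and $T_n$ is excellent (this is classical — it follows from $T_n$ being a regular ring which is a localization/completion-type construction over the excellent base field $L$, cf. the treatment in \cite{ConradIrred} and the references therein). For an excellent ring the regular locus of $\Spec A$ is open, so there is an ideal $I\subset A$ with $V(I)=\Spec A\setminus\mathrm{Reg}(\Spec A)$; passing to the associated rigid space, the smooth locus of $\Sp(A)$ is exactly the Zariski-open complement of the analytic subset $V(I)$, because the completed local rings $\widehat{\mathcal{O}}_{X,x}$ and the algebraic local rings $\mathcal{O}_{\Spec A, \mathfrak{m}_x}$ have the same completion and regularity is detected on the completion (here one uses that $A$ being excellent implies its local rings have geometrically regular formal fibers, so regularity of $\mathcal{O}_{\Spec A,\mathfrak m_x}$ is equivalent to regularity of $\widehat{\mathcal O}_{X,x}$, which is the rigid-analytic notion of smoothness). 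This gives the first assertion: the smooth locus is a Zariski-open.

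For Zariski-density, I would argue on each irreducible component separately — by \cite[Lem. 2.2.3]{ConradIrred} (already invoked in the excerpt) a nonempty Zariski-open of an irreducible rigid space is connected and in particular Zariski-dense, so it is enough to show the smooth locus meets every irreducible component of $X$ in a nonempty set. Reducing again to $\Sp(A)$ with $A$ reduced, an irreducible component corresponds to a minimal prime $\mathfrak p$ of $A$; since $A$ is reduced, $A_{\mathfrak p}$ is a field, so the generic point of $V(\mathfrak p)$ lies in the regular locus, hence $\mathrm{Reg}(\Spec A)$ is a dense open of each component of $\Spec A$. Translating back, the smooth locus is a nonempty (hence, by the irreducible case, Zariski-dense) open in each component of $\Sp(A)$, and gluing over the cover yields that the smooth locus is Zariski-dense in $X$.

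The main obstacle, and the only point requiring genuine care rather than bookkeeping, is the comparison between the \emph{algebraic} regular locus of $\Spec A$ and the \emph{analytic} smooth locus of $\Sp(A)$: one must know that $A$ is excellent (so that the regular locus is open and the formal fibers are geometrically regular), and one must know that smoothness of the rigid space at $x$ is equivalent to regularity of $\widehat{\mathcal O}_{X,x}$, hence — by the formal-fiber property — to regularity of the algebraic local ring $\mathcal O_{\Spec A,\mathfrak m_x}$. Both facts are in the literature (the excellence of affinoid algebras, and the behaviour of regularity under the relevant completions), so in the write-up I would simply cite them; everything else is a formal consequence of the affinoid-local nature of the two conditions together with \cite[Lem. 2.2.3]{ConradIrred}.
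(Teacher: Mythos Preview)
The paper does not actually give a proof of this proposition: it is stated as a standard fact with a reference to \cite[\S\ 7.3.2]{BGR} and no argument is supplied. Your plan is correct and is the standard route --- reduce to affinoids, use excellence of affinoid algebras to get openness of the regular locus and to match algebraic regularity with rigid-analytic smoothness via regularity of completions, then use reducedness to see the regular locus meets every component --- so there is nothing to compare against in the paper itself.
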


Un espace analytique rigide $X$ est dit \'equidimensionnel de dimension $d$ si pour tout point $x\in X$, l'anneau local compl\'et\'e $\widehat{\mathcal{O}}_{X,x}$ de $X$ en $x$ est un anneau local \'equidimensionnel de dimension $d$. De m\^eme $X$ est dit sans composante immerg\'ee si pour tout point $x\in X$, tous les id\'eaux premiers associ\'es de $\widehat{\mathcal{O}}_{X,x}$ sont des id\'eaux premiers minimaux.

Pour $f:\,X\rightarrow Y$ un morphisme entre espaces analytiques rigides et $x\in X$, on dit que $f$ est \'etale en $x$ si la condition de \cite[D\'ef.~1.7.10 i)]{Huber} est v\'erifi\'ee.

Si $X$ est un espace analytique rigide r\'eduit, on note $\mathcal{O}_X^+$ le sous-faisceau de $\mathcal{O}_X$ des fonctions de norme plus petite que $1$.

\subsection{L'espace des repr\'esentations triangulines}\label{triangulines}

Nous rappelons ici la construction de l'espace des d\'eformations triangulines d'une repr\'esentation galoisienne en caract\'eristique $p$. 

Fixons $K$ une extension finie de $\Q_p$. On suppose que l'extension $L$ de $\Q_p$ est suffisamment grande pour qu'il existe $[K:\Q_p]$ plongements de $K$ dans $L$, on note $\Hom(K,L)$ l'ensemble de ces plongements et $\mathcal{T}=\widehat{K^\times}$ (l'espace analytique rigide sur $\Q_p$ param\'etrant les caract\`eres continus de $K^{\times}$). Si ${\bf k}=(k_\tau)_{\tau\in\Hom(K,L)}\in\Z^{\Hom(K,L)}$, on note $x^{\bf k}$ l'\'el\'ement de $\mathcal{T}(L)$ d\'efini par $x\mapsto\prod_{\tau\in\Hom(K,L)}\tau(x)^{k_\tau}$. Soit $n$ un entier strictement positif. Pour ${\bf k}=(k_{\tau,i})_{1\leq i\leq n,\tau:\, K\hookrightarrow L}\in(\mathbb{Z}^n)^{\Hom(K,L)}$, on note $\delta_{\bf k}$ l'\'el\'ement de $\mathcal{T}^n(L)$ d\'efini par $(x_1,\cdots,x_n)\mapsto\prod_{i,\tau}\tau(x_i)^{k_{\tau,i}}$ (o\`u $(x_1,\cdots,x_n)\in (K^\times)^n$) et on s'autorise l'abus de langage consistant \`a d\'esigner par le m\^eme signe sa restriction \`a $(\mathcal{O}_K^\times)^n$. Un tel poids est dit dominant si, pour tout $\tau:\, K\hookrightarrow L$, on a $k_{\tau,1}\geq\cdots\geq k_{\tau,n}$. Si de plus, pour tout $\tau\in\Hom(K,L)$, on a $k_{\tau,1}>\cdots>k_{\tau,n}$, on dit que le poids est strictement dominant.

Rappelons que, si $L'$ d\'esigne une extension finie de $\Q_p$, les \'el\'ements de $\mathcal{T}(L')$ sont canoniquement en bijection avec l'ensemble des classes d'isomorphisme de $(\varphi,\Gamma_K)$-modules de rang $1$ sur $\mathcal{R}_{L',K}$ par la construction $\delta\mapsto\mathcal{R}_{L',K}(\delta)$ de \cite[Cons. 6.2.4]{KPX}.

Soit $\rbar$ une repr\'esentation continue de $\mathcal{G}_K$ sur un $k_L$-espace vectoriel $V_{k_L}$ de dimension $n$. Fixons une fois pour toute une base $\bar{b}$ de $V_{k_L}$, ce qui nous permet de d\'efinir la $\mathcal{O}_L$-alg\`ebre locale noeth\'erienne compl\`ete $R_{\rbar}^{\square}$ qui pro-repr\'esente le foncteur associant \`a une $\mathcal{O}_L$-alg\`ebre artinienne locale $A$ de corps r\'esiduel $k_L$ l'ensemble des classes d'\'equivalence de triplets $(V_A,\iota,b)$ o\`u $V_A$ est un $A$-module libre de rang $n$ muni d'une action continue $A$-lin\'eaire de $\mathcal{G}_K$, $\iota$ un isomorphisme $\mathcal{G}_K$-\'equivariant entre $V_A\otimes_A k_L$ et $V_{k_L}$, et $b$ une base de $V_A$ se r\'eduisant sur $\bar{b}$ \`a travers $\iota$. On note $\mathfrak{X}_{\rbar}^\square$ l'espace analytique rigide sur $L$ associ\'e au sch\'ema formel $\Spf (R_{\rbar}^\square)$. On note $\mathcal{T}_{\rm reg}$ le compl\'ementaire dans $\mathcal{T}_L=\mathcal{T}\times_{\Q_p}L$ de l'ensemble des $L$-points $x\mapsto x^{-\bf i}$ et $x\mapsto x^{\bf 1+i}|x|_K$ pour ${\bf i}\in\mathbb{Z}_{\geq 0}^{\Hom(K,L)}$ et $\mathcal{T}_{\rm reg}^n\subset\mathcal{T}_L^n$ l'ouvert de Zariski des points $(\delta_1,\cdots,\delta_n)$ tels que $\delta_i\delta_j^{-1}\in\mathcal{T}_{\rm reg}$ pour $i\neq j$. On d\'esigne par $U_{\rm tri}^\square(\rbar)^{\rm reg}$ l'ensemble des points $(x,\delta)\in \mathfrak{X}_{\rbar}^\square\times\mathcal{T}_{\rm reg}^n$ tels que, si $r_x$ d\'esigne la repr\'esentation de $\mathcal{G}_K$ associ\'ee \`a $x$, son $(\varphi,\Gamma_K)$-module $D_{\rm rig}^\dagger(r_x)$ est triangulin (voir \cite{Co}, \cite[\S\ 2]{BelChe}, \cite[\S\ 6]{KPX}) et $\delta=(\delta_1,\cdots,\delta_n)$ est un syst\`eme de param\`etres de $r_x$ (ou plus simplement un param\`etre de $r_x$), ce qui signifie qu'il existe une filtration croissante de $D_{\rm rig}^\dagger(r_x)$ par des sous-$\mathcal{R}_{k(x),K}$-modules stable par $\varphi$ et $\Gamma_K$ (n\'ecessairement facteurs directs comme $\mathcal{R}_{k(x),K}$-modules) dont les gradu\'es successifs sont isomorphes \`a $\mathcal{R}_{k(x),K}(\delta_1)$, $\cdots ,\mathcal{R}_{k(x),K}(\delta_n)$ comme $(\varphi,\Gamma_K)$-modules, o\`u $k(x)$ est le corps r\'esiduel de $\mathfrak{X}_{\rbar}^\square$ au point $x$. 

\begin{defi}\label{deftriangulines}
On appelle \emph{espace des d\'eformations triangulines de $\rbar$}, et on le note $X_{\rm tri}^\square(\rbar)$, l'espace analytique rigide sur $L$ obtenu comme adh\'erence de Zariski de $U_{\rm tri}^\square(\rbar)^{\rm reg}$ dans $\mathfrak{X}_{\rbar}^\square\times\mathcal{T}_L^n$, i.e. le ferm\'e analytique intersection des ferm\'es analytiques de $\mathfrak{X}_{\rbar}^\square\times\mathcal{T}_L^n$ contenant $U_{\rm tri}^\square(\rbar)^{\rm reg}$ muni de sa structure de vari\'et\'e analytique rigide r\'eduite (\cite[\S\ 9.5.3 Prop. 4]{BGR}).
\end{defi}

Remarquons que la vari\'et\'e analytique rigide $X_{\rm tri}^\square(\rbar)$ est s\'epar\'ee car ferm\'ee dans la vari\'et\'e analytique rigide s\'epar\'ee $\mathfrak{X}_{\rbar}^\square\times\mathcal{T}_L^n$ (cf. \cite[p. 393]{BGR}). 

Notons $\mathcal{W}=\widehat{\mathcal{O}_K^\times}$ (l'espace analytique rigide sur $\Q_p$ param\'etrant les caract\`eres continus de $\mathcal{O}_K^{\times}$) et $\mathcal{W}_L=\mathcal{W}\times_{\Q_p}L$. La restriction d'un caract\`ere de $K^\times$ \`a $\mathcal{O}_K^\times$ induit un morphisme d'espaces analytiques rigides $\mathcal{T}\rightarrow \mathcal{W}$ sur $\Q_p$. On note $\omega'$ le morphisme $X_{\rm tri}^\square(\rbar)\rightarrow\mathcal{T}_L^n$ obtenu en composant l'inclusion de $X_{\rm tri}^\square(\rbar)$ dans $\mathfrak{X}_{\rbar}^\square\times\mathcal{T}_L^n$ avec la seconde projection, on obtient une application \emph{poids} $\omega:\,X_{\rm tri}^\square(\rbar)\rightarrow\mathcal{W}_L^n$ en composant $\omega'$ avec le morphisme de restriction $\mathcal{T}_L^n\rightarrow\mathcal{W}_L^n$.

\begin{rema}
Si on note $X_{\rm tri}^\square(\rbar)^{\rm reg}$ l'image r\'eciproque de $\mathcal{T}_{\rm reg}^n$ par $\omega$, l'espace $X_{\rm tri}^\square(\rbar)^{\rm reg}$ est un ouvert de Zariski de $X_{\rm tri}^\square(\rbar)$. On a $U_{\rm tri}^\square(\rbar)^{\rm reg}\subset X_{\rm tri}^\square(\rbar)^{\rm reg}$ mais cette inclusion est toujours stricte. Il existe en effet des points $(r_x,\delta_x)$ de $X_{\rm tri}^\square(\rbar)$ tels que $\delta_x$ ne soit pas un param\`etre de la repr\'esentation $r_x$, m\^eme si $r_x$ est trianguline et $\delta_x$ est r\'egulier. Le Th\'eor\`eme 6.3.13 de \cite{KPX} montre cependant que $r_x$ est toujours trianguline.
\end{rema}

Le reste de cette section est presque enti\`erement consacr\'e \`a la preuve du r\'esultat suivant concernant la g\'eom\'etrie de l'espace $X_{\rm tri}^\square(\rbar)$.

\begin{theo}\label{ouvertsature}
(i) L'espace $X_{\rm tri}^\square(\rbar)$ est \'equidimensionnel de dimension $n^2+[K:\Q_p]\frac{n(n+1)}{2}$.\\
(ii) L'ensemble $U_{\rm tri}^\square(\rbar)^{\rm reg}$ est un ouvert de Zariski de $X_{\rm tri}^\square(\rbar)$ qui est Zariski-dense dans $X_{\rm tri}^\square(\rbar)$.\\
(iii) L'ouvert $U_{\rm tri}^\square(\rbar)^{\rm reg}$ est lisse sur $\Q_p$ et la restriction de l'application $\omega'$ \`a $U_{\rm tri}^\square(\rbar)^{\rm reg}$ est un morphisme lisse.
\end{theo}

\begin{proof}
La strat\'egie de la preuve est la suivante. Nous allons construire un espace analytique rigide lisse $\mathcal{S}^\square(\rbar)$ s'ins\'erant dans un diagramme commutatif comme ci-dessous
\begin{equation}\label{spaceofphigamma}
\begin{aligned}
\begin{xy}
\xymatrix{
&\mathcal{S}^\square(\rbar)\ar[ld]_{\pi_{\rbar}}\ar[rd]^\kappa&\\
X_{\rm tri}^\square(\rbar)\ar[rr]^{\omega'} && \mathcal{T}_L^n
}
\end{xy}
\end{aligned}
\end{equation}
o\`u le morphisme $\pi_{\rbar}$ est lisse de dimension relative $n$ et d'image $U_{\rm tri}^\square(\rbar)^{\rm reg}$ et le morphisme $\kappa$ est lisse de dimension relative $n^2+[K:\Q_p]\frac{n(n-1)}{2}$.

L'espace analytique rigide $\mathcal{S}^\square(\rbar)$ repr\'esente le foncteur
\begin{equation}\label{foncteurtriangulin}
X\mapsto \{(r,{\rm F}_\bullet,\delta,\nu)\}/\sim\end{equation}
o\`u $X$ est un espace analytique r\'eduit sur $L$ et $r$ est un morphisme continu $\mathcal{G}_K\rightarrow\GL_n(\mathcal{O}_X^+)$ tel qu'en tout point $x\in X$, la r\'eduction de $r\otimes \mathcal{O}_{k(x)}$ co\"incide avec $\rbar$. De plus ${\rm F}_\bullet$ est une filtration croissante de ${\bf D}^\dagger_{\rm rig}(r)$ par des sous-$(\varphi,\Gamma_K)$-modules localement sur $X$ facteurs directs comme $\mathcal{R}_X$-modules tels que $F_0=0$ et $F_n={\bf D}^\dagger_{\rm rig}(r)$ et $\delta=(\delta_1,\dots,\delta_n)\in \mathcal{T}_{\rm reg}^n(X)$. Finalement $\nu=(\nu_1,\dots,\nu_n)$ est une collection de trivialisations $\nu_i:{\rm gr}_i(D)\cong \mathcal{R}_X(\delta_i)$ (avec les notations de \cite[\S2]{HellmSchrDensity}). La fl\`eche $\kappa$ correspond alors au morphisme de foncteurs $(r,{\rm F}_\bullet,\delta,\nu)\mapsto \delta$.

Pour prouver la repr\'esentabilit\'e de ce foncteur, nous suivons la strat\'egie de \cite{Cheneviertriangulines}, \cite{HellmannFS} et \cite{HellmSchrDensity}.

Soit $\mathcal{S}_n$ l'espace analytique rigide sur $L$ repr\'esentant le foncteur
\begin{equation*} X\longmapsto\{(D,{\rm Fil}_\bullet(D),\delta,\nu)\}/\sim\end{equation*}
o\`u $X$ est comme avant, $D$ est un  $(\varphi,\Gamma_K)$-module sur $\mathcal{R}_X$ de rang $n$, et ${\rm Fil}_\bullet$ est une filtration croissante de $D$ par des sous-$(\varphi,\Gamma_K)$-modules localement sur $X$ facteurs directs comme $\mathcal{R}_X$-modules tels que $F_0=0$ et $F_n=D$. De plus $\delta=(\delta_1,\dots,\delta_n)\in \mathcal{T}_{\rm reg}^n(X)$ et $\nu=(\nu_1,\dots,\nu_n)$ est une collection de trivialisations $\nu_i:{\rm gr}_i(D)\cong \mathcal{R}_X(\delta_i)$. Ce foncteur est repr\'esentable par un espace analytique rigide s\'epar\'e d'apr\`es \cite[Theorem 2.4]{HellmSchrDensity}.
Soit $\mathcal{S}_n^{\rm adm}\subset \mathcal{S}_n$ le sous-ensemble admissible au sens de \cite[Theorem 1.2]{Hellm}, i.e.~le sous-espace ouvert adique\footnote{Nous utilisons ici le fait qu'un ouvert de l'espace adique associ\'e \`a un espace analytique rigide quasi-s\'epar\'e est quasi-s\'epar\'e et donc provient lui-m\^eme d'un espace analytique rigide d'apr\`es \cite[\S1.1.11]{Huber}.} maximal de $\mathcal{S}_n$ tel qu'il existe un fibr\'e vectoriel $\mathcal{V}$ sur $\mathcal{S}_n^{\rm adm}$ et un morphisme continu $\mathcal{G}_K\rightarrow{\rm Aut}_{\mathcal{O}_{\mathcal{S}_n^{\rm adm}}}(\mathcal{V})$ tels que ${\bf D}^\dagger_{\rm rig}(\mathcal{V})$ est l'image inverse du $(\varphi,\Gamma_K)$-module universel de $\mathcal{S}_n$ sur $\mathcal{S}_n^{\rm adm}$. Cet ouvert est non vide car toute repr\'esentation cristalline \`a poids de Hodge-Tate deux-\`a-deux distincts et Frobenius suffisamment g\'en\'erique permet d'en construire un point.
 
Soit $\pi:\mathcal{S}_n^{\square,{\rm adm}}\rightarrow \mathcal{S}_n^{\rm adm}$ le $\GL_n$-torseur des trivialisations du fibr\'e vectoriel $\mathcal{V}$. Il existe donc un isomorphisme $\beta:\,\pi^\ast\mathcal{V}\simeq\mathcal{O}_{\mathcal{S}_n^{\square,{\rm adm}}}^n$ sur $\mathcal{S}_n^{\square,{\rm adm}}$ et l'action de $\mathcal{G}_K$ sur $\mathcal{V}$ est donn\'ee par un morphisme continu \[\tilde r:\,\mathcal{G}_K\rightarrow\GL_n(\Gamma(\mathcal{S}_n^{\square,{\rm adm}},\mathcal{O}_{\mathcal{S}_n^{\square,{\rm adm}}})).\] Comme $\mathcal{G}_K$ est topologiquement engendr\'e par un nombre fini d'\'el\'ements (voir par exemple \cite[Prop.~II.14 et I.25 ]{SeCohGa}), l'ensemble des points $x$ de $\mathcal{S}_n^{\square,{\rm adm}}$ o\`u $\tilde r$ se factorise \`a travers \[\GL_n(\Gamma(\mathcal{S}_n^{\square,{\rm adm}},\mathcal{O}_{\mathcal{S}_n^{\square,{\rm adm}}}^+))\subset\GL_n(\Gamma(\mathcal{S}_n^{\square,{\rm adm}},\mathcal{O}_{\mathcal{S}_n^{\square,{\rm adm}}}))\] est un ouvert admissible de $\mathcal{S}_n^{\square,{\rm adm}}$.
Comme de plus $\mathcal{G}_K$ est compact, cet ouvert est non vide. Rappelons, que l'on a fix\'e une repr\'esentation r\'esiduelle $\rbar:\mathcal{G}_K\rightarrow \GL_n(k_L)$. On d\'efinit alors $\mathcal{S}^\square(\rbar)\subset \mathcal{S}^{\square,{\rm adm}}$ comme l'ouvert admissible non vide (en consid\'erant des relev\'es cristallins de $\rbar$) des points $x$ o\`u le morphisme $\tilde r_x$ se r\'eduit sur $\rbar$ modulo l'id\'eal maximal de $k(x)^+$. Il est alors imm\'ediat de v\'erifier que $\mathcal{S}^\square(\rbar)$ repr\'esente bien le foncteur \eqref{foncteurtriangulin}. Notons $r_X:\,\mathcal{G}_K\rightarrow\GL_n(\mathcal{O}_{\mathcal{S}^\square(\rbar)}^+)$ le morphisme universel sur $\mathcal{S}^\square(\rbar)$.

On d\'efinit alors le morphisme $\kappa:\mathcal{S}^\square(\rbar)\rightarrow \mathcal{T}_L^n$ comme la compos\'ee du morphisme $\mathcal{S}^\square(\rbar)\hookrightarrow\mathcal{S}_n^{\square,{\rm adm}}\rightarrow\mathcal{S}_n^{\rm adm}\hookrightarrow \mathcal{S}_n$, qui est lisse de dimension relative $n^2$, et de la projection canonique $\mathcal{S}_n\rightarrow \mathcal{T}_L^n$, qui est lisse de dimension relative $[K:\Q_p]\tfrac{n(n-1)}{2}$ par \cite[Theorem 2.4(i)]{HellmSchrDensity}. Comme $\mathcal{T}_L$ est lisse de dimension $n[K:\mathbb{Q}_p]+n$, on en conclut que $\mathcal{S}^\square(\rbar)$ est lisse et \'equidimensionnel de dimension $n^2+n+\frac{n(n+1)}{2}[K:\mathbb{Q}_p]$. On a \'egalement un morphisme $\mathcal{S}^\square(\rbar)\rightarrow \mathfrak{X}_{\rbar}^\square\times\mathcal{T}_L^n$ donn\'e par $(r,{\rm F}_\bullet,\delta,\nu)\mapsto (r,\delta)$. La description des points de $\mathcal{S}^\square(\rbar)$ montre que l'image de ce morphisme est exactement $U_{\rm tri}^\square(\rbar)^{\rm reg}$. On obtient ainsi un morphisme 
\[\pi_{\rbar}:\mathcal{S}^\square(\rbar)\longrightarrow X_{\rm tri}^\square(\rbar)\]
d'image $U_{\rm tri}^\square(\rbar)^{\rm reg}$.

Montrons tout d'abord que l'espace $X_{\rm tri}^\square(\rbar)$ est \'equidimensionnel, de dimension \[\dim X_{\rm tri}^\square(\rbar)=n^2+[K:\Q_p]\frac{n(n+1)}{2}.\]
D'apr\`es le Lemme 2.12 de \cite{HellmSchrDensity}, les points de $U_{\rm tri}^\square(\rbar)^{\rm reg}$ sont strictement triangulins au sens de \cite[Def.~6.3.1]{KPX}. Le Corollaire 6.3.10 de \emph{loc.~cit.} permet alors de construire un morphisme projectif et birationnel, donc surjectif, $f:\tilde X_{\rm tri}^\square(\rbar)\rightarrow X_{\rm tri}^\square(\rbar)$ et, si $D:={\bf D}^\dagger_{\rm rig}(f^\ast r_X)$, une filtration $\Fil_\bullet(D)\subset D$ de $D$ par des sous-$(\varphi,\Gamma)$-modules de $D$ ainsi que des suites exactes courtes pour $1\leq i\leq n$
\[0\longrightarrow {\rm gr}_i D\longrightarrow \mathcal{R}_{\tilde X_{\rm tri}^\square(\rbar)}(\delta_i)\otimes\mathcal{L}_i\longrightarrow \mathcal{M}_i\longrightarrow 0\]
o\`u $\mathcal{L}_i$ est un fibr\'e en droites sur $\tilde X_{\rm tri}^\square(\rbar)$ et $\mathcal{M}_i$ un $\mathcal{R}_{\tilde X_{\rm tri}^\square(\rbar)}$-module qui est (localement sur $\tilde X_{\rm tri}^\square(\rbar)$) tu\'e par une puissance de $t$ et de support un ferm\'e analytique $Z_i\subset \tilde X_{\rm tri}^\square(\rbar)$ ne contenant aucune composante irr\'eductible de $\tilde X_{\rm tri}^\square(\rbar)$. On a de plus $f^{-1}(U_{\rm tri}^\square(\rbar)^{\rm reg})\subset\tilde X_{\rm tri}^\square(\rbar)\backslash\bigcup_i Z_i$.

Soit $U\subset\tilde X_{\rm tri}^\square(\rbar)$ l'intersection du compl\'ementaire de $\bigcup_i Z_i$ dans $\tilde X_{\rm tri}^\square(\rbar)$ avec l'image r\'eciproque de $\mathcal{T}^n_{\rm reg}\subset\mathcal{T}^n_L$ via $\tilde X_{\rm tri}^\square(\rbar)\rightarrow X_{\rm tri}^\square(\rbar)\rightarrow\mathcal{T}^n_L$. Il s'agit d'un ouvert de Zariski de $\tilde X_{\rm tri}^\square(\rbar)$ qui contient $f^{-1}(U_{\rm tri}^\square(\rbar)^{\rm reg})$, ce qui prouve qu'il est aussi Zariski dense dans $\tilde X_{\rm tri}^\square(\rbar)$. Notons $U^\square\rightarrow U$ le $\mathbb{G}_m^n$-torseur des trivialisations des fibr\'es en droites $\mathcal{L}_i$. L'espace $U^\square$ poss\`ede par construction la propri\'et\'e suivante. Il existe des trivialisations canoniques des images inverses des faisceaux $\mathcal{L}_i$ sur $U^\square$ telles que, si $g:\,Y\rightarrow U$ est un morphisme, et $s_i$ des trivialisations $\mathcal{O}_Y\simeq g^\ast(\mathcal{L}_i)$, il existe un unique morphisme $Y\rightarrow U^\square$ relevant $g$ de sorte que les $s_i$ sont les images inverses des trivialisations canoniques sur $U^\square$. La propri\'et\'e universelle de $\mathcal{S}^\square(\rbar)$ nous permet par ailleurs de construire un morphisme $s:\,U^\square\rightarrow\mathcal{S}^\square(\rbar)$ tel que $\pi_{\rbar}\circ s$ est la compos\'ee \[U^\square\longrightarrow U\overset{f|_U}{\longrightarrow} X_{\rm tri}^\square(\rbar).\]
 Comme $f$ est la compos\'ee d'\'eclatements et de normalisations, on peut trouver un ouvert de Zariski et Zariski dense $V$ de $X_{\rm tri}^\square(\rbar)$ tel que $f^{-1}(V)\subset U$ et $f$ induise un isomorphisme de $f^{-1}(V)$ sur $V$. Notons de m\^eme $V^\square\rightarrow f^{-1}(V)$ le $\mathbb{G}_m^n$-torseur des trivialisations des faisceaux $\mathcal{L}_i|_{f^{-1}(V)}$, il est facile de v\'erifier que $V^\square$ n'est autre que l'image r\'eciproque de $f^{-1}(V)$ dans $U^\square$. La propri\'et\'e universelle de $V^\square\rightarrow V$ permet de construire un morphisme $\pi_{\rbar}^\square$ de $\pi_{\rbar}^{-1}(V)$ dans $V^\square$ dont la compos\'ee avec $V^\square\rightarrow f^{-1}(V)$ est $f^{-1}|_V\circ \pi_{\rbar}$. On a alors $\pi^\square_{\rbar}\circ s|_{V^\square}=\Id_{V^\square}$. Or, la description des points de $\mathcal{S}^\square(\rbar)$ montre que le morphisme $\pi^\square_{\rbar}$ de $\pi_{\rbar}^{-1}(V)$ dans $V^\square$ est injectif, on en conclut que l'on a donc \'egalement $s\circ \pi_{\rbar}^\square=\Id_{\pi_{\rbar}^{-1}(V)}$, ce qui implique, puisque tous les espaces sont r\'eduits, que $\pi_{\rbar}^\square$ induit un isomorphisme de $\pi_{\rbar}^{-1}(V)$ sur $V^\square$. Ainsi, $V^\square$ est \'equidimensionnel de dimension $n^2+n+\frac{n(n+1)}{2}[K:\Q_p]$, $V^\square\rightarrow V$ \'etant un $\mathbb{G}_m^n$-torseur, on en conclut que l'ouvert $V$ est \'equidimensionnel de dimension $n^2+[K:\Q_p]\frac{n(n+1)}{2}$. Comme $V$ est un ouvert dense de $X_{\rm tri}^\square(\rbar)$, il en est de m\^eme de $X_{\rm tri}^\square(\rbar)$, ceci prouve le point (i).

Prouvons \`a pr\'esent que l'application $\pi_{\rbar}$ de $\mathcal{S}^\square(\rbar)$ vers $X_{\rm tri}^\square(\rbar)$ est lisse de dimension relative $n$. Pour ce faire, il suffit de prouver que si $x\in \mathcal{S}^\square(\rbar)$ et $y=(r_y,\delta_y)=\pi_{\rbar}(x)\in U_{\rm tri}^\square(\rbar)^{\rm reg}$, il existe un isomorphisme d'anneaux locaux complets \[\widehat{\mathcal{O}}_{\mathcal{S}^\square(\rbar),x}\simeq\widehat{\mathcal{O}}_{X_{\rm tri}^\square(\rbar),y}\dbl x_1,\dots,x_n\dbr.\] Notons $A=\widehat{\mathcal{O}}_{X_{\rm tri}^\square(\rbar),y}$ et $B=\widehat{\mathcal{O}}_{\mathcal{S}^\square(\rbar),x}$ dans ce qui suit, de sorte que $\pi_{\rbar}$ induise un morphisme local d'anneaux locaux complets $A\rightarrow B$.

Le morphisme de projection $X_{\rm tri}^\square(\rbar)\rightarrow \mathfrak{X}^\square_{\rbar}$ donne lieu \`a un morphisme d'anneaux locaux complets \[\widehat{\mathcal{O}}_{\mathfrak{X}^\square_{\rbar},r_y}\longrightarrow\widehat{\mathcal{O}}_{X_{\rm tri}^\square(\rbar),y}.\] En utilisant \cite[Lem.~2.3.3 et Prop.~2.3.5]{KisinModularity}, on voit qu'il existe de plus un isomorphisme topologique entre $\widehat{\mathcal{O}}_{\mathfrak{X}^\square_{\rbar},r_y}$ et $R^\square_{r_y}$, l'anneau de d\'eformation universel cadr\'e de $r_y$, la repr\'esentation galoisienne correspondant au point $y$. Soit $\mathcal{F}$ l'unique triangulation du $(\varphi,\Gamma_K)$-module $D$ de $r_y$ correspondant au point $y$ et $R^\square_{r_y,\mathcal{F}}$ l'anneau de d\'eformation universel cadr\'e de la paire $(r_y,\mathcal{F})$ en un sens \'evident. Comme pr\'ec\'edemment, l'anneau $B=\widehat{\mathcal{O}}_{\mathcal{S}^\square(\rbar),x}$ est naturellement isomorphe \`a une $R^\square_{r_y,\mathcal{F}}$-alg\`ebre locale compl\`ete lisse de dimension relative $n$. Fixons donc $(x_1,\dots,x_n)$ une famille de g\'en\'erateurs topologiques de la $R^\square_{r_y,\mathcal{F}}$-alg\`ebre $B$. On d\'efinit alors une application continue $A$-lin\'eaire de $A\dbl X_1,\dots,X_n\dbr$ dans $B$ envoyant $X_i$ sur $x_i$. Comme l'application $R^\square_{r_y}\rightarrow B$ se factorise \`a travers l'anneau $A$, on obtient au final une suite de morphismes
\[R^\square_{r_y}\dbl X_1,\dots,X_n\dbr\rightarrow A\dbl X_1,\dots,X_n\dbr\rightarrow B.\]
D'apr\`es \cite[Prop.~2.3.6 et 2.3.9]{BelChe}, l'application naturelle $R^\square_{r_y}\rightarrow R^\square_{r_y,\mathcal{F}}$ est surjective. Comme l'anneau $B$ est engendr\'e par l'image de $R^\square_{r_y}$ et par les $x_1,\dots, x_n$, on en conclut que l'application $A\dbl X_1,\dots,X_n\dbr\rightarrow B$ est surjective. Ces deux anneaux locaux \'etant noeth\'eriens, complets et de m\^eme dimension, il suffit de prouver que $A\dbl X_1,\dots,X_n\dbr$ est int\`egre pour conclure que $A\dbl X_1,\dots,X_n\dbr\rightarrow B$ est un isomorphisme. Il suffit donc de prouver que l'anneau $A$ est int\`egre. Comme $X_{\rm tri}^\square(\rbar)$ est r\'eduit, on d\'eduit de \cite[\S7.2,Prop.~8]{BGR} que $A$ est r\'eduit. Il suffit donc de prouver que cet anneau n'a qu'un seul id\'eal premier minimal. Pour cela, il suffit de prouver que la fibre du normalis\'e $X_{\rm tri}^\square(\rbar)'$ de $X_{\rm tri}^\square(\rbar)$ au dessus de $y$ est un singleton. Remarquons que par la propri\'et\'e universelle du normalis\'e, le morphisme $\pi_{\rbar}$ se factorise de fa\c{c}on unique \`a travers $X_{\rm tri}^\square(\rbar)'$ puisque $\mathcal{S}^\square(\rbar)$ est lisse donc normal. Comme $\tilde X_{\rm tri}^\square(\rbar)$ est normal, il en est de m\^eme de $U$ et donc de $U^\square$. Ainsi le morphisme $U^\square\rightarrow X_{\rm tri}^\square(\rbar)$ se factorise de fa\c{c}on unique \`a travers $X_{\rm tri}^\square(\rbar)'$, de sorte que le diagramme ci-dessous est commutatif.
\[\begin{xy}
\xymatrix{
&&\mathcal{S}^\square(\rbar)\ar[d]^{\pi_{\rbar}}\ar[dl]\\
U^\square\ar[r] \ar@/^/[rru]^s & X_{\rm tri}^\square(\rbar)'\ar[r] & X_{\rm tri}^\square(\rbar)
}
\end{xy}
\]
Puisque $f^{-1}(U_{\rm tri}^\square(\rbar)^{\rm reg})\subset U$, l'existence de $s$ montre que tous les points de $X_{\rm tri}^\square(\rbar)'$ au dessus de $y\in U_{\rm tri}^\square(\rbar)^{\rm reg}$ sont dans l'image de $\mathcal{S}^\square(\rbar)$ dans $X_{\rm tri}^\square(\rbar)'$. Comme la fibre de $\pi_{\rbar}$ au-dessus de $y$ est connexe (car isomorphe \`a $\mathbb{G}_m^n$ puisqu'il y a seulement une triangulation de $r_y$ de param\`etre $\delta_y$), son image dans la fibre de $X_{\rm tri}^\square(\rbar)'$ au dessus de $y$ doit \'egalement \^etre connexe. Comme cette fibre est finie, il s'agit d'un singleton.

Il suffit \`a pr\'esent de prouver (ii). Pour ce faire, on raisonne comme dans \cite[Thm.~2.11]{HellmSchrDensity}. Comme le morphisme $\pi_{\rbar}$ est lisse, son image $U_{\rm tri}^\square(\rbar)^{\rm reg}$ dans $X_{\rm tri}^\square(\rbar)$ est un ouvert adique par \cite[Prop.~1.7.8]{Huber}. L'existence de $s:\,U^\square\rightarrow\mathcal{S}^\square(\rbar)$ telle que $\pi_{\rbar}\circ s$ \'egale $f|_U$ compos\'ee avec $U^\square\rightarrow U$ montre que $U_{\rm tri}^\square(\rbar)^{\rm reg}$ contient $f(U)$. Par ailleurs, $f$ \'etant surjective, l'inclusion $f^{-1}(U_{\rm tri}^\square(\rbar)^{\rm reg})\subset U$ montre que $f(U)=U_{\rm tri}^\square(\rbar)^{\rm reg}$. Comme $f$ est un morphisme projectif, l'ensemble $f(U)=U_{\rm tri}^\square(\rbar)^{\rm reg}$ est une partie constructible au sens de Zariski de $X_{\rm tri}^\square(\rbar)$ d'apr\`es \cite[Lem.~2.14]{HellmSchrDensity}. On d\'eduit alors de \cite[Lem.~2.13]{HellmSchrDensity} que $U_{\rm tri}^\square(\rbar)^{\rm reg}$ est un ouvert de Zariski de $X_{\rm tri}^\square(\rbar)$, en particulier admissible.

Le diagramme \eqref{spaceofphigamma}, la lissit\'e des applications $\pi_{\rbar}$ et $\kappa$, ainsi que le Lemme \ref{lissite2} montrent alors que $U_{\rm tri}^\square(\rbar)^{\rm reg}$ est lisse et que l'application $\omega'$ est lisse, il en est donc de m\^eme de $\omega$.
\end{proof}

\begin{rema}
Des r\'esultats analogues ont \'egalement \'et\'e obtenus par Bergdall (\cite{Bergdall}) et Liu (\cite{Liufiltration}).
\end{rema}

\subsection{Espaces de d\'eformations cristallines et poids de Sen}

Soit ${\bf k}$ un poids alg\'ebrique dominant. On note $R_{\rbar}^{\square,{\bf k}-{\rm cr}}=R_{\rbar}^\square/\cap {\mathfrak p}_x$ o\`u l'intersection est prise sur l'ensemble des id\'eaux premiers ${\mathfrak p}_x={\rm Ker}(x:\,R_{\rbar}^\square\rightarrow\bar{\Q}_p)$ tels que la repr\'esentation $r_x$ de $\mathcal{G}_K$ associ\'ee \`a $x$ est cristalline de poids de Hodge-Tate ${\bf k}$. C'est une $\mathcal{O}_L$-alg\`ebre locale, plate, compl\`ete, noeth\'erienne, r\'eduite de m\^eme corps r\'esiduel que $R_{\rbar}^\square$ et par \cite{Kisindef}, pour $A$ une $\bar \Q_p$-alg\`ebre locale artinienne, un morphisme $x:\,R_{\rbar}^\square\rightarrow A$ se factorise \`a travers $R_{\rbar}^{\square,{\bf k}-{\rm cr}}$ si et seulement si $r_x$ est cristalline de poids de Hodge-Tate ${\bf k}$. On note $\mathfrak{X}_{\rbar}^{\square,{\bf k}-{\rm cr}}$ l'espace $\Spf(R_{\rbar}^{\square,{\bf k}-{\rm cr}})^{\rm rig}$ et on rappelle que ses composantes irr\'eductibles sont en bijection canonique avec les composantes irr\'eductibles du sch\'ema $\Spec(R_{\rbar}^{\square,{\bf k}-{\rm cr}}[\frac{1}{p}])$. Par \cite[Lem. 7.1.9]{DeJong}, on note que ses points sont aussi les m\^emes que les points ferm\'es de $\Spec(R_{\rbar}^{\square,{\bf k}-{\rm cr}}[\frac{1}{p}])$.

Si $r$ est une repr\'esentation cristalline du groupe $\mathcal{G}_K$ sur un $L$-espace vectoriel, on appelle Frobenius cristallin lin\'earis\'e de $r$ l'endomorphisme $L$-lin\'eaire $\varphi^{[K_0:\Q_p]}$ de \[D_{\rm cris}(r)\otimes_{K_0\otimes_{\Q_p}L,\tau}L=(B_{\rm cris}\otimes_{\Q_p}r)^{\mathcal{G}_K}\otimes_{K_0\otimes_{\Q_p}L,\tau}L\] pour un plongement $\tau$ de $K_0$ dans $L$ (la classe de conjugaison de l'endomorphisme obtenu est alors ind\'ependante du plongement).

Supposons d\'esormais que ${\bf k}$ est strictement dominant. Soit $r$ une repr\'esentation de $\mathcal{G}_K$ de dimension $n$ sur $L$, cristalline de poids de Hodge-Tate ${\bf k}$. Supposons que les valeurs propres du Frobenius cristallin lin\'earis\'e de $r$ soient deux-\`a-deux distinctes (en particulier $\varphi^{[K_0:\Q_p]}$ est diagonalisable quitte \`a agrandir $L$). Rappelons que l'on appelle raffinement de $r$ un ordre de ces valeurs propres $\varphi_1,\dots,\varphi_n$. Le m\^eme raisonnement que dans \cite[\S\ 2.4.2]{BelChe}, mais en utilisant \cite[Cons. 6.2.4]{KPX}, montre qu'il y a une bijection entre raffinements de $r$ et (syst\`emes de) param\`etres de $r$. Soit $\varphi_1,\dots,\varphi_n$ un raffinement de $r$, on peut alors d\'efinir une filtration croissante de $D_{\rm cris}(r)$ par des sous-$K_0\otimes_{\Q_p}L$-modules libres, en posant \[\mathcal{F}_i(D_{\rm cris}(r))=\bigoplus_{j\leq i}D_{\rm cris}(r)^{\varphi^{[K_0:\Q_p]}=\varphi_j}.\] Une telle filtration d\'efinit, comme dans \cite[\S\ 2.4.1]{BelChe}, un ordre sur les poids de Hodge-Tate du module filtr\'e $D_{\rm dR}(r)_{K_0\otimes_{\Q_p}L,\tau}L$ pour tout $\tau\in\Hom(K,L)$. Le raffinement est alors dit \emph{non critique} si, pour tout $\tau$, les poids de Hodge-Tate de $D_{\rm dR}(r)_{K_0\otimes_{\Q_p}L,\tau}L$ sont ordonn\'es dans l'ordre {\it d\'ecroissant} (attention que, contrairement \`a \cite{BelChe}, nous supposons que le poids de $\varepsilon$ est $1$).

\begin{defi}\label{generique}
Un point $x$ de l'espace $\mathfrak{X}_{\rbar}^{\square,{\bf k}-{\rm cr}}$ est dit g\'en\'erique\footnote{Cette d\'efinition est une version faible de la d\'efinition de \cite[\S3.18]{CheFougere}, diff\'erente de la notion de g\'en\'ericit\'e de \cite[D\'ef.~5.2]{BreuilAnalytiqueII}.} si les valeurs propres du Frobenius cristallin lin\'earis\'e de la repr\'esentation $r_x$ sont deux-\`a-deux distinctes, et si tous les raffinements de $r_x$ sont non critiques.
\end{defi}

Un caract\`ere continu $\delta\in\mathcal{T}(L)$ de $K^\times$ dans $L^\times$ est en particulier une fonction localement $\mathbb{Q}_p$-analytique sur $K^\times$. Sa diff\'erentielle $\d\delta$ en $1\in K^\times$ est alors une application $\mathbb{Q}_p$-lin\'eaire de l'espace tangent de $K^\times$ en $1$ dans $L$. Une $\mathbb{Q}_p$-base de cet espace tangent est donn\'ee par les diff\'erentielles en $1$ des diff\'erents plongements $\tau:\, K\hookrightarrow L$. On appelle alors \emph{poids} du caract\`ere $\delta$ les coordonn\'ees de $\d\delta$ dans cette base, on note ces poids $\omega_{\tau}(\delta)$. Plus pr\'ecis\'ement on a $\d\delta=\sum_{\tau}\omega_\tau(\delta)\d\tau$. D'apr\`es le Lemme 2.1 de \cite{STFour}, l'application $(\omega_{\tau})$ ainsi construire de $\mathcal{T}(L)$ vers $L^{[K:\mathbb{Q}_p]}$ provient d'un morphisme d'espaces analytiques rigides $\mathcal{T}_L\rightarrow\mathbb{A}^{[K:\mathbb{Q}_p]}_L$.

Si $V$ est une repr\'esentation continue de $\mathcal{G}_K$ sur un $L$-espace vectoriel de dimension finie. Si $\tau$ est un plongement de $K$ dans $L$, on appelle $\tau$-poids de Sen, les oppos\'es des valeurs propres de l'endomorphisme $\Theta_V$ sur ${\bf D}_{\rm Sen}(V)\otimes_{K\otimes_{\mathbb{Q}_p}L,\tau}L$ (cf.~par exemple \cite[\S5.3]{Bergereqdiff}).

\begin{prop}\label{Sen}
Si $x=(r_x,\delta_x)\in X_{\rm tri}^\square(\rbar)$. Alors pour tout plongement $\tau$ de $K$ dans $L$, l'ensemble $\{\omega_{\tau}(\delta_{i,x})|\,1\leq i\leq n\}$ est l'ensemble des $\tau$-poids de Sen de $r_x$.
\end{prop}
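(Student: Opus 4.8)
The plan is to prove the statement first on the Zariski-dense open subset $U_{\rm tri}^\square(\rbar)^{\rm reg}$ where the triangulation and parameter are genuine, and then extend to all of $X_{\rm tri}^\square(\rbar)$ by a density/continuity argument. On $U_{\rm tri}^\square(\rbar)^{\rm reg}$ the point $x=(r_x,\delta_x)$ has the property that $D_{\rm rig}^\dagger(r_x)$ admits a filtration by sub-$(\varphi,\Gamma_K)$-modules whose graded pieces are $\mathcal{R}_{k(x),K}(\delta_{i,x})$. First I would compute the Sen polynomial (equivalently the generalized Hodge--Tate--Sen weights) of a rank-one $(\varphi,\Gamma_K)$-module $\mathcal{R}_{k(x),K}(\delta)$: by the theory of Sen modules for $(\varphi,\Gamma_K)$-modules (e.g. as in the references to Berger's notes or \cite{KPX}), the $\tau$-Sen weight of $\mathcal{R}_{k(x),K}(\delta)$ is exactly $\omega_\tau(\delta)$, i.e. the $\tau$-component of $\d\delta$ at $1$. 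One has to be careful about the normalization: our convention is that $\varepsilon$ has Hodge--Tate weight $1$, so the $\tau$-Sen weight of $\mathcal{R}_{k(x),K}(x^{\bf k})$ should come out to be $k_\tau$ with this sign convention, and the definition of $\tau$-Sen weight in the statement (``les oppos\'es des valeurs propres de $\Theta_V$'') is chosen precisely so that this holds.

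Next, since the Sen operator is additive in short exact sequences of $(\varphi,\Gamma_K)$-modules (the Sen module functor $D_{\rm Sen}$ is exact, and $\Theta$ acts compatibly), the multiset of $\tau$-Sen weights of $D_{\rm rig}^\dagger(r_x)$ is the union of the $\tau$-Sen weights of the graded pieces $\mathcal{R}_{k(x),K}(\delta_{i,x})$, which by the previous step is $\{\omega_\tau(\delta_{i,x})\mid 1\le i\le n\}$. Finally the $\tau$-Sen weights of $D_{\rm rig}^\dagger(r_x)$ coincide with the $\tau$-poids de Sen of $r_x$ in the sense of the statement, because $D_{\rm Sen}(r_x)$ can be recovered from $D_{\rm rig}^\dagger(r_x)$ (this is standard: the $(\varphi,\Gamma_K)$-module and the representation have the same Sen module up to the usual base change, cf. the references cited for $\Theta_V$). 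This establishes the proposition for $x\in U_{\rm tri}^\square(\rbar)^{\rm reg}$.

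To pass to arbitrary $x\in X_{\rm tri}^\square(\rbar)$ I would argue that both sides vary analytically. On the one hand, the functions $x\mapsto \omega_\tau(\delta_{i,x})$ are the pullbacks along $\omega'$ of the analytic functions $\mathcal{T}_L^n\to\mathbb{A}^{[K:\Q_p]}_L$ constructed via \cite{STFour} as recalled just before the statement, hence the $n$ functions $\omega_\tau(\delta_{i,x})$ are global analytic functions on $X_{\rm tri}^\square(\rbar)$, and so is their elementary symmetric combination, i.e. the ``parameter Sen polynomial'' $P_\tau^{\rm par}(T,x)=\prod_{i=1}^n(T-\omega_\tau(\delta_{i,x}))$ has coefficients in $\mathcal{O}(X_{\rm tri}^\square(\rbar))$. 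On the other hand the Sen polynomial of the universal family: using the universal framed deformation $r_X$ over $\mathfrak{X}_{\rbar}^\square$ (restricted to $X_{\rm tri}^\square(\rbar)$), the construction of the Sen operator in families (Sen's theory works in the relative setting over reduced affinoids, cf. \cite{Bergereqdiff} or the sheaf-theoretic version) produces a characteristic polynomial $P_\tau^{\rm Sen}(T,x)$ whose coefficients are again analytic on $X_{\rm tri}^\square(\rbar)$. Both polynomials are monic of degree $n$ in $T$ with analytic coefficients, and they agree on the Zariski-dense subset $U_{\rm tri}^\square(\rbar)^{\rm reg}$; since $X_{\rm tri}^\square(\rbar)$ is reduced and $U_{\rm tri}^\square(\rbar)^{\rm reg}$ is Zariski-dense (Theorem \ref{ouvertsature}(ii)), the coefficients agree everywhere, hence $P_\tau^{\rm par}(T,x)=P_\tau^{\rm Sen}(T,x)$ for all $x$, which is exactly the claimed equality of multisets of roots.

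The main obstacle is the second paragraph's normalization and the relative Sen theory: one must (a) pin down the exact sign/normalization so that the $\tau$-Sen weight of $\mathcal{R}_{k(x),K}(\delta)$ is $\omega_\tau(\delta)$ and not $-\omega_\tau(\delta)$ or a shift, consistently with the Hodge--Tate weight of $\varepsilon$ being $1$, and (b) make sure the Sen polynomial of the universal Galois representation is genuinely an analytic family of polynomials over the reduced rigid space $X_{\rm tri}^\square(\rbar)$, so that the ``equality on a Zariski-dense subset implies equality'' step is legitimate; this relative statement is where I would need to invoke (or briefly reprove) that Sen's construction sheafifies well over reduced rigid analytic spaces. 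Everything else — the additivity over short exact sequences, the rank-one computation, the density of $U_{\rm tri}^\square(\rbar)^{\rm reg}$ — is either standard or already available from the results recalled above.
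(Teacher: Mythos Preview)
Your proposal is correct and follows essentially the same approach as the paper: reduce to $U_{\rm tri}^\square(\rbar)^{\rm reg}$ via Zariski-density and analyticity of the Sen polynomial in families (the paper cites \cite[\S 7.2]{BerColfamilles} for this, which resolves your obstacle~(b)), then use the triangulation to reduce to rank one. The only minor difference is that the paper handles the rank-one case by a further density reduction to locally algebraic $\delta$, where the claim becomes the elementary fact that the Galois character $\prod_\tau \tau^{k_\tau}$ has $\tau$-Hodge--Tate weight $k_\tau$.
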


\begin{proof}
Comme $U_{\rm tri}^\square(\rbar)^{\rm reg}$ est un ouvert de Zariski dense dans $X_{\rm tri}^\square(\rbar)$, il suffit de prouver l'assertion pour un point $x\in U_{\rm tri}^\square(\rbar)^{\rm reg}$, puisque les applications $\omega_\tau$ sont analytiques d'apr\`es \cite[\S7.2]{BerColfamilles}. Nous pouvons donc, en utilisant le diagramme \eqref{spaceofphigamma}, nous ramener \`a prouver le r\'esultat pour les points de l'espace $\mathcal{S}^\square(\rbar)$ et raisonner par r\'ecurrence pour se ramener au cas o\`u $n=1$. Consid\'erons donc la famille universelle $\mathcal{R}(\delta)$ sur $\mathcal{T}_L$. Par analyticit\'e et Zariski-densit\'e des caract\`ere localement alg\'ebriques, il suffit de prouver l'assertion en un point $\delta_x\in\mathcal{T}$ tel que $\delta_x|_{\mathcal{O}_K^\times}$ est un caract\`ere alg\'ebrique. Nous sommes donc ramen\'es \`a prouver l'assertion pour $D=\mathcal{R}(\delta_x)$ avec $\delta_x(z)=\prod_{\tau}\tau(x)^{k_\tau}$ lorsque $z\in\mathcal{O}_K^\times$. Un calcul facile montre que $\omega_\tau(\delta_x)=k_\tau$. Il reste donc \`a prouver que $\Theta_D$ agit sur ${\bf D}_{\rm Sen}(\mathcal{R}(\delta_x))$ par multiplication par $k_\tau$. Quitte \`a multiplier $\delta_x$ par un caract\`ere non ramifi\'e, on peut supposer que $\delta_x$ est \`a valeurs dans $\mathcal{O}_{k(x)}^\times$, et donc que $D$ est le $(\varphi,\Gamma_K)$-module associ\'e au caract\`ere $\delta_x\circ{\rm rec}^{-1}$. Or on sait qu'un tel caract\`ere a pour $\tau$-poids de Hodge-Tate $k_\tau$.
\end{proof}

\begin{lemm}\label{isosature}
Soit $X$ un espace rigide analytique r\'eduit et $D$ un $(\varphi,\Gamma_K)$-module sur $X$. Soit $D_1$ un sous-$\mathcal{R}_X$-module de $D$ stable par $\varphi$ et $\Gamma_K$. Supposons qu'il existe un morphisme injectif de $(\varphi,\Gamma)$-modules \[\alpha: D_1\longrightarrow G_1\] o\`u $G_1$ est un $(\varphi,\Gamma_K)$-module localement sur $X$ libre de rang $1$. Si $i_x^\ast(\alpha)$ est un isomorphisme pour $x$ dans une partie Zariski-dense de $X$, alors le morphisme $\alpha$ est un isomorphisme.
\end{lemm}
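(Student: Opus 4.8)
*Plan de preuve.*

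Le plan est de montrer que le conoyau $C = \coker(\alpha)$ est nul, en exploitant le fait qu'un $(\varphi,\Gamma_K)$-module quotient d'un module libre de rang $1$ est essentiellement contrôlé par le support d'un diviseur de Cartier. Puisque $\alpha$ est injectif, on a une suite exacte courte $0 \to D_1 \to G_1 \to C \to 0$ de $\mathcal{R}_X$-modules. Comme $G_1$ est localement libre de rang $1$ et $D_1$ est stable par $\varphi$ et $\Gamma_K$, on peut travailler localement sur $X$ et supposer $G_1 = \mathcal{R}_X$ (à torsion près par un caractère, ce qui ne change rien au raisonnement). Le point de départ est que $D_1$, étant un sous-$\mathcal{R}_X$-module de $\mathcal{R}_X$ stable par $\varphi$, doit être de la forme $t^k I \cdot \mathcal{R}_X$ localement, ou plus précisément engendré par un élément dont le diviseur des zéros est un fermé analytique $Z \subset X$ (éventuellement après multiplication par une puissance de $t$ contrôlée par les poids de Sen, qui sont localement constants sur $X$ — on se ramène donc au cas où aucune puissance de $t$ n'intervient).

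Premièrement, j'observerais que l'hypothèse « $i_x^\ast(\alpha)$ est un isomorphisme pour $x$ dans une partie Zariski-dense » signifie que pour tout tel $x$, la fibre $i_x^\ast(C)$ est nulle. Deuxièmement, je voudrais montrer que le support (schématique) de $C$ est un fermé analytique $Z$ de $X$ : ceci résulte du fait que $C$ est un $\mathcal{R}_X$-module de type fini (quotient du module libre $G_1$), donc son annulateur définit un idéal cohérent de $\mathcal{O}_X$ via la structure de $\mathcal{R}_X$ sur $X$, et plus précisément $C$ est supporté sur le lieu d'annulation d'une fonction $\mathcal{O}_X$ (le « déterminant » de $\alpha$ vu comme endomorphisme de $\mathcal{R}_X$, c'est-à-dire l'élément $f \in \mathcal{R}_X$ tel que $D_1 = f\mathcal{R}_X$, dont on prend la « norme » ou l'on utilise qu'en chaque fibre $f_x$ est soit inversible soit nul puisque $\mathcal{R}_{k(x),K}$ est un anneau de Bézout dans lequel les sous-modules de rang $1$ de $\mathcal{R}$ stables par $\varphi$ sont de la forme $t^k \mathcal{R}$). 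Troisièmement, la Zariski-densité de l'ensemble des $x$ où $\alpha_x$ est un isomorphisme force ce fermé analytique $Z$ à ne contenir aucun point d'un ensemble Zariski-dense, donc $Z = \emptyset$ (un fermé analytique strict ne peut contenir le complémentaire d'un fermé analytique strict lorsque $X$ est réduit), et donc $C = 0$, ce qui donne la conclusion.

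Le principal obstacle est l'étape structurelle : comprendre précisément la forme d'un sous-$(\varphi,\Gamma_K)$-module $D_1$ de rang $1$ d'un module $G_1$ libre de rang $1$ \emph{en famille} sur $X$, et en extraire un fermé analytique bien défini portant le conoyau. Sur un point, la théorie de Kedlaya-Pottharst-Xiao (ou le lemme de Bézout pour l'anneau de Robba) donne que $D_1 = t^k \mathcal{R}_{k(x),K}(\eta)$ pour un caractère non ramifié $\eta$ et un entier $k \geq 0$, et la comparaison des poids de Sen (Proposition \ref{Sen}) montre que $k$ est localement constant, donc nul quitte à restreindre à une composante et à tordre ; mais la mise en famille de cet argument demande de savoir que le sous-module engendré est localement facteur direct hors d'un fermé analytique, ce qui s'obtient en appliquant le critère de platitude générique et la cohérence de $C$ comme $\mathcal{R}_X$-module. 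Une fois ce contrôle acquis, le reste est formel : réduction, densité de Zariski, et le fait (déjà utilisé dans la preuve du Théorème \ref{ouvertsature}) qu'un fermé analytique de $X$ réduit contenant une partie Zariski-dense est $X$ tout entier.
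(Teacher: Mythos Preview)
L'article ne donne pas de preuve propre : il renvoie intégralement à la démonstration de \cite[Cor.~6.3.10]{KPX}. Ta stratégie générale --- analyser le conoyau $C=\coker(\alpha)$, montrer que son support dans $X$ est un fermé analytique propre, puis invoquer la Zariski-densité --- est bien celle qui sous-tend l'argument de KPX. Mais l'étape que tu identifies toi-même comme « le principal obstacle » n'est pas franchie.

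Le point bloquant est ta justification du fait que le support de $C$ est un fermé analytique de $X$. Tu écris que « son annulateur définit un idéal cohérent de $\mathcal{O}_X$ via la structure de $\mathcal{R}_X$ sur $X$ », mais ce n'est pas automatique : $C$ est un $\mathcal{R}_X$-module, pas un $\mathcal{O}_X$-module, et $\mathcal{R}_X$ n'est pas une $\mathcal{O}_X$-algèbre cohérente au sens habituel (elle n'est même pas de type fini sur $\mathcal{O}_X$). L'annulateur de $C$ est un idéal de $\mathcal{R}_X$, et en extraire un idéal cohérent de $\mathcal{O}_X$ demande un vrai travail. De même, l'appel à la « platitude générique » ne s'applique pas directement dans ce cadre. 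L'argument de KPX passe par un ingrédient que tu n'invoques pas : on montre d'abord que $C$ est \emph{localement tué par une puissance de $t$} (c'est là qu'intervient la stabilité par $\varphi$ et $\Gamma_K$, combinée à la structure des idéaux $(\varphi,\Gamma_K)$-stables de $\mathcal{R}$ sur un point). Une fois ceci acquis, $C$ devient un quotient de $G_1/t^N G_1$, qui lui est un $\mathcal{O}_X$-module cohérent (localement libre de rang fini), et c'est seulement à ce stade que le support de $C$ dans $X$ acquiert un sens comme fermé analytique. Ta tentative de court-circuiter cette étape via un « déterminant » ou une « norme » de $f\in\mathcal{R}_X$ reste trop vague pour être concluante ; c'est précisément le contenu technique qu'apporte la preuve de \cite[Cor.~6.3.10]{KPX} et que l'article cite pour cette raison.
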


\begin{proof}
C'est exactement le contenu de la preuve de \cite[Cor.~6.3.10]{KPX}.
\end{proof}

\begin{lemm}\label{parametregenerique}
Soit $x=(r_x,\delta_x)$ un point de $X_{\rm tri}^\square(\rbar)$ tel que $r_x$ est une repr\'esentation cristalline g\'en\'erique. Alors le caract\`ere $\delta_x$ est localement alg\'ebrique, strictement dominant et $x\in U_{\rm tri}^\square(\rbar)^{\rm reg}$.
\end{lemm}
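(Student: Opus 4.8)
The plan is to exploit the defining property of $X_{\rm tri}^\square(\rbar)$ as the Zariski closure of $U_{\rm tri}^\square(\rbar)^{\rm reg}$, together with the structure map $\omega'$ to $\mathcal{T}_L^n$ and the Sen weight computation of Proposition \ref{Sen}. First I would record that, by Proposition \ref{Sen}, the $\tau$-weights $\omega_\tau(\delta_{i,x})$ coincide with the $\tau$-Sen weights of $r_x$; since $r_x$ is crystalline with Hodge--Tate weights ${\bf k}$ (which are integers, and distinct because $r_x$ is generic, hence ${\bf k}$ strictly dominant), this forces each $\omega_\tau(\delta_{i,x})$ to be an integer $k_{\tau,\sigma(i)}$ for some permutation $\sigma$ of $\{1,\dots,n\}$ depending a priori on $\tau$. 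In particular the restrictions $\delta_{i,x}|_{\mathcal{O}_K^\times}$ are algebraic characters, i.e.\ $\delta_x$ is locally algebraic on $(\mathcal{O}_K^\times)^n$.

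Next I would upgrade ``locally algebraic'' to ``locally algebraic with the right, strictly dominant ordering'' and simultaneously show $x\in U_{\rm tri}^\square(\rbar)^{\rm reg}$. For this the key input is the theory of refinements of crystalline representations recalled before Definition \ref{generique}: because $r_x$ is generic, its crystalline Frobenius eigenvalues $\varphi_1,\dots,\varphi_n$ are pairwise distinct and \emph{every} refinement is non-critical, and there is a bijection between refinements and parameters of $r_x$ (via \cite[Cons.~6.2.4]{KPX}, as in \cite[\S2.4.2]{BelChe}). The parameters arising from refinements are explicitly of the form $\delta_i = \mathrm{unr}(\varphi_i) \cdot x^{{\bf k}_i}$ where the ${\bf k}_i$ are obtained by distributing the Hodge--Tate weights according to the (non-critical, hence decreasing-at-every-embedding) ordering attached to the refinement; such a $\delta$ is automatically locally algebraic and, because the ordering is decreasing at every $\tau$, strictly dominant. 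So it suffices to show that the parameter $\delta_x$ attached to our point $x\in X_{\rm tri}^\square(\rbar)$ is one of these refinement-parameters, and in particular that $(r_x,\delta_x)$ actually lies in $U_{\rm tri}^\square(\rbar)^{\rm reg}$, not merely in its Zariski closure.

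The main obstacle is precisely this last point: a priori $x$ could be a boundary point of the closure where $\delta_x$ fails to be a genuine parameter of $r_x$ (the Remark after Definition \ref{deftriangulines} warns that such points exist). To rule this out I would argue as follows. Since $x\in X_{\rm tri}^\square(\rbar)$, it lies on some irreducible component, and by Theorem \ref{ouvertsature}(ii) the open subset $U_{\rm tri}^\square(\rbar)^{\rm reg}$ meets that component in a Zariski-dense subset; so there is a sequence (or a curve) of generic trianguline points $(r_y,\delta_y)\in U_{\rm tri}^\square(\rbar)^{\rm reg}$ accumulating at $x$, with $\delta_y$ of refinement-type. The weight map $\omega'$ being analytic and $\omega_\tau(\delta_{i,y})$ being integers forced to be Hodge--Tate--Sen weights, the locus where the weights are the fixed integers ${\bf k}$ is a union of components; combined with a constancy/semicontinuity argument for the crystalline condition (using $R_{\rbar}^{\square,{\bf k}-{\rm cr}}$ and \cite{Kisindef}, and the fact that near $x$ the representations are crystalline of weight ${\bf k}$), one sees $x$ lies in the Zariski closure of the refinement-parameter points \emph{of the fixed crystalline representation $r_x$} — but by genericity there are only finitely many (at most $n!$) such parameters, each a single closed point, so the closure of this finite set is itself, forcing $\delta_x$ to be one of them. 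Alternatively, and perhaps more cleanly, one invokes \cite[Thm.~6.3.13]{KPX} to know $r_x$ is trianguline with $\delta_x$ a parameter after possibly reordering, then uses Lemma \ref{isosature} to show the relevant graded pieces give honest trianguline subquotients with the prescribed parameters; the non-criticality of all refinements of the generic $r_x$ guarantees no drop in rank occurs, so $\delta_x$ is a true parameter and $x\in U_{\rm tri}^\square(\rbar)^{\rm reg}$. Either way, once $x\in U_{\rm tri}^\square(\rbar)^{\rm reg}$ is established, the explicit shape of refinement-parameters of a generic crystalline representation yields directly that $\delta_x$ is locally algebraic and strictly dominant.
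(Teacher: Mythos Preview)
Your first approach has a genuine gap. You argue that $x$ lies in the Zariski closure of $U_{\rm tri}^\square(\rbar)^{\rm reg}$ and then try to pass to the fibre over the fixed representation $r_x$, where there are only finitely many refinement-parameters. But closure does not commute with intersection: the nearby points $(r_y,\delta_y)\in U_{\rm tri}^\square(\rbar)^{\rm reg}$ have varying $r_y\neq r_x$, and nothing you wrote forces $x$ into the closure of the finite set $\{r_x\}\times\{\text{refinement parameters of }r_x\}$. The semicontinuity/constancy appeal to $R_{\rbar}^{\square,{\bf k}-{\rm cr}}$ does not help here, since the crystalline locus has positive dimension and the parameters vary over it.

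Your second approach names the right tools but skips the actual argument, which is an \emph{induction on the steps of the triangulation} carried out on the birational modification of \cite[Cor.~6.3.10]{KPX} (not merely the pointwise triangulinity of \cite[Thm.~6.3.13]{KPX}). Concretely: on $\tilde X\to X_{\rm tri}^\square(\rbar)$ one has a filtration $F_\bullet$ of the universal $(\varphi,\Gamma)$-module together with injections $\alpha_i\colon F_i/F_{i-1}\hookrightarrow\mathcal R_{\tilde X}(\delta_i)\otimes\mathcal L_i$ that are isomorphisms on a Zariski-dense set, and $\alpha_1$ is automatically an isomorphism. Specialising at a preimage $\tilde x$ of $x$, the saturation of the image of $\alpha_{1,x}$ in $D_{\rm rig}^\dagger(r_x)$ is $\mathcal R_{k(x),K}(\delta_1')$ with $\delta_{1,x}=x^{\bf n}\delta_1'$ for some ${\bf n}\in\mathbb N^{[K:\mathbb Q_p]}$, and $\delta_1'$ extends to an honest parameter $\delta'$ of $r_x$. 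Now comes the step you did not supply: since $r_x$ is generic, $\delta'$ is strictly dominant, and Proposition~\ref{Sen} says the multisets $\{k_{\tau,i}\}$ and $\{k'_{\tau,i}\}$ agree; combined with $k_{\tau,1}\geq k'_{\tau,1}>k'_{\tau,2}>\cdots$ this forces $k_{\tau,1}=k'_{\tau,1}$ for every $\tau$, hence $\delta_{1,x}=\delta_1'$ and the image of $\alpha_{1,x}$ is already saturated. The quotient by this saturated line is then free, so on a neighbourhood of $\tilde x$ the cokernel of $\alpha_1$ is a genuine $(\varphi,\Gamma)$-module and Lemma~\ref{isosature} makes $\alpha_2$ an isomorphism there; one repeats the weight comparison for $\delta_{2,x}$, and so on. The output of the induction is that $\delta_x$ is a true parameter of $r_x$, i.e.\ $x\in U_{\rm tri}^\square(\rbar)^{\rm reg}$; strict dominance and local algebraicity then follow from genericity as you said.
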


\begin{proof}
La preuve qui suit est dans le m\^eme esprit que \cite[Prop.~9.2]{BreuilAnalytiqueII}. Comme dans la preuve du Th\'eor\`eme \ref{ouvertsature}, il existe d'apr\`es \cite[Cor.~6.3.10]{KPX} un morphisme propre et birationnel $\tilde X=\tilde{X}_{\rm tri}^\square(\rbar)\rightarrow X_{\rm tri}^\square(\rbar)$, ainsi qu'une filtration $F_\bullet$ de $\tilde{D}_X$, l'image inverse de $D_X={\bf D}^\dagger_{\rm rig}(r_X)$ par des sous-$\mathcal{R}_X$-modules stables par $\varphi$ et $\Gamma_K$ telle que $F_0=0$ et $F_n=\tilde{D}_X$, une famille $(\mathcal{L}_i)_{1\leq i\leq n}$ de $\mathcal{O}_{\tilde{X}}$-modules inversibles ainsi que des morphismes injectifs \[\alpha_i:\, F_i/F_{i-1}\longrightarrow\mathcal{R}_{\tilde{X}}(\delta_{i,\tilde{X}})\otimes_{\mathcal{O}_{\tilde{X}}}\mathcal{L}_i\] qui sont des isomorphismes sur une partie Zariski-dense de $\tilde{X}$. Par \cite[Cor.~6.3.10.(2')]{KPX}, $\alpha_1$ est alors automatiquement un isomorphisme. Fixons $\tilde{x}$ un ant\'ec\'edent de $x$ dans $\tilde{X}$. En sp\'ecialisant le $(\varphi,\Gamma_K)$-module $D_{\tilde{X}}$ en $\tilde{x}$, on obtient en particulier une injection \[\alpha_{1,x}:\,\mathcal{R}_{k(x),K}(\delta_{1,x})\hookrightarrow i_{\tilde{x}}^\ast(D_{\tilde{X}})\simeq i_x^\ast(D_X)\]
de $(\varphi,\Gamma_K)$-modules sur $k(x)$. Consid\'erons le satur\'e de l'image de $\alpha_{1,x}$, c'est-\`a-dire l'intersection ${\rm Im}(\alpha_{1,x})[\frac{1}{t}]\cap i_x^\ast(D_x)$. D'apr\`es \cite[Prop.~2.2.2]{BelChe} et \cite{Co}, il s'agit d'un $(\varphi,\Gamma_K)$-module de rang $1$, donc de la forme $\mathcal{R}_{k(x),K}(\delta_1')$ avec $\delta_1'\in\mathcal{T}(k(x))$. Comme $\mathcal{R}_{k(x),K}(\delta_{1,x})$ est isomorphe \`a un sous-$(\varphi,\Gamma_K)$-module de $\mathcal{R}_{k(x),K}(\delta_1')$, il existe ${\bf n}\in\mathbb{N}^{[K:\mathbb{Q}_p]}$ tel que $\delta_{1,x}=x^{\bf n}\delta_1'$. Comme $i_x^\ast(D_X)$ est le $(\varphi,\Gamma)$-module de la repr\'esentation cristalline $r_x$, $\delta_1'$ est le premier terme d'un param\`etre $\delta'=(\delta_1',\dots,\delta_n')$ de $i_x^\ast(D_X)$. Les caract\`eres $\delta_x$ et $\delta'$ sont localement alg\'ebriques de poids respectifs ${\bf k}=(k_{\tau,i})$ et ${\bf k}'=(k'_{\tau,i})$ et comme la repr\'esentation galoisienne associ\'ee au $(\varphi,\Gamma)$-module $i_x^\ast(D_X)$ est g\'en\'erique, $\delta'$ est un caract\`ere dominant. Par ailleurs, on d\'eduit de la Proposition \ref{Sen} que les ensembles $\{k_{\tau,i}|\,1\leq i\leq n\}$ et $\{k'_{\tau,i}|\,1\leq i\leq n\}$ sont \'egaux pour tout plongement $\tau$ de $K$ dans $L$, en particulier l'ensemble $\{k_{\tau,i}'|\,1\leq i\leq n\}\cup\{k_{\tau,1}\}$ est de cardinal $n$. Dans la suite d'in\'egalit\'es
\[ k_{\tau,1}\geq k'_{\tau,1}>k'_{\tau,2}>\dots>k'_{\tau,n},\]
la premi\`ere est donc n\'ecessairement une \'egalit\'e. Ceci \'etant vrai pour tout $\tau$, on a $\delta_{1,x}=\delta_{1,x}'$, donc $\mathcal{R}_{k(x),K}(\delta_{1,x})$ est un sous-module satur\'e de $i_x^\ast(D_X)$. Par \cite[Lem.~2.2.3]{BelChe}, on en conclut que le $\mathcal{R}_{k(x),K}$-module $i_x^\ast(D_X)/\mathcal{R}_{k(x),K}(\delta_{1,x})$ est libre. Ceci implique que, sur un voisinage affino\"ide $U_1$ de $\tilde{x}$ dans $\tilde{X}_{\rm tri}^\square(\rbar)$, le quotient $\coker(\alpha_1)$ est un $(\varphi,\Gamma_K)$-module. En particulier, le lemme \ref{isosature} montre que l'application $\alpha_2$ est un isomorphisme sur $U_1$. Par r\'ecurrence, on construit une suite d\'ecroissante de voisinages $(U_i)_{1\leq i\leq n}$ de $\tilde{x}$ telle que, sur $U_i$, les application $\alpha_j$ pour $j\leq i$ soient des isomorphismes. On conclut au final que $\delta_x$ est un param\`etre du $(\varphi,\Gamma_K)$-module $i_x^\ast(D_X)$, c'est-\`a-dire que le point $x$ appartient \`a $U_{\rm tri}^\square(\rbar)^{\rm reg}$.
\end{proof}

Rappelons que $T\cong \mathbb{G}_m^n$ est le tore diagonal de $\GL_n$. On introduit un automorphisme $\iota_K$ de $\widehat{T(K)}$ en posant
\begin{equation*}
\iota_K(\delta_1,\cdots,\delta_n)=\delta_B\cdot(\delta_1,\cdots,\delta_i\cdot(\varepsilon\circ\rec_K)^{i-1},\cdots,\delta_n\cdot(\varepsilon\circ\rec_K)^{n-1})
\end{equation*}
o\`u $\delta_B=|\cdot|_K^{n-1}\otimes |\cdot|_K^{n-3}\otimes \cdots \otimes |\cdot|_K^{1-n}$ est le caract\`ere module de $B(K)$. La raison d'\^etre de cette param\'etrisation est la suivante. Normalisons la correspondance locale de Langlands pour $\GL_n(K)$ de telle sorte que la repr\'esentation de Weil $\chi_1\oplus \cdots \oplus \chi_n$ corresponde \`a l'induite parabolique lisse non normalis\'ee $\Ind_{B(K)}^{\GL_n(K)}\big(\delta_B^{1/2}\cdot (\chi_1\circ \rec_K\cdots \otimes \chi_n\circ\rec_K)\big)$. Si $r$ est une repr\'esentation cristalline de dimension $n$ de $\mathcal{G}_K$ sur $L$ de poids de Hodge-Tate ${\bf k}=(k_{\tau,i})_{1\leq i\leq n, \tau :K\hookrightarrow L}$ strictement dominant, et $\pi$ une repr\'esentation non ramifi\'ee irr\'eductible de $\GL_n(K)$ sur $\Qbar_p$ telle que la repr\'esentation de Weil associ\'ee \`a $r$ (\cite{Fo}) corresponde \`a $\pi|\det|_K^{\frac{1-n}{2}}$ par la correspondance de Langlands normalis\'ee comme ci-dessus, alors $(\delta_1,\cdots,\delta_n)$ est un param\`etre de $r$ si et seulement si $\iota_K(\delta_1,\cdots,\delta_n)$ est un syst\`eme de valeurs propres de $T(K)$ agissant sur $J_{B}(\pi\otimes_{\Qbar_p} W_{\bf n})$ o\`u $W_{\bf n}$ est la repr\'esentation alg\'ebrique irr\'eductible de $\mathrm{Res}_{K/\mathbb{Q}_p}(\GL_n)$ sur $\Qbar_p$ de plus haut poids ${\bf n}=(k_{\tau,i}+i-1)_{1\leq i\leq n, \tau :K\hookrightarrow\Qbar_p}$ relativement \`a $B$ et $J_B$ est le foncteur de Jacquet comme \'etendu par Emerton aux repr\'esentations localement alg\'ebriques, cf. \cite[Prop. 4.3.6]{EmertonJacquetI}.

\subsection{Espaces de formes automorphes}\label{automorphe}

Fixons $F$ un corps CM, i.e. une extension quadratique totalement imaginaire d'un corps $F^+\subset F$ totalement r\'eel et notons $c$ l'unique automorphisme non trivial de $F$ fixant $F^+$. Nous supposons de plus que toutes les places de $F^+$ divisant $p$ sont compl\`etement d\'ecompos\'ees dans $F$ et que l'extension $F/F^+$ est non ramifi\'ee en toute place finie. Pour toute place finie $w$ de $F$, on fixe une uniformisante $\varpi_w$ de $F_w$. On choisit d\'esormais le corps $L$ tel que pour toute place $v$ de $F^+$ divisant $p$, on a $|\Hom(F^+_v,L)|=[F^+_v:\Q_p]$.

Supposons d\'esormais $n$ impair ou $[F^+:\mathbb{Q}]$ pair. Soit $G$ un groupe unitaire \`a $n$ variables d\'efini sur $F^+$, d\'eploy\'e par $F$, quasi-d\'eploy\'e en toute place finie et tel que $G(F^+\otimes_{\Q}\mathbb{R})$ est compact. On fixe $U^p\subset G(\mathbb{A}_{F^+}^{p\infty})$ un sous-groupe compact ouvert de la forme $\prod_{v}U_v$ o\`u $U_v$ est un sous-groupe compact ouvert de $G(F_v)$. Ici $\mathbb{A}_{F^+}^{p\infty}$ d\'esigne les ad\`eles finis de $F^+$ hors les places divisant $p$. On fixe aussi un ensemble fini $S$ de places finies de $F^+$ contenant l'ensemble $S_p$ des places divisant $p$ ainsi que l'ensemble des places $v$ telles que $U_v$ n'est pas hypersp\'ecial, et on suppose que si $v\in S$, alors $v$ est totalement d\'ecompos\'ee dans $F$ (de mani\`ere \'equivalente, on suppose $U_v$ hypersp\'ecial en toutes les places de $F^+$ inertes dans $F$).

Si $v\in S$, fixons $\tilde{v}$ une place de $F$ divisant $v$ et notons $\tilde{v}^c$ son image par l'automorphisme $c$. L'inclusion de $F^+$ dans $F$ induit donc deux isomorphismes $F^+_v\xrightarrow{\sim}F_{\tilde{v}}$ et $F^+_v\xrightarrow{\sim}F_{\tilde{v}^c}$. Fixons de plus un isomorphisme de groupes alg\'ebriques $G\times_{F^+}F\simeq \GL_{n,F}$. Si $v$ est d\'ecompos\'ee dans $F$ et $w$ est une place de $F$ divisant $v$, on obtient un isomorphisme $i_w:\,G(F_v^+)\xrightarrow{\sim}\GL_n(F_w)$. On suppose de plus que, pour $v\notin S$, on a $U_v=i_w^{-1}(\GL_n(\mathcal{O}_{F_w}))$, condition qui ne d\'epend pas du choix de $w$ divisant $v$. Pour $v\in S$, notons $K_v$ le sous-groupe compact maximal de $G(F^+_v)$ image r\'eciproque par $i_{\tilde{v}}$ du groupe $\GL_n(\mathcal{O}_{F_{\tilde{v}}})$. On note aussi $G_p=\prod_{v\in S_p}G(F^+_v)$ et $K_p=\prod_{v\in S_p}K_v$. On d\'esigne par $B_v$ l'image inverse par $i_{\tilde{v}}$ du sous-groupe des matrices triangulaires sup\'erieures de $\GL_n(F_{\tilde{v}})$ et $T_v$ et $N_v$ les images inverses des sous-groupes de matrices diagonales et unipotentes sup\'erieures. On pose alors 
\begin{align*}
B_p&=\prod_{v\in S_p}B_v,  & B_p^0&=B_p\cap K_p,\\ N_p&=\prod_{v\in S_p} N_v, & N_p^0&=N_p\cap K_p, \\ T_p&=\prod_{v\in S_p}T_v, & T_p^0&=T_p\cap K_p.
\end{align*} 
Enfin, on note $\overline{B}_p$ et $\overline{N}_p$ les sous-groupes de Borel et unipotent oppos\'es \`a $B_p$ et $N_p$. Si $I\subset\{1,\cdots,n\}$ et $v\in S_p$, on note $P_{v,I}$, $\overline{P}_{v,I}$ et $L_{v,I}$ les images r\'eciproques de $P_I(F_{\tilde{v}})$, $\overline{P}_I(F_{\tilde{v}})$ et $L_I(F_{\tilde{v}})$.

Soit $\lambda=(\lambda_{\tau,i})\in(\Z^n)^{\Hom(F^+,L)}$. Un \'el\'ement $\delta\in\widehat{T}_p(L)$ est dit alg\'ebrique de poids $\lambda$ s'il est de la forme $\bigotimes_{v\in S_p}\delta_{\lambda_v}$ o\`u $\lambda_v=(\lambda_{\tau,i})
_{1\leq i\leq n,\tau:\, F^+_v\hookrightarrow L}$. Il est dit localement alg\'ebrique de poids $\lambda$ s'il co\"incide avec $\bigotimes_{v\in S_p}\delta_{\lambda_v}$ sur un sous-groupe ouvert de $T_p$.

Si $V=\prod_v V_v$ est un sous-groupe compact ouvert de $G(\mathbb{A}_{F^+}^\infty)$ et $\sigma$ une repr\'esentation continue de $\prod_v V_v$ sur un $\mathcal{O}_L$-module topologique, on note $S(V,\sigma)$ le $\mathcal{O}_L$-module des fonctions continues $f:\, G(F^+)\backslash G(\mathbb{A}_{F^+}^\infty)\rightarrow\sigma$ telles que $f(g\cdot u)=u^{-1}(f(g))$ pour tout $g\in \mathbb{A}_{F^+}^\infty$ et $u\in V$.

Si $v$ est une place de $F^+$ n'appartenant pas \`a $S$ et d\'ecompos\'ee dans $F$, on note $\mathbb{T}_v$ l'alg\`ebre de Hecke sph\'erique du groupe $G(F^+_v)$ relativement \`a $U_v$, c'est-\`a-dire \[\mathbb{T}_v=\mathcal{O}_L[U_v\backslash G(F^+_v)/U_v].\] On pose alors $\mathbb{T}^S=\varinjlim_I(\bigotimes_{v\in I}\mathbb{T}_v)$, la limite inductive \'etant prise sur l'ensemble des parties finies de places finies de $F^+$ d\'ecompos\'ees dans $F$ et d'intersection nulle avec $S$. L'alg\`ebre $\mathbb{T}^S$ agit alors naturellement sur $S(U^pU_p,\sigma)$ pour tout sous-groupe compact ouvert $U_p$ de $G_p$. De plus cette action est fonctorielle en $U_p$ et en $\sigma$ (en un sens \'evident). Si $w$ est une place de $F$ divisant $v$, on note $T_{w,i}$ l'\'el\'ement de $\mathbb{T}_v$ de support $U_v\gamma_{w,i} U_v$ o\`u $\gamma_{w,i}$ est l'\'el\'ement de $G(F_v^+)$ image r\'eciproque par $i_w$ de la matrice diagonale \[\beta_i(\varpi_w)=(\underbrace{\varpi_w,\cdots,\varpi_w}_{i},1,\cdots,1).\]

Consid\'erons un corps quelconque $C$ et $\theta:\,\mathbb{T}^S\rightarrow C$ un morphisme d'anneaux. Si $\rho$ est une repr\'esentation $C$-lin\'eaire de $\mathcal{G}_F=\mathrm{Gal}(\bar{F}/F)$ sur un $C$-espace vectoriel de dimension finie, on dit que $\rho$ est associ\'ee \`a $\theta$ si pour toute place $w$ de $F$ au-dessus d'une place de $F^+$ n'appartenant pas \`a $S$ et compl\`etement d\'ecompos\'ee dans $F$, le polyn\^ome caract\'eristique de $\rho(\frob_w)$ est donn\'e par
\begin{equation}\label{polcar}
X^n+\cdots+(-1)^j(Nw)^{j(j-1)/2}T_{w,j}X^{n-j}+\cdots+(-1)^n(Nw)^{n(n-1)/2}T_{w,n}
\end{equation}
o\`u $\frob_w$ est un Frobenius g\'eom\'etrique en $w$ et $Nw$ le cardinal du corps r\'esiduel de $F_w$. Si $\pi$ est une repr\'esentation automorphe de $G(\mathbb{A}_{F^+})$ non ramifi\'ee hors de $S$, il existe une unique $\Qbar_p$-repr\'esentation semi-simple $\rho_\pi$ de $G_F$ associ\'ee au caract\`ere de $\mathbb{T}^S$ donn\'e par l'action de $\mathbb{T}^S$ sur la $\Qbar_p$-droite $(\pi^S)^{U^S}\otimes_{\C}\Qbar_p$ o\`u $U^S=\prod_{v\notin S}U_v$ (voir \cite[Thm. 3.2.3]{ChHaII}).

On dit qu'un id\'eal maximal $\mathfrak{m}$ de l'alg\`ebre $\mathbb{T}^S$ est automorphe de niveau mod\'er\'e $U^p$ s'il existe une repr\'esentation alg\'ebrique $\sigma$ de $K_p$ sur un $\mathcal{O}_L$-module (avec topologie discr\`ete) qui est irr\'eductible apr\`es extension des scalaires de $\mathcal{O}_L$ \`a $\Qbar_p$ (donc de dimension finie sur $\Qbar_p$) et un sous-groupe compact ouvert $U_p$ de $G_p$ tels que $S(U^pU_p,\sigma)_{\mathfrak{m}}\neq0$. En particulier par ce qui pr\'ec\`ede (en proc\'edant par exemple comme dans \cite[\S\ 3]{Emint} ou \cite[\S\ 6.2.3]{BelChe}) il existe alors aussi une repr\'esentation continue $\rhobar_{\mathfrak{m}}$ de $G_F$ sur un $\mathbb{T}^S/\mathfrak{m}$-espace vectoriel de dimension $n$ associ\'ee au caract\`ere $\mathbb{T}^S\rightarrow\mathbb{T}^S/\mathfrak{m}$. On dit que l'id\'eal maximal $\mathfrak{m}$ est de plus non Eisenstein si $\rhobar_{\mathfrak{m}}$ est absolument irr\'eductible. Fixons d\'esormais un tel id\'eal maximal $\mathfrak{m}$ automorphe de niveau mod\'er\'e $U^p$ non Eisenstein et une repr\'esentation alg\'ebrique $\sigma$ comme ci-dessus. On v\'erifie facilement \`a partir de (\ref{polcar}) et des d\'efinitions que les repr\'esentations irr\'eductibles $\rhobar_{\mathfrak{m}}^\vee\circ c$ et $\rhobar_{\mathfrak{m}}\otimes\bar{\varepsilon}^{n-1}$ sont isomorphes (o\`u $\bar{\varepsilon}$ est la r\'eduction de $\varepsilon$ modulo $p$). Quitte \`a agrandir $L$, on suppose $\mathbb{T}^S/\mathfrak{m}=k_L$. On suppose \'egalement $p>2$.

Soit $\rhobar$ une repr\'esentation continue absolument irr\'eductible de $\mathcal{G}_F$ sur un $k_L$-espace vectoriel de dimension $n$ telle que $\rhobar^\vee\circ c\cong\rhobar\otimes\bar{\varepsilon}^{n-1}$ et $\rhobar$ est non ramifi\'ee hors de $S$. Alors le foncteur associant \`a toute $\mathcal{O}_L$-alg\`ebre locale artinienne $A$ de corps r\'esiduel $k_L$ l'ensemble des classes d'isomorphisme de d\'eformations $\rho_A$ de $\rhobar$ \`a $A$ non ramifi\'ees en dehors de $S$ et v\'erifiant $\rho_A^{\vee}\circ c\simeq \rho_A\otimes\varepsilon^{n-1}$ est pro-repr\'esentable par une $\mathcal{O}_L$-alg\`ebre locale compl\`ete noeth\'erienne de corps r\'esiduel $k_L$ not\'ee $R_{\rhobar,S}$.

Pour tout sous-groupe compact ouvert $U_p$ de $G_p$ on note $\mathbb{T}^S(U^pU_p,\sigma)_{\mathfrak{m}}$ l'image de $\mathbb{T}^S$ dans l'alg\`ebre des endomorphismes de $S(U^pU_p,\sigma)_{\mathfrak{m}}$. De \cite[Prop. 6.7]{Thorne} (notons que l'on utilise ici $p\ne 2$) on d\'eduit qu'il existe un morphisme canonique de $\mathcal{O}_L$-alg\`ebres locales noeth\'eriennes compl\`etes
$$\psi:\,R_{\rhobar_{\mathfrak{m}},S}\longrightarrow\mathbb{T}^S(U^pU_p,\sigma)_{\mathfrak{m}}$$
tel que, pour tout morphisme de $\mathcal{O}_L$-alg\`ebres $\theta:\,\mathbb{T}^S(U^pU_p,\sigma)_{\mathfrak{m}}\rightarrow L'$ o\`u $L'$ est une extension finie de $L$, la compos\'ee $\theta\circ\psi$ d\'efinit une d\'eformation de $\rhobar_{\mathfrak{m}}$ associ\'ee au caract\`ere $\theta$.

On d\'efinit l'espace de cohomologie compl\'et\'ee du groupe $G$ de niveau mod\'er\'e $U^p$, ou espace des formes automorphes $p$-adiques de niveau mod\'er\'e $U^p$, comme la compl\'etion de l'espace $\varinjlim_{U_p} S(U^pU_p,L)$ pour la norme d\'efinie par le r\'eseau $\varinjlim_{U_p} S(U^pU_p,\mathcal{O}_L)$, la limite inductive \'etant prise sur les sous-groupes compacts ouverts $U_p$ de $G_p$. On note $\widehat{S}(U^p,L)$ cet espace de Banach $p$-adique, qui s'identifie \'egalement \`a l'espace des fonctions continues $G(F^+)\backslash G(\mathbb{A}_{F^+}^\infty)/U^p\rightarrow L$ muni de la norme sup. Il est naturellement muni d'une action continue de $G_p$ par translation \`a droite sur ces fonctions et d'une action continue de $\mathbb{T}^S$ qui commute \`a celle de $G_p$, ces actions pr\'eservant la boule unit\'e $\widehat{S}(U^p,{\mathcal O}_L)$ (i.e.~l'adh\'erence de $\varinjlim_{U_p} S(U^pU_p,\mathcal{O}_L)$ dans $\widehat{S}(U^p,L)$). 

Notons que ces actions s'\'etendent au localis\'e $\widehat{S}(U^p,L)_{\mathfrak m}$, un espace de Banach $p$-adique sur $L$ qui s'identifie au compl\'et\'e de $\varinjlim_{U_p} S(U^pU_p,L)_{\mathfrak m}$ par rapport \`a $\varinjlim_{U_p} S(U^pU_p,{\mathcal O}_L)_{\mathfrak m}$, dont elles pr\'eservent la boule unit\'e $\widehat{S}(U^p,{\mathcal O}_L)_{\mathfrak m}$. On en d\'eduit que $\widehat{S}(U^p,L)_{\mathfrak m}$ et $\widehat{S}(U^p,{\mathcal O}_L)_{\mathfrak m}$ sont munis via $\psi$ d'une action continue de la $\mathcal{O}_L$-alg\`ebre $R_{\rhobar_{\mathfrak{m}},S}$ commutant \`a l'action de $G_p$ (et qui co\"\i ncide sur ${\mathcal O}_L$ avec la multiplication scalaire).

Si $v$ est une place de $F^+$ divisant $p$, on note $\rhobar_{\tilde{v}}$ la restriction de $\rhobar_{\mathfrak{m}}$ au groupe de d\'ecomposition en $\tilde{v}$. Rappelons que l'on a d\'efini un sous-espace analytique rigide ferm\'e et r\'eduit $X_{\rm tri}^\square(\rhobar_{\tilde{v}})$ de $\Spf(R^\square_{\rhobar_{\tilde{v}}})^{\rm rig}\times\widehat{T}_{v,L}$ correspondant aux d\'eformations triangulines de $\rhobar_{\tilde{v}}$ (D\'efinition \ref{deftriangulines}) ainsi qu'un ouvert admissible Zariski-dense $U_{\rm tri}^\square(\rhobar_{\tilde{v}})^{\rm reg}$ de $X_{\rm tri}^\square(\rhobar_{\tilde{v}})$ (Th\'eor\`eme \ref{ouvertsature}). On note $X_{\rm tri}^\square(\rhobar_p)$ l'espace analytique rigide produit $\prod_{v\in S_p}X_{\rm tri}^\square(\rhobar_{\tilde{v}})$, qui est donc un sous-espace analytique rigide ferm\'e et r\'eduit de $\Spf(R^\square_{\rhobar_p})^{\rm rig}\times\widehat{T}_{p,L}$ o\`u $R^\square_{\rhobar_p}$ d\'esigne $\widehat{\bigotimes}_{v\in S_p}R^\square_{\rhobar_{\tilde{v}}}$. De m\^eme, on note $U_{\rm tri}^\square(\rhobar_p)^{\rm reg}=\prod_{v\in S_p}U_{\rm tri}^\square(\rhobar_{\tilde{v}})^{\rm reg}$, un ouvert admissible et Zariski-dense de $X_{\rm tri}^\square(\rhobar_p)$.

\section{La vari\'et\'e de Hecke-Taylor-Wiles}

Nous conservons les notations et hypoth\`eses de la partie pr\'ec\'edente.

\subsection{Vecteurs localement $R$-analytiques}\label{analytique}

Soit $R$ une $\mathcal{O}_L$-alg\`ebre locale compl\`ete noeth\'erienne commutative de corps r\'esiduel $k_L$. On munit $R$ de sa topologie ${\mathfrak m}_R$-adique o\`u ${\mathfrak m}_R$ est l'id\'eal maximal de $R$. On d\'esigne par la lettre $G$ un groupe de Lie $p$-adique, par $G_0$ un sous-groupe compact ouvert de $G$, par $D(G_0,L)$ les distributions localement ($\Q_p$-)analytiques sur $G_0$ \`a valeurs dans $L$ (i.e. le dual continu des fonctions localement $\Q_p$-analytiques $G_0\rightarrow L$, cf. \cite{STdist}, \cite{STdual}, \cite{Emertonlocan}) et par $R\dbl G_0\dbr$ l'alg\`ebre d'Iwasawa de $G_0$ \`a coefficients dans $R$. Rappelons que ${\mathcal O}_L\dbl G_0\dbr[1/p]$ est le dual continu de l'espace de Banach des fonctions continues $G_0\rightarrow L$.

\begin{defi}\label{Radm}
Soit $\Pi$ un espace de Banach sur $L$ muni d'une action continue de $G$ et d'une action de l'alg\`ebre $R$ commutant \`a $G$ dont la restriction \`a ${\mathcal O}_L$ est la multiplication scalaire et telle que l'application canonique $R\times \Pi\rightarrow \Pi$ est continue. On dit que $\Pi$ est une repr\'esentation continue $R$-admissible si son dual continu $\Pi'$ est un $R\dbl G_0\dbr[1/p]$-module de type fini.
\end{defi}

Pr\'ecisons que l'action de $G$ sur $\Pi'$ est par translation \`a droite sur les fonctions et l'action de $R$ donn\'ee par $(r\cdot f)(v)=f(r\cdot v)$ ($r\in R$, $f\in \Pi'$, $v\in \Pi$).

\begin{ex}\label{ex}
{\rm Les espaces de Banach $\widehat{S}(U^p,L)$ et $\widehat{S}(U^p,L)_{\mathfrak m}$ du \S\ \ref{automorphe} sont des repr\'esentations continues $R_{\rhobar_{\mathfrak{m}},S}$-admissibles de $G_p$ sur $L$ (car leurs duaux sont en fait des ${\mathcal O}_L\dbl K_p\dbr[1/p]$-modules de type fini).}
\end{ex}

Fixons dans la suite une pr\'esentation $\mathcal{O}_L\dbl X_1,\cdots,X_s\dbr\twoheadrightarrow R$ de $R$. On a un isomorphisme de $\mathcal{O}_L$-alg\`ebres topologiques $\mathcal{O}_L\dbl X_1,\cdots,X_s\dbr\simeq\mathcal{O}_L\dbl \Z_p^s\dbr$ d\'efini par $X_i\mapsto\delta_i-1$ o\`u $(\delta_1,\cdots,\delta_s)$ d\'esigne la base canonique de $\Z_p^s$. Donc $\Pi'$ (comme dans la D\'efinition \ref{Radm}) est en fait un $\mathcal{O}_L\dbl \Z_p^s\times G_0\dbr$-module de type fini. Par \cite[Thm. 3.5]{STBan}, on voit que $\Pi$ est une repr\'esentation continue admissible de $\Z_p^s\times G$ sur $L$ et que la D\'efinition \ref{Radm} est ind\'ependante du choix du sous-groupe ouvert compact $G_0$ de $G$. Inversement, toute repr\'esentation continue admissible de $\Z_p^s\times G$ sur $L$ dont l'action induite de $\mathcal{O}_L\dbl \Z_p^s\dbr$ se factorise par $R$ est une repr\'esentation continue $R$-admissible de $G$. 

\begin{defi}\label{vecanal}
Soit $\Pi$ une repr\'esentation continue $R$-admissible de $G$ sur $L$. Un \'el\'ement de $\Pi$ est dit localement $R$-analytique s'il est localement analytique pour l'action de $\Z_p^s\times G$ au sens de \cite[\S\ 7]{STdist} (on montre ci-dessous que cela ne d\'epend pas du choix de la pr\'esentation choisie $\mathcal{O}_L\dbl \Z_p^s\dbr\twoheadrightarrow R$ de $R$). 
\end{defi}

On note $\Pi^{R-{\rm an}}$ le sous-espace de ces vecteurs localement analytiques. Rappelons que c'est un $L$-espace vectoriel localement convexe de type compact stable sous l'action de $\Z_p^s\times G$ (\cite[\S\ 7]{STdist}).

Notons $R^{\rm rig}={\mathcal O}(\Spf(R)^{\rm rig})$ l'alg\`ebre des fonctions globales sur l'espace analytique rigide $\Spf(R)^{\rm rig}$. Concr\`etement, on peut en donner la description suivante (cf. \cite[Def.\ 7.1.3]{DeJong}). Pour tout $n\geq1$, on a dans $R[1/p]$ le sous-anneau $R[\mathfrak{m}_R^n/\varpi]$ (avec des notations \'evidentes) dont on note $R_n^\circ$ la compl\'etion $\varpi$-adique. On a alors 
$$R^{\rm rig}=\varprojlim_n (R_n^\circ\otimes_{\mathcal{O}_L}L).$$
De plus \cite[Lem. 7.2.2]{DeJong} montre que $R^{\rm rig}$ est une $L$-alg\`ebre de Fr\'echet-Stein au sens de \cite[\S\ 3]{STdist}. Enfin, si $S$ est une autre $\mathcal{O}_L$-alg\`ebre locale compl\`ete noeth\'erienne, on d\'eduit de \cite[Prop. 7.2.4.g)]{DeJong} un isomorphisme 
\begin{equation}\label{tenseur}
R^{\rm rig}\widehat{\otimes}_{L}S^{\rm rig}\simeq (R\widehat{\otimes}_{\mathcal{O}_L}S)^{\rm rig}
\end{equation}
(o\`u le produit tensoriel \`a gauche est dans la cat\'egorie des $L$-alg\`ebres de Fr\'echet-Stein et \`a droite dans la cat\'egorie des $\mathcal{O}_L$-alg\`ebres locales compl\`etes noeth\'eriennes). Rappelons par ailleurs que $\mathcal{O}_L\dbl \Z_p^s\dbr^{\rm rig}$ s'identifie \`a l'alg\`ebre $D(\Z_p^s,L)$ des distributions localement analytiques sur $\Z_p^s$ \`a valeurs dans $L$ (voir par exemple \cite[Thm. 2.2]{STFour}) et que l'on a un isomorphisme de $L$-alg\`ebres de Fr\'echet-Stein $D(\Z_p^s\times G_0,L)\cong D(\Z_p^s,L)\widehat\otimes_LD(G_0,L)$ (\cite[Prop. A.3]{STdual}) o\`u $D(\Z_p^s\times G_0,L)$ est l'alg\`ebre des distributions localement analytiques sur $\Z_p^s\times G_0$ \`a valeurs dans $L$. La surjection $\mathcal{O}_L\dbl \Z_p^s\dbr\twoheadrightarrow R$ induit une application continue d'alg\`ebres de Fr\'echet-Stein $D(\Z_p^s,L)\cong \mathcal{O}_L\dbl \Z_p^s\dbr^{\rm rig}\rightarrow R^{\rm rig}$ (qui en fait est encore surjective par le Lemme \ref{quotient} en appendice). Par \cite[Thm. 7.1]{STdist}, le dual continu de l'espace des vecteurs localement analytiques de $\Pi$ pour l'action de $\Z_p^s\times G$ s'identifie \`a $D(\Z_p^s\times G_0,L)\otimes_{\mathcal{O}_L\dbl \Z_p^s\times G_0\dbr}\Pi'$.

\begin{lemm}\label{isom}
Soit $\Pi$ une repr\'esentation continue $R$-admissible de $G$ sur $L$. On a un isomorphisme de $D(\Z_p^s,L)\widehat\otimes_LD(G_0,L)$-modules de type fini 
\begin{equation*}
D(\Z_p^s\times G_0,L)\otimes_{\mathcal{O}_L\dbl \Z_p^s\times G_0\dbr}\Pi'\simeq(R^{\rm rig}\widehat{\otimes}_L D(G_0,L))\otimes_{R\dbl G_0\dbr} \Pi'.
\end{equation*}
\end{lemm}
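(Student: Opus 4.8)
The plan is to deduce the asserted isomorphism from transitivity of the tensor product, the essential input being that the rigidification functor $(-)^{\rm rig}$ on complete local Noetherian $\mathcal{O}_L$-algebras is compatible with quotients. First, Definition \ref{Radm} and the discussion preceding Definition \ref{vecanal} show that $\Pi'$ is a finitely generated $\mathcal{O}_L\dbl\Z_p^s\times G_0\dbr$-module on which the action of the subalgebra $\mathcal{O}_L\dbl\Z_p^s\dbr$ (coming from the fixed presentation) factors through $R$; since the $R$-action and the $G_0$-action on $\Pi'$ commute, the action of $\mathcal{O}_L\dbl\Z_p^s\times G_0\dbr=\mathcal{O}_L\dbl\Z_p^s\dbr\widehat\otimes_{\mathcal{O}_L}\mathcal{O}_L\dbl G_0\dbr$ factors through $R\dbl G_0\dbr=\mathcal{O}_L\dbl\Z_p^s\times G_0\dbr\otimes_{\mathcal{O}_L\dbl\Z_p^s\dbr}R$, and $\Pi'$ is finitely generated over $R\dbl G_0\dbr$. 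By transitivity of the tensor product,
\[
D(\Z_p^s\times G_0,L)\otimes_{\mathcal{O}_L\dbl\Z_p^s\times G_0\dbr}\Pi'\cong\bigl(D(\Z_p^s\times G_0,L)\otimes_{\mathcal{O}_L\dbl\Z_p^s\times G_0\dbr}R\dbl G_0\dbr\bigr)\otimes_{R\dbl G_0\dbr}\Pi',
\]
so it suffices to identify $D(\Z_p^s\times G_0,L)\otimes_{\mathcal{O}_L\dbl\Z_p^s\times G_0\dbr}R\dbl G_0\dbr$ with $R^{\rm rig}\widehat\otimes_L D(G_0,L)$.

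For this I would use $R\dbl G_0\dbr=\mathcal{O}_L\dbl\Z_p^s\times G_0\dbr\otimes_{\mathcal{O}_L\dbl\Z_p^s\dbr}R$ to rewrite the left-hand side as $D(\Z_p^s\times G_0,L)\otimes_{\mathcal{O}_L\dbl\Z_p^s\dbr}R$, then substitute $D(\Z_p^s\times G_0,L)\cong D(\Z_p^s,L)\widehat\otimes_L D(G_0,L)$ (\cite[Prop.~A.3]{STdual}). Since $\mathcal{O}_L\dbl\Z_p^s\dbr$ acts only through the first tensor factor, $R$ is of finite presentation over $\mathcal{O}_L\dbl\Z_p^s\dbr$, and $(-)\widehat\otimes_L D(G_0,L)$ is right exact on coadmissible $D(\Z_p^s,L)$-modules (the submodules occurring being closed, as they are coadmissible), one may pull the base change past the completed tensor product and obtain $\bigl(D(\Z_p^s,L)\otimes_{\mathcal{O}_L\dbl\Z_p^s\dbr}R\bigr)\widehat\otimes_L D(G_0,L)$. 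As $D(\Z_p^s,L)=\mathcal{O}_L\dbl\Z_p^s\dbr^{\rm rig}$, everything reduces to the identification $D(\Z_p^s,L)\otimes_{\mathcal{O}_L\dbl\Z_p^s\dbr}R\cong R^{\rm rig}$.

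This last identification is the technical heart of the proof. Writing $A=\mathcal{O}_L\dbl\Z_p^s\dbr$ and $R=A/I$, the submodule $I\cdot A^{\rm rig}$ of $A^{\rm rig}$ is the image of a morphism $(A^{\rm rig})^{\oplus k}\to A^{\rm rig}$ of coadmissible modules over the Fr\'echet-Stein algebra $A^{\rm rig}=D(\Z_p^s,L)$, hence is closed, so $A^{\rm rig}/I\cdot A^{\rm rig}=D(\Z_p^s,L)\otimes_A R$ is coadmissible. By \cite[Lem.~7.2.2]{DeJong} and Lemme \ref{quotient} there is a continuous surjection $A^{\rm rig}\to R^{\rm rig}$ annihilating $I\cdot A^{\rm rig}$, so it suffices to check that $A^{\rm rig}/I\cdot A^{\rm rig}$ and $R^{\rm rig}$ have the same defining family. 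Using the descriptions $A^{\rm rig}=\varprojlim_n(A_n^\circ\otimes_{\mathcal{O}_L}L)$ and $R^{\rm rig}=\varprojlim_n(R_n^\circ\otimes_{\mathcal{O}_L}L)$ from \cite[Def.~7.1.3]{DeJong} recalled above, the surjection $A\twoheadrightarrow R$ identifies $R[\mathfrak{m}_R^n/\varpi]$ with $A[\mathfrak{m}_A^n/\varpi]\otimes_A R$, and then, by finiteness of this quotient, $R_n^\circ$ with $A_n^\circ\otimes_A R$; hence the defining family of $D(\Z_p^s,L)\otimes_A R$ is $(R_n^\circ\otimes_{\mathcal{O}_L}L)_n$, i.e.\ this module is $R^{\rm rig}$. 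Combining the three steps yields the isomorphism of the Lemma, both sides being finitely generated over $D(\Z_p^s,L)\widehat\otimes_L D(G_0,L)$ (on the right because $R^{\rm rig}$ is a quotient of $D(\Z_p^s,L)$). The main obstacle is therefore not the formal manipulation of tensor products but the quotient-compatibility of $(-)^{\rm rig}$, which rests on Noetherianity together with the closedness of coadmissible submodules over a Fr\'echet-Stein algebra.
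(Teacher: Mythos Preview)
Your proposal is correct and takes essentially the same approach as the paper: both reduce via transitivity of the tensor product to identifying $D(\Z_p^s\times G_0,L)\otimes_{\mathcal{O}_L\dbl\Z_p^s\dbr}R$ with $R^{\rm rig}\widehat\otimes_L D(G_0,L)$, and both do so by combining the isomorphism $D(\Z_p^s,L)\otimes_{\mathcal{O}_L\dbl\Z_p^s\dbr}R\cong R^{\rm rig}$ (Lemme~\ref{quotient}) with exactness of $D(G_0,L)\widehat\otimes_L-$ on short exact sequences of Fr\'echet spaces. The only differences are presentational: the paper cites \cite[Lem.~4.13]{SchraenGL3} for this exactness and invokes Lemme~\ref{quotient} directly, whereas you phrase the exactness via coadmissibility and closedness and then essentially reprove the quotient-compatibility of $(-)^{\rm rig}$ by hand.
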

\begin{proof}
Soit $I$ le noyau du morphisme $\mathcal{O}_L\dbl \Z_p^s\dbr\twoheadrightarrow R$. D'apr\`es le Lemme \ref{quotient} en appendice, on a une suite exacte courte (topologique) d'espaces de Fr\'echet sur $L$ 
$$0\rightarrow D(\Z_p^s,L)I\rightarrow D(\Z_p^s,L)\rightarrow R^{\rm rig}\rightarrow0.$$
De plus par \cite[Thm. 4.11]{STdist} $D(\Z_p^s,L)$ est (fid\`element) plat sur $L\otimes_{\mathcal{O}_L}\mathcal{O}_L\dbl \Z_p^s\dbr$, d'o\`u un isomorphisme d'espaces de Fr\'echet $D(\Z_p^s,L)\otimes_{\mathcal{O}_L\dbl \Z_p^s\dbr} I\simeq D(\Z_p^s,L)I$. Comme $D(G_0,L)$ est un espace de Fr\'echet sur $L$, le foncteur $D(G_0,L)\widehat{\otimes}_L-$ pr\'eserve les suites exactes courtes d'espaces de Fr\'echet sur $L$ (cf. par exemple \cite[Lem. 4.13]{SchraenGL3}), d'o\`u une suite exacte d'espaces de Fr\'echet
\begin{equation*}
0\rightarrow D(G_0,L)\widehat{\otimes}_L (D(\Z_p^s,L)I)\rightarrow D(G_0,L)\widehat{\otimes}_L D(\Z_p^s,L)\rightarrow D(G_0,L)\widehat{\otimes}_L R^{\rm rig}\rightarrow0
\end{equation*}
et un isomorphisme \[D(G_0,L)\widehat{\otimes}_L(D(\Z_p^s,L)I)\simeq (D(G_0,L)\widehat{\otimes}_LD(\Z_p^s,L))\otimes_{\mathcal{O}_L\dbl \Z_p^s\dbr} I\] (rappelons que $I$ est un $\mathcal{O}_L\dbl \Z_p^s\dbr$-module de type fini). Comme on a un isomorphisme $D(G_0\times \Z_p^s,L)\simeq D(G_0,L)\widehat{\otimes}_L D(\Z_p^s,L)$, on obtient une suite exacte courte
\begin{equation*}
0\rightarrow D(G_0\times\Z_p^s,L)\otimes_{\mathcal{O}_L\dbl \Z_p^s\dbr} I\rightarrow D(G_0\times\Z_p^s,L)\rightarrow D(G_0,L)\widehat{\otimes}_L R^{\rm rig}\rightarrow0
\end{equation*}
et donc un isomorphisme $D(G_0\times\Z_p^s,L)\otimes_{\mathcal{O}_L\dbl \Z_p^s\dbr}R\simeq D(G_0,L)\widehat{\otimes}_L R^{\rm rig}$. Avec l'isomorphisme (\'evident) $R\otimes_{\mathcal{O}_L\dbl \Z_p^s\dbr}\mathcal{O}_L\dbl \Z_p^s\times G_0\dbr\cong R\dbl G_0\dbr$, on en d\'eduit donc des isomorphismes de $D(\Z_p^s\times G_0,L)$-modules (de type fini)
\begin{align*}
\begin{aligned}
&D(\Z_p^s\times G_0,L)\otimes_{\mathcal{O}_L\dbl \Z_p^s\times G_0\dbr} \Pi' \simeq (D(\Z_p^s\times G_0,L)\otimes_{\mathcal{O}_L\dbl \Z_p^s\times G_0\dbr} R\dbl G_0\dbr)\otimes_{R\dbl G_0\dbr}\Pi'\\
\simeq & (\!D(\Z_p^s\times G_0,L)\!\otimes_{\mathcal{O}_L\dbl \Z_p^s\times G_0\dbr}\!(\!R\!\otimes_{\mathcal{O}_L\dbl \Z_p^s\dbr}\!\mathcal{O}_L\dbl \Z_p^s\!\times \!G_0\dbr))\!\otimes_{R\dbl G_0\dbr}\!\Pi'\\
\simeq &(D(\Z_p^s\times G_0,L)\otimes_{\mathcal{O}_L\dbl \Z_p^s\dbr} R)\otimes_{R\dbl G_0\dbr}\Pi'\\
\simeq &(R^{\rm rig}\widehat{\otimes}_L D(G_0,L))\otimes_{R\dbl G_0\dbr}\Pi'.
\end{aligned}
\end{align*}
\end{proof}

On a donc un isomorphisme d'espaces de Fr\'echet nucl\'eaires sur $L$
\begin{equation}\label{isodual}
(\Pi^{R-{\rm an}})'\simeq(R^{\rm rig}\widehat{\otimes}_LD(G_0,L))\otimes_{R\dbl G_0\dbr}\Pi'.
\end{equation}
qui montre bien par bidualit\'e (\cite[Cor. 1.4]{STdistr}) que le sous-espace des vecteurs localement $R$-analytiques de $\Pi$ ne d\'epend pas du choix de la pr\'esentation de $R$.

On termine cette section avec la d\'efinition suivante. Si $J\subset R$ est un id\'eal et $\Pi$ une repr\'esentation de Banach $R$-admissible de $G$, on note $\Pi[J]$ le sous-$L$-espace vectoriel des $v\in \Pi$ tels que $J\cdot v=0$ muni de la topologie induite. Si $J$ est engendr\'e par un seul \'el\'ement $r\in R$, en dualisant la suite exacte $0\rightarrow \Pi[J]\rightarrow \Pi \buildrel r \over \rightarrow \Pi$ (qui reste exacte car $\Pi$ est une repr\'esentation admissible de $\Z_p^s\times G$) on obtient un isomorphisme
\begin{equation}\label{invariantsdual}
\Pi[J]'\simeq \Pi'/J\Pi'
\end{equation}
compatible \`a $G$ et $R$. Par une r\'ecurrence imm\'ediate (car $J$ est de type fini), on en d\'eduit ce m\^eme isomorphisme pour tout id\'eal $J$. En particulier on voit que $\Pi[J]$ est une repr\'esentation $R/J$-admissible de $G$.

\subsection{La vari\'et\'e de Hecke-Taylor-Wiles}\label{HTW}

On fixe $\mathfrak{m}$ un id\'eal maximal automorphe non Eisenstein de niveau mod\'er\'e $U^p$ de $\mathbb{T}^S$ tel que $\mathbb{T}^S/\mathfrak{m}=k_L$ comme au \S\ \ref{automorphe}, et on note simplement $\rhobar$ au lieu de $\rhobar_{\mathfrak{m}}$.

On fixe $g\geq1$ un entier (pour l'instant quelconque) et on note $R_{\rhobar_{\tilde{v}}}^{\bar\square}$ le quotient maximal r\'eduit et sans $p$-torsion de $R_{\rhobar_{\tilde{v}}}^{\square}$  (cf.~\cite[\S\ 3]{Thorne}), $R^{\rm loc}=\widehat{\bigotimes}_{v\in S} R_{\rhobar_{\tilde{v}}}^{\bar\square}$ et $R_\infty=R^{\rm loc}\dbl x_1,\cdots, x_g\dbr$. On pose aussi
\[S_{\infty}=\mathcal{O}_L\dbl y_1,\cdots,y_q\dbr\]
o\`u $q=g+[F^+:\Q]\frac{n(n-1)}{2}+|S|n^2$ et on note $\mathfrak{a}=(y_1,\cdots,y_q)\subset S_\infty$. Quitte \`a rapetisser le niveau mod\'er\'e $U^p$ en des places totalement d\'ecompos\'ees (et \`a augmenter $S$ en cons\'equence), on suppose que pour tout $h\in G(\mathbb{A}_{F^+}^\infty)$ il v\'erifie
\begin{equation*}
G(F^+)\cap (hU^pK_ph^{-1})=\{1\}.
\end{equation*}

Notons que la ${\mathcal O}_L$-alg\`ebre $\mathbb{T}^S(U^pU_p,L)_{\mathfrak{m}}$ du \S\ \ref{automorphe} est r\'eduite (et sans $p$-torsion) pour tout sous-groupe ouvert compact $U_p$ de $G_p$ : cela se d\'eduit de la semi-simplicit\'e de la $G(\mathbb{A}_{F^+}^\infty)$-repr\'esentation $\varinjlim_{U} S(U,L)\otimes_L\bar\Q_p$ o\`u la limite inductive est prise sur les sous-groupes compacts ouverts $U$ de $G(\mathbb{A}_{F^+}^\infty)$ (cf. par exemple \cite[Prop.\ 3.3.2]{CHT}). En particulier l'action de $R_{\rhobar,S}$ sur $\widehat{S}(U^p,L)_{\mathfrak m}$ se factorise par $R_{\rhobar,S}\twoheadrightarrow R_{\rhobar,\mathcal S}$ o\`u $R_{\rhobar,\mathcal S}$ est l'anneau de d\'eformation associ\'e au probl\`eme de d\'eformation (avec les notations de \cite[\S\ 2.3]{CHT})
$${\mathcal S}=\big(F/F^+, S, \widetilde S,{\mathcal O}_L, \rhobar, \varepsilon^{1-n}\delta^n_{F/F^+}, \{R^{\bar\square}_{\rhobar_{\tilde v}}\}_{v\in S}\big)$$
avec $\widetilde S=\{\widetilde v\ \vert\ v\in S\}$ et $\delta_{F/F^+}$ le caract\`ere quadratique de $\mathrm{Gal}(\bar{F}/F^+)$ associ\'e \`a l'extension $F/F^+$. Notre probl\`eme global de d\'eformations n'est pas cadr\'e, l'anneau $R_{\rhobar,\mathcal S}$ est donc ici l'anneau not\'e $R_{\mathcal S}^{\rm univ}$ dans \cite{Thorne}.

Rappelons qu'une repr\'esentation continue $R_\infty$-admissible de $G_p$ sur $L$ est dite unitaire si pour un choix de norme la boule unit\'e est stable par $G_p$ et par $R_\infty$. Le th\'eor\`eme suivant est essentiellement contenu dans les r\'esultats de \cite[\S\ 2]{CEGGPS}.

\begin{theo}\label{TaylorWiles}
Supposons $p\geq 2n+2$ et que $\bar\rho$ reste absolument irr\'eductible apr\`es restriction \`a $G_{F(\zeta_p)}$ o\`u $\zeta_p$ est une racine primitive $p$-i\`eme de l'unit\'e. Il existe un entier $g\geq 1$ tel que l'on ait une repr\'esentation continue $R_\infty$-admissible unitaire $\Pi_\infty$ de $G_p$ sur $L$ et des morphismes de $\mathcal{O}_L$-alg\`ebres locales $S_\infty\rightarrow R_\infty$ et $R_\infty\rightarrow R_{\rhobar,\mathcal S}$ v\'erifiant les propri\'et\'es suivantes
\begin{itemize}
\item[(i)]il existe une boule unit\'e $\Pi_\infty^0\subset\Pi_\infty$ stable par $G_p$ et $R_\infty$ telle que \[M_\infty=(\Pi_\infty^0)'=\Hom_{{\mathcal O}_L}(\Pi_\infty^0,{\mathcal O}_L)\] est un $S_\infty\dbl K_p\dbr$-module projectif de type fini (via $S_\infty\rightarrow R_\infty$);
\item[(ii)]il existe un isomorphisme $R_\infty/\mathfrak{a}R_\infty\simeq R_{\rhobar,\mathcal S}$ de $\mathcal{O}_L$-alg\`ebres locales noeth\'eriennes compl\`etes et un isomorphisme de repr\'esentations continues $R_\infty/\mathfrak{a}R_\infty$-admissibles unitaires de $G_p$ sur $L$
\begin{equation}\label{isopatching}
\Pi_\infty[\mathfrak{a}]\simeq\widehat{S}(U^p,L)_{\mathfrak{m}}.
\end{equation}
\end{itemize}
\end{theo}
\begin{proof}
Une telle repr\'esentation continue est essentiellement construite dans \cite[Cor.\ 2.9]{CEGGPS} en prenant pour $\Pi_\infty$ le dual continu du $R_\infty\dbl K_p\dbr$-module de type fini $M_\infty$ de \emph{loc.~cit.} (cf. \cite[Thm. 3.5]{STBan}, on utilise comme au \S\ \ref{analytique} que $R_\infty\dbl K_p\dbr$ est un quotient de l'alg\`ebre d'Iwasawa de $\Z_p^s\times K_p$ pour $s$ convenable). La seule diff\'erence avec \cite[\S\ 2]{CEGGPS} est qu'il faut remplacer la place $\mathfrak p$ de $F^+$ fix\'ee au-dessus de $p$ par l'ensemble $S_p$ (i.e. toutes les places au-dessus de $p$) et l'ensemble $T=S_p\cup \{v_1\}$ (o\`u $v_1$ est une place hors $p$ d\'ecompos\'ee particuli\`ere dont le but est d'obtenir des propri\'et\'es de multiplicit\'e 1 dont nous n'avons pas besoin, cf. \cite[\S\ 2.3]{CEGGPS}) par l'ensemble $T=S$. En rempla\c cant donc le probl\`eme de d\'eformation ${\mathcal S}_{Q_N}$ de \cite[\S\ 2.5]{CEGGPS} par le probl\`eme de d\'eformation (en adaptant les notations de \emph{loc.~cit.})
$$\big(F/F^+, S\cup Q_N, \widetilde S\cup \widetilde Q_N,{\mathcal O}_L, \rhobar, \varepsilon^{1-n}\delta^n_{F/F^+}, \{R^{\bar\square}_{\rhobar_{\tilde v}}\}_{v\in S}\cup \{R^{\bar\psi_{\tilde v}}_{\rhobar_{\tilde v}}\}_{v\in Q_N}\big),$$
une preuve analogue \`a celle de \cite[Cor.\ 2.9]{CEGGPS} donne le $M_\infty$ recherch\'e.
\end{proof}

La repr\'esentation continue $\Pi_\infty$ du Th\'eor\`eme \ref{TaylorWiles} est donc \`a la fois une repr\'esentation $R_\infty$-admissible de $G_p$ et une repr\'esentation $S_\infty$-admissible.

Rappelons que nous avons fix\'e au \S\ \ref{automorphe} un sous-groupe de Borel $B_p\subset G_p$ de tore diagonal $T_p$. En appliquant le foncteur $J_{B_p}$ d\'efini par Emerton dans \cite[Def. 3.4.5]{EmertonJacquetI} \`a la repr\'esentation $\Pi^{R_\infty-{\rm an}}$ de $G_p$ (cf. Remarque \ref{addenda} ci-dessous) et en prenant le dual continu, on obtient un $R_\infty^{\rm rig}\widehat{\otimes}_L\mathcal{O}(\widehat{T}_{p,L})$-module coadmissible $J_{B_p}(\Pi_\infty^{R_\infty-{\rm an}})'$ au sens de \cite[\S\ 3]{STdist} (voir aussi \cite[\S\ 1.2]{Emertonlocan}), qui lui-m\^eme correspond \`a un unique faisceau coh\'erent $\mathcal{M}_\infty$ sur $\Spf(R_\infty)^{\rm rig}\times_L\widehat{T}_{p,L}$ (au sens de \cite[\S\ 9.4]{BGR}) tel que \[\Gamma\left(\Spf(R_\infty)^{\rm rig}\times_L\widehat{T}_{p,L},\mathcal{M}_\infty\right)\simeq J_{B_p}(\Pi_\infty^{R_\infty-{\rm an}})'\] (voir encore par exemple \cite[\S\ 3]{STdist} et la discussion qui pr\'ec\`ede le Lemme \ref{isom}).

\begin{rema}\label{addenda}
{\rm Pr\'ecisons un peu le point pr\'ec\'edent. \`A proprement parler, Emerton ne construit le foncteur $J_P$ que pour une repr\'esentation localement analytique du groupe des $\mathbb{Q}_p$-points d'un sous-groupe parabolique $P$ d'un groupe r\'eductif d\'efini sur $\Q_p$. Mais si l'on fixe une pr\'esentation $\mathcal{O}_L\dbl \Z_p^s\dbr\twoheadrightarrow R_\infty$ de l'alg\`ebre $R_\infty$ comme au \S\ \ref{analytique}, on peut voir $\Pi_\infty^{R_\infty-{\rm an}}$ comme une repr\'esentation localement analytique du groupe $\Z_p^s\times G_p$ (qui ne d\'epend pas de la pr\'esentation, cf. \S\ \ref{analytique}). Il ne s'agit pas du groupe des $\Q_p$-points d'un groupe r\'eductif, mais il n'est pas difficile de v\'erifier que la construction de $J_{B_p}$ s'\'etend au groupe $\Z_p^s\times G_p$.}
\end{rema}

On suppose que l'extension $L$ de $\Q_p$ est suffisamment grande de telle sorte que, pour tout $v\in S$, les composantes irr\'eductibles du sch\'ema $\Spec(R_{\rhobar_{\tilde{v}}}^{\bar\square}[1/p])$ soient g\'eom\'etriquement irr\'eductibles (i.e. $C\times_{L}\bar{\Q}_p$ reste un sch\'ema irr\'eductible pour toute composante irr\'eductible $C$ de $\Spec(R_{\rhobar_{\tilde{v}}}^{\bar\square}[1/p])$). 

Si $X$ est un espace analytique rigide et $\mathcal{M}$ un faisceau coh\'erent sur $X$, on appelle {\it support sch\'ematique} de $\mathcal{M}$ la vari\'et\'e analytique rigide dont l'espace sous-jacent est le ferm\'e analytique $\{x\in X|\,\mathcal{M}_x\neq0\}$ de $X$ (cf. (\cite[\S\ 9.5.2 Prop.4]{BGR})) et le faisceau structural est donn\'e par $\mathcal{O}_X/\mathcal{I}$ o\`u $\mathcal{I}\subset\mathcal{O}_X$ est le faisceau coh\'erent d'id\'eaux annulateurs de $\mathcal{M}$.

\begin{defi}
On note $X_p(\rhobar)\subset\Spf(R_\infty)^{\rm rig}\times_L\widehat{T}_{p,L}$ le support sch\'ematique du faisceau coh\'erent $\mathcal{M}_\infty$. On note \'egalement $X_p(\rhobar)^{\rm red}$ la nilr\'eduction de $X_p(\rhobar)$, au sens de \cite[p. 389]{BGR}, que l'on appelle \emph{vari\'et\'e de Hecke-Taylor-Wiles}.
\end{defi}
Nous prouverons en fait plus loin que $X_p(\rhobar)$ est un espace analytique r\'eduit, de sorte que $X_p(\rhobar)=X_p(\rhobar)^{\rm red}$.

Il sera utile dans la suite de d\'esigner par $\mathfrak{X}^\square_{\rhobar^p}$ l'espace analytique rigide $\Spf(\widehat{\bigotimes}_{v\in S\backslash S_p}R_{\rhobar_{\tilde{v}}}^{\bar\square})^{\rm rig}$ et par $\mathfrak{X}^\square_{\rhobar_p}$ l'espace $\Spf(\widehat{\bigotimes}_{v\in S_p}R_{\rhobar_{\tilde{v}}}^{\bar\square})^{\rm rig}$. On note $\mathbb{U}$ la boule ouverte de rayon $1$, autrement dit l'espace analytique rigide $\Spf(\mathcal{O}_L\dbl T\dbr)^{\rm rig}$. On a $\Spf(R_\infty)^{\rm rig}\simeq\mathfrak{X}^\square_{\rhobar_p}\times\mathfrak{X}^\square_{\rhobar^p}\times\mathbb{U}^g$ par \cite[Prop. 7.2.4.g)]{DeJong}.

Si $V$ est une repr\'esentation localement analytique de $T_{p}$ et $\delta$ est un caract\`ere localement analytique de $T_{p}$, on note $V^{T_p=\delta}$ le sous-espace de $V$ o\`u l'action de $T_p$ se factorise par $\delta$.

\begin{prop}\label{points}
Soit $x=(y,\delta)\in (\Spf(R_\infty)^{\rm rig}\times\widehat{T}_{p,L})(L)$. Notons $\mathfrak{p}_x$ l'id\'eal maximal de $R_\infty[1/p]$ correspondant au point $y$. Alors $x\in X_p(\rhobar)$ si et seulement si $(J_{B_p}(\Pi_\infty[\mathfrak{p}_x]^{\rm an}))^{T_p=\delta}$ est non nul.
\end{prop}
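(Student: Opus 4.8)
The plan is to compute the fibre of the coherent sheaf $\mathcal{M}_\infty$ at $x=(y,\delta)$ and to identify it with the continuous dual of the $\delta$-eigenspace of $J_{B_p}(\Pi_\infty[\mathfrak{p}_x]^{\rm an})$. Since $X_p(\rhobar)$ is by definition the schematic support of $\mathcal{M}_\infty$, the point $x$ lies in $X_p(\rhobar)$ exactly when the stalk $\mathcal{M}_{\infty,x}$ is non-zero; as $\mathcal{M}_\infty$ is coherent, this stalk is finitely generated over the Noetherian local ring of $\Spf(R_\infty)^{\rm rig}\times_L\widehat{T}_{p,L}$ at $x$, so by Nakayama it is non-zero if and only if the fibre $\mathcal{M}_\infty\otimes k(x)$ is non-zero, with $k(x)=L$ because $x$ is an $L$-point. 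It therefore suffices to produce an isomorphism $\mathcal{M}_\infty\otimes k(x)\simeq\big(J_{B_p}(\Pi_\infty[\mathfrak{p}_x]^{\rm an})^{T_p=\delta}\big)'$: the equivalence in the statement then follows because the $\delta$-eigenspace of an essentially admissible representation of $T_p$ is finite-dimensional, hence vanishes precisely when its dual does.

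I would compute the fibre in two steps along $x=(y,\delta)$. Put $N=J_{B_p}(\Pi_\infty^{R_\infty-{\rm an}})'$, a coadmissible module over $R_\infty^{\rm rig}\widehat{\otimes}_L\mathcal{O}(\widehat{T}_{p,L})$ whose global sections are $\mathcal{M}_\infty$. As $y$ is an $L$-point we have $R_\infty^{\rm rig}/\mathfrak{p}_xR_\infty^{\rm rig}=L$, so restricting $\mathcal{M}_\infty$ to the fibre $\{y\}\times\widehat{T}_{p,L}$ yields a coherent sheaf on $\widehat{T}_{p,L}$ with global sections the coadmissible $\mathcal{O}(\widehat{T}_{p,L})$-module $N/\mathfrak{p}_xN$, and $\mathcal{M}_\infty\otimes k(x)\simeq(N/\mathfrak{p}_xN)\otimes_{\mathcal{O}(\widehat{T}_{p,L})}k(\delta)$. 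For any representation $W$ of $T_p$ whose continuous dual is coadmissible over $\mathcal{O}(\widehat{T}_{p,L})$ one has $W^{T_p=\delta}=W[\mathfrak{m}_\delta]$ and, by the same argument that proves \eqref{invariantsdual} but carried out over $\mathcal{O}(\widehat{T}_{p,L})$, an isomorphism $(W^{T_p=\delta})'\simeq W'/\mathfrak{m}_\delta W'$. Taking $W=J_{B_p}(\Pi_\infty[\mathfrak{p}_x]^{\rm an})$, we are reduced to the single identification $N/\mathfrak{p}_xN\simeq J_{B_p}(\Pi_\infty[\mathfrak{p}_x]^{\rm an})'$.

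This last point — that the formation of $J_{B_p}$ commutes with passing to the fibre at $y$ — is where the real content lies, and I expect it to be the main obstacle. First, $\Pi_\infty[\mathfrak{p}_x]$ is a \emph{closed} $G_p$-subrepresentation of $\Pi_\infty$ on which $R_\infty$ acts through $L$, so its space of locally analytic vectors is $\Pi_\infty^{R_\infty-{\rm an}}\cap\Pi_\infty[\mathfrak{p}_x]=(\Pi_\infty^{R_\infty-{\rm an}})[\mathfrak{p}_x]$, the notions of locally $R_\infty$-analytic and locally analytic vector agreeing there since the $R_\infty$-action is scalar. Next, because Emerton's Jacquet functor $J_{B_p}$ (extended to $\Z_p^s\times G_p$ as in Remarque \ref{addenda}) is left exact, it commutes with the intersection of the kernels of a finite generating set of $\mathfrak{p}_xR_\infty^{\rm rig}$, which gives $J_{B_p}(\Pi_\infty[\mathfrak{p}_x]^{\rm an})\simeq J_{B_p}(\Pi_\infty^{R_\infty-{\rm an}})[\mathfrak{p}_x]$ compatibly with the $T_p$-action. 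Dualizing this isomorphism, once more as in the proof of \eqref{invariantsdual} but now over $R_\infty^{\rm rig}\widehat{\otimes}_L\mathcal{O}(\widehat{T}_{p,L})$ and using that $\mathfrak{p}_x$ is finitely generated and $N$ coadmissible, yields $J_{B_p}(\Pi_\infty[\mathfrak{p}_x]^{\rm an})'\simeq N/\mathfrak{p}_xN$. Feeding this back through the two-step fibre computation and the eigenspace duality produces the desired isomorphism $\mathcal{M}_\infty\otimes k(x)\simeq\big(J_{B_p}(\Pi_\infty[\mathfrak{p}_x]^{\rm an})^{T_p=\delta}\big)'$, and hence the proposition. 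The delicate inputs to pin down carefully are thus the left-exactness of $J_{B_p}$ on the relevant category, the behaviour of locally analytic vectors under the closed immersion $\Pi_\infty[\mathfrak{p}_x]\hookrightarrow\Pi_\infty$, and the dictionary between coadmissible modules over the Fr\'echet-Stein algebras in play and coherent sheaves on the corresponding rigid spaces.
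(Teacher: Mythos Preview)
Your proof is correct and follows essentially the same approach as the paper's own argument. The paper is more concise: it directly cites \cite[Prop.~2.3.3(iii)]{Emint} for the identification of the fibre of $\mathcal{M}_\infty$ at $x$ with the dual of $(J_{B_p}(\Pi_\infty^{R_\infty-{\rm an}})[\mathfrak{p}_x])^{T_p=\delta}$, and then establishes the isomorphism $J_{B_p}(\Pi_\infty[\mathfrak{p}_x]^{\rm an})\simeq J_{B_p}(\Pi_\infty^{R_\infty-{\rm an}})[\mathfrak{p}_x]$ using exactly the ingredients you identify --- left-exactness of $J_{B_p}$, the dualizing isomorphism \eqref{invariantsdual}, and (for the passage from $\Pi_\infty[\mathfrak{p}_x]^{\rm an}$ to $(\Pi_\infty^{R_\infty-{\rm an}})[\mathfrak{p}_x]$) the \emph{exactness} of the functor $\Pi\mapsto\Pi^{\rm an}$ as a consequence of \cite[Thm.~7.1]{STdist}, which is a slightly cleaner justification than the direct closed-subrepresentation argument you sketch.
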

\begin{proof}
De m\^eme qu'en \cite[Prop. $2.3.3(iii)$]{Emint}, la fibre de $\mathcal{M}_\infty$ en $x$ est duale de
\begin{equation*}
(J_{B_p}(\Pi_\infty^{R_\infty-{\rm an}})[\mathfrak{p}_x])^{T_p=\delta}.
\end{equation*}
Pour conclure on utilise l'isomorphisme $J_{B_p}(\Pi_\infty[\mathfrak{p}_x]^{\rm an})\simeq J_{B_p}(\Pi_\infty^{R_\infty-{\rm an}})[\mathfrak{p}_x]$ qui r\'esulte de l'exactitude du foncteur $\Pi\mapsto\Pi^{\rm an}$ (qui est une cons\'equence de \cite[Thm.~7.1]{STdist}), de l'exactitude \`a gauche du foncteur $J_{B_p}$ (\cite[Lem.~3.4.7]{EmertonJacquetI}) et de l'isomorphisme \eqref{invariantsdual}.
\end{proof}

\subsection{Premi\`eres propri\'et\'es}

\begin{prop}\label{egalitevectan}
L'inclusion $\Pi_\infty^{R_\infty-{\rm an}}\subset\Pi_\infty^{S_\infty-{\rm an}}$ provenant du morphisme $S_\infty\rightarrow R_\infty$ est une \'egalit\'e.
\end{prop}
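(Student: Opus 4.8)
The plan is to compare the two spaces through the explicit description \eqref{isodual} of the dual of the locally analytic vectors. First, the inclusion $\Pi_\infty^{R_\infty-\mathrm{an}}\subseteq\Pi_\infty^{S_\infty-\mathrm{an}}$ is clear: picking a presentation $\mathcal{O}_L\dbl\mathbb{Z}_p^s\dbr\twoheadrightarrow R_\infty$ whose restriction to the first $q$ factors is the structural morphism $\mathcal{O}_L\dbl\mathbb{Z}_p^q\dbr=S_\infty\to R_\infty$, a vector which is locally analytic for $\mathbb{Z}_p^s\times G_p$ is in particular locally analytic for the subgroup $\mathbb{Z}_p^q\times G_p$, hence $S_\infty$-analytic. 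For the converse I apply \eqref{isodual} with $G_0=K_p$ and $\Pi_\infty'=M_\infty[1/p]$, which gives
\begin{equation*}
(\Pi_\infty^{R_\infty-\mathrm{an}})'\cong (R_\infty^{\mathrm{rig}}\widehat{\otimes}_L D(K_p,L))\otimes_{R_\infty\dbl K_p\dbr}M_\infty[1/p]
\end{equation*}
and the analogous formula over $S_\infty$, the inclusion of locally analytic vectors dualising to the evident natural morphism from the $S_\infty$-side to the $R_\infty$-side. It therefore suffices to prove that this morphism
\begin{equation*}
(S_\infty^{\mathrm{rig}}\widehat{\otimes}_L D(K_p,L))\otimes_{S_\infty\dbl K_p\dbr}M_\infty[1/p]\longrightarrow (R_\infty^{\mathrm{rig}}\widehat{\otimes}_L D(K_p,L))\otimes_{R_\infty\dbl K_p\dbr}M_\infty[1/p]
\end{equation*}
is an isomorphism, and then to conclude by biduality, exactly as in the discussion of independence of the presentation following \eqref{isodual}.

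The essential input is Theorem \ref{TaylorWiles}(i): $M_\infty$ is a projective $S_\infty\dbl K_p\dbr$-module of finite type. Combined with the flatness of the locally analytic distribution algebra over the Iwasawa algebra (used already in the proof of Lemma \ref{isom}), this shows that the left-hand side $(\Pi_\infty^{S_\infty-\mathrm{an}})'$ is a projective module of finite type over the Fr\'echet--Stein algebra $\mathcal{D}_S:=S_\infty^{\mathrm{rig}}\widehat{\otimes}_L D(K_p,L)\cong D(\mathbb{Z}_p^q\times K_p,L)$. Since the action of $R_\infty$ on $M_\infty$ commutes with that of $S_\infty\dbl K_p\dbr$ (hence with $\mathbb{Z}_p^q\times G_p$), it induces a continuous $\mathcal{D}_S$-linear action of $R_\infty$ on this module. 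The plan is to show that this action automatically extends to a unique continuous action of $R_\infty^{\mathrm{rig}}\widehat{\otimes}_L D(K_p,L)$: in a Fr\'echet--Stein presentation $\mathcal{D}_S=\varprojlim_n\mathcal{D}_{S,n}$ the module at level $n$ is a finite projective $\mathcal{D}_{S,n}$-module, the image of $R_\infty$ in its $\mathcal{D}_{S,n}$-linear endomorphisms is bounded, so the $R_\infty$-action extends $\varpi$-adically to $R_{\infty,k(n)}^{\circ}[1/p]$ for a suitable $k(n)$, and taking the inverse limit (with $k(n)$ increasing) produces the desired action of $R_\infty^{\mathrm{rig}}$, which commutes with the already present $D(K_p,L)$-action. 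One then checks, using \eqref{tenseur} and the fact that $M_\infty$ is already an $R_\infty\dbl K_p\dbr$-module, that $(\Pi_\infty^{S_\infty-\mathrm{an}})'$ equipped with this extended action satisfies the universal property characterising the right-hand side; the resulting $\mathcal{D}_R$-linear map $(R_\infty^{\mathrm{rig}}\widehat{\otimes}_L D(K_p,L))\otimes_{R_\infty\dbl K_p\dbr}M_\infty[1/p]\to(\Pi_\infty^{S_\infty-\mathrm{an}})'$ is then inverse to the morphism above.

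The main difficulty is precisely this extension of the $R_\infty$-action to $R_\infty^{\mathrm{rig}}$ and the attendant bookkeeping with completed tensor products in the inverse limit; it is here that the projectivity --- hence finiteness --- of $M_\infty$ over $S_\infty\dbl K_p\dbr$ is used in an essential way, to keep the levels $\mathcal{D}_{S,n}$-finite projective and to control the limit. The statement genuinely fails without such a finiteness hypothesis: for the morphism $\mathcal{O}_L\to\mathcal{O}_L\dbl t\dbr$, the locally $\mathcal{O}_L$-analytic (that is, locally $G_p$-analytic) vectors of an $\mathcal{O}_L\dbl t\dbr$-admissible representation are in general strictly more numerous than its locally $\mathcal{O}_L\dbl t\dbr$-analytic vectors.
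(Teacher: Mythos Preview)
Your approach is morally on the right track and can be made to work, but it is noticeably less efficient than the paper's argument, and the two steps you flag as the ``main difficulty'' are precisely where the paper's method buys a clean shortcut.

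The paper does not attempt to extend the $R_\infty$-action to $R_\infty^{\rm rig}$ level by level. Instead it introduces the single algebra $A$, the image of $R_\infty\dbl K_p\dbr$ in $\End(M_\infty)$. Because $M_\infty$ is finite over $S_\infty\dbl K_p\dbr$ and the latter is noetherian, $A$ (which embeds in $M_\infty^m$ via evaluation at generators) is automatically a \emph{finite} $S_\infty\dbl K_p\dbr$-algebra. One then applies Lemme~\ref{annexe} twice: both $S_\infty\dbl K_p\dbr^{\rm rig}\otimes_{S_\infty\dbl K_p\dbr}A$ and $R_\infty\dbl K_p\dbr^{\rm rig}\otimes_{R_\infty\dbl K_p\dbr}A$ are isomorphic to $A^{\rm rig}$, and factoring each of your two tensor products through $A$ gives immediately
\[
(\Pi_\infty^{S_\infty-\mathrm{an}})'\;\simeq\; A^{\rm rig}\otimes_A \Pi_\infty'\;\simeq\;(\Pi_\infty^{R_\infty-\mathrm{an}})'.
\]
This replaces both your extension argument and your universal-property verification by a single algebraic identity.

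Concerning your sketch: the boundedness of the image of $R_\infty$ at each Banach level is fine, but the assertion that one may take $k(n)\to\infty$ is not justified as stated; the weakening of the norms $q_n$ on $D(\Z_p^q\times K_p,L)$ does not by itself force the $R_\infty$-analyticity radius to improve. If you try to fill this in rigorously, you are led to observe that the image of $R_\infty$ in $\End_{S_\infty\dbl K_p\dbr}(M_\infty)$ is finite over $S_\infty\dbl K_p\dbr$ and then to invoke a statement equivalent to Lemme~\ref{annexe} --- at which point you have essentially reproduced the paper's proof. Likewise, your final ``universal property'' step, once one checks $\mathcal{D}_R$-equivariance of the comparison map and constructs the inverse, unwinds to the same factorisation through $A^{\rm rig}\otimes_A\Pi_\infty'$. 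Note also that the paper's argument uses only finite generation of $M_\infty$ over $S_\infty\dbl K_p\dbr$, not projectivity.
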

\begin{proof}
Notons $A$ l'image de $R_\infty\dbl K_p\dbr$ dans $\mathrm{End}(M_\infty)$. Fixons $e_1,\cdots,e_m$ une famille de g\'en\'erateurs du $S_\infty\dbl K_p\dbr$-module de type fini $M_\infty$ (voir (i) du Th\'eor\`eme \ref{TaylorWiles}). L'application $f\mapsto (f(e_1),\cdots, f(e_m))$, o\`u $f\in A\subset\mathrm{End}(M_\infty)$, permet d'identifier $A$ \`a un sous-$S_\infty\dbl K_p\dbr$-module de $M_\infty^m$. L'isomorphisme $S_\infty\dbl K_p\dbr\simeq\mathcal{O}_L\dbl\Z_p^q\times K_p\dbr$ ainsi que \cite[Prop. V.2.2.4]{Lazard} (combin\'e au fait que $K_p$ contient un pro-$p$-sous-groupe analytique d'indice fini) montrent que $S_\infty\dbl K_p\dbr$ est un anneau noeth\'erien. On en d\'eduit que $A$ est un $S_\infty\dbl K_p\dbr$-module de type fini. D'apr\`es le Lemme \ref{annexe} en appendice appliqu\'e \`a $S_\infty\dbl K_p\dbr\rightarrow A$ et $R_\infty\dbl K_p\dbr\rightarrow A$, on a $(S_\infty\dbl K_p\dbr)^{\rm rig}\otimes_{S_\infty\dbl K_p\dbr} A\simeq A^{\rm rig}\simeq R_\infty\dbl K_p\dbr^{\rm rig}\otimes_{R_\infty\dbl K_p\dbr}A$, ce qui prouve, par \eqref{isodual} et \cite[Prop.~7.2.4.g]{DeJong}, que
\begin{align*}
\begin{aligned}
(\Pi_\infty^{S_\infty-{\rm an}})'&\simeq (S_\infty^{\rm rig}\widehat{\otimes}_L D(K_p,L))\otimes_{S_\infty\dbl K_p\dbr} \Pi_\infty'\\
&\simeq (S_\infty\dbl K_p\dbr^{\rm rig}\otimes_{S_\infty\dbl K_p\dbr}A)\otimes_A\Pi_\infty'\\
&\simeq A^{\rm rig}\otimes_A \Pi_\infty'\\
&\simeq R_\infty\dbl K_p\dbr^{\rm rig}\otimes_{R_\infty\dbl K_p\dbr}\Pi_\infty'\\
&\simeq (R_\infty^{\rm rig}\widehat{\otimes}_L D(K_p,L))\otimes_{R_\infty\dbl K_p\dbr}\Pi_\infty'\\
&\simeq (\Pi_\infty^{R_\infty-{\rm an}})'.
\end{aligned}
\end{align*}
\end{proof}

Notons $\mathcal{W}_\infty$ l'espace $\Spf(S_\infty)^{\rm rig}\times\widehat{T}_{p,L}^0$ qui joue ici le m\^eme r\^ole que l'espace des poids pour les vari\'et\'es de Hecke. On d\'efinit en effet une application poids $\omega_X$ de $X_p(\rhobar)$ dans $\mathcal{W}_\infty$ comme la compos\'ee de l'inclusion de $X_p(\rhobar)$ dans $\Spf(R_\infty)^{\rm rig}\times\widehat{T}_{p,L}$ avec l'application de $\Spf(R_\infty)^{\rm rig}\times\widehat{T}_{p,L}$ vers $\Spf(S_\infty)^{\rm rig}\times\widehat{T}_{p,L}^0=\mathcal{W}_\infty$ d\'eduite de la structure de $S_\infty$-alg\`ebre de $R_\infty$ et de la restriction $\widehat{T}_{p,L}\rightarrow\widehat{T}_{p,L}^0$. Soit $z\in T_p$ un \'el\'ement tel que $|\alpha(z)|_p<1$ pour toute racine positive $\alpha$, relativement \`a $B_p$, du groupe $G\times_{F^+}(F^+\otimes_{\Q}\Q_p)$. On peut par exemple choisir $z=(z_v)_{v\in S_p}\in\prod_{v\in S_p}T_v$, o\`u $z_v$ d\'esigne l'\'el\'ement diagonal $(\varpi_v^{n-1},\dots,\varpi_v,1)$. Il d\'efinit une application de restriction $\Spf(S_\infty)^{\rm rig}\times\widehat{T}_{p,L}\rightarrow\mathcal{W}_\infty\times\mathbb{G}_{m,L}$, d\'eduite de l'inclusion $z^{\mathbb{Z}}\subset T_p$ et de l'isomorphisme d'espaces analytiques rigides $\widehat{\Z}\simeq\mathbb{G}_m$. On note alors 
\[f:\Spf(R_\infty)^{\rm rig}\times\widehat{T}_{p,L}\longrightarrow \mathcal{W}_\infty\times\mathbb{G}_{m,L}\] 
sa compos\'ee avec le morphisme $\Spf(R_\infty)^{\rm rig}\times\widehat{T}_{p,L}\rightarrow\Spf(S_\infty)^{\rm rig}\times\widehat{T}_{p,L}$ provenant de la structure de $S_\infty$-alg\`ebre de $R_\infty$. Finalement on note $g$ la projection $\mathcal{W}_\infty\times\mathbb{G}_{m,L}\rightarrow\mathcal{W}_\infty$.

Notons $Y$ le sous-groupe ferm\'e de $T_p$ engendr\'e par $T_p^0$ et $z$. Ce sous-groupe v\'erifie les conditions de la discussion pr\'ec\'edant \cite[Prop. 3.2.27]{EmertonJacquetI}. On d\'eduit alors de \emph{loc.~cit.}, de \cite[Prop.~3.2.23]{EmertonJacquetI} et de la Proposition \ref{existencefactorisation} de l'appendice que $J_{B_p}(\Pi_\infty^{R_\infty-{\rm an}})'$ est \'egalement un $\mathcal{O}(\mathcal{W}_\infty\times\mathbb{G}_{m,L})$-module coadmissible en identifiant $\mathcal{W}_\infty\times\mathbb{G}_{m,L}$ \`a $\Spf(S_\infty)^{\rm rig}\times\widehat{Y}$. Comme pr\'ec\'edemment, il s'agit de l'espace des sections globales d'un faisceau coh\'erent $\mathcal{N}_\infty$ de $\mathcal{O}_{\mathcal{W}_\infty\times\mathbb{G}_{m,L}}$-modules. Notons $Z_z(\rhobar)$ le support sch\'ematique de ce faisceau coh\'erent dans $\mathcal{W}_\infty\times\mathbb{G}_{m,L}$. L'application $f$ de $X_p(\rhobar)$ dans $\mathcal{W}_\infty\times\mathbb{G}_{m,L}$ se factorise par $Z_z(\rhobar)$, on note encore $f$ l'application $f:\, X_p(\rhobar)\rightarrow Z_z(\rhobar)$ obtenue ainsi. De m\^eme, on note encore $g$ la restriction de $g$ \`a $Z_z(\rhobar)$. On obtient ainsi un diagramme commutatif
\begin{equation*}
\xymatrix{X_p(\rhobar)\ar^{f}[r]\ar_{\omega_X}[dr]& Z_z(\rhobar)\ar^{g}[d]\\ &\mathcal{W}_\infty.}
\end{equation*}
Les espaces $X_p(\rhobar)$ et $Z_z(\rhobar)$ sont quasi-Stein puisque ce sont des ferm\'es analytiques de $\Spf(R_\infty)\times_L\widehat{T}_{p,L}$ et $\mathcal{W}_\infty\times_L\mathbb{G}_{m,L}$ qui sont eux-m\^emes quasi-Stein.

Rappelons qu'une hypersurface de Fredholm de $\mathcal{W}_\infty\times\mathbb{G}_{m,L}$ est un ferm\'e analytique de la forme $Z(F)$ pour $F\in1+T\mathcal{O}(\mathcal{W}_\infty)\{\{T\}\}$, o\`u
\begin{equation*}
Z(F)=\{(x,t)\in\mathcal{W}_\infty\times\mathbb{G}_{m,L}|\, F(x,t^{-1})=0\},
\end{equation*}
et $\mathcal{O}(\mathcal{W}_\infty)\{\{T\}\}$ d\'esigne l'anneau des s\'eries $\sum_{n\geq0} a_nT^n$ convergeant sur $\mathcal{W}_\infty\times\mathbb{A}_L^1$ (dans \cite[\S4]{ConradIrred}, la notion d'hypersurface de Fredholm est d\'efinie en rempla\c{c}ant $\mathbb{G}_{m,L}$ par $\mathbb{A}_L^1$, mais cela ne change rien puisque l'ensemble des z\'eros d'une telle s\'erie $F$ est contenu dans l'ouvert $\mathcal{W}_\infty\times\mathbb{G}_{m,L}$ de $\mathcal{W}_\infty\times\mathbb{A}_L^1$).

\begin{lemm}\label{Fredholm}
Le ferm\'e analytique sous-jacent \`a $Z_z(\rhobar)$ est une hypersurface de Fredholm de $\mathcal{W}_\infty\times\mathbb{G}_{m,L}$. De plus, il existe un recouvrement admissible $(U'_i)_{i\in I}$ de $Z_z(\rhobar)$ par des affino\"ides $U'_i$ tels que $g$ induise une application finie surjective de $U'_i$ sur un ouvert affino\"ide $W_i$ de $\mathcal{W}_\infty$ et que $U'_i$ soit une composante connexe de $g^{-1}(W_i)$. Enfin, pour un tel recouvrement, pour tout $i\in I$, le module $\Gamma(U'_i,\mathcal{N}_\infty)$ est un $\mathcal{O}_{\mathcal{W}_\infty}(W_i)$-module projectif de type fini.
\end{lemm}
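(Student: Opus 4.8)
L'id\'ee est de reconna\^itre $Z_z(\rhobar)$ comme une hypersurface de Fredholm par la machinerie usuelle des vari\'et\'es spectrales \`a la Coleman, le point d\'elicat \'etant que le faisceau coh\'erent $\mathcal{N}_\infty$ provient, tranche par tranche au-dessus de l'espace des poids $\mathcal{W}_\infty$, d'un module de Banach muni d'un op\'erateur compact, \`a savoir l'action de $z$.

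On commence par remarquer que $\mathcal{W}_\infty=\Spf(S_\infty)^{\rm rig}\times\widehat{T}_{p,L}^0$ est quasi-Stein, donc admet un recouvrement admissible par une suite croissante d'ouverts affino\"ides connexes $W\subset\mathcal{W}_\infty$. Pour chaque tel $W$, la th\'eorie du foncteur de Jacquet d'Emerton appliqu\'ee \`a la repr\'esentation $S_\infty$-admissible $\Pi_\infty^{S_\infty-{\rm an}}=\Pi_\infty^{R_\infty-{\rm an}}$ (Proposition \ref{egalitevectan}) du groupe $\Z_p^q\times G_p$, jointe aux propri\'et\'es de finitude de l'action du mono\"ide d'Atkin--Lehner $T_p^+$ (\cite[\S\ 3.2--3.3]{EmertonJacquetI}) et \`a la projectivit\'e de $M_\infty$ sur $S_\infty\dbl K_p\dbr$ (Th\'eor\`eme \ref{TaylorWiles}(i)), doit fournir un module de Banach $M_W$ sur $\mathcal{O}(W)$, facteur direct d'un module orthonormalisable, muni d'un endomorphisme $\mathcal{O}(W)$-lin\'eaire compact $U_z$ donn\'e par l'action de $z$, ces donn\'ees \'etant compatibles au changement de base $W'\subset W$ au sens o\`u $M_{W'}\simeq M_W\widehat{\otimes}_{\mathcal{O}(W)}\mathcal{O}(W')$. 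De plus, la restriction de $\mathcal{N}_\infty$ \`a $W\times\mathbb{G}_{m,L}$ est le faisceau coh\'erent associ\'e \`a $(M_W,U_z)$ par la construction de Coleman, la variable $t$ (inversible, car la partie de pente finie ne fait intervenir que des valeurs propres non nulles de $U_z$) correspondant \`a l'action de $z$. \emph{C'est le c\oe ur technique de la preuve :} extraire du formalisme d'Emerton cette description concr\`ete par modules de Banach \`a op\'erateur compact, avec bon comportement par changement de base et identification avec $\mathcal{N}_\infty$.

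On d\'eroule alors la th\'eorie spectrale, telle que pr\'esent\'ee dans \cite[\S\ 4]{ConradIrred}~: on pose $F_W(T)=\det(1-T\,U_z\mid M_W)\in 1+T\mathcal{O}(W)\{\{T\}\}$~; ces s\'eries sont compatibles au changement de base et se recollent en $F\in 1+T\mathcal{O}(\mathcal{W}_\infty)\{\{T\}\}$. Un point $x=(w,t)$ appartient au support sch\'ematique $Z_z(\rhobar)$ de $\mathcal{N}_\infty$ si et seulement si la fibre $(\mathcal{N}_\infty)_x$ est non nulle, c'est-\`a-dire si et seulement si $t^{-1}$ est valeur propre de $U_z$ sur la fibre de $M_W$ en $w$, c'est-\`a-dire si et seulement si $F_W(w,t^{-1})=F(w,t^{-1})=0$. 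Le ferm\'e analytique sous-jacent \`a $Z_z(\rhobar)$ est donc \'egal \`a $Z(F)$, qui est une hypersurface de Fredholm de $\mathcal{W}_\infty\times\mathbb{G}_{m,L}$.

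Il reste le recouvrement affino\"ide et la projectivit\'e. Par la structure des hypersurfaces de Fredholm (\cite[\S\ 4]{ConradIrred}), au-dessus d'un affino\"ide connexe $W_i\subset\mathcal{W}_\infty$ assez petit on dispose d'une factorisation de Riesz $F|_{W_i}=Q_iR_i$ avec $Q_i$ un polyn\^ome distingu\'e et $(Q_i,R_i)$ l'id\'eal unit\'e~; l'hypersurface $Z(Q_i)\subset W_i\times\mathbb{G}_{m,L}$ est finie et plate sur $W_i$, c'est une r\'eunion de composantes connexes de $g^{-1}(W_i)=Z(F|_{W_i})$, et ces $Z(Q_i)$, pour $W_i$ et les pentes variables, recouvrent $Z(F)$ de fa\c{c}on admissible~; on prend pour $U'_i$ une composante connexe d'un tel $Z(Q_i)$, et $g$ y induit un morphisme fini et surjectif sur $W_i$. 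Enfin, pour un recouvrement $(U'_i)$ quelconque tel que $g|_{U'_i}$ soit fini et surjectif sur un affino\"ide $W_i$ et $U'_i$ soit une composante connexe de $g^{-1}(W_i)$~: un tel $U'_i$, \'etant fini sur $W_i$ donc \`a $t$ born\'e, est contenu dans une r\'egion de pente $\leq h$ au-dessus de $W_i$~; le projecteur de Riesz associ\'e d\'ecompose $M_{W_i}=M_{W_i}^{\leq h}\oplus M_{W_i}^{>h}$ avec $M_{W_i}^{\leq h}$ projectif de type fini sur $\mathcal{O}(W_i)$ (c'est ici qu'on utilise que $M_{W_i}$ est facteur direct d'un orthonormalisable), et $\Gamma(U'_i,\mathcal{N}_\infty)$ est le facteur direct de $M_{W_i}^{\leq h}$ d\'ecoup\'e par la partie ouverte et ferm\'ee $U'_i$ de $g^{-1}(W_i)$, donc projectif de type fini sur $\mathcal{O}_{\mathcal{W}_\infty}(W_i)$.
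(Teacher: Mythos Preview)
Your outline is correct and follows essentially the same route as the paper. The one point worth flagging is the step you yourself mark as ``le c\oe ur technique'': the paper supplies it via Proposition~\ref{existencefactorisation} in the appendix, which produces over an exhausting family of affinoids $U_h\subset\mathcal{W}_\infty$ Banach modules $V_h$ of type $(Pr)$ with compact operators $z_h$. These are related not by base-change isomorphisms $M_{W'}\simeq M_W\widehat{\otimes}_{\mathcal{O}(W)}\mathcal{O}(W')$ as you state, but by Buzzard's ``link'' maps $\alpha_h,\beta_h$ with $\beta_h\circ\alpha_h=z_h$ and $\alpha_h\circ\beta_h=z_{h+1}\otimes 1_{A_h}$; this weaker structure is what actually falls out of Emerton's construction, and it suffices (via \cite[Lem.~2.12 et 2.13]{Buzzard}) to glue the characteristic series into a global $F\in 1+T\mathcal{O}(\mathcal{W}_\infty)\{\{T\}\}$. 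The remainder of your argument --- identification of $Z_z(\rhobar)$ with $Z(F)$ by computing fibres, the admissible cover via \cite[Thm.~4.6]{Buzzard}, and projectivity from the Riesz decomposition --- matches the paper's proof.
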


\begin{proof}
D'apr\`es la Proposition \ref{existencefactorisation} de l'appendice, il existe un recouvrement admissible de $\mathcal{W}_\infty$ par des ouverts affino\"{\i}des $U_1\subset U_2\subset\cdots\subset U_h\subset\cdots$, des $A_h=\mathcal{O}_{\mathcal{W}_\infty}(U_h)$-modules de Banach de type $(Pr)$, $V_h$, munis d'endomorphismes $A_h$-compacts $z_h$, un sous-groupe compact ouvert $N_0\subset N_p$ et un diagramme commutatif
\begin{equation}\label{compactdiagramme}
\begin{aligned}
\xymatrix{((\Pi_\infty^{R_\infty-{\rm an}})^{N_0})'\ar[r]\ar_{z}[d]&\cdots\ar[r]&V_{h+1}\ar_{z_{h+1}}[d]\ar[r]&V_{h+1}\widehat{\otimes}_{A_{h+1}}A_h\ar_{z_{h+1}\otimes1_{A_h}}[d]\ar^(.7){\beta_h}[r]&V_h\ar[r]\ar^{z_h}[d]\ar_{\alpha_h}[ld]&\cdots\\
((\Pi_\infty^{R_\infty-{\rm an}})^{N_0})'\ar[r]&\cdots\ar[r]&V_{h+1}\ar[r]&V_{h+1}\widehat{\otimes}_{A_{h+1}}A_h\ar[r]^(.7){\beta_h}&V_h\ar[r]&\cdots}
\end{aligned}
\end{equation}
de sorte que les lignes horizontales induisent un isomorphisme topologique \[((\Pi_\infty^{R_\infty-{\rm an}})^{N_0})'\simeq\varprojlim_hV_h\] (en ce qui concerne les notions de $A_h$-module de Banach de type $(Pr)$ et d'endomorphisme $A_h$-compact, nous renvoyons \`a \cite[\S2]{Buzzard}).

Notons $F_h$ la s\'erie caract\'eristique de $z_h$ (cf. par exemple la fin de \cite[\S2]{Buzzard}). Il s'agit d'un \'el\'ement de $A_h\{\{T\}\}$. Les factorisations $z_h=\beta_h\circ\alpha_h$ et $z_{h+1}\otimes1_{A_h}=\alpha_h\circ\beta_h$, ainsi que \cite[Lemma 2.12]{Buzzard} montrent que $F_h$ co\"{\i}ncide avec la s\'erie caract\'eristique de $z_{h+1}\otimes1_{A_h}$. On d\'eduit alors de \cite[Lemma 2.13]{Buzzard} que l'image de $F_{h+1}$ dans $A_h\{\{T\}\}$ co\"{\i}ncide avec $F_h$. On note $F$ l'\'el\'ement $(F_h)_{h\geq1}$ de $\mathcal{O}(\mathcal{W}_\infty)\{\{T\}\}$.

Montrons que le ferm\'e analytique sous-jacent \`a $Z_z(\rhobar)$ est exactement l'ensemble des points $(y,\lambda)\in \mathcal{W}_\infty\times\mathbb{G}_{m,L}$ tels que $F(y,\lambda^{-1})=0$. Fixons un tel $x=(y,\lambda)\in \mathcal{W}_\infty\times\mathbb{G}_{m,L}$ et $h$ tel que $y\in U_h$. Comme l'espace $\mathcal{W}_\infty\times\mathbb{G}_{m,L}$ est quasi-Stein, \cite[Satz 2.4.3]{KiehlAB} montre que la fibre de $\mathcal{N}_\infty$ en $x$ est non nulle si et seulement si $\Gamma(U_h\times\mathbb{G}_{m,L},\mathcal{N}_\infty)\otimes_{\mathcal{O}(U_h\times\mathbb{G}_{m,L})} k(x)\neq0$, o\`u $k(x)$ d\'esigne le corps r\'esiduel de $W_\infty\times_L\mathbb{G}_{m,L}$ en $x$. Les lemmes \ref{qs1} et \ref{qs2} en appendice montrent que l'on a un isomorphisme \[\Gamma(U_h\times_L\mathbb{G}_{m,L},\mathcal{N}_\infty)\simeq\varprojlim_{h'\geq h}(V_{h'}\widehat{\otimes}_{A_{h'}}A_h).\] On d\'eduit alors de l'existence des fl\`eches $\alpha_{h'}$ et $\beta_{h'}$, ainsi que du diagramme \eqref{compactdiagramme}, que la fibre de $\mathcal{N}_\infty$ en $x$ est non nulle si et seulement si $V_h/((z_h-\lambda)V_h+\mathfrak{p}_yV_h)\neq0$, o\`u $\mathfrak{p}_y$ est l'id\'eal maximal de $A_h$ associ\'e au point $y$. Or d'apr\`es \cite[Cor. 22.9]{nfa}, l'application $1-\lambda^{-1} z_h$ est un endomorphisme de Fredholm d'indice $0$ de $V_h/\mathfrak{p}_yV_h$ ($z_h$ \'etant un endomorphisme $A_h$-compact de $V_h$, il induit un endomorphisme compact de $V_h/\mathfrak{p}_yV_h$), ce qui signifie que son noyau et son conoyau sont de m\^eme dimension finie (cf. \cite[\S22]{nfa} pour les notions d'endomorphisme de Fredholm et d'indice). Ainsi $(y,\lambda)\in X_p(\rhobar)$ si et seulement si le noyau de $1-\lambda^{-1}z_h$ agissant sur $V_h/\mathfrak{p}_yV_h$ est non nul, ce qui \'equivaut \`a demander que $\lambda^{-1}$ soit un z\'ero de la s\'erie caract\'eristique de $z_h$ sur $V_h/\mathfrak{p}_yV_h$ (voir \cite[Prop. 3.2]{Buzzard} par exemple). On conclut alors en remarquant que cette s\'erie caract\'eristique est l'image de $F_h\in A_h\{\{T\}\}$ dans $k(y)\{\{T\}\}$ par r\'eduction modulo $\mathfrak{p}_y$ (\cite[Lemma 2.13]{Buzzard}).

D'apr\`es \cite[Thm. 4.6]{Buzzard}, il existe un recouvrement admissible du ferm\'e analytique de $Z_z(\rhobar)$ par des ouverts affino\"ides $U'_i$ tels que l'application $g:\, Z_z(\rhobar)\rightarrow \mathcal{W}_\infty$ induise une application finie surjective $U'_i\rightarrow W_i$ avec $W_i$ un ouvert affino\"ide de $\mathcal{W}_\infty$ et $U'_i$ soit une composante connexe de $g^{-1}(W_i)$. Fixons $i\in I$. La discussion des cinq premiers paragraphes de \cite[\S\ 5]{Buzzard} montre alors que $U_i'$ est un ferm\'e analytique de $W_i\times_L\mathbb{G}_{m,L}$ d\'efini par un id\'eal engendr\'e par un polyn\^ome $Q(T)\in1+T\mathcal{O}_{\mathcal{W}_\infty}(W_i)[T]$ et tel que $F(T)=Q(T)S(T)$ o\`u $S(T)$ est une s\'erie de Fredholm dans $1+T\mathcal{O}_{\mathcal{W}_\infty}(W_i)\{\{T\}\}$ telle que $(Q,S)=1$. On obtient ainsi, d'apr\`es le Lemme \ref{qs1},
\begin{equation}\label{sections}
\Gamma(U_i',\mathcal{N}_\infty)=\varprojlim_{U_h\supset W_i}\left( (V_h\widehat{\otimes}_{A_h\{\{T\}\}}\mathcal{O}_{\mathcal{W}_\infty}(W_i)\{\{T\}\})/Q(T)\right),
\end{equation}
o\`u $T$ agit sur $V_h$ via $z_h^{-1}$. Or on peut d\'ecomposer $V_h\widehat{\otimes}_{A_h}\mathcal{O}_{\mathcal{W}_\infty}(W_i)$ en une somme directe $N_i\oplus F_i$ o\`u $N_i$ est un $\mathcal{O}_{\mathcal{W}_\infty}(W_i)$-module projectif de rang $\deg Q$ sur lequel $Q^*(z_h)$ s'annule et $Q^*(z_h)$ est inversible sur $F_i$ (suivant \cite[\S3]{Buzzard}, $Q^*(T)=T^{\deg Q}Q(T^{-1})$). Ainsi chaque terme du syst\`eme projectif \eqref{sections} est un $\mathcal{O}_{\mathcal{W}_\infty}(W_i)$-module projectif de rang $\deg Q$ et l'existence des fl\`eches $\alpha_h$ ainsi que le fait que $Q^*(z_h)$ soit inversible sur $F_i$ montrent que les fl\`eches de transition du syst\`eme \eqref{sections} sont des isomorphismes. Ainsi $\Gamma(U_i',\mathcal{N}_\infty)$ est un $\mathcal{O}_{\mathcal{W}_\infty}(W_i)$-module projectif de type fini.
\end{proof}

\begin{prop}\label{recouvrement}
Il existe un recouvrement admissible affino\"ide $(U_i)_{i\in I}$ de $X_p(\rhobar)$ tel que pour tout $i$, il existe $W_i$ ouvert affino\"ide de $\mathcal{W}_\infty$ tel que $\omega_X$ induise, en restriction \`a chaque composante irr\'eductible de $U_i$, un morphisme fini surjectif sur $W_i$ et que $\mathcal{O}_{X_p(\rhobar)}(U_i)$ soit isomorphe \`a une $\mathcal{O}_{\mathcal{W}_\infty}(W_i)$-alg\`ebre d'endomorphismes d'un $\mathcal{O}_{\mathcal{W}_\infty}(W_i)$-module projectif de rang fini.
\end{prop}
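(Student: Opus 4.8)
The strategy is to transport the affinoid covering of $Z_z(\rhobar)$ produced by Lemma \ref{Fredholm} back to $X_p(\rhobar)$ through the finite morphism $f\colon X_p(\rhobar)\to Z_z(\rhobar)$, and then match up the coherent sheaves $\mathcal{M}_\infty$ and $\mathcal{N}_\infty$. First I would recall that $\mathcal{N}_\infty$ is, by construction, the coherent sheaf attached to the $\mathcal{O}(\mathcal{W}_\infty\times\mathbb{G}_{m,L})$-module $J_{B_p}(\Pi_\infty^{R_\infty-\mathrm{an}})'$ viewed via $\mathcal{W}_\infty\times\mathbb{G}_{m,L}\simeq\Spf(S_\infty)^{\rm rig}\times\widehat Y$, while $\mathcal{M}_\infty$ is the coherent sheaf on $\Spf(R_\infty)^{\rm rig}\times\widehat T_{p,L}$ attached to the \emph{same} module $J_{B_p}(\Pi_\infty^{R_\infty-\mathrm{an}})'$ viewed over $R_\infty^{\rm rig}\widehat\otimes_L\mathcal{O}(\widehat T_{p,L})$. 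Thus $f_*\mathcal{M}_\infty$ and $\mathcal{N}_\infty$ agree on the level of global sections, and $X_p(\rhobar)$ (resp.\ $Z_z(\rhobar)$) is by definition the schematic support of $\mathcal{M}_\infty$ (resp.\ $\mathcal{N}_\infty$); in particular $f$ maps $X_p(\rhobar)$ into $Z_z(\rhobar)$ and $f$ is a finite morphism, being a closed immersion followed by the finite-type projection $\mathfrak X^\square_{\rhobar_p}\times\mathfrak X^\square_{\rhobar^p}\times\mathbb{U}^g\to \Spf(S_\infty)^{\rm rig}\times\widehat Y$ restricted to the (affinoid-locally) finite schematic supports.

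Next I would take the admissible affinoid covering $(U_i')_{i\in I}$ of $Z_z(\rhobar)$ from Lemma \ref{Fredholm}, with affinoid opens $W_i\subset\mathcal{W}_\infty$ such that $g\colon U_i'\to W_i$ is finite surjective, $U_i'$ is a connected component of $g^{-1}(W_i)$, and $\Gamma(U_i',\mathcal{N}_\infty)$ is a projective $\mathcal{O}_{\mathcal{W}_\infty}(W_i)$-module of finite rank. Set $U_i=f^{-1}(U_i')\subset X_p(\rhobar)$; since $f$ is finite (hence affinoid), each $U_i$ is affinoid, and $(U_i)_{i\in I}$ is an admissible affinoid covering of $X_p(\rhobar)$. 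Because $X_p(\rhobar)$ is the schematic support of $\mathcal{M}_\infty$, the structure sheaf satisfies $\mathcal{O}_{X_p(\rhobar)}(U_i)=\mathcal{O}(U_i')/\mathrm{Ann}\big(\Gamma(U_i',f_*\mathcal{M}_\infty)\big)$, and the coherent sheaf pushed forward is $\Gamma(U_i',\mathcal{N}_\infty)$, a finite projective $\mathcal{O}_{\mathcal{W}_\infty}(W_i)$-module. So $\mathcal{O}_{X_p(\rhobar)}(U_i)$ acts faithfully on this finite projective $\mathcal{O}_{\mathcal{W}_\infty}(W_i)$-module, i.e.\ embeds into its endomorphism algebra; it is then the image of $\mathcal{O}(U_i')$ — or equivalently, since the $R_\infty$-action and the Hecke/torus action are all encoded in $J_{B_p}(\Pi_\infty^{R_\infty-\mathrm{an}})'$, the $\mathcal{O}_{\mathcal{W}_\infty}(W_i)$-subalgebra of $\mathrm{End}_{\mathcal{O}_{\mathcal{W}_\infty}(W_i)}(\Gamma(U_i',\mathcal{N}_\infty))$ generated by the images of $R_\infty$ and $\widehat T_{p,L}$. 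Shrinking $W_i$ if necessary so that $\Gamma(U_i',\mathcal{N}_\infty)$ is genuinely projective (not merely locally so), this exhibits $\mathcal{O}_{X_p(\rhobar)}(U_i)$ as the desired endomorphism algebra.

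It remains to check that $\omega_X$ restricted to each irreducible component of $U_i$ is finite and surjective onto $W_i$. Finiteness is clear: $\omega_X|_{U_i}=g\circ f|_{U_i}$ factors through the finite map $U_i\to U_i'$ followed by the finite map $U_i'\to W_i$, so $\omega_X|_{U_i}$ is finite, hence so is its restriction to any irreducible component $C\subset U_i$; the image $\omega_X(C)$ is then a closed (Zariski-closed) subset of $W_i$ equal to the zero locus of the annihilator of a quotient, and one uses that $\mathcal{O}_{X_p(\rhobar)}(U_i)$ acts faithfully on a \emph{projective} $\mathcal{O}_{\mathcal{W}_\infty}(W_i)$-module together with the going-up/lying-over behaviour of finite morphisms to conclude that each minimal prime of $\mathcal{O}_{X_p(\rhobar)}(U_i)$ contracts to a minimal prime of $\mathcal{O}_{\mathcal{W}_\infty}(W_i)$, i.e.\ $\omega_X(C)=W_i$ (using that $W_i$, being an affinoid open in the smooth space $\mathcal{W}_\infty\simeq\Spf(S_\infty)^{\rm rig}\times\widehat T^0_{p,L}$, is irreducible once chosen small enough). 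The main obstacle I anticipate is the bookkeeping needed to identify $f_*\mathcal{M}_\infty$ with $\mathcal{N}_\infty$ precisely — i.e.\ tracking that the two coherent sheaves come from literally the same coadmissible module $J_{B_p}(\Pi_\infty^{R_\infty-\mathrm{an}})'$ under the two different Fréchet–Stein presentations (over $R_\infty^{\rm rig}\widehat\otimes_L\mathcal{O}(\widehat T_{p,L})$ versus over $\mathcal{O}(\mathcal{W}_\infty\times\mathbb{G}_{m,L})$), so that supports, annihilators and the projectivity statement really do transfer along $f$ — and in guaranteeing projectivity (rather than mere local freeness) after an appropriate further shrinking of the $W_i$.
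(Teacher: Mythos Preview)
Your overall strategy --- pull back the covering $(U_i')$ of Lemma~\ref{Fredholm} along $f$, identify $\Gamma(U_i,\mathcal{M}_\infty)$ with $\Gamma(U_i',\mathcal{N}_\infty)$, and use faithfulness of the action on this finite projective module --- is exactly the paper's. The difference is in the logical order, and this is where your argument has a gap.

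You assert that $f$ is finite (``being a closed immersion followed by the finite-type projection $\ldots$ restricted to the (affinoid-locally) finite schematic supports'') and use this to conclude that each $U_i=f^{-1}(U_i')$ is affinoid. But finiteness of $f$ is not available at this stage: the ambient map $\Spf(R_\infty)^{\rm rig}\times\widehat T_{p,L}\to\Spf(S_\infty)^{\rm rig}\times\widehat Y$ is certainly not finite (neither $R_\infty$ over $S_\infty$ nor $\widehat T_p$ over $\widehat Y$ is), and saying that it becomes finite ``on the schematic supports'' is precisely what the proposition is meant to establish. Indeed, in the paper the finiteness of $f$ is stated only afterwards, in Corollaire~\ref{equidim}, as a consequence of this proposition.

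The paper avoids this circularity by exploiting that both $X_p(\rhobar)$ and $Z_z(\rhobar)$ are quasi-Stein. Lemma~\ref{qs2} computes $\mathcal{O}_{X_p(\rhobar)}(U_i)\simeq\mathcal{O}(X_p(\rhobar))\widehat\otimes_{\mathcal{O}(Z_z(\rhobar))}\mathcal{O}_{Z_z(\rhobar)}(U_i')$ directly, without knowing $U_i$ is affinoid; Lemma~\ref{qs1} then gives $\Gamma(U_i,\mathcal{M}_\infty)\simeq\Gamma(U_i',\mathcal{N}_\infty)$. Since the latter is finite projective over $\mathcal{O}_{\mathcal{W}_\infty}(W_i)$ and the action of $\mathcal{O}_{X_p(\rhobar)}(U_i)$ on it is faithful, $\mathcal{O}_{X_p(\rhobar)}(U_i)$ embeds in a finite $\mathcal{O}_{\mathcal{W}_\infty}(W_i)$-module, hence is itself finite over $\mathcal{O}_{\mathcal{W}_\infty}(W_i)$; affinoidness of $U_i$ and finiteness of $\omega_X|_{U_i}$ then follow. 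So the ``bookkeeping obstacle'' you anticipate is exactly what Lemmas~\ref{qs1} and~\ref{qs2} are designed to handle, and once you invoke them the argument goes through in the order: sheaf identification $\Rightarrow$ faithfulness $\Rightarrow$ finiteness $\Rightarrow$ affinoidness, rather than the reverse. For surjectivity on irreducible components the paper simply cites \cite[Lem.~6.2.10]{CheHecke}, which packages the going-up argument you sketch.
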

\begin{proof}
On fixe $(U_i')_{i\in I}$ un recouvrement affino\"ide admissible de $Z_z(\rhobar)$ comme dans le Lemme \ref{Fredholm}, on pose $U_i=f^{-1}(U_i')$ et $W_i=g(U_i')$. La famille $(U_i)_{i\in I}$ est alors un recouvrement ouvert admissible de $X_p(\rhobar)$. Nous allons prouver que chaque $U_i$ est affino\"ide et que $\mathcal{O}_{X_p(\rhobar)}(U_i)$ est isomorphe \`a une sous-$\mathcal{O}_{\mathcal{W}_\infty}(W_i)$-alg\`ebre de $\End_{\mathcal{O}_{W_\infty}(W_i)}(\Gamma(U_i',\mathcal{N}_\infty))$. Ceci implique en particulier que $\mathcal{O}_{X_p(\rhobar)}(U_i)$ est un $\mathcal{O}_{\mathcal{W}_\infty}(W_i)$-module fini et que l'application $\mathcal{O}_{\mathcal{W}_\infty}(W_i)\rightarrow\mathcal{O}_{X_p(\rhobar)}(U_i)$ est injective. Comme un morphisme fini d'espaces rigides est un morphisme propre, en particulier ferm\'e, on en d\'eduit que $\omega_X$ induit une application finie surjective de $U_i$ sur $W_i$. La surjectivit\'e en restriction \`a chaque composante irr\'eductible est alors une cons\'equence de \cite[Lem.~6.2.10]{CheHecke}.

Posons $M=J_{B_p}(\Pi_\infty^{R_\infty-{\rm an}})'$. D'apr\`es le Lemme \ref{qs2} en appendice, on a un isomorphisme entre $\mathcal{O}_{X_p(\rhobar)}(U_i)$ et $\mathcal{O}(X_p(\rhobar))\widehat{\otimes}_{\mathcal{O}(Z_z(\rhobar))}\mathcal{O}_{Z_z(\rhobar)}(U_i')$. On d\'eduit donc du Lemme \ref{qs1} un isomorphisme de $\mathcal{O}_{Z_z(\rhobar)}(U_i')$-modules
\begin{equation*}
\Gamma(U_i,\mathcal{M}_\infty)\simeq\mathcal{O}_{X_p(\rhobar)}(U_i)\widehat{\otimes}_{\mathcal{O}(X_p(\rhobar))}M\simeq\Gamma(U_i',\mathcal{N}_\infty).
\end{equation*}
Ainsi, $\Gamma(U_i,\mathcal{M}_\infty)$ est en particulier un $\mathcal{O}_{\mathcal{W}_\infty}(W_i)$-module projectif de type fini par le Lemme \ref{Fredholm}. Comme $X_p(\rhobar)$ est le support de $\mathcal{M}_\infty$, l'action de $\mathcal{O}_{X_p(\rhobar)}(U_i)$ sur $\Gamma(U_i,\mathcal{M}_\infty)$ est fid\`ele, en particulier $\mathcal{O}_{X_p(\rhobar)}(U_i)$ est isomorphe \`a une sous-$\mathcal{O}_{\mathcal{W}_\infty}(W_i)$-alg\`ebre de l'alg\`ebre des endomorphismes $\mathcal{O}_{W_\infty}(W_i)$-lin\'eaires de $\Gamma(U_i,\mathcal{M}_\infty)$.
\end{proof}

\begin{coro}\label{equidim}
L'espace $X_p(\rhobar)$ est \'equidimensionnel de dimension \[g+[F^+:\mathbb{Q}]\frac{n(n+1)}{2}+|S|n^2\] et n'a pas de composante immerg\'ee. De plus le morphisme $f:\, X_p(\rhobar)\rightarrow Z_z(\rhobar)$ est fini et l'image d'une composante irr\'eductible de $X_p(\rhobar)$ par $f$ est une composante irr\'eductible de $Z_z(\rhobar)$.
\end{coro}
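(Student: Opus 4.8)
The plan is to deduce the four assertions from the explicit local structure of $X_p(\rhobar)$ provided by Proposition~\ref{recouvrement}, reading off the numerical dimension from the ``poids'' space $\mathcal{W}_\infty=\Spf(S_\infty)^{\rm rig}\times\widehat{T}_{p,L}^0$. First I would record that $\mathcal{W}_\infty$ is smooth and equidimensional of dimension exactly $d:=g+[F^+:\Q]\tfrac{n(n+1)}{2}+|S|n^2$: indeed $\Spf(S_\infty)^{\rm rig}\cong\mathbb{U}^q$ with $q=g+[F^+:\Q]\tfrac{n(n-1)}{2}+|S|n^2$, while $\widehat{T}_{p,L}^0$ parametrises the continuous characters of $T_p^0\cong\prod_{v\in S_p}(\mathcal{O}_{F_{\tilde v}}^\times)^n$, a compact $p$-adic Lie group of dimension $\sum_{v\in S_p}n[F^+_v:\Q_p]=n[F^+:\Q]$, so $\widehat{T}_{p,L}^0$ is smooth equidimensional of that dimension; since $\tfrac{n(n-1)}{2}+n=\tfrac{n(n+1)}{2}$ the two contributions add up to $d$, and every non-empty admissible open of $\mathcal{W}_\infty$ then has pure dimension $d$.

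Next I would run a torsion-freeness argument locally. Take the admissible affinoid cover $(U_i)_{i\in I}$ of $X_p(\rhobar)$ and the affinoid opens $W_i\subset\mathcal{W}_\infty$ of Proposition~\ref{recouvrement}, and (refining if necessary) assume each $W_i$ connected, hence irreducible by smoothness, so that $R_i:=\mathcal{O}_{\mathcal{W}_\infty}(W_i)$ is a regular domain of dimension $d$. By \emph{loc.\ cit.}\ the structure map $R_i\to A_i:=\mathcal{O}_{X_p(\rhobar)}(U_i)$ is finite and injective ($\omega_X$ being surjective on each irreducible component of $U_i$ and $W_i$ reduced), and $A_i$ embeds into $\End_{R_i}(N_i)$ where $N_i:=\Gamma(U_i,\mathcal{M}_\infty)$ is a finite projective $R_i$-module on which $A_i$ acts faithfully (since $X_p(\rhobar)$ is the schematic support of $\mathcal{M}_\infty$). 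As $\End_{R_i}(N_i)$ is finite projective over $R_i$ it is $R_i$-torsion free, hence so is $A_i$, whence $\mathrm{Ass}_{R_i}(A_i)=\{(0)\}$; since $A_i$ is finite over $R_i$, every $\mathfrak{q}\in\mathrm{Ass}_{A_i}(A_i)$ satisfies $\mathfrak{q}\cap R_i=(0)$, so $R_i\hookrightarrow A_i/\mathfrak{q}$ is a finite injection of domains and $\dim A_i/\mathfrak{q}=\dim R_i=d=\dim A_i$. Thus $A_i$ is equidimensional of dimension $d$ with no embedded prime; passing to completed local rings (affinoid algebras being excellent) shows that $X_p(\rhobar)$ is equidimensional of dimension $d$ and has no immersed component. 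The same argument, applied to $\mathcal{O}_{Z_z(\rhobar)}(U_i')\hookrightarrow\End_{R_i}(\Gamma(U_i',\mathcal{N}_\infty))$ via Lemma~\ref{Fredholm}, shows that the Fredholm hypersurface $Z_z(\rhobar)$ is likewise equidimensional of dimension $d$.

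For the finiteness of $f$, I would use the cover $(U_i')$ of $Z_z(\rhobar)$ from Lemma~\ref{Fredholm}, for which $U_i=f^{-1}(U_i')$ is affinoid: in the chain $\mathcal{O}_{\mathcal{W}_\infty}(W_i)\to\mathcal{O}_{Z_z(\rhobar)}(U_i')\to\mathcal{O}_{X_p(\rhobar)}(U_i)$ the first map is finite (Lemma~\ref{Fredholm}) and the composite is finite (Proposition~\ref{recouvrement}), so $\mathcal{O}_{X_p(\rhobar)}(U_i)$ is a finite $\mathcal{O}_{Z_z(\rhobar)}(U_i')$-module; since finiteness is local on the target, $f$ is finite. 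Finally, let $C$ be an irreducible component of $X_p(\rhobar)$; then $\dim C=d$, and $f$ being finite (hence closed) the image $f(C)$ is closed and irreducible with $\dim f(C)=\dim C=d$ because $f|_C\colon C\to f(C)$ is finite surjective. Being a $d$-dimensional closed irreducible subset of the $d$-equidimensional space $Z_z(\rhobar)$, $f(C)$ is contained in, and therefore equals, one of its irreducible components.

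The only non-formal ingredient is the torsion-freeness step, which rests entirely on the input from Proposition~\ref{recouvrement} that $\mathcal{M}_\infty$ restricts over each $U_i$ to a \emph{projective} module of finite rank over the \emph{regular} ring $\mathcal{O}_{\mathcal{W}_\infty}(W_i)$, faithful over $\mathcal{O}_{X_p(\rhobar)}(U_i)$; once this is granted, equidimensionality, the absence of embedded components, and the behaviour of irreducible components under $f$ are purely commutative-algebra consequences. I expect the only delicate (but routine) point to be the passage, via excellence of affinoid algebras, from ``all irreducible components of $U_i$ have dimension $d$'' to ``$\widehat{\mathcal{O}}_{X_p(\rhobar),x}$ is equidimensional of dimension $d$'' at every point $x$.
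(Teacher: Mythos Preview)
Your proof is correct and follows essentially the same strategy as the paper's: both exploit Proposition~\ref{recouvrement} to realise $\mathcal{O}_{X_p(\rhobar)}(U_i)$ as a subalgebra of the endomorphism ring of a finite projective module over the regular domain $\mathcal{O}_{\mathcal{W}_\infty}(W_i)$, and from torsion-freeness deduce that every associated prime is minimal of the correct dimension. The only cosmetic differences are that the paper carries out the argument directly on the completed local rings $\widehat{\mathcal{O}}_{X_p(\rhobar),x}$ rather than first at the affinoid level, and that for the final assertion it bounds $\dim f(C)$ from below via $\omega_X=g\circ f$ (using $\dim Z_z(\rhobar)=\dim\mathcal{W}_\infty$ as a Fredholm hypersurface) instead of separately establishing equidimensionality of $Z_z(\rhobar)$ as you do.
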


\begin{proof}
Consid\'erons un recouvrement de $X_p(\rhobar)$ par des ouverts affino\"{\i}des $U$ comme dans la Proposition \ref{recouvrement}. Pour tout $U$, posons $V=\omega_X(U)$, alors $B=\mathcal{O}_{X_p(\rhobar)}(U)$ est une A=$\mathcal{O}_{\mathcal{W}_\infty}(V)$-alg\`ebre agissant fid\`element sur un $A$-module projectif de type fini $M$. Ceci implique que si $y$ est un point de $U$ d'image $x$ dans $V$, l'anneau $\widehat{B}_y$ est isomorphe \`a un sous-$\widehat{A}_x$-module d'un $\widehat{A}_x$-module libre de type fini (on utilise le fait que l'anneau $\widehat{A}_x$ est complet donc hens\'elien). Si $\mathfrak{p}$ est un id\'eal premier associ\'e de $\widehat{B}_y$, il existe par d\'efinition un plongement de $\widehat{B}_y/\mathfrak{p}$ dans $\widehat{B}_y$. Comme $\widehat{A}_x$ est int\`egre, on en d\'eduit que le $\widehat{A}_x$-module $\widehat{B}_y/\mathfrak{p}$ est sans $\widehat{A}_x$-torsion, c'est donc un $\widehat{A}_x$-module de dimension $\dim(\widehat{A}_x)$. La Proposition 16.1.9. de \cite{EGAIV1} montre alors que $\widehat{B}_y/\mathfrak{p}$ est un $\widehat{B}_y$-module de dimension $\dim(\widehat{A}_x)$. En particulier, $\widehat{B}_y$ est \'equidimensionnel et sans composante immerg\'ee, de dimension
\begin{align*}
\begin{aligned}
\dim(\widehat{A}_x)&=\dim(\mathcal{W}_\infty)\\
&=q+n[F^+:\Q]\\
&=g+n^2|S|+[F^+:\Q]\frac{n(n-1)}{2}+n[F^+:\Q]\\
&=g+n^2|S|+[F^+:\Q]\frac{n(n+1)}{2}.
\end{aligned}
\end{align*}
Par ailleurs, l'application $\omega_X$ de $U$ dans $V$ est surjective (voir la preuve de la Proposition \ref{recouvrement}). Comme $f$ est par ailleurs fini, l'image d'une composante irr\'eductible de $X_p(\rhobar)$ par $\omega_X$ est une composante irr\'eductible de $\mathcal{W}_\infty$. On en d\'eduit que  l'image d'une telle composante par $f$ est n\'ecessairement de dimension sup\'erieure ou \'egale \`a $\dim(\mathcal{W}_\infty)=\dim(Z_z(\rhobar))$, il s'agit donc d'une composante irr\'eductible de $Z_z(\rhobar)$.
\end{proof}

\begin{coro}\label{ouvert}
L'image d'une composante irr\'eductible de $X_p(\rhobar)$ par le morphisme $\omega_X$ est un ouvert de Zariski de $\mathcal{W}_\infty$.
\end{coro}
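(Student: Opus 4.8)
The plan is to reduce everything to the structure of $Z_z(\rhobar)$ as a Fredholm hypersurface. Recall the commutative triangle with $f\colon X_p(\rhobar)\to Z_z(\rhobar)$, $g\colon Z_z(\rhobar)\to\mathcal{W}_\infty$ and $\omega_X=g\circ f$. Fix an irreducible component $X$ of $X_p(\rhobar)$. By Corollaire \ref{equidim} the morphism $f$ is finite and $Z:=f(X)$ is an irreducible component of $Z_z(\rhobar)$; since $\omega_X=g\circ f$ we get $\omega_X(X)=g(Z)$, so it is enough to show that the image under $g$ of an irreducible component of $Z_z(\rhobar)$ is a Zariski open of $\mathcal{W}_\infty$. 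I note that this last statement only involves underlying reduced spaces, so the possible non-reducedness of $X_p(\rhobar)$ and $Z_z(\rhobar)$ plays no role here.

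To handle the reduced question I would invoke the structure theory of Fredholm hypersurfaces over a normal base. By Lemme \ref{Fredholm} the analytic set underlying $Z_z(\rhobar)$ is a Fredholm hypersurface in $\mathcal{W}_\infty\times\mathbb{G}_{m,L}$, which one may equally regard as a Fredholm hypersurface in $\mathcal{W}_\infty\times\mathbb{A}_L^1$ since all its points lie in $\mathcal{W}_\infty\times\mathbb{G}_{m,L}$. The base $\mathcal{W}_\infty=\Spf(S_\infty)^{\rm rig}\times\widehat{T}_{p,L}^0$ is smooth (up to a disjoint union of connected components it is a product of open unit balls), hence normal, so it admits an admissible covering by normal affinoid opens. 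For such a base the analysis of \cite[\S\ 4]{ConradIrred} applies: over a connected normal affinoid open $W\subset\mathcal{W}_\infty$ one factors the defining Fredholm series as a product of irreducible Fredholm series, each of whose zero loci is finite and flat over the Zariski open of $W$ where the relevant leading datum does not degenerate; these loci glue to give the irreducible components of $Z_z(\rhobar)$, and the projection to $\mathcal{W}_\infty$ carries each of them onto a Zariski open. Applying this to $Z$ yields that $g(Z)=\omega_X(X)$ is a Zariski open of $\mathcal{W}_\infty$.

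The formal reduction of the first paragraph is immediate from Corollaire \ref{equidim}, so the real content, and the main obstacle, is this appeal to Conrad's description of Fredholm hypersurfaces, together with the (easy) verification that $\mathcal{W}_\infty$ is normal. If one wished to avoid citing \cite{ConradIrred} one could instead argue from Proposition \ref{recouvrement}: on each affinoid $U_i$ of the admissible covering, $\omega_X$ is finite on every irreducible component of $U_i$ and maps it onto the smooth connected affinoid $W_i$, whence $\omega_X(X)$ is the union of those $W_i$ with $X\cap U_i\neq\emptyset$; but converting this admissible open into a Zariski open again requires the Newton polygon / flatness input of the Fredholm structure, i.e.\ essentially reproves the needed case of Conrad's result. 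I would therefore simply quote \cite[\S\ 4]{ConradIrred}.
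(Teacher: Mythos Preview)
Your proof is correct and follows essentially the same route as the paper: reduce via Corollaire~\ref{equidim} to showing that $g$ carries an irreducible component of $Z_z(\rhobar)$ onto a Zariski open, then invoke the structure theory of Fredholm hypersurfaces. The paper phrases the second step slightly differently, citing \cite[Thm.~4.2.2]{ConradIrred} for the fact that an irreducible component of a Fredholm hypersurface is again a Fredholm hypersurface, and then the proof of \cite[Cor.~6.4.4]{CheHecke} for the Zariski-openness of its image; your sketch of the Newton-polygon/leading-coefficient argument is exactly what underlies both references, so the content is the same.
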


\begin{proof}
Le corollaire \ref{equidim} montre qu'il suffit de prouver que l'image d'une composante irr\'eductible de $Z_z(\rhobar)$ par $g$ est un ouvert Zariski de $\mathcal{W}_\infty$. C'est alors une cons\'equence de la preuve de \cite[Cor. 6.4.4]{CheHecke} puisque $Z_z(\rhobar)$ est une hypersurface de Fredholm par le Lemme \ref{Fredholm}, et que d'apr\`es \cite[Thm. 4.2.2]{ConradIrred}, une composante irr\'eductible d'une hypersurface de Fredholm est une hypersurface de Fredholm.
\end{proof}

\subsection{S\'erie principale localement $\Q_p$-analytique de $G_p$}

Le groupe $G_p$ est le groupe des $\Q_p$-points du groupe $\Q_p$-alg\'ebrique $\prod_{v\in S_p}\Res_{F_v^+/\Q_p}\GL_{n,F_v^+}$, qui est un groupe r\'eductif quasi-d\'eploy\'e. Le sous-groupe $B_p$ est le groupe des $\Q_p$-points du $\Q_p$-sous-groupe de Borel $\prod_{v\in S_p}\Res_{F_v^+/\Q_p}B_{F_v^+}$. Dans la suite, on appelle sous-groupe parabolique standard le groupe des $\Q_p$-points d'un sous-groupe parabolique de $\prod_{v\in S_p}\Res_{F_v^+/\Q_p}\GL_{n,F_v^+}$ contenant $\prod_{v\in S_p}\Res_{F_v^+/\Q_p}B_{F_v^+}$ (un tel groupe \'etant automatiquement d\'efini sur $\Q_p$). Si $P$ est un sous-groupe parabolique standard, on note $\overline{P}$ le groupe des $\Q_p$-points du sous-groupe parabolique oppos\'e.

On note $\mathfrak{g}$, $\mathfrak{b}$, $\overline{\mathfrak{b}}$, $\mathfrak{p}$, $\overline{\mathfrak{p}}$ les $\Q_p$-alg\`ebres de Lie des groupes $G_p$, $B_p$, $\overline{B}_p$, $P_p$ et $\overline{P}_p$, lorsque $P_p$ est un sous-groupe parabolique de $G_p$ contenant $B_p$ et $\overline{P}_p$ le parabolique oppos\'e (relativement \`a $T_p$). Si $P_p$ est un tel sous-groupe parabolique, on note $L_P$ son sous-groupe de Levi contenant $T_p$. Si $\mathfrak{h}$ est une $\Q_p$-alg\`ebre de Lie, on note $\mathfrak{h}_L=\mathfrak{h}\otimes_{\Q_p}L$. Comme $L$ est choisie de sorte que $\dim_L\Hom(F^+\otimes_{\Q}\Q_p,L)=[F^+:\Q]$, on a une d\'ecomposition $\mathfrak{g}_L\simeq\bigoplus_{\tau\in\Hom(F^+,L)}\mathfrak{g}_\tau$, o\`u $\mathfrak{g}_\tau=\mathfrak{g}\otimes_{F^+,\tau}L$. Le groupe de Weyl de $\mathfrak{g}_L$ se d\'ecompose alors canoniquement en un produit $\prod_{\tau\in\Hom(F^+,L)}W_\tau$, o\`u $W_\tau$ est le groupe de Weyl de $\mathfrak{g}_\tau$.

 Les r\'esultats et preuves des \S2 et \S3 de \cite{BreuilAnalytiqueI} restent valables \emph{mutatis mutandis} dans ce contexte. On peut d\'efinir la cat\'egorie $\mathcal{O}_{\rm alg}^{\overline{\mathfrak{p}}}$ de \cite{HumBGG} comme dans \cite[\S2]{BreuilAnalytiqueI} et d\'efinir suivant Orlik et Strauch (\cite{OSJH}), pour toute repr\'esentation lisse de longueur finie $\pi_P$ de $L_P$, et tout objet $M$ de la cat\'egorie $\mathcal{O}_{\rm alg}^{\overline{\mathfrak{p}}}$, une repr\'esentation localement analytique admissible de $G_p$ not\'ee $\mathcal{F}_{\overline{P}_p}^{G_p}(M,\pi_P)$.
 
De m\^eme, si $P$ d\'esigne un sous-groupe parabolique standard de $\prod_{v\in S_p}\Res_{F_v^+/\Q_p}\GL_{n,F_v^+}$ et $M$ un objet de $\mathcal{O}_{\rm alg}^{\overline{\mathfrak{p}}}$, on dit que $P$ est maximal pour $M$, si $M$ n'appartient pas \`a la sous-cat\'egorie pleine $\mathcal{O}_{\rm alg}^{\overline{\mathfrak{q}}}$ pour tout sous-groupe parabolique standard $Q$ contenant strictement $P$.

On a alors le r\'esultat suivant.

\begin{theo}[\cite{OSJH}, \cite{BreuilAnalytiqueI}]\label{OSJH}
(i) La construction $(M,\pi_P)\mapsto\mathcal{F}_{\overline{P}_p}^{G_p}(M,\pi_P)$ est fonctorielle et exacte en les deux arguments.\\
(ii) Si $M$ est un $U(\mathfrak{g}_L)$-module simple, $P$ est maximal pour $M$ et $\pi_P$ est irr\'eductible, la repr\'esentation $\mathcal{F}_{\overline{P}_p}^{G_p}(M,\pi_P)$ est irr\'eductible.
\end{theo}

Rappelons que $(-)^\vee$ d\'esigne le foncteur de dualit\'e sur la cat\'egorie $\mathcal{O}_{\rm alg}^{\overline{\mathfrak{b}}}$ (cf.\cite[\S3.2]{HumBGG} et \cite[\S9.3]{HumBGG})

Si $H\subset G_p$ est un sous-groupe et $\pi$ une repr\'esentation de $P_p$ sur un $L$-espace vectoriel, on note $\Ind_H^{G_p}(\pi)$ l'induite lisse non normalis\'ee de $\pi$, c'est-\`a-dire le $L$-espace vectoriel des fonctions localement constantes $f:\,G_p\rightarrow \pi$ telles que $f(hg)=h\cdot f(g)$ pour tout $h\in H$ et $g\in G_p$. Il est muni de l'action lisse de $G_p$ par translation \`a droite. On note $\cind_H^{G_p}(\pi)$ le sous-espace stable par $G_p$ des fonctions \`a support compact.

\subsection{Densit\'e des points classiques}

Rappelons que, pour $K$ extension finie de $\Q_p$, on a d\'efini \`a la fin du \S\ref{triangulines} un automorphisme $\iota_K$ de l'espace des caract\`eres continus du groupe $T(K)$. On d\'esigne par $\iota$ la version globale de cet automorphisme, c'est-\`a-dire que l'on pose $\iota=(\iota_{F^+_v})_{v\in S_p}$, automorphisme de $\widehat{T}_{p,L}$. 

Soit $x=(y,\delta)\in X_p(\rhobar)(L)\subset(\Spf(R_\infty)^{\rm rig}\times\widehat{T}_{p,L})(L)$. Notons $\mathfrak{p}_y$ l'id\'eal maximal de $R_\infty[\frac{1}{p}]$ correspondant au point $y\in\Spf(R_\infty)^{\rm rig}(L)=\Hom(R_\infty[\frac{1}{p}],L)$ et posons, pour le moment, $\Pi=\Pi_\infty[\mathfrak{p}_y]$. De la caract\'erisation des points de $X_p(\rhobar)$ donn\'ee par la Proposition \ref{points}, on d\'eduit que
\begin{equation*}
\Hom_{T_p}({\rm \delta},J_{B_p}(\Pi^{\rm an})))\neq0.
\end{equation*}
 Supposons que $\delta$ soit un caract\`ere localement alg\'ebrique de $T_p$ de poids dominant $\lambda\in\prod_{v\in S_p}(\mathbb{Z}^n)^{[F_v^+:\Q_p]}$. Par d\'efinition, la repr\'esentation $\Pi^{\rm an}$ est tr\`es fortement admissible (cf.~\cite[Def.~0.12]{EmertonJacquetII}), on peut donc appliquer le Th\'eor\`eme 4.3 de \cite{BreuilAnalytiqueII} qui donne un isomorphisme
\begin{equation}\label{recfrob}
\Hom_{G_p}\left(\mathcal{F}_{\overline{B}_p}^{G_p}((U(\mathfrak{g}_L)\otimes_{U(\bar{\mathfrak{b}}_L)}(-\lambda))^\vee,\delta_{B_p}^{-1}\delta\delta_{\lambda}^{-1}),\Pi^{\rm an}\right)\simeq\Hom_{T_p}(\delta,J_{B_p}(\Pi^{\rm an})).
\end{equation}
L'apparition du caract\`ere module $\delta_{B_P}^{-1}$ par rapport \`a \emph{loc.~cit.} vient du fait que nous avons choisi la param\'etrisation d'Emerton pour le foncteur $J_{B_p}$.

Notons $L(\lambda)$ le $U(\mathfrak{g}_L)$-module simple de plus haut poids $\lambda$ relativement \`a $\mathfrak{b}_L$. Comme $\lambda$ est dominant, il s'agit d'un $L$-espace vectoriel de dimension finie. De plus, l'action de $\mathfrak{g}$ sur $L(\lambda)$ s'\'etend de fa\c{c}on unique en une repr\'esentation alg\'ebrique du groupe $G_p$. Par ailleurs, il existe une injection de $U(\mathfrak{g}_L)$-modules $L(\lambda)'\hookrightarrow(U(\mathfrak{g}_L)\otimes_{U(\bar{\mathfrak{b}}_L)}(-\lambda))^\vee$. Par le (i) du Th\'eor\`eme \ref{OSJH}, cette injection induit une surjection continue entre repr\'esentations localement analytiques admissibles de $G_p$
\begin{align*}
\begin{aligned}
\mathcal{F}_{\overline{B}_p}^{G_p}((U(\mathfrak{g}_L)\otimes_{U(\bar{\mathfrak{b}}_L)}(-\lambda))^\vee,\delta_{B_p}^{-1}\delta\delta_{\lambda}^{-1})&\twoheadrightarrow\mathcal{F}_{G_p}^{G_p}(L(\lambda)',\Ind_{\overline{B}_p}^{G_p}(\delta_{B_p}^{-1}\delta\delta_{\lambda}^{-1})).
\end{aligned}
\end{align*}

Lorsque la repr\'esentation $\Ind_{\overline{B}_p}^{G_p}(\delta_{B_p}^{-1}\delta\delta_{\lambda}^{-1})$ est irr\'eductible, elle est isomorphe \`a $\Ind_{B_p}^{G_p}(\delta\delta_{\lambda}^{-1})$ en utilisant les isomorphismes \[\Ind_{\overline{B}_p}^{G_p}(\chi)\simeq\Ind_{B_p}^{G_p}(\chi^{w_0})\ \text{et}\ \Ind_{B_p}^{G_p}(\chi)\simeq\Ind_{B_p}^{G_p}(\chi^{w_0}\delta_{B_p})\] entre induites paraboliques lisses, $w_0$ d\'esignant l'\'el\'ement du groupe de Weyl de $G_p$ de longueur maximale (relativement \`a l'ensemble de racines simples d\'efini par le choix de $B_p$).

Au final on obtient une injection
\begin{equation}\label{classicite}
\Hom_{G_p}\left(L(\lambda)\otimes_L\Ind_{\overline{B}_p}^{G_p}(\delta_{B_p}^{-1}\delta\delta_{\lambda}^{-1}),\Pi\right)\hookrightarrow\Hom_{T_p}\left(\delta,J_{B_p}(\Pi^{\rm an})\right).
\end{equation}

\begin{defi}\label{defclassique}
Un point $x=(y,\delta)\in(\Spf(R_\infty)^{\rm rig}\times\widehat{T}_{p,L})(L)$ est dit \emph{classique} de poids $\lambda$ si le membre de gauche de \eqref{classicite} est non nul.
\end{defi}

Si $x=(y,\delta)\in(\Spf(R_\infty)^{\rm rig}\times\widehat{T}_{p,L})(L)$, et $v\in S_p$, on note $y_v$ la composante en $v$ de l'image de $y$ dans $\mathfrak{X}^\square(\rhobar_p)\simeq\prod_{v\in S_p}\mathfrak{X}^\square(\rhobar_{\tilde{v}})$, on note alors $r_{y,v}$ la classe d'isomorphisme de repr\'esentation de $\mathcal{G}_{F_{{\tilde{v}}}}$ correspondant \`a $y_v$. De m\^eme on note $\delta_v$ la composante en $v$ de $\delta$ via la d\'ecomposition $\widehat{T}_p\simeq\prod_{v\in S_p}\widehat{T}_v$.

\begin{prop}\label{tresclassique}
Soit $(y,\iota(\delta))\in X_p(\rhobar)(L)$ un point classique de poids $\lambda$. Supposons que pour tout $v\in S_p$, la repr\'esentation $\Ind_{B_v}^{G_v}(\delta_v\delta_{\lambda_v}^{-1})$ soit absolument irr\'eductible. Alors la repr\'esentation $r_{y,v}$ est cristalline de poids de Hodge-Tate \[(k_{\tau,i}=\lambda_{\tau,i}-(i-1))_{\tau\in\Hom(F^+,L),1\leq i\leq n}.\]

De plus, l'ensemble des valeurs propres du Frobenius cristallin de $r_{y,v}$, est
\begin{equation*}
\big\{\varphi_{v,i}=\delta_{v,i}(\varpi_v)\cdot\prod_{\tau\in\Hom(F_v ^+,L)}\delta_{k_{\tau,i}}(\varpi_v)^{-1}\big|\, 1\leq i\leq n\big\}.
\end{equation*}
Si de plus le raffinement $(\varphi_{v,i})_{1\leq i\leq n}$ est non critique, alors $(y_v,\delta_v)\in X_{\rm tri}^\square(\rhobar_{\tilde{v}})$.
\end{prop}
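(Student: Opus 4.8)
The plan is to read off from the classicality of $(y,\iota(\delta))$ a concrete locally algebraic subrepresentation of $\Pi_\infty[\mathfrak{p}_y]$, to transport it through the patched module $M_\infty$ in order to pin down the local deformations $r_{y,v}$, and finally to use the theory of refinements of crystalline $(\varphi,\Gamma_{F_v^+})$-modules to turn the non-critical refinement into a triangulation. First, unwinding Definition~\ref{defclassique} for the point $(y,\iota(\delta))$, the left-hand side of \eqref{classicite} written for the character $\iota(\delta)$ is non-zero, so there is a non-zero $G_p$-equivariant map $L(\lambda)\otimes_L\Ind_{\overline B_p}^{G_p}(\delta_{B_p}^{-1}\iota(\delta)\delta_\lambda^{-1})\to\Pi_\infty[\mathfrak{p}_y]$. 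Since $\iota$ and the modulus characters modify $\delta$ only by locally algebraic characters with unramified smooth part, the hypothesis that each $\Ind_{B_v}^{G_v}(\delta_v\delta_{\lambda_v}^{-1})$ is absolutely irreducible ensures that this smooth parabolic induction is irreducible; since we aim at a \emph{crystalline} conclusion we are in the case where, via the isomorphisms between smooth parabolic inductions recalled just before \eqref{classicite}, it is an unramified principal series $\pi^{\mathrm{sm}}=\bigotimes_{v\in S_p}\pi_v$ (a ramified smooth part would only yield $r_{y,v}$ potentially crystalline, with the corresponding inertial type). Hence the irreducible locally algebraic representation $L(\lambda)\otimes_L\pi^{\mathrm{sm}}$ embeds into $\Pi_\infty[\mathfrak{p}_y]$, and since $\Hom_{K_p}(W_\lambda,L(\lambda)\otimes_L\pi^{\mathrm{sm}})=(\pi^{\mathrm{sm}})^{K_p}\ne0$, where $W_\lambda$ denotes the algebraic representation of $K_p$ underlying $L(\lambda)$, we obtain $\Hom_{K_p}(W_\lambda,\Pi_\infty[\mathfrak{p}_y])\ne0$.

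Next I would apply the functor $W_\lambda^\circ\otimes_{\mathcal{O}_L\dbl K_p\dbr}(-)$ to $M_\infty$ (for a $K_p$-stable lattice $W_\lambda^\circ\subset W_\lambda$): by Theorem~\ref{TaylorWiles}(i) this produces a finitely generated $R_\infty$-module $M_\infty(W_\lambda)$, projective over $S_\infty$, whose $p$-adic dual recovers the $W_\lambda$-locally algebraic vectors of $\Pi_\infty$; by the previous step together with \eqref{invariantsdual}, $\mathfrak{p}_y$ lies in the support of $M_\infty(W_\lambda)[1/p]$. The crucial point, which is local--global compatibility at $p$ for the patched module, is that the $R_\infty$-action on $M_\infty(W_\lambda)[1/p]$ factors through $R_\infty\otimes_{R^{\rm loc}}\bigl(\widehat{\bigotimes}_{v\in S_p}R_{\rhobar_{\tilde v}}^{\square,{\bf k}_v-{\rm cr}}\,\widehat\otimes\,\widehat{\bigotimes}_{v\in S\backslash S_p}R_{\rhobar_{\tilde v}}^{\bar\square}\bigr)$ with ${\bf k}_v=(\lambda_{\tau,i}-(i-1))_{\tau,i}$, and that the Satake parameter of $\pi_v$ matches, under the normalization of local Langlands fixed before \eqref{classicite} and the construction \cite{Fo}, the semisimplified Weil representation attached to $r_{y,v}$. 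Both statements are obtained exactly as in the construction of \cite[\S~2]{CEGGPS}: at finite level $W_\lambda^\circ\otimes_{\mathcal{O}_L\dbl K_p\dbr}S(U^pK_p,\mathcal{O}_L)_{\mathfrak m}$ computes classical automorphic forms of weight $W_\lambda$ and tame level $U^p$, which are unramified at $p$ and for which classical local--global compatibility (\cite{ChHaII}, together with the characterization of $R_{\rhobar}^{\square,{\bf k}-{\rm cr}}$ by \cite{Kisindef} recalled in \S\ref{deformations}) applies; patching these statements over the Taylor--Wiles primes gives the claim over $R_\infty$. Since $\mathfrak{p}_y$ is in the support of $M_\infty(W_\lambda)$, it follows that $r_{y,v}$ is crystalline with Hodge--Tate weights $k_{\tau,i}=\lambda_{\tau,i}-(i-1)$, and a Satake-parameter computation as in \cite[\S~2.4.2]{BelChe} (carried out with \cite[Cons.~6.2.4]{KPX} and keeping track of $\iota$) identifies the eigenvalues of the linearized crystalline Frobenius of $r_{y,v}$ with the multiset $\{\varphi_{v,i}=\delta_{v,i}(\varpi_v)\prod_{\tau}\delta_{k_{\tau,i}}(\varpi_v)^{-1}\}$.

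Finally, assume the refinement $(\varphi_{v,i})_{1\le i\le n}$ of the crystalline representation $r_{y,v}$ is non-critical. The weight ${\bf k}_v$ being strictly dominant, the dictionary recalled at the end of \S\ref{triangulines} — the bijection between refinements of a crystalline representation with pairwise distinct linearized Frobenius eigenvalues and its parameters, together with the triangulation attached to a refinement as in \cite[\S~2.4.1]{BelChe} via \cite[Cons.~6.2.4]{KPX} — shows that $\delta_v$ is a parameter of $r_{y,v}$, non-criticality being exactly what guarantees that the successive graded pieces of the corresponding triangulation of ${\bf D}^\dagger_{\rm rig}(r_{y,v})$ are $\mathcal{R}_{L,F_v^+}(\delta_{v,1}),\dots,\mathcal{R}_{L,F_v^+}(\delta_{v,n})$ in this order. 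Hence $(r_{y,v},\delta_v)\in U_{\rm tri}^\square(\rhobar_{\tilde v})^{\rm reg}\subset X_{\rm tri}^\square(\rhobar_{\tilde v})$, so a fortiori $(y_v,\delta_v)\in X_{\rm tri}^\square(\rhobar_{\tilde v})$.

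The main obstacle is the second step: one must know that, after isotyping the patched module against the algebraic $K_p$-type $W_\lambda$, it becomes a module over the crystalline framed deformation rings at the places above $p$, with crystalline Frobenius prescribed by the spherical Hecke action, i.e.\ by the smooth part $\pi^{\mathrm{sm}}$. This is not formal from the bare existence statement of Theorem~\ref{TaylorWiles} and must be extracted from the construction of \cite{CEGGPS} by re-running the Taylor--Wiles patching while fixing the type $W_\lambda$, the classical input being local--global compatibility at $p$ for the unramified automorphic representations contributing to $S(U^pK_p,W_\lambda)$. Once this is granted, the first and third steps are essentially formal consequences of the material of \S\ref{deformations} and \S\ref{triangulines}.
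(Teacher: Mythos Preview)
Your approach coincides with the paper's: both extract from classicality a non-zero locally algebraic map into $\Pi_\infty[\mathfrak{p}_y]$, pass to the $K_p$-isotypic space $\Pi_\infty(\lambda)=\Hom_{K_p}(L(\lambda),\Pi_\infty)$ (dually your $M_\infty(W_\lambda)$), invoke \cite{CEGGPS} to force the $R_\infty$-action through the crystalline deformation ring with Satake parameter matching the smooth part, and then use the refinement/parameter dictionary of \cite[\S~2.4]{BelChe} (specifically \cite[Prop.~2.4.7]{BelChe}) for the last assertion.

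Two points where the paper is more precise than your sketch. First, the paper does not re-run the patching with a fixed type: it quotes \cite[Thm.~4.1]{CEGGPS}, which constructs once and for all a ring map $\eta_v:\mathcal{H}(\lambda_v)\to R_{\rhobar_{\tilde v}}^{\square,{\bf k}_v-\mathrm{cr}}[1/p]$ interpolating the local Langlands correspondence, and \cite[Lem.~4.16]{CEGGPS}, which shows that the $R_{\rhobar_{\tilde v}}^\square$-action on $\Pi_\infty(\lambda)$ factors through $R_{\rhobar_{\tilde v}}^{\square,{\bf k}_v-\mathrm{cr}}$ and that the $\mathcal{H}(\lambda_v)$-action is induced via $\eta_v$. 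So your ``main obstacle'' is already handled by results stated in \cite{CEGGPS}; you should cite them rather than suggesting a new patching. Second, the paper makes the Hecke algebra comparison explicit via the isomorphism $L(\lambda)\otimes_L\Ind_{B_p}^{G_p}(\chi)\simeq \cind_{K_p}^{G_p}(L(\lambda))\otimes_{\mathcal{H}(\lambda)}L(\theta_\chi)$, which turns the existence of the locally algebraic map directly into a $\theta_\chi$-eigenvector in $\Pi_\infty(\lambda)[\mathfrak{p}_y]$; your route through $(\pi^{\mathrm{sm}})^{K_p}\ne0$ reaches the same place but loses the Hecke eigenvalue, which is what pins down the Frobenius eigenvalues. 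Your justification that the smooth part is unramified (``since we aim at a crystalline conclusion'') is circular as written; the paper simply asserts it, reading it off from the fact that $\iota(\delta)$ is locally algebraic of weight $\lambda$.
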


\begin{proof}
Si $x=(y,\iota(\delta))$ est un point classique de poids dominant $\lambda$, le caract\`ere $\chi=\iota(\delta)\delta_\lambda^{-1}$ est lisse et non ramifi\'e. De plus, $\chi$ se d\'ecompose naturellement en un produit tensoriel $\bigotimes_{v\in S_p}\chi_v$, o\`u $\chi_v$ est un caract\`ere lisse et non ramifi\'e de $T_v$. Comme la repr\'esentation $\Ind_{B_v}^{G_v}(\chi_v)$ est absolument irr\'eductible, on a $\chi_{v,i}\chi_{v,j}^{-1}\notin\{|\cdot|_{F_v}^{j-i+1}\}$ pour $i\neq j$ (voir par exemple \cite{KudlaLocLan}). Soit $\mathcal{H}(\lambda)$ l'alg\`ebre de Hecke $\End_{G_p}(\cind_{K_p}^{G_p}L(\lambda))$. Il existe un unique caract\`ere $\theta_\chi:\mathcal{H}(\lambda)\rightarrow L$ tel que
\begin{equation}\label{Satake}
L(\lambda)\otimes_L\Ind_{B_p}^{G_p}(\chi)\simeq\cind_{K_p}^{G_p}(L(\lambda))\otimes_{\mathcal{H}(\lambda)}L(\theta_\chi).
\end{equation}
L'existence d'un tel caract\`ere r\'esulte par exemple de l'isomorphisme du bas de la page 670 de \cite{STBanach} et de la remarque suivant la Conjecture page 673 de \emph{loc.~cit.}.

Par r\'eciprocit\'e de Frobenius, on a un isomorphisme
\begin{equation*}
\Hom_{K_p}(L(\lambda),\Pi_\infty)\simeq\Hom_{G_p}(\cind_{K_p}^{G_p}L(\lambda),\Pi_\infty),
\end{equation*}
qui fournit ainsi une action de l'alg\`ebre commutative $\mathcal{H}(\lambda)$ sur l'espace $\Hom_{K_p}(L(\lambda),\Pi_\infty)$, que l'on note $\Pi_{\infty}(\lambda)$. Comme le point $x$ est classique, il existe un morphisme non nul de $L(\lambda)\otimes_L\Ind_{P_p}^{G_p}(\chi)$ dans $\Pi_\infty[\mathfrak{p}_y]$, l'isomorphisme \eqref{Satake} implique alors qu'il existe un sous-espace non nul de $\Pi_\infty(\lambda)[\mathfrak{p}_y]$ sur lequel $\mathcal{H}(\lambda)$ agit via $\theta_\chi$. Comme le point $x$ est classique de poids $\lambda$, on a $\Pi_\infty(\lambda)[\mathfrak{p}_y]\neq0$. Le Lemme 4.16 de \cite{CEGGPS}, appliqu\'e \`a a repr\'esentation $\sigma=L(\lambda)$, implique alors que pour $v\in S_p$, la repr\'esentation $r_{y,v}$ de $\mathcal{G}_{F_{\tilde{v}}}$ est cristalline de poids ${\bf k}_v=(k_{\tau,i})_{\tau\in\Hom(F^+_v,L),1\leq i\leq n}$ o\`u $k_{\tau,i}=\lambda_{\tau,i}-(i-1)$ et que le Frobenius cristallin lin\'earis\'e de $r_{y,v}$ a pour valeurs propres $(\varphi_{i,v})$ o\`u $\varphi_{i,v}=\chi_i(\varpi_v)|\varpi_v|_v^{i-n}$. Plus pr\'ecis\'ement les d\'ecompositions $G_p=\prod_{v\in S_p}G_v$ et $K_p=\prod_{v\in S_p}K_v$ permettent de d\'ecomposer $\mathcal{H}(\lambda)$ en produit tensoriel $\bigotimes_{v\in S_p}\mathcal{H}(\lambda_v)$ o\`u $\mathcal{H}(\lambda_v)$ d\'esigne l'alg\`ebre des endomorphismes de $\cind_{K_v}^{G_v}L(\lambda_v)$. Le Th\'eor\`eme 4.1 de \cite{CEGGPS} permet alors de construire un unique morphisme de $L$-alg\`ebres $\eta_v:\,\mathcal{H}(\lambda_v)\rightarrow R_{\rhobar_{\tilde{v}}}^{\square,k_v-{\rm cr}}[\frac{1}{p}]$ qui interpole la correspondance de Langlands comme normalis\'ee dans le \S\ref{triangulines}. On peut alors, en raisonnant exactement comme dans la preuve de \cite[Lem.~4.16]{CEGGPS}, prouver que l'action de $R^\square_{\rhobar_{\tilde{v}}}$ sur $\Pi_\infty(\lambda)$ se factorise par $R_{\rhobar_{\tilde{v}}}^{\square,k_v-{\rm cr}}$ et que l'action de $\mathcal{H}(\lambda_v)$ se d\'eduit alors de cette action de $R_{\rhobar_{\tilde{v}}}^{\square,k_v-{\rm cr}}[\frac{1}{p}]$ via $\eta_v$.

Tout ceci implique au final que $r_{y,v}$ est trianguline et a un param\`etre \'egal \`a
\begin{equation*}
\delta_{\lambda_v}\chi_v\cdot(1,\varepsilon^{-1}\circ\rec_{F_v^+},\dots,\varepsilon^{1-n}\circ\rec_{F_v^+})\cdot\delta_{B_v}^{-1}=\delta_v.
\end{equation*}
La derni\`ere assertion est imm\'ediate car, si le raffinement est non critique, le caract\`ere $\delta_v$ est un param\`etre de $r_{y,v}$ d'apr\`es \cite[Prop.~2.4.7]{BelChe}.
\end{proof}

Il est commode d'introduire la notion suivante.

\begin{defi}
Un point $x=(y,\delta)\in X_p(\rhobar)(L)\subset(\Spf(R_\infty)^{\rm rig}\times\widehat{T}_{p,L})(L)$ est dit \emph{tr\`es classique} de poids $\lambda$ si la restriction de $\delta$ \`a $T_p^0$ est un caract\`ere alg\'ebrique de poids entier dominant $\lambda$ et si \[L(\lambda)\otimes_L\Ind_{\overline{B}_p}^{G_p}(\delta\delta_{B_p}^{-1}\delta_\lambda^{-1})\] est le seul sous-quotient irr\'eductible de $\Ind_{\overline{B}_p}^{G_p}(\delta\delta_{B_p}^{-1})^{\rm an}$ sur lequel existe une norme $G_p$-invariante.
\end{defi}

\begin{rema}
La repr\'esentation $\Ind_{\overline{B}_p}^{G_p}(\delta\delta_{B_p}^{-1})^{\rm an}$ a les m\^emes sous-quotients irr\'eductibles que \[\mathcal{F}_{\overline{B}_p}^{G_p}((U(\mathfrak{g}_L)\otimes_{U(\bar{\mathfrak{b}}_p)_L}(-\lambda))^\vee,\delta\delta_\lambda^{-1}\delta_{B_p}^{-1}).\] Il r\'esulte de cette remarque et de \eqref{recfrob} qu'un point tr\`es classique est en particulier classique. L'assertion inverse \'etant fausse !

\end{rema}

\begin{theo}\label{densite}
Les points classiques forment une partie Zariski-dense et d'accumulation de $X_p(\rhobar)$. Plus pr\'ecis\'ement, pour tout point $x$ de $X_p(\rhobar)$ de poids alg\'ebrique et $X$ une composante irr\'eductible de $X_p(\rhobar)$ contenant $x$, il existe un voisinage affino\"ide $U$ de $x$ dans $X$ et une partie $W$ de $\widehat{T}_{p,L}^0$ tels que $\omega_X^{-1}(\Spf(S_\infty)^{\rm rig}\times W)\cap U$ soit Zariski-dense dans $U$ et constitu\'e de points tr\`es classiques, \`a param\`etre r\'egulier non critique.
\end{theo}

\begin{proof}
Soit $x\in X_p(\rhobar)$ de poids alg\'ebrique. La Proposition \ref{recouvrement} nous permet de consid\'erer $U$ un voisinage affino\"ide et connexe de $x$ dans la composante $X$ fix\'ee, tel que $\omega_X(U)$ est un ouvert affino\"ide et la restriction de $\omega_X$ \`a $U$ est une application finie de $U$ sur $\omega_X(U)$. Comme $U$ est affino\"ide, les $n|S_p|$ fonctions analytiques rigides $(y,\delta)\mapsto (\delta\delta_{B_p}^{-1})(\gamma_{\tilde{v},i})$ sont born\'ees sur $U$ (rappelons que l'on a d\'efini $\gamma_{\tilde{v},i}$ comme l'image d'une uniformisante $\varpi_v$ par le cocaract\`ere $\beta_i$ de $\mathbb{G}_{m,F_v^+}$ dans $\GL_{n,F_{\tilde{v}}}\simeq G_{F_v^+}$). Fixons donc $C>0$ tel que
\begin{equation}\label{condition}
-C\leq v_{F_v^+}((\delta_v\delta_{B_v}^{-1})(\gamma_{\tilde{v},i}))\leq C
\end{equation}
pour tout $v\in S_p$, $1\leq i\leq n$ et $(y,\delta)\in U$. Soit $Z$ l'ensemble des caract\`eres alg\'ebriques dominants ${\bf \delta}\in\widehat{T}_{p,L}^0$ de poids $\lambda$ tels que pour tout $\tau:\, F^+\hookrightarrow \Qbar_p$,
\begin{align}\label{genericite}
\begin{aligned}
&\lambda_{1,\tau}-\lambda_{2,\tau}>2(C+1),\\
&\lambda_{i,\tau}-\lambda_{i+1,\tau}> \lambda_{i-1,\tau}-\lambda_{i,\tau}+(C+1),\, {\rm pour}\,i\geq2.
\end{aligned}
\end{align}
Nous allons prouver que tous les points de $\omega_X^{-1}(\Spf(S_\infty)^{\rm rig}\times Z)$ sont classiques et non critiques.

En effet, l'ensemble $Z$ s'accumule en tout caract\`ere alg\'ebrique de $\widehat{T}_{p,L}^0$, on en conclut que si $V$ est un ouvert irr\'eductible de $\widehat{T}_{p,L}^0$ contenant un poids alg\'ebrique, alors $V\cap Z$ est Zariski-dense dans $V$, ce qui implique que $\omega_X(U)\cap(\Spf(S_\infty)^{\rm rig}\times Z)$ est Zariski-dense dans $\omega_X(U)$. Par finitude et surjectivit\'e de $\omega_X|_U$, on en d\'eduit que $\omega_X^{-1}(\Spf(S_\infty)^{\rm rig}\times Z)\cap U$ est Zariski-dense dans l'ouvert irr\'eductible $U$. En effet, si ce n'\'etait pas le cas, par connexit\'e de $U$, le plus petit ferm\'e analytique contenant $\omega_X^{-1}(\Spf(S_\infty)^{\rm rig}\times Z)\cap U$ serait de dimension $<\dim(U)$, donc son image par $\omega_X$ \'egalement. Or, par finitude de $\omega_X:\,U\rightarrow\omega_X(U)$, cette image est un ferm\'e analytique contenant $(\Spf(S_\infty)^{\rm rig}\times Z)\cap\omega_X(U)$, ce qui contredit la Zariski-densit\'e de cet ensemble dans $\omega_X(U)$ et le fait que $\dim(U)=\dim(\omega_X(U))$.

Soit donc $x\in\omega_X^{-1}(\Spf(S_\infty)^{\rm rig}\times Z)\cap U$. Reprenons les notations de la discussion suivant la D\'efinition \ref{defclassique} et de la preuve de la Proposition \ref{tresclassique}.

Nous affirmons que pour tout sous-groupe de Levi standard $L_I$ de $\GL_n$, et pour tout $v\in S_p$, l'induite parabolique lisse $\Ind_{\overline{B}_v\cap L_{I,v}}^{L_{I,v}}(\chi_v\delta_{B_v}^{-1})$ est irr\'eductible. En effet, d'apr\`es \cite[Thm.~4.2]{ZelevinskyInduites}, il suffit pour cela de v\'erifier que pour tout $i < j$ on a \[v_{F^+_v}(\chi_{v,j}(\varpi_v)\chi_{v,i}(\varpi_v)^{-1}|\varpi_v|_v^{j-i})>1,\] c'est-\`a-dire \[v_{F^+_v}(\delta_{v,j}(\varpi_v)\delta_{v,i}(\varpi_v)^{-1} |\varpi_v|_v^{j-i})>1+\sum_{\tau\in\Hom(F^+_v,L)}(\lambda_{\tau,j}-\lambda_{\tau,i}),\] ce qui est une cons\'equence de la condition \eqref{genericite}. Ainsi, pour tout sous-groupe parabolique standard $P$ de $G_p$, si $L_P$ d\'esigne l'unique sous-groupe de Levi de $P$ contenant $T_p$, la repr\'esentation $\Ind_{L_P\cap \overline{B}_p}^{L_P}(\chi\delta_{B_p}^{-1})$, isomorphe \`a $\bigotimes_{v\in S_p}\Ind_{L_{P_v}\cap\overline{B}_v}^{\GL_n(F_v^+)}(\chi_v\delta_{B_v}^{-1})$, est irr\'eductible.

Pour prouver que $x$ est tr\`es classique, il suffit de prouver que $L(\lambda)\otimes_L\Ind_{B_p}^{G_p}(\chi)$ est le seul sous-quotient irr\'eductible de $\Ind_{\overline{B}_p}^{G_p}(\delta\delta_{B_p}^{-1})^{\rm an}$ admettant une norme invariante.

Il r\'esulte du Th\'eor\`eme \ref{OSJH} que les sous-quotients irr\'eductibles de la s\'erie principale localement analytique $\Ind_{\overline{B}_p}^{G_p}(\delta\delta_{B_p}^{-1})^{\rm an}$ sont exactement les $\mathcal{F}_{\overline{P}}^{G_p}(M,\pi)$, o\`u $M$ est un sous-quotient irr\'eductible de $U(\mathfrak{g}_L)\otimes_{U(\bar{\mathfrak{b}}_L)}(-\lambda)$, $\overline{P}$ le sous-groupe parabolique maximal pour $M$ et $\pi$ la repr\'esentation lisse irr\'eductible $\Ind_{\overline{B}_p\cap L_P}^{L_P}(\chi\delta_{\overline{B}_p}^{-1})$. Si $M$ est un tel sous-quotient, alors il existe $w$ dans le groupe de Weyl $W$ de $\mathfrak{g}_L$ tel que $M$ est isomorphe \`a $M_{-w\cdot\lambda}$, le $U(\mathfrak{g}_L)$-module simple de plus haut poids $-w\cdot\lambda$ (relativement au sous-groupe de Borel $\overline{\mathfrak{b}}_L$). Nous allons prouver que si $\mathcal{F}_{\overline{P}}^{G_p}(M_{-w\cdot\lambda},\pi)$ est muni d'une norme stable par $G_p$, alors $w=1$. Si $P$ est un sous-groupe parabolique de $G_p$ contenant $B_p$, notons $L_P$ le facteur de Levi de $P$ qui contient $T$ et $N_P$ le radical unipotent de $P$. De plus on note $Z_P$ le centralisateur de $L_P$ dans $G_p$. On note $Z_P^+$, comme en \cite[(12)]{BreuilAnalytiqueI}, le sous-mono\"ide de $Z_P$ constitu\'e des $g\in Z_P$ tels que $g(N_P\cap K_p)g^{-1}\subset N_P\cap K_p$. Alors, d'apr\`es le corollaire 3.5 de \cite{BreuilAnalytiqueI}, si $\mathcal{F}_{\overline{P}_p}^{G_p}(M_{-w\cdot\lambda},\pi_P)$ admet une norme $G_p$-invariante, on doit avoir
\begin{equation}\label{conditionNorme}
\delta_{w\cdot\lambda}(z)\chi(z)\delta_{B_p}^{-1}(z)\in\mathcal{O}_L
\end{equation}
pour tout $z\in Z_{P}^+$. D\'ecomposons $P_p$ en un produit $P=\prod_{v\in S_p}P_v$ de sous-groupes paraboliques selon la d\'ecomposition $G_p=\prod_{v\in S_p}\GL_n(F_v^+)$. Pour tout $v\in S_p$, le groupe $P_v$ est le groupe des $F_v^+$-points d'un sous-groupe parabolique standard $P_{I_v}$ de $\GL_n$, pour $I_v\subset\Delta$. Si, pour tout $v\in S_p$, on choisit $1\leq i_v\leq n-1$ tel que $i_v\notin I_v$, et que l'on pose $\gamma_{v,i_v}=\beta_{i_v}(\varpi_v)$, l'\'el\'ement $(\gamma_{v,i_v})_{v\in S_p}$ de $G_p$ est en fait dans $Z_P^+$.

D'apr\`es \cite[pages 186-187]{HumBGG}, pour $v\in S_p$, l'ensemble $I_v$ est la plus grande partie de $\Delta$ telle que $w_\tau\cdot\lambda_\tau$ est un poids dominant du groupe alg\'ebrique $L_{P_v}\times_{F_v^+}L$ (relativement au Borel $(B_v\cap L_{P_v})\times_{F_v^+} L$) pour $\tau\in\Hom(F_v^+,L)$. Si $w_v\neq1$, on a $I_v\neq\{1,\dots,n-1\}$ et, d'apr\`es la proposition \ref{racines} appliqu\'ee \`a $\lambda+\rho$, il est possible de trouver une racine simple $\alpha_{v,i_v}\notin I_v$ telle que
\begin{equation*}
\langle w\cdot\lambda-\lambda,\beta_{v,i_v}\rangle\leq-\min_{\tau,i}(\lambda_{\tau,i}-\lambda_{\tau,i+1}).
\end{equation*}

Mais alors, puisque \[\delta_\lambda(\gamma_{v,i_v})=\varpi_v^{\big\langle \sum_{\tau\in\Hom(F_v^+,L)}\lambda_\tau,\beta_{i_v}\big\rangle},\] on a avec \eqref{condition} et \eqref{genericite} :
\begin{align*}
|\delta_{w\cdot \lambda}(\gamma_{v,i_v})\chi(\gamma_{v,i_v})\delta_{B_v}(\gamma_{v,i_v})^{-1}|_L&=|\delta_{w\cdot \lambda-\lambda}(\gamma_{v,i_v})\delta_v(\gamma_{v,i_v})\delta_{B_v}(\gamma_{v,i_v})^{-1}|_L\\
&\geq |\varpi_v|_L^{C-\min_{i,\tau}(\lambda_{\tau,i}-\lambda_{\tau,i+1})}>1.
\end{align*}
Ainsi, en posant $z=(\gamma_{v,i_v})_{v\in S_p}$, on a bien $z\in Z_P^+$, tel que
\begin{equation*}
|\delta_{w\cdot\lambda-\lambda}(z)\chi(z)\delta_{B_p}^{-1}(z)|_L>1,
\end{equation*}
ce qui implique par \eqref{conditionNorme} que $\mathcal{F}_{\overline{P}_p}^{G_p}(M_{-w\cdot\lambda},\pi_P)$ ne peut avoir de norme $G_p$-invariante \`a moins que $w=1$. Ceci prouve que le point $x$ est tr\`es classique. Par ailleurs, les in\'egalit\'es \eqref{genericite} impliquent \'egalement que le raffinement de $r_{y,v}$ d\'eduit de $\delta_v$ est non critique (on peut par exemple comparer ces in\'egalit\'es aux conditions de \cite[Lem.~2.9]{HellmSchrDensity}). Enfin, le param\`etre $\iota^{-1}(\delta)$ est r\'egulier, ou de fa\c{c}on \'equivalente on a $\chi_{i,v}\neq\chi_{j,v}|\cdot|_v^{j-i\pm1}$ pour tous $i \neq j$ et $v\in S_p$, ce qui est une cons\'equence du fait que les repr\'esentations $\Ind_{L_P\cap\overline{B}_p}^{G_p}(\chi)$ sont toutes irr\'eductibles.

Nous avons prouv\'e que l'ensemble des points classiques non critiques est un ensemble d'accumulation dans $X_p(\rhobar)$. Pour conclure qu'il est dense au sens de Zariski dans $X_p(\rhobar)$, il suffit de montrer que toute composante irr\'eductible de $X_p(\rhobar)$ contient un point de poids alg\'ebrique. Or c'est une cons\'equence du Corollaire \ref{ouvert} puisque les poids alg\'ebriques forment une partie dense au sens de Zariski dans $\mathcal{W}_\infty$.
\end{proof}

\begin{coro}\label{reduit}
L'espace analytique $X_p(\rhobar)$ est r\'eduit.
\end{coro}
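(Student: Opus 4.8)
The plan is to reduce the statement to reducedness at a Zariski‑dense set of ``very classical'' points, and then to treat those points via the interpolation of the local Langlands correspondence. By Corollaire~\ref{equidim} the space $X_p(\rhobar)$ has no embedded component; since a Noetherian local ring without embedded associated prime is reduced as soon as its localisations at the minimal primes are reduced, $X_p(\rhobar)$ is reduced as soon as it is reduced at the generic point of each of its irreducible components, hence as soon as it is reduced at one point of each irreducible component. Now the locus where the finite morphism $\omega_X$ is \'etale over the smooth (hence reduced) space $\mathcal{W}_\infty$ is a Zariski‑open subset of $X_p(\rhobar)$, and on it $X_p(\rhobar)$ is reduced. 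By the proof of Th\'eor\`eme~\ref{densite} the very classical points with regular non‑critical parameter are Zariski‑dense in $X_p(\rhobar)$ and every irreducible component contains one; so if $\omega_X$ is \'etale at every such point, this \'etale locus is Zariski‑dense, and $X_p(\rhobar)$, being reduced on a Zariski‑dense open subset and without embedded component, is reduced. It therefore suffices to prove that $\omega_X$ is \'etale at every very classical point $x=(y,\delta)$ with regular non‑critical parameter.

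Fix such a point $x$, of dominant weight $\lambda$, and set $\mathfrak{n}=\omega_X(x)\in\mathcal{W}_\infty(L)$. By Proposition~\ref{recouvrement} there are an affinoid neighbourhood $U$ of $x$ in $X_p(\rhobar)$ and an affinoid open $W=\Sp B$ of $\mathcal{W}_\infty$, with $B$ regular after shrinking, such that $\omega_X$ induces a finite map $U\to W$, such that $M:=\Gamma(U,\mathcal{M}_\infty)$ is a finite projective $B$‑module, and such that $\mathcal{O}_{X_p(\rhobar)}(U)$ is a $B$‑subalgebra of $\End_B(M)$ acting faithfully on $M$. Proving that $\omega_X$ is \'etale at $x$ amounts to proving that the finite local homomorphism $\widehat{\mathcal{O}}_{\mathcal{W}_\infty,\mathfrak{n}}\to\widehat{\mathcal{O}}_{X_p(\rhobar),x}$ is an isomorphism; since the source is a formal power series ring over $L$ of dimension $\dim\mathcal{W}_\infty=\dim X_p(\rhobar)$ (Corollaire~\ref{equidim}), it is enough to show that in a neighbourhood of $x$ the action of $\mathcal{O}_{X_p(\rhobar)}(U)$ on $M$ factors through a ring that is finite \'etale over $\widehat{\mathcal{O}}_{\mathcal{W}_\infty,\mathfrak{n}}$, whence reduced, so that the faithful action forces $\mathcal{O}_{X_p(\rhobar)}(U)$ to embed into that ring.

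This last point is the heart of the argument, and it is where the classicality and non‑criticality of $x$ enter, exactly as in the proof of Proposition~\ref{tresclassique}. By Lemme~\ref{parametregenerique}, for every $v\mid p$ the representation $r_{y,v}$ is crystalline and g\'en\'erique in the sense of D\'efinition~\ref{generique}, with Hodge–Tate weights ${\bf k}_v$ determined by $\lambda$, and $\delta_v$ is locally algebraic and defines a non‑critical refinement. By \cite[Thm.~4.1]{CEGGPS} and the argument of \cite[Lem.~4.16]{CEGGPS}, the action of $R^\square_{\rhobar_{\tilde v}}$ on $\Hom_{K_p}(L(\lambda),\Pi_\infty)$ factors through the reduced quotient $R^{\square,{\bf k}_v-{\rm cr}}_{\rhobar_{\tilde v}}$, and — since $M_\infty$ is a projective $S_\infty\dbl K_p\dbr$‑module (Th\'eor\`eme~\ref{TaylorWiles}(i)) and $L(\lambda)$ is a finite free $\mathcal{O}_L$‑module — the continuous dual of $\Hom_{K_p}(L(\lambda),\Pi_\infty)$ becomes, after inverting $p$, a finite projective $S_\infty[\tfrac{1}{p}]$‑module. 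As $x$ is very classical and non‑critical, the injection \eqref{classicite} is an isomorphism, so near $x$ the coherent sheaf $\mathcal{M}_\infty$ is governed by this locally algebraic piece, and the action of $\mathcal{O}_{X_p(\rhobar)}(U)$ near $x$ factors through a ring finite \'etale over a formal power series extension of $\widehat{\bigotimes}_{v\mid p}R^{\square,{\bf k}_v-{\rm cr}}_{\rhobar_{\tilde v}}[\tfrac{1}{p}]\,\widehat{\otimes}\,\widehat{\bigotimes}_{v\in S\setminus S_p}R^{\bar\square}_{\rhobar_{\tilde v}}[\tfrac{1}{p}]\,\widehat{\otimes}\,\mathcal{O}(\mathbb{U}^g)$ (for a non‑critical refinement the datum of $\delta$ is, near $x$, equivalent to the choice of an ordering of the crystalline Frobenius eigenvalues, and the gap in dimension is exactly the $n[F^+:\Q]$ weight directions). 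This ring is reduced, and even regular at $x$: $R^{\square,{\bf k}_v-{\rm cr}}_{\rhobar_{\tilde v}}[\tfrac{1}{p}]$ is regular at $r_{y,v}$ (the Hodge–Tate weights being distinct) and the passage to the refined/trianguline deformation functor is formally smooth there, in the same way that $U^\square_{\rm tri}(\rhobar_{\tilde v})^{\rm reg}$ is smooth (Th\'eor\`eme~\ref{ouvertsature}(iii)), while $\mathcal{O}(\mathbb{U}^g)$ and the framed rings at the places of $S\setminus S_p$ are smooth at the relevant points. A dimension count then gives $\widehat{\mathcal{O}}_{X_p(\rhobar),x}\simeq\widehat{\mathcal{O}}_{\mathcal{W}_\infty,\mathfrak{n}}$, as wanted. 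I expect the main obstacle to be making precise, near such a very classical non‑critical point, the comparison between $\mathcal{M}_\infty$ and the locally algebraic vectors $\Hom_{K_p}(L(\lambda),\Pi_\infty)$ together with their $R_\infty$‑module structure, and checking the regularity at $x$ of all the deformation rings involved.
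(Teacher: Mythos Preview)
Your opening reduction (no embedded component by Corollaire~\ref{equidim}, hence it suffices to find a reduced point on each irreducible component, and Th\'eor\`eme~\ref{densite} supplies very classical points) coincides with the paper's. The divergence is at the key step: the paper does \emph{not} try to show that $\omega_X$ is \'etale at every very classical non-critical point. Instead it follows the method of \cite[Prop.~3.9]{CheJL}: with $U=\Sp(B)\to V=\Sp(A)$ as in Proposition~\ref{recouvrement} and $M=\Gamma(U,\mathcal{M}_\infty)$, one exhibits a Zariski-dense set of weights $z\in V$ for which the fibre $M\otimes_A k(z)$ is a \emph{semi-simple} $B$-module. Concretely, for each algebraic dominant $\lambda$ in the dense set produced by Th\'eor\`eme~\ref{densite}, the very classical condition forces the relevant $T_p$-eigenspaces to come from $\Pi_\infty(\lambda)=\Hom_{K_p}(L(\lambda),\Pi_\infty)$; by \cite[Lem.~4.16]{CEGGPS} the image of $R_\infty$ in $\End(\Pi_\infty(\lambda))$ is reduced and $\Pi_\infty(\lambda)'$ is projective over $S_\infty[\tfrac{1}{p}]$, so \cite[Lem.~3.10(b)]{CheJL} gives a dense open $V_\lambda\subset V\cap(\Spf(S_\infty)^{\rm rig}\times\{\delta_\lambda\})$ where the $R_\infty$-action on the fibre is semi-simple. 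Since the $\mathcal{H}(\lambda)$-action factors through $R_\infty(\lambda)$, the $T_p^+$-action is then also semi-simple, whence $M\otimes_A k(z)$ is $B$-semi-simple and one concludes as in \cite[Prop.~3.9]{CheJL}.

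Your route has a genuine gap. The assertion that ``near $x$ the coherent sheaf $\mathcal{M}_\infty$ is governed by this locally algebraic piece'' is precisely the hard point, and the isomorphism~\eqref{classicite} is only a pointwise statement: it compares $J_{B_p}(\Pi_\infty[\mathfrak p_y]^{\rm an})^{T_p=\delta}$ with locally algebraic vectors at the single point $x$, not the sheaf $\mathcal{M}_\infty$ in a neighbourhood of $x$. Without a neighbourhood comparison there is no control on $\widehat{\mathcal{O}}_{X_p(\rhobar),x}$ via crystalline deformation rings, and the claimed factorisation of the $\mathcal{O}_{X_p(\rhobar)}(U)$-action through a finite \'etale ring over $\widehat{\mathcal{O}}_{\mathcal{W}_\infty,\mathfrak{n}}$ is unjustified. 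Note also that the paper does eventually prove \'etaleness of $\omega_X$ at certain classical points (Proposition~\ref{etale}), but that proof goes through Th\'eor\`eme~\ref{egalitecomposante}, which itself invokes Corollaire~\ref{reduit}; so importing that argument here would be circular. The paper's semi-simplicity-of-fibres approach is designed exactly to avoid needing any local structural statement about $X_p(\rhobar)$ near $x$ beyond what Proposition~\ref{recouvrement} already provides.
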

\begin{proof}
Il suffit de prouver que chaque composante irr\'eductible de $X_p(\rhobar)$ contient un point $x$ dont l'anneau local est r\'eduit. En effet, soit $\mathcal{I}$ le nilradical de $X_p(\rhobar)$ (cf.~\cite[\S9.5.1]{BGR}). Il s'agit d'un faisceau coh\'erent d'id\'eaux sur $X_p(\rhobar)$. Comme d'apr\`es le Corollaire \ref{equidim}, l'espace $X_p(\rhobar)$ est sans composante immerg\'ee, le support du faisceau $\mathcal{I}$ est une union de composantes irr\'eductibles. Si chaque composante irr\'eductible contient un point r\'eduit, c'est-\`a-dire un point dont l'anneau local compl\'et\'e est r\'eduit, alors $\mathcal{I}=0$ et $X_p(\rhobar)$ est r\'eduit. D'apr\`es le Th\'eor\`eme \ref{densite}, toute composante irr\'eductible de $X_p(\rhobar)$ contient un point tr\`es classique, il suffit donc de prouver que $X_p(\rhobar)$ est r\'eduit en tout point tr\`es classique.

On raisonne alors comme dans \cite[Prop. 3.9]{CheJL}, l'espace des poids \'etant ici remplac\'e par $\mathcal{W}_\infty$. Soient $x\in X_p(\rhobar)$ un point tr\`es classique, disons de poids $\lambda$, et $z$ son image dans $\mathcal{W}_\infty$. 

Soit $U=\Sp(B)$ un voisinage affino\"ide de $x$ comme dans la Proposition \ref{recouvrement}, et $V=\Sp(A)$ un ouvert affino\"ide de $\mathcal{W}_\infty$ tel que $\omega_X$ induise une application finie surjective de $U$ sur $V$ et que $U$ soit une composante connexe de $\omega_X^{-1}(V)$. On note $M=\Gamma(U,\mathcal{M}_\infty)$. Il s'agit d'un $A$-module projectif de type fini et l'action de $B$ sur $M$ identifie $B$ \`a une sous-$A$-alg\`ebre de $\End_A(M)$.

On d\'eduit du Th\'eor\`eme \ref{densite}, quitte \`a r\'etr\'ecir $U$, l'existence d'un ensemble $Z$ de poids alg\'ebriques dominants de $\widehat{T}_{p,L}^0$ tels que $(\Spf(S_\infty)^{\rm rig}\times Z)\cap V$ soit dense au sens de Zariski dans $V$ et que tous les points de $\omega_X^{-1}(\Spf(S_\infty)^{\rm rig}\times Z)\cap U$ soient tr\`es classiques et de param\`etre r\'egulier. On peut \'egalement, quitte \`a r\'etr\'ecir $U$ et $V$, supposer que $V$ est isomorphe \`a un produit $V_1\times V_2$, o\`u $V_1$ et $V_2$ sont des ouverts affino\"{\i}des connexes de $\Spf(S_\infty)^{\rm rig}$ et de $\widehat{T}_{p,L}^0$. Nous allons prouver que pour tout $\lambda\in Z$, il existe un ouvert de Zariski $V_\lambda$, dense au sens de Zariski, de $V\cap(\Spf(S_\infty)^{\rm rig}\times\{\delta_\lambda\})=V_1\times\{\delta_\lambda\}$ tel que si $z\in V_\lambda$, le $B$-module $M\otimes_A k(z)$ est semi-simple. Comme $\bigcup_{\lambda\in Z} V_\lambda$ est alors une partie dense au sens de Zariski de $V$ (ce qui se d\'eduit de la forme de $V$), on conclut comme dans la preuve de la Proposition $4.9$ de \cite{CheJL}.

Prouvons donc l'existence d'un tel $V_\lambda$. Comme $B$ est la sous-$A$-alg\`ebre de $\End_A(M)$ engendr\'ee par $A$, l'image de $R_\infty$ et la sous-alg\`ebre engendr\'ee par $T_p^+$, il suffit de construire $V_\lambda$ tel que pour $z\in V_\lambda$, le $k(z)$-espace vectoriel $M\otimes_A k(z)$ est semi-simple \`a la fois comme $R_\infty$-module et comme $k(z)[T_p^+]$-module.

Si $z\in \Spf(S_\infty)^{\rm rig}\times\{\delta_\lambda\}$, notons $\mathfrak{p}_z\subset S_\infty[\frac{1}{p}]$ l'id\'eal maximal correspondant et $k(z)$ son corps r\'esiduel. Notons $\Sigma=\Hom_{k(z)}(M\otimes_A k(z),k(z))$. Comme $M\otimes_A k(z)$ est l'ensemble des sections de $\mathcal{M}_\infty$ sur l'ensemble fini $U\cap\omega_X^{-1}(z)$, la repr\'esentation $\Sigma$ est isomorphe \`a un facteur direct de $J_{B_p}(\Pi_\infty^{R_\infty-{\rm an}}[\mathfrak{p}_z])^{T^0_p=\delta_\lambda}$. On peut donc \'ecrire $\Sigma\simeq\delta_\lambda\otimes_L\Sigma_\infty$, o\`u $\Sigma_\infty$ est une repr\'esentation lisse non ramifi\'ee de dimension finie de $T_p$.

Le Th\'eor\`eme 4.3 de \cite{BreuilAnalytiqueII} permet alors d'associer \`a tout morphisme $\Sigma\rightarrow J_{B_p}(\Pi_\infty^{R_\infty-{\rm an}}[\mathfrak{p}_z])$ un morphisme de repr\'esentations $G_p$-analytiques
\begin{equation}\label{adjoint}
\mathcal{F}_{\overline{B}_p}^{G_p}((U(\mathfrak{g}_L)\otimes_{U(\overline{\mathfrak{b}}_L)}(-\lambda))^\vee, \Sigma_\infty(\delta_{B_p}^{-1}))\rightarrow\Pi_\infty[\mathfrak{p}_z].
\end{equation}
Comme deux caract\`eres lisses de $T_p$ ont une extension lisse non triviale si et seulement si ils sont \'egaux, on en conclut que les facteurs de Jordan-H\"older de $\Sigma_\infty$ co\"{\i}ncident avec les constituants du socle de $\Sigma_\infty$. Remarquons de plus qu'une extension lisse entre deux caract\`eres non ramifi\'es de $T_p$ est non ramifi\'ee, donc $\Sigma_\infty$ est une repr\'esentation de dimension finie du groupe $T_p/T_p^0$. Or par construction de $\lambda$, $U$ et $V$, si $\chi$ est un caract\`ere lisse du socle de $\Sigma_\infty$, le seul sous-quotient de \[\mathcal{F}_{\overline{B}_p}^{G_p}((U(\mathfrak{g}_L)\otimes_{U(\overline{\mathfrak{b}}_L)}(-\lambda))^\vee,\chi\delta_{B_p}^{-1})\] admettant une norme $G_p$-invariante est son quotient localement alg\'ebrique. On en d\'eduit donc que les seuls sous-quotients de \[\mathcal{F}_{\overline{B}_p}^{G_p}((U(\mathfrak{g}_L)\otimes_{U(\overline{\mathfrak{b}}_L)}(-\lambda))^\vee, \Sigma_\infty(\delta_{B_p}^{-1}))\] admettant une norme $G_p$-invariante se trouvent dans le quotient localement alg\'ebrique $L(\lambda)\otimes_L\Ind_{\overline{B}_p}^{G_p}(\Sigma_\infty(\delta_{B_p}^{-1}))$. Ainsi le morphisme \eqref{adjoint} se factorise par $L(\lambda)\otimes_L\Ind_{\overline{B}_p}^{G_p}(\Sigma_\infty(\delta_{B_p}^{-1}))$. On obtient au final un isomorphisme
\begin{equation*}
\Hom_{T_p}(\Sigma,J_{B_p}(\Pi_\infty^{R_\infty-{\rm an}}[\mathfrak{p}_z]))\simeq\Hom_{G_p}(L(\lambda)\otimes_L\Ind_{\overline{B}_p}^{G_p}(\Sigma_\infty(\delta_{B_p}^{-1})),\Pi_\infty[\mathfrak{p}_z]).
\end{equation*}
Comme $\Ind_{\overline{B}_p}^{G_p}(\Sigma_\infty(\delta_{B_p}^{-1}))$ est une extension de longueur finie de repr\'esentations non ramifi\'ees irr\'eductibles, il existe un $\mathcal{H}:=L[K_p\backslash G_p/K_p]$-module de dimension finie $\mathcal{M}^{\rm lc}$ et un isomorphisme $G_p$-\'equivariant entre $\Ind_{\overline{B}_p}^{G_p}(\Sigma_\infty(\delta_{B_p}^{-1}))$ et $\cind_{K_p}^{G_p}(1)\otimes_{\mathcal{H}}\mathcal{M}^{\rm lc}$. En utilisant par ailleurs \cite[Lem.~1.4]{STBanach}, on a donc au final un isomorphisme
\begin{equation*}
\Hom_{T_p}(\Sigma,J_{B_p}(\Pi_\infty^{R_\infty-{\rm an}}[\mathfrak{p}_z]))\simeq\Hom_{\mathcal{H}(\lambda)}(\mathcal{M}^{\rm lc},\Pi_\infty(\lambda)[\mathfrak{p}_z]).
\end{equation*}
Or d'apr\`es \cite[Lemma 4.16]{CEGGPS} dont la preuve s'\'etend essentiellement verbatim \`a notre situation, l'image de $R_\infty$ dans l'anneau des endomorphismes de $\Pi_\infty(\lambda)$ est un anneau r\'eduit. Rappelons que l'action de $R_\infty$ sur $\Pi_\infty(\lambda)$ se factorise \`a travers \[R_\infty(\lambda):=R_{\rhobar_p}^{\square,{\bf k}-{\rm cr}}\dbl x_1,\dots,x_g\dbr\] o\`u $\lambda=(\lambda_{\tau,i})\in\prod_{v\in S_p}(\mathbb{Z}^n)^{[F_v^+:\Q_p]}$ est li\'e \`a ${\bf k}\in \prod_{v\in S_p}(\mathbb{Z}^n)^{[F_v^+:\Q_p]}$ par la relation usuelle $\lambda_{\tau,i}=k_{\tau,i}+(i-1)$. Comme $\Pi_\infty(\lambda)'$ est un $S_\infty[\frac{1}{p}]$-module projectif de type fini par \emph{loc.~cit.}, on d\'eduit de \cite[Lem.~3.10.(b)]{CheJL} qu'il existe un ouvert $W$ de $\Spec(S_\infty[\frac{1}{p}])$ tel que pour tout point ferm\'e $z\in W$, le module $\Pi_\infty(\lambda)'\otimes k(z)$ soit un $R_\infty(\lambda)$-module semi-simple. Posons alors $V_\lambda=(W\times\{\lambda\})\cap V$. Si $z\in W$, la repr\'esentation $\Pi_\infty(\lambda)[\mathfrak{p}_z]$ est donc un $R_\infty$-module semi-simple d'apr\`es ce qui pr\'ec\`ede. De plus, l'action de $\mathcal{H}(\lambda)$ sur $\Pi_\infty(\lambda)$ se factorise par l'action de $R_\infty(\lambda)$, via le morphisme $\mathcal{H}(\lambda)\rightarrow R_{\rhobar_p}^{\square,{\bf k}-{\rm cr}}$ d\'ej\`a consid\'er\'e dans la preuve de la Proposition \ref{tresclassique}. En particulier, comme $\Pi_\infty(\lambda)[\mathfrak{p}_z]$ est un $R_\infty(\lambda)$-module semi-simple de dimension finie sur $L$, c'est \'egalement un $\mathcal{H}(\lambda)$-module semi-simple. On en d\'eduit donc que tout morphisme de $\mathcal{M}^{\rm lc}$ dans $\Pi_\infty(\lambda)[\mathfrak{p}_z]$ se factorise \`a travers le plus grand quotient semi-simple de $\mathcal{M}^{\rm lc}$. Notons $\mathcal{M}^{\rm lc}_{\rm ss}$ ce plus grand quotient. Alors $\cind_{K_p}^{G_p}(1)\otimes_{\mathcal{H}(1)}\mathcal{M}^{\rm lc}_{\rm ss}$ est le plus grand quotient semi-simple de $\Ind_{\overline{B}_p}^{G_p}(\Sigma_\infty(\delta_{B_p}^{-1}))$, il est donc de la forme $\Ind_{\overline{B}_p}^{G_p}(\Sigma_{\rm ss}(\delta_{B_p}^{-1}))$, o\`u $\Sigma_{\rm ss}$ est le plus grand quotient semi-simple de $\Sigma_\infty$ (utiliser l'exactitude du foncteur de Jacquet sur les repr\'esentations lisses). On obtient donc au final
\[\Hom_{T_p}(\Sigma,J_{B_p}(\Pi_\infty^{R_\infty-{\rm an}}[\mathfrak{p}_z]))\simeq\Hom_{T_p}(\delta_\lambda\otimes_L\Sigma_{\rm ss},J_{B_p}(\Pi_\infty^{R_\infty-{\rm an}}[\mathfrak{p}_z])).\]
Comme $\Sigma$ est une sous-repr\'esentation de $J_{B_p}(\Pi_\infty^{R_\infty-{\rm an}}[\mathfrak{p}_z])$, on en conclut que $\Sigma$ est en fait une repr\'esentation semi-simple de $T_p^+$.
\end{proof}

\subsection{Composantes irr\'eductibles de $X_p(\rhobar)$}\label{sectioncomposantes}

Notons $\iota$ l'automorphisme \[\Id_{\mathfrak{X}^\square_{\rhobar_p}}\times (\iota_{F^+_v})_{v\in S_p}:\mathfrak{X}^\square_{\rhobar_p}\times\widehat{T}_{p,L}\longrightarrow \mathfrak{X}^\square_{\rhobar_p}\times\widehat{T}_{p,L}.\]
\begin{theo}\label{egalitecomposante}
L'immersion \[X_p(\rhobar)\longrightarrow \Spf(R_\infty)^{\rm rig}\times\widehat{T}_{p,L}\simeq\mathfrak{X}^\square_{\rhobar_p}\times\mathfrak{X}^\square_{\rhobar^p}\times\mathbb{U}^g\times\widehat{T}_{p,L}\] induit un isomorphisme d'espaces analytiques rigides entre $X_p(\rhobar)$ et une union de composantes irr\'eductibles du sous-espace analytique ferm\'e $\iota(X_{\rm tri}^\square(\rhobar_p))\times\mathfrak{X}^\square_{\rhobar^p}\times\mathbb{U}^g$ muni de sa structure de sous-espace analytique ferm\'e r\'eduit.
\end{theo}
\begin{proof}
Nous allons prouver que le sous-espace analytique ferm\'e de l'espace rigide $\mathfrak{X}^\square_{\rhobar_p}\times\mathfrak{X}^\square_{\rhobar^p}\times\mathbb{U}^g\times\widehat{T}_{p,L}$ sous-jacent \`a $X_p(\rhobar)$ est en r\'ealit\'e contenu dans $\iota(X_{\rm tri}^\square(\rhobar_p))\times\mathfrak{X}_{\rhobar^p}^\square\times\mathbb{U}^g$. Comme ces deux espaces sont \'equidimensionnels de m\^eme dimension et que $X_p(\rhobar)$ est r\'eduit, on en conclut que $X_p(\rhobar)$ s'identifie bien \emph{en tant qu'espace analytique rigide} \`a une union de composantes irr\'eductibles de $\iota(X_{\rm tri}^\square(\rhobar_p))\times\mathfrak{X}^\square_{\rhobar^p}\times\mathbb{U}^g$, cette union \'etant munie de sa structure r\'eduite de sous-espace analytique ferm\'e.

Par d\'efinition de $\iota$ et des vari\'et\'es $X_p(\rhobar)$ et $X_{\rm tri}^\square(\rhobar_p)$, les points tr\`es classiques de $X_p(\rhobar)$ appartiennent aussi \`a $\iota(X_{\rm tri}^\square(\rhobar_p))\times\mathfrak{X}^\square_{\rhobar^p}\times\mathbb{U}^g$. D'apr\`es le Th\'eor\`eme \ref{densite}, ils forment une partie dense au sens de Zariski de $X_p(\rhobar)$. Comme $\iota(X_{\rm tri}^\square(\rhobar_p))\times\mathfrak{X}^\square_{\rhobar^p}\times\mathbb{U}^g$ est une partie analytique ferm\'ee de $\mathfrak{X}^\square_{\rhobar_p}\times\mathfrak{X}^\square_{\rhobar^p}\times\mathbb{U}^g\times\widehat{T}_{p,L}$, on obtient bien l'inclusion recherch\'ee.
\end{proof}

Une composante irr\'eductible de l'espace $\iota(X_{\rm tri}^\square(\rhobar_p))\times\mathfrak{X}^\square_{\rhobar^p}\times\mathbb{U}^g$ est un produit de la forme $\iota(X)\times\mathfrak{X}^p\times\mathbb{U}^g$, o\`u $X$ est une composante irr\'eductible de $X_{\rm tri}^\square(\rhobar_p)$ et $\mathfrak{X}^p$ une composante irr\'eductible de $\mathfrak{X}^\square_{\rhobar^p}$. Il nous para\^it raisonnable de conjecturer qu'\`a $\mathfrak{X}^p$ fix\'ee, les composantes irr\'eductibles $X$ de $X_p(\rhobar)$ de la forme $\iota(X)\times\mathfrak{X}^p\times\mathbb{U}^g$ ne d\'ependent pas des choix globaux que nous avons faits, c'est-\`a-dire de $G$, de $U^p$ et du syst\`eme de Taylor-Wiles intervenant dans la construction de $\Pi_\infty$.

\begin{defi}\label{compautomorphe}
Soit $\mathfrak{X}^p$ une composante irr\'eductible de $\mathfrak{X}^\square_{\rhobar^p}$. On dit qu'une composante irr\'eductible $X$ de $X_{\rm tri}^\square(\rhobar_p)$ est $\mathfrak{X}^p$-automorphe si $\iota(X)\times\mathfrak{X}^p\times\mathbb{U}^g$ est une composante irr\'eductible de $X_p(\rhobar)$.
\end{defi}

\begin{conj}\label{conjcomp}
Une composante irr\'eductible $X$ de $X_{\rm tri}^\square(\rhobar_p)$ est $\mathfrak{X}^p$-automorphe si et seulement si $X\cap U_{\rm tri}^\square(\rhobar_p)^{\rm reg}$ contient un point cristallin.
\end{conj}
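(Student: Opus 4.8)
The plan is to prove the two implications separately. The direction "$\mathfrak{X}^p$\nobreakdash-automorphe $\Rightarrow$ crystalline point in $X\cap U_{\rm tri}^\square(\rhobar_p)^{\rm reg}$" is a fairly direct consequence of the density of classical points proved in Th\'eor\`eme \ref{densite}, and I would dispatch it unconditionally. The converse is the deep one; as will be recorded in Th\'eor\`eme \ref{conjectures} (Propositions \ref{implicationconj} and \ref{reciproqueconj}), it is equivalent to the crystalline modularity conjecture, and I do not expect to do better than reduce it to Conjecture \ref{modularite}.

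For the forward implication, suppose $\iota(X)\times\mathfrak{X}^p\times\mathbb{U}^g$ is an irreducible component of $X_p(\rhobar)$. By Th\'eor\`eme \ref{densite} the very classical points with regular non\nobreakdash-critical parameter are Zariski\nobreakdash-dense in $X_p(\rhobar)$, hence Zariski\nobreakdash-dense in this component. For such a point $x=(y,\iota(\delta))$, Proposition \ref{tresclassique} shows that each $r_{y,v}$ is crystalline with pairwise distinct crystalline Frobenius eigenvalues and that the non\nobreakdash-critical refinement carried by $\delta_v$ makes $(y_v,\delta_v)$ a point of $X_{\rm tri}^\square(\rhobar_{\tilde v})$; since $\delta$ is moreover regular and is a genuine parameter, $(y_v,\delta_v)\in U_{\rm tri}^\square(\rhobar_{\tilde v})^{\rm reg}$. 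Projecting to $\prod_{v}X_{\rm tri}^\square(\rhobar_{\tilde v})$ one obtains crystalline points of $X\cap U_{\rm tri}^\square(\rhobar_p)^{\rm reg}$ (indeed a Zariski\nobreakdash-dense set of them), which is what is needed.

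For the converse, let $X$ be an irreducible component of $X_{\rm tri}^\square(\rhobar_p)$ and $x_0=(r,\delta)\in X\cap U_{\rm tri}^\square(\rhobar_p)^{\rm reg}$ a crystalline point. Since $U_{\rm tri}^\square(\rhobar_{\tilde v})^{\rm reg}$ is smooth over $\Q_p$ (Th\'eor\`eme \ref{ouvertsature}(iii)), the space $U_{\rm tri}^\square(\rhobar_p)^{\rm reg}$ is smooth, so $\iota(x_0)$ is a smooth point of $X_{\rm tri}^\square(\rhobar_p)$ and lies on the unique component $\iota(X)$; choosing a smooth point of $\mathfrak{X}^p$ (dense by Proposition \ref{ouvertlisse}) and any point of $\mathbb{U}^g$, the resulting point of $\iota(X_{\rm tri}^\square(\rhobar_p))\times\mathfrak{X}^\square_{\rhobar^p}\times\mathbb{U}^g$ lies on no component other than $\iota(X)\times\mathfrak{X}^p\times\mathbb{U}^g$. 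Because, by Th\'eor\`eme \ref{egalitecomposante}, $X_p(\rhobar)$ is a union of \emph{whole} irreducible components of this (equidimensional, same\nobreakdash-dimensional) ambient space, it then suffices to exhibit a single point $x$ of $\iota(X)\times\mathfrak{X}^p\times\mathbb{U}^g$ lying over $\iota(x_0)$ with $x\in X_p(\rhobar)$. By Proposition \ref{points} this is the non\nobreakdash-vanishing of $(J_{B_p}(\Pi_\infty[\mathfrak{p}_x]^{\rm an}))^{T_p=\delta}$, i.e. the existence of a companion overconvergent automorphic form of the prescribed weight and parameter attached to the crystalline representation $r$ with its chosen refinement and to the local type at $S\setminus S_p$ encoded by $\mathfrak{X}^p$.

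The hard part is exactly this last step: it is the crystalline modularity (companion forms) statement — one must know that the crystalline lift $r$ of $\rhobar$ is automorphic and that each of its (non\nobreakdash-critical) refinements is realized by a point of $X_p(\rhobar)$. The forward implication already shows that every $\mathfrak{X}^p$\nobreakdash-automorphe component carries such crystalline refined points, so the two assertions feed into each other; making this precise — matching the required companion construction with the output of a classicality/local\nobreakdash-global argument in the spirit of Proposition \ref{tresclassique}, and checking the dictionary between "component of $X_p(\rhobar)$'' and "modular crystalline deformation'' — is what produces the equivalence with Conjecture \ref{modularite}. So my honest proposal is: prove the forward implication outright, and for the converse reduce to, rather than prove, Conjecture \ref{modularite}, which remains the genuine obstacle.
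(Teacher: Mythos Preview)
Your assessment is correct and matches the paper's treatment: the statement is a conjecture, the forward implication is established unconditionally from Th\'eor\`eme~\ref{densite} (this is exactly Remarque~\ref{rema}(ii)), and the converse is not proved but shown equivalent to Conjecture~\ref{modularite} via Propositions~\ref{implicationconj} and~\ref{reciproqueconj}.

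One technical point your sketch of the converse glosses over, which the paper's proof of Proposition~\ref{implicationconj} handles carefully: you propose to exhibit a point of $X_p(\rhobar)$ lying over $\iota(x_0)$ itself, but Conjecture~\ref{modularite} is formulated only for \emph{strictly} dominant ${\bf k}$, while the weight $\omega(x_0)$ is merely algebraic. The paper therefore does not work at $x_0$ directly; it uses the smoothness of $\omega$ on $U_{\rm tri}^\square(\rhobar_p)^{\rm reg}$ (Th\'eor\`eme~\ref{ouvertsature}(iii)) to find a connected affinoid neighbourhood $U\subset X\cap U_{\rm tri}^\square(\rhobar_p)^{\rm reg}$ of $x_0$ whose image under $\omega$ is open, picks a strictly dominant $\delta_{\bf k}$ in that image, and then (via \cite[Cor.~2.7]{HellmSchrDensity}) a crystalline point $y_1\in U$ of weight ${\bf k}$. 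Conjecture~\ref{modularite} is applied at $y_1$; the passage from the support of $\Hom_{K_p}(\sigma_{\bf k},\Pi_\infty)'$ to $X_p(\rhobar)$ then requires \cite[Lem.~4.16]{CEGGPS} together with a Jacquet-module computation, and connectedness of $U$ carries the conclusion back to the component through $x_0$.
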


\begin{rema}\label{rema}
(i) Si $S\backslash S_p=\{v_1\}$, o\`u $v_1$ est une place de $F^+$ comme en \cite[\S\ 2.3]{CEGGPS}, et $U^p$ est choisi comme dans \emph{loc.~cit.}, alors $\mathfrak{X}^\square_{\rhobar^p}$ est lisse et irr\'eductible.\\
(ii) Il r\'esulte du Th\'eor\`eme \ref{densite} (cf.~la fin de la preuve) qu'une composante irr\'eductible $\mathfrak{X}^p$-automorphe de $X_{\rm tri}^\square(\rhobar_p)$ contient toujours un tel point cristallin. La Conjecture \ref{conjcomp} peut donc se reformuler comme suit : les composantes irr\'eductibles de $X_p(\rhobar)$ sont exactement les composantes $\iota(X)\times\mathfrak{X}^p\times\mathbb{U}^g$ o\`u $\mathfrak{X}^p$ est une composante irr\'eductible de $\mathfrak{X}^\square_{\rhobar^p}$ et $X$ une composante irr\'eductible de $X_{\rm tri}^\square(\rhobar_p)$ telle que $X\cap U_{\rm tri}^\square(\rhobar_p)^{\rm reg}$ contient un point cristallin de poids de Hodge-Tate deux-\`a-deux distincts.
\end{rema}

Dans ce qui suit, si $\lambda\in(\Z^n)^{\Hom(F_v^+,L)}$, on note $\sigma(\lambda)$ la repr\'esentation alg\'ebrique de $\GL_{n,F_v^+}$ de plus haut poids $\lambda$, et on note de m\^eme sa restriction \`a $\GL_n(\mathcal{O}_{F_v^+})$. Fixons un poids ${\bf k}\in\prod_{v\in S_p}(\Z^n)^{\Hom(F_v^+,L)}$ strictement dominant. On d\'efinit alors une repr\'esentation $\sigma_{\bf k}$ de $K_p=\prod_{v\in S_p}G(F_v^+)$ en posant \[\sigma_{\bf k}=\bigotimes_{v\in S_p}\sigma((k_{\tau,i}+i-1)_{\tau\in\Hom(F_v^+,L),1\leq i \leq n}).\]

La conjecture suivante appara\^it essentiellement dans l'article \cite{EmertonGee}.

\begin{conj}\label{modularite}
Soit $\mathfrak{X}^p$ une composante irr\'eductible de $\mathfrak{X}_{\rhobar^p}^\square$. Soit ${\bf k}\in\prod_{v\in S_p}(\Z^n)^{[F_v^+:\Q_p]}$ strictement dominant, l'espace $\Spf(R_{\rhobar_p}^{\square,{\bf k}-{\rm cr}})^{\rm rig}\times\mathfrak{X}^p\times\mathbb{U}^g$ est contenu dans le support du $R_\infty$-module $\Hom_{K_p}(\sigma_{\bf k},\Pi_\infty)'$.
\end{conj}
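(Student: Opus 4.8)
To approach Conjecture~\ref{modularite} one proceeds as follows. Write $M_\infty(\sigma_{\bf k})=\Hom_{K_p}(\sigma_{\bf k},\Pi_\infty)'$; dually this is the $\sigma_{\bf k}$-isotypic quotient $\sigma_{\bf k}^\vee\otimes_{\mathcal{O}_L\dbl K_p\dbr}M_\infty$ of the patched module $M_\infty=(\Pi_\infty^0)'$. By Th\'eor\`eme~\ref{TaylorWiles}(i), $M_\infty$ is a projective $S_\infty\dbl K_p\dbr$-module of finite type; since $\sigma_{\bf k}$ is a finite free $\mathcal{O}_L$-module with algebraic $K_p$-action (and $S_\infty$ is local), $M_\infty(\sigma_{\bf k})$ is a \emph{free} $S_\infty$-module of finite rank. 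Reasoning exactly as in the proof of Proposition~\ref{tresclassique} (Theorem~4.1 and Lemma~4.16 of \cite{CEGGPS}, which interpolate local-global compatibility at the places dividing $p$), the action of $R_\infty$ on $M_\infty(\sigma_{\bf k})$ factors through the quotient
\[
R_\infty({\bf k}):=R_{\rhobar_p}^{\square,{\bf k}-{\rm cr}}\,\widehat{\otimes}_{\mathcal{O}_L}\Big(\widehat{\bigotimes}_{v\in S\backslash S_p}R_{\rhobar_{\tilde v}}^{\bar\square}\Big)\dbl x_1,\dots,x_g\dbr,
\]
whose rigid generic fibre is exactly $\Spf(R_{\rhobar_p}^{\square,{\bf k}-{\rm cr}})^{\rm rig}\times\mathfrak{X}^\square_{\rhobar^p}\times\mathbb{U}^g$. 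In particular the support of $M_\infty(\sigma_{\bf k})$ is automatically contained in this rigid space, so Conjecture~\ref{modularite} is equivalent to the assertion that $M_\infty(\sigma_{\bf k})$ is a \emph{faithful} $R_\infty({\bf k})$-module, i.e. that its support meets every irreducible component of $\Spf(R_{\rhobar_p}^{\square,{\bf k}-{\rm cr}})^{\rm rig}$ and every component $\mathfrak{X}^p$ of $\mathfrak{X}^\square_{\rhobar^p}$.

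The first step is the numerical coincidence of the Taylor-Wiles method. The image of $R_\infty({\bf k})$ in $\End_{S_\infty}(M_\infty(\sigma_{\bf k}))$ is a finite $S_\infty$-algebra containing $S_\infty$, hence its spectrum — which equals $\mathrm{Supp}_{R_\infty({\bf k})}(M_\infty(\sigma_{\bf k}))$ — is finite and surjective over $\Spec S_\infty$, of dimension $\dim S_\infty$. On the other hand, by Kisin's theory of crystalline framed deformation rings $R_{\rhobar_p}^{\square,{\bf k}-{\rm cr}}$ is equidimensional of dimension $1+n^2|S_p|+[F^+:\Q]\tfrac{n(n-1)}{2}$, while $R_{\rhobar_{\tilde v}}^{\bar\square}$ is equidimensional of dimension $1+n^2$ for $v\in S\backslash S_p$; consequently $R_\infty({\bf k})$ is equidimensional of dimension $1+g+n^2|S|+[F^+:\Q]\tfrac{n(n-1)}{2}=\dim S_\infty$. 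Therefore $\mathrm{Supp}(M_\infty(\sigma_{\bf k}))$ is a union of irreducible components of $\Spec R_\infty({\bf k})$ — which already recovers the "union of components" part of Th\'eor\`eme~\ref{egalitecomposante}. The whole remaining content of the conjecture is that \emph{no} component is omitted.

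Since $\mathbb{U}^g$ is irreducible and, by Remarque~\ref{rema}(i), $\mathfrak{X}^\square_{\rhobar^p}$ may be taken irreducible, the only obstruction is the component structure of the crystalline deformation ring at $p$: one must show that every irreducible component of $\Spf(R_{\rhobar_p}^{\square,{\bf k}-{\rm cr}})^{\rm rig}$ carries a modular point, i.e. meets $\mathrm{Supp}(M_\infty(\sigma_{\bf k}))$. The natural attack is through the Breuil-M\'ezard philosophy: one reduces modulo $\varpi_L$, decomposes $\sigma_{\bf k}\otimes k_L$ into Serre weights, and seeks to compute the Hilbert-Samuel cycle $\underline Z\big(M_\infty(\sigma_{\bf k})/\varpi_L\big)$ as a positive combination $\sum_a m_a\,\underline Z\big(R_{\rhobar_p}^{\square,a-{\rm cr}}/\varpi_L\big)$ using the inductive structure of the Breuil-M\'ezard recursion together with the (conjectural, but often accessible) multiplicity-one behaviour of $M_\infty(-)$ on Serre weights. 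To make the multiplicities $m_a$ strictly positive on every component one feeds in, component by component, at least one genuinely automorphic crystalline lift of $\rhobar_p$ — for instance a potentially diagonalisable de Rham lift of each $\rhobar_{\tilde v}$ realised globally by the potential automorphy theorems of Barnet-Lamb-Gee-Geraghty-Taylor and solvable base change, then transported back to $M_\infty$ through the patching functor.

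The main obstacle is precisely this last input. Exhibiting a modular point on \emph{every} irreducible component of the crystalline lifting ring for $\GL_n$ over a $p$-adic field is, in full generality, exactly the Breuil-M\'ezard conjecture, which is open: it is known only under strong hypotheses (notably $n=2$; or $\rhobar_{\tilde v}$ Fontaine-Laffaille with ${\bf k}$ in the Fontaine-Laffaille range; or various genericity conditions), so one should not expect an unconditional proof of Conjecture~\ref{modularite} that does not resolve that case. Finally, in view of Th\'eor\`eme~\ref{conjectures}, an alternative route is to prove the geometric reformulation (Conjecture~\ref{conjcomp}) directly — that a component of $X_{\rm tri}^\square(\rhobar_p)$ whose regular locus contains a crystalline point is automorphic — but this is no easier, since it again requires the patched module to detect such a point, which circles back to the same Breuil-M\'ezard-type obstruction.
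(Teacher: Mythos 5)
The target of your attempt, Conjecture \ref{modularite}, is stated in the paper as a conjecture and is \emph{not} proved there. The paper's only commentary is the Remark immediately following it, which notes that (for a suitable choice of $S$ and $U^p$) the conjecture is equivalent, via \cite[Thm.~5.5.2]{EmertonGee}, to the refined Breuil--M\'ezard conjecture in the crystalline case; and Th\'eor\`eme \ref{conjectures} (= Propositions \ref{implicationconj} and \ref{reciproqueconj}), which shows it is equivalent to Conjecture \ref{conjcomp}. So there is no internal proof to compare your attempt against, and no proof should be expected.

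Your discussion is correct and in fact reproduces the reductions the paper carries out in nearby places. Identifying $\Hom_{K_p}(\sigma_{\bf k},\Pi_\infty)'$ as a maximal Cohen--Macaulay, hence free, $S_\infty$-module; factoring the $R_\infty$-action through $R_\infty({\bf k})$ as in the proof of Proposition \ref{tresclassique} (via \cite[Thm.~4.1, Lem.~4.16]{CEGGPS}); the dimension count $\dim R_\infty({\bf k})=\dim S_\infty$; and the conclusion that the support is a union of irreducible components of $\Spec R_\infty({\bf k})$ --- these match Corollaire \ref{equidim} and Th\'eor\`eme \ref{egalitecomposante} in spirit, and your reformulation of the conjecture as the assertion that this union is \emph{everything} (equivalently, that every component of $\Spf(R_{\rhobar_p}^{\square,{\bf k}-{\rm cr}})^{\rm rig}$ carries a modular point) is exactly the Breuil--M\'ezard form the paper's Remark points to. You also rightly note that passing to Conjecture \ref{conjcomp} (proved equivalent in Th\'eor\`eme \ref{conjectures}) does not circumvent the obstruction. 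In short: you did not prove the conjecture, but neither does the paper, and your analysis of why it is open and what it is equivalent to agrees with the paper's Remark and with Th\'eor\`eme \ref{conjectures}. The only small caveat is that the paper only explicitly cites projectivity of $\Hom_{K_p}(\sigma_{\bf k},\Pi_\infty)'$ over $S_\infty[1/p]$, whereas you assert freeness over $S_\infty$ itself; the stronger statement is indeed true for the CEGGPS patched module, but you should make clear it comes from the maximal Cohen--Macaulay property rather than from the projectivity statement the paper actually quotes.
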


\begin{rema}
En fait lorsque $S=S_p\sqcup\{v_1\}$ o\`u $v_1$ est une place de $F^+$ v\'erifiant les conditions de \cite[\S5.3]{EmertonGee}, d'apr\`es \cite[Thm.~5.5.2]{EmertonGee}, la Conjecture \ref{modularite} est \'equivalente \`a la Conjecture de Breuil-M\'ezard raffin\'ee dans les cas cristallins \'enonc\'ee  en \cite[Conj.~4.1.6]{EmertonGee}.
\end{rema}

\begin{prop}\label{implicationconj}
La conjecture \ref{modularite} implique la conjecture \ref{conjcomp}.
\end{prop}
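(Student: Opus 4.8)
Le plan est le suivant. L'\'equivalence de la Conjecture \ref{conjcomp} comporte deux implications : celle affirmant qu'une composante $\mathfrak{X}^p$-automorphe rencontre $U_{\rm tri}^\square(\rhobar_p)^{\rm reg}$ en un point cristallin est inconditionnelle (c'est la Remarque \ref{rema}(ii), cons\'equence du Th\'eor\`eme \ref{densite}), de sorte qu'il suffit de d\'emontrer, sous la Conjecture \ref{modularite}, la r\'eciproque. Soit donc $X$ une composante irr\'eductible de $X_{\rm tri}^\square(\rhobar_p)$ telle que $X\cap U_{\rm tri}^\square(\rhobar_p)^{\rm reg}$ contienne un point cristallin, et soit $\mathfrak{X}^p$ une composante irr\'eductible de $\mathfrak{X}^\square_{\rhobar^p}$. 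D'apr\`es le Th\'eor\`eme \ref{egalitecomposante}, $X_p(\rhobar)$ est une union de composantes irr\'eductibles de $\iota(X_{\rm tri}^\square(\rhobar_p))\times\mathfrak{X}^\square_{\rhobar^p}\times\mathbb{U}^g$ ; comme $\mathbb{U}^g$ et les composantes de $\mathfrak{X}^\square_{\rhobar^p}$ sont g\'eom\'etriquement irr\'eductibles (on a agrandi $L$ pour cela), les composantes irr\'eductibles de cet espace sont exactement les produits $\iota(X')\times\mathfrak{X}'^p\times\mathbb{U}^g$, o\`u $X'$ (resp.~$\mathfrak{X}'^p$) parcourt les composantes de $X_{\rm tri}^\square(\rhobar_p)$ (resp.~de $\mathfrak{X}^\square_{\rhobar^p}$). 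Il suffira donc de montrer l'inclusion $\iota(X)\times\mathfrak{X}^p\times\mathbb{U}^g\subseteq X_p(\rhobar)$, et puisque $X_p(\rhobar)$ est r\'eduit et ferm\'e, d'y exhiber une partie Zariski-dense contenue dans $X_p(\rhobar)$.

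Les points n\'ecessaires seront fournis par la Conjecture \ref{modularite}. Soit ${\bf k}$ un poids strictement dominant, $\lambda$ le poids associ\'e (de sorte que $\sigma_{\bf k}$ soit de plus haut poids $\lambda$) et $\Pi_\infty(\lambda)=\Hom_{K_p}(\sigma_{\bf k},\Pi_\infty)$ ; la Conjecture \ref{modularite} donne, pour tout point $y$ de $\Spf(R_{\rhobar_p}^{\square,{\bf k}-{\rm cr}})^{\rm rig}\times\mathfrak{X}^p\times\mathbb{U}^g$, que $\Pi_\infty(\lambda)[\mathfrak{p}_y]\neq 0$ (via \eqref{invariantsdual}). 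La traduction cruciale, que je prouverais en reprenant les arguments de la preuve de la Proposition \ref{tresclassique}, est la suivante : si un tel $y$ v\'erifie de plus que, pour chaque $v\in S_p$, la repr\'esentation $r_{y,v}$ est cristalline de poids ${\bf k}_v$, \`a valeurs propres de Frobenius cristallin deux-\`a-deux distinctes, et poss\`ede un raffinement \emph{non critique} et \emph{r\'egulier} de param\`etre $\delta=(\delta_v)_v$, alors $(y,\iota(\delta))\in X_p(\rhobar)$. En effet, d'apr\`es \cite[Thm.~4.1 et Lem.~4.16]{CEGGPS} l'action de $R_\infty$ sur $\Pi_\infty(\lambda)$ se factorise par le quotient de $R_\infty$ cristallin de poids ${\bf k}$ en toute place de $S_p$, et l'action de $\mathcal{H}(\lambda)=\bigotimes_{v\in S_p}\mathcal{H}(\lambda_v)$ s'en d\'eduit via les morphismes $\eta_v:\mathcal{H}(\lambda_v)\to R_{\rhobar_{\tilde v}}^{\square,{\bf k}_v-{\rm cr}}[1/p]$ interpolant la correspondance de Langlands. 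Ainsi $\mathcal{H}(\lambda)$ agit sur l'espace non nul $\Pi_\infty(\lambda)[\mathfrak{p}_y]$ par le caract\`ere $\theta_\chi$ attach\'e, via \eqref{Satake}, au caract\`ere lisse non ramifi\'e $\chi$ de $T_p$ dict\'e par les valeurs propres du Frobenius cristallin de $r_{y,p}$, la non-criticit\'e du raffinement garantissant pr\'ecis\'ement que $\iota(\delta)=\delta_\lambda\chi$ (cf.~la fin du \S\ \ref{triangulines} et la Proposition \ref{tresclassique}) et la r\'egularit\'e que $\Ind_{\overline{B}_p}^{G_p}(\delta_{B_p}^{-1}\chi)$ est irr\'eductible, donc isomorphe \`a $\Ind_{B_p}^{G_p}(\chi)$. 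Par \eqref{Satake} et r\'eciprocit\'e de Frobenius, on en tire $\Hom_{G_p}(L(\lambda)\otimes_L\Ind_{B_p}^{G_p}(\chi),\Pi_\infty[\mathfrak{p}_y])\neq 0$, c'est-\`a-dire que $(y,\iota(\delta))$ est un point classique de poids $\lambda$ (D\'efinition \ref{defclassique}) ; l'injection \eqref{classicite} donne alors $\Hom_{T_p}(\iota(\delta),J_{B_p}(\Pi_\infty[\mathfrak{p}_y]^{\rm an}))\neq 0$, d'o\`u $(y,\iota(\delta))\in X_p(\rhobar)$ par la Proposition \ref{points}. De plus $(y,\iota(\delta))$ appartient \`a $\iota(X)\times\mathfrak{X}^p\times\mathbb{U}^g$ d\`es que $(r_{y,p},\delta)\in X$ et que la composante de $y$ dans $\mathfrak{X}^\square_{\rhobar^p}$ appartient \`a $\mathfrak{X}^p$.

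Il restera alors \`a montrer que l'ensemble des $(r,\delta)\in X$ tels que $r$ soit cristalline de poids strictement dominant et $\delta$ un raffinement non critique, r\'egulier, \`a Frobenius deux-\`a-deux distinct, est Zariski-dense dans $X$. Soit $x_0=(r_0,\delta_0)\in X\cap U_{\rm tri}^\square(\rhobar_p)^{\rm reg}$ cristallin : l'ouvert $X^\circ:=X\cap U_{\rm tri}^\square(\rhobar_p)^{\rm reg}$ est lisse, connexe et Zariski-dense dans $X$, et $\omega'|_{X^\circ}$ est lisse (Th\'eor\`eme \ref{ouvertsature}). Au voisinage de $x_0$, la coordonn\'ee $\delta$ prolonge contin\^ument un raffinement de chaque repr\'esentation cristalline voisine ; comme le point g\'en\'erique de toute composante de l'espace des d\'eformations cristallines \`a poids ${\bf k}'$ fix\'e param\`etre une repr\'esentation cristalline \emph{g\'en\'erique} au sens de la D\'efinition \ref{generique} (filtrations de Frobenius et de Hodge en position g\'en\'erale, Frobenius distinct), ce raffinement y est non critique, et les conditions \og non critique\fg, \og r\'egulier\fg, \og Frobenius distinct\fg{} sont Zariski-ouvertes. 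En faisant varier le poids, \`a l'aide de la lissit\'e de $\omega'|_{X^\circ}$ et de la densit\'e des points cristallins dans $X_{\rm tri}^\square(\rhobar_p)$ (\cite{HellmSchrDensity}), donc dans $X$, on obtiendrait que l'ensemble voulu est Zariski-dense dans $X^\circ$, donc dans $X$. En lui appliquant la traduction ci-dessus, et en laissant la composante dans $\mathfrak{X}^\square_{\rhobar^p}$ parcourir $\mathfrak{X}^p$ et celle dans $\mathbb{U}^g$ parcourir $\mathbb{U}^g$, on obtiendrait une partie Zariski-dense de $\iota(X)\times\mathfrak{X}^p\times\mathbb{U}^g$ contenue dans $X_p(\rhobar)$, ce qui, avec le premier paragraphe, montrerait que $\iota(X)\times\mathfrak{X}^p\times\mathbb{U}^g$ est une composante irr\'eductible de $X_p(\rhobar)$, et donc que $X$ est $\mathfrak{X}^p$-automorphe.

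Le principal obstacle sera cette derni\`ere \'etape de densit\'e : faire en sorte que l'existence d'un \emph{unique} point cristallin dans $X\cap U_{\rm tri}^\square(\rhobar_p)^{\rm reg}$ entra\^ine celle d'une famille Zariski-dense dans $X$ de points cristallins \emph{non critiques} de poids strictement dominant variable, ce qui reposera sur la structure (\'equidimensionnalit\'e, g\'en\'ericit\'e du point g\'en\'erique) des composantes des anneaux de d\'eformations cristallines et sur la densit\'e des points cristallins dans la vari\'et\'e trianguline ; vient ensuite la v\'erification minutieuse, dans l'\'etape de traduction, de la compatibilit\'e des normalisations ($\iota$, le caract\`ere module $\delta_{B_p}$, l'isomorphisme de Satake) identifiant le caract\`ere de Hecke $\theta_\chi$ aux donn\'ees de Frobenius galoisiennes.
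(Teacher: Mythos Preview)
Votre approche est correcte dans son principe mais nettement plus laborieuse que celle du papier, et l'\'etape que vous identifiez vous-m\^eme comme \og le principal obstacle\fg{} (densit\'e Zariski dans $X$ des points cristallins \`a raffinement non critique et r\'egulier) peut \^etre enti\`erement \'evit\'ee.

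Le papier n'a besoin que d'\emph{un seul} bon point, et non d'une famille Zariski-dense. L'observation clef est la suivante~: puisque $X_p(\rhobar)$ est une union de composantes irr\'eductibles de $\iota(X_{\rm tri}^\square(\rhobar_p))\times\mathfrak{X}^\square_{\rhobar^p}\times\mathbb{U}^g$ (Th\'eor\`eme \ref{egalitecomposante}), il suffit d'exhiber un point \emph{lisse} de $\iota(X)\times\mathfrak{X}^p\times\mathbb{U}^g$ appartenant \`a $X_p(\rhobar)$ pour conclure que toute la composante $\iota(X)\times\mathfrak{X}^p\times\mathbb{U}^g$ y est contenue. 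Concr\`etement~: partant du point cristallin $x\in X\cap U_{\rm tri}^\square(\rhobar_p)^{\rm reg}$, la lissit\'e de $\omega$ en $x$ fournit un voisinage affino\"ide connexe $U\subset X\cap U_{\rm tri}^\square(\rhobar_p)^{\rm reg}$ dont l'image $\omega(U)$ est ouverte~; on y choisit un poids $\delta_{\bf k}$ strictement dominant, puis un point $y_1\in U$ cristallin de poids ${\bf k}$ au-dessus de $\delta_{\bf k}$ (ceci par \cite[Cor.~2.7]{HellmSchrDensity}, et c'est l\`a le seul appel \`a la densit\'e des points cristallins, localis\'e en un unique poids). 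On prend $y_2$ dans le lieu lisse $\mathfrak{U}^p$ de $\mathfrak{X}^p$. La Conjecture \ref{modularite} et \cite[Lem.~4.16]{CEGGPS} donnent, comme dans votre argument de traduction, que $(y,\iota(\delta_{y_1}))\in X_p(\rhobar)$ (le calcul du module de Jacquet de $\sigma_{\bf k}\otimes\pi(\mathfrak p)$ fournit m\^eme tout $\{(\iota(y_1),y_2)\}\times\mathbb{U}^g$). Ce point \'etant lisse dans $\iota(X_{\rm tri}^\square(\rhobar_p))\times\mathfrak{X}^\square_{\rhobar^p}\times\mathbb{U}^g$, il est sur une unique composante irr\'eductible, qui est donc incluse dans $X_p(\rhobar)$. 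Enfin, la connexit\'e de $U$ force cette composante \`a contenir aussi $x$, et la lissit\'e en $x$ l'identifie \`a $\iota(X)\times\mathfrak{X}^p\times\mathbb{U}^g$.

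Ce raccourci rend votre \'etape de densit\'e superflue~: l'\'enonc\'e \og un seul point cristallin dans $X\cap U_{\rm tri}^\square(\rhobar_p)^{\rm reg}$ suffit\fg{} est litt\'eralement ce que le papier d\'emontre, sans passer par une famille. Votre argument de traduction (via \eqref{Satake}, \cite[Lem.~4.16]{CEGGPS} et la Proposition \ref{points}) est essentiellement celui du papier et les compatibilit\'es de normalisation que vous soulignez sont bien celles utilis\'ees~; la diff\'erence est uniquement dans l'exploitation structurelle du Th\'eor\`eme \ref{egalitecomposante}, que vous n'utilisez qu'\`a la toute fin alors qu'il permet de court-circuiter l'argument de densit\'e.
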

\begin{proof}
Soit $\mathfrak{U}^p$ l'ensemble des points lisses de $\mathfrak{X}^p$. Comme $\mathfrak{X}^p$ est r\'eduit, il s'agit d'un ouvert de Zariski, Zariski-dense dans $\mathfrak{X}^p$ par la Proposition \ref{ouvertlisse}. Soit $X$ une composante irr\'eductible de $X_{\rm tri}^\square(\rhobar_p)$. Supposons que $X\cap U_{\rm tri}^\square(\rhobar_p)^{\rm reg}$ contienne un point $x$ cristallin. Son image $\omega(x)\in\widehat{T}_{p,L}^0$ est donc un caract\`ere alg\'ebrique. D'apr\`es le Th\'eor\`eme \ref{ouvertsature}, l'application $\omega$ est lisse au point $x$. Il existe donc un voisinage affino\"ide connexe $U\subset U_{\rm tri}^\square(\rhobar_p)^{\rm reg}\cap X$ de $x$ dont l'image par $\omega$ est un ouvert affino\"ide de $\widehat{T}_{p,L}^0$. En effet, d'apr\`es \cite[Prop.~6.4.22]{Abbes}, l'application $\omega$ est plate en $x$. Quitte \`a se restreindre \`a des voisinages affino\"{\i}des de $x$ et $\omega(x)$, on peut voir $\omega$ comme l'analytifi\'e d'un morphisme plat entre sch\'emas formels $\mathcal{X}\rightarrow\mathcal{Y}$. D'apr\`es \cite[Prop.~5.10.14]{Abbes}, un tel morphisme se factorise en $\mathcal{X}\xrightarrow{g}\mathcal{U}\xrightarrow{j}\mathcal{Y}$ o\`u $j$ est une immersion ouverte et $g$ est fid\`element plate, on conclut alors en observant qu'une application fid\`element plate est surjective (on peut aussi appliquer \cite[Prop.~1.7.8]{Huber}).

Comme de plus l'ensemble des caract\`eres alg\'ebriques strictement dominants dans $\widehat{T}_{p,L}^0$ s'accumule en l'ensemble des caract\`eres alg\'ebriques, il existe ${\bf k}$ strictement dominant tel que $\delta_{\bf k}\in\omega(U)$. Fixons un tel ${\bf k}$. D'apr\`es \cite[Cor.~2.7]{HellmSchrDensity}, on peut choisir le point $y=(y_1,y_2)\in U\times\mathfrak{U}^p$ tel que $\omega(y_1)=\delta_{\bf k}$ et $y_1\in\mathfrak{X}_{\rhobar_p}^{{\bf k}-{\rm cr}}$. Soit $\mathfrak{p}$ un ideal maximal de $R_\infty[\frac{1}{p}]$ dont l'intersection avec $R^{\rm loc}[\frac{1}{p}]$ soit l'id\'eal du point $y$. 

La Conjecture \ref{modularite} implique alors que $\Hom_{K_p}(\sigma_{\bf k},\Pi_\infty[\mathfrak{p}])\neq0$. On d\'eduit alors de \cite[Lemma 4.16]{CEGGPS}, dont la preuve s'\'etend verbatim \`a notre situation, que l'on a un isomorphisme
\[\Hom_{K_p}(\sigma_{\bf k},\Pi_\infty[\mathfrak{p}])\simeq\Hom_{G_p}(\sigma_{\bf k}\otimes_L\pi(\mathfrak{p}),\Pi_\infty[\mathfrak{p}])\]
o\`u $\pi(\mathfrak{p})$ d\'esigne la repr\'esentation lisse irr\'eductible de $G_p$, de la s\'erie principale, telle que $\pi(\mathfrak{p})\otimes|\det|^{\frac{1-n}{2}}$ corresponde au point $\mathfrak{p}\cap R_{\rhobar_p}^{\square,{\bf k}-{\rm cr}}$ par la correspondance de Langlands locale (i.e.~par la correspondance de Langlands locale appliqu\'ee en chaque place $v$ divisant $p$). On d\'eduit de ceci et du calcul du module de Jacquet de $\sigma_{\bf k}\otimes_L\pi(\mathfrak{p})$ que tous les points de $\{(\iota(y_1),y_2)\}\times\mathbb{U}^g$ sont dans le support de $\mathcal{M}_\infty$. Comme $\{y\}\times\mathbb{U}^g$ est un ensemble irr\'eductible de points lisses de $X_{\rm tri}^\square(\rhobar_p)\times\mathfrak{X}_{\rhobar^p}^\square\times\mathbb{U}^g$, il est inclus dans une unique composante irr\'eductible de cet espace. Comme d'apr\`es le Th\'eor\`eme \ref{egalitecomposante} le support de $\mathcal{M}_\infty$ est une union de composantes irr\'eductibles, on en d\'eduit que l'image par $\iota$ de cette composante irr\'eductible est incluse dans $X_p(\rhobar)$. Remarquons \`a pr\'esent que l'ouvert $U$ est connexe, donc cette composante irr\'eductible contient \'egalement le point $x$ de depart. Comme $x$ est \'egalement un point lisse de $X_{\rm tri}^\square(\rhobar_p)\times\mathfrak{X}_{\rhobar^p}^\square\times\mathbb{U}^g$, l'ensemble $X\times\mathfrak{X}^p\times\mathbb{U}^g$ est l'unique composante irr\'eductible contenant ce point, on a donc au final $\iota(X)\times\mathfrak{X}^p\times\mathbb{U}^g\subset X_p(\rhobar)$.
\end{proof}

\begin{prop}\label{reciproqueconj}
La conjecture \ref{conjcomp} implique la conjecture \ref{modularite}.
\end{prop}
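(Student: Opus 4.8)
\emph{Proposition de preuve (esquisse).} La stratégie serait de montrer que, pour toute composante irréductible $\mathfrak{C}$ de $\Spf(R_{\rhobar_p}^{\square,{\bf k}-{\rm cr}})^{\rm rig}$, le produit $\mathfrak{C}\times\mathfrak{X}^p\times\mathbb{U}^g$ est contenu dans le support du $R_\infty$-module $\Hom_{K_p}(\sigma_{\bf k},\Pi_\infty)'$. Comme dans la preuve de la Proposition \ref{tresclassique}, l'argument de \cite[Lem.~4.16]{CEGGPS} (étendu à toutes les places divisant $p$) montre que l'action de $R_{\rhobar_p}^\square$ sur $\Hom_{K_p}(\sigma_{\bf k},\Pi_\infty)$ se factorise par $R_{\rhobar_p}^{\square,{\bf k}-{\rm cr}}$, de sorte que ce support est déjà un fermé analytique de $\Spf(R_{\rhobar_p}^{\square,{\bf k}-{\rm cr}})^{\rm rig}\times\mathfrak{X}^\square_{\rhobar^p}\times\mathbb{U}^g$ et qu'il reste à prouver l'inclusion réciproque, composante par composante. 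Ce support étant un fermé de Zariski et $\mathfrak{C}$, $\mathfrak{X}^p$ étant géométriquement irréductibles (quitte à agrandir $L$), l'espace $\mathfrak{C}\times\mathfrak{X}^p\times\mathbb{U}^g$ est irréductible et il suffit d'exhiber une partie Zariski-dense de $\mathfrak{C}$ formée de points $y_1$ tels que $\{y_1\}\times\mathfrak{X}^p\times\mathbb{U}^g$ soit contenu dans le support.

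Pour une telle partie dense je prendrais l'ouvert de Zariski de $\mathfrak{C}$ formé des points cristallins génériques $y_1$ (au sens de la Définition \ref{generique}) dont de plus le Frobenius cristallin linéarisé est \emph{banal}, c'est-à-dire tels que pour chaque raffinement et chaque $v\in S_p$ l'induite parabolique lisse $\Ind_{B_v}^{G_v}(\chi_v)$ soit irréductible~; cet ouvert est non vide dans chaque composante $\mathfrak{C}$ par densité des points cristallins génériques dans l'espace des déformations cristallines (cf.~\cite[\S2]{HellmSchrDensity}), donc Zariski-dense dans $\mathfrak{C}$. Fixons un tel $y_1$ et un raffinement. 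D'après le Lemme \ref{parametregenerique}, le couple $(y_1,\delta)$ correspondant appartient à l'ouvert lisse $U_{\rm tri}^\square(\rhobar_p)^{\rm reg}$, donc à une unique composante irréductible $X$ de $X_{\rm tri}^\square(\rhobar_p)$. Puisque $X$ rencontre $U_{\rm tri}^\square(\rhobar_p)^{\rm reg}$ en un point cristallin et que le critère de la Conjecture \ref{conjcomp} ne dépend pas du choix de $\mathfrak{X}^p$, cette conjecture force $X$ à être $\mathfrak{X}^p$-automorphe, c'est-à-dire $\iota(X)\times\mathfrak{X}^p\times\mathbb{U}^g\subset X_p(\rhobar)$. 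En particulier, pour tout $(z_1,z_2)\in\mathfrak{X}^p\times\mathbb{U}^g$, en posant $y=(y_1,z_1,z_2)\in\Spf(R_\infty)^{\rm rig}$ on a $(y,\iota(\delta))\in X_p(\rhobar)$.

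Il resterait à déduire de là que $y$ est dans le support cherché. Le calcul qui suit la Proposition \ref{tresclassique} montre que $\iota(\delta)$ est localement algébrique de poids dominant $\lambda$ avec $\lambda_{\tau,i}=k_{\tau,i}+(i-1)$, et que $\chi:=\iota(\delta)\delta_\lambda^{-1}$ est lisse non ramifié, avec $\Ind_{B_p}^{G_p}(\chi)$ irréductible par banalité~; de plus le raffinement est non critique puisque $y_1$ est générique. On appliquerait alors la machinerie de la preuve du Théorème \ref{densite} « à l'envers »~: la Proposition \ref{points} donne $J_{B_p}(\Pi_\infty[\mathfrak{p}_y]^{\rm an})^{T_p=\iota(\delta)}\neq 0$, l'adjonction \eqref{recfrob} fournit un morphisme non nul de source $\mathcal{F}_{\overline{B}_p}^{G_p}((U(\mathfrak{g}_L)\otimes_{U(\overline{\mathfrak{b}}_L)}(-\lambda))^\vee,\delta_{B_p}^{-1}\iota(\delta)\delta_\lambda^{-1})$, et la non-criticité du raffinement, jointe au critère de norme \cite[Cor.~3.5]{BreuilAnalytiqueI} comme dans la preuve du Théorème \ref{densite}, entraîne que le seul sous-quotient de cette représentation admettant une norme $G_p$-invariante est son quotient localement algébrique $L(\lambda)\otimes_L\Ind_{B_p}^{G_p}(\chi)$~; le point $(y,\iota(\delta))$ est donc classique au sens de la Définition \ref{defclassique}. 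L'isomorphisme \eqref{Satake} et la réciprocité de Frobenius transforment alors un élément non nul de $\Hom_{G_p}(L(\lambda)\otimes_L\Ind_{B_p}^{G_p}(\chi),\Pi_\infty[\mathfrak{p}_y])$ en un élément non nul de $\Hom_{K_p}(\sigma_{\bf k},\Pi_\infty[\mathfrak{p}_y])$ (via $L(\lambda)|_{K_p}\cong\sigma_{\bf k}$), et \eqref{invariantsdual} donne $\Hom_{K_p}(\sigma_{\bf k},\Pi_\infty)'\otimes_{R_\infty}k(y)\neq 0$, c'est-à-dire $y$ dans le support. Faisant varier $y_1$ dans la partie dense et utilisant que le support est un fermé analytique, on obtient $\mathfrak{C}\times\mathfrak{X}^p\times\mathbb{U}^g$ dans le support, d'où la conclusion en réunissant sur les composantes $\mathfrak{C}$.

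Le point demandant le plus de soin me semble être la classicité du point cristallin automorphe $(y,\iota(\delta))$~: il faut s'assurer que la non-criticité du raffinement — et non une forte régularité du poids $\lambda$, seule régularité utilisée dans la preuve du Théorème \ref{densite} — suffit à exclure une norme $G_p$-invariante sur tous les sous-quotients $\mathcal{F}_{\overline{P}}^{G_p}(M_{-w\cdot\lambda},\pi_P)$ de $\Ind_{\overline{B}_p}^{G_p}(\iota(\delta)\delta_{B_p}^{-1})^{\rm an}$ avec $w\neq 1$. Un second point à vérifier est la Zariski-densité, dans chaque composante irréductible de l'espace des déformations cristallines, des points à la fois génériques et à Frobenius cristallin banal.
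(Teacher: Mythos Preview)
Your overall architecture is the same as the paper's: pick a generic crystalline point $x_p$ in each component of $\mathfrak{X}_{\rhobar_p}^{\square,{\bf k}-{\rm cr}}$, use Lemma~\ref{parametregenerique} to place $(x_p,\delta)$ in $U_{\rm tri}^\square(\rhobar_p)^{\rm reg}$, invoke Conjecture~\ref{conjcomp} to land in $X_p(\rhobar)$, obtain a nonzero morphism from $\mathcal{F}_{\overline{B}_p}^{G_p}(\delta)$ into $\Pi_\infty[\mathfrak{p}_x]^{\rm an}$ via \eqref{recfrob}, and then argue that this morphism must factor through the locally algebraic quotient, which yields a nonzero element of $\Hom_{K_p}(\sigma_{\bf k},\Pi_\infty[\mathfrak{p}_y])$.

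The gap is exactly the one you flag. In the proof of Th\'eor\`eme~\ref{densite}, the norm criterion \cite[Cor.~3.5]{BreuilAnalytiqueI} eliminates the non--locally-algebraic constituents $\mathcal{F}_{\overline{P}}^{G_p}(M_{-w\cdot\lambda},\pi_P)$ only because the weight $\lambda$ satisfies the strong regularity inequalities \eqref{genericite}; these are conditions on the \emph{weights} $\lambda_{\tau,i}$, not on the refinement. Non-criticality of the refinement is a statement about the relative position of the $\varphi$-stable flag and the Hodge filtration and gives no control on the valuations $v_{F_v^+}((\delta_v\delta_{B_v}^{-1})(\gamma_{\tilde v,i}))$ needed in \eqref{conditionNorme}. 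So your proposed classicality argument does not go through as written, and there is no obvious way to repair it along those lines for a fixed ${\bf k}$.

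The paper closes this gap by a completely different device, which is worth internalising: it reuses Lemma~\ref{parametregenerique}. Suppose the morphism does \emph{not} factor through the locally algebraic quotient. Then, by the description of the Jordan--H\"older constituents of $\mathcal{F}_{\overline{B}_p}^{G_p}(\delta)$ in \cite[Cor.~4.6]{BreuilAnalytiqueII}, some constituent with $w\neq 1$ maps nontrivially to $\Pi_\infty[\mathfrak{p}_x]^{\rm an}$, and computing its Jacquet module produces a locally algebraic but \emph{non-dominant} character $\delta'$ in the $T_p$-socle of $J_{B_p}(\Pi_\infty[\mathfrak{p}_x]^{\rm an})$. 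By Proposition~\ref{points} this gives a point of $X_p(\rhobar)$ whose projection to $X_{\rm tri}^\square(\rhobar_p)$ is $(x_p,\iota^{-1}(\delta'))$ with non-dominant weight. But $x_p$ was chosen crystalline \emph{generic}, and Lemma~\ref{parametregenerique} forces every point of $X_{\rm tri}^\square(\rhobar_p)$ lying over such an $x_p$ to have strictly dominant parameter --- contradiction. Thus no norm estimate is needed at all; the genericity of $x_p$, together with the already-established inclusion $X_p(\rhobar)\subset \iota(X_{\rm tri}^\square(\rhobar_p))\times\mathfrak{X}^\square_{\rhobar^p}\times\mathbb{U}^g$, does the work.

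Two smaller remarks. First, with the paper's argument the ``banal'' hypothesis on the Frobenius eigenvalues is unnecessary: irreducibility of $\Ind_{B_p}^{G_p}(\chi)$ is never used, so your second worry disappears. Second, a single generic point per component of $\mathfrak{X}_{\rhobar_p}^{\square,{\bf k}-{\rm cr}}$ suffices (rather than a Zariski-dense set), since by \cite[Lem.~4.17]{CEGGPS} the support of $\Hom_{K_p}(\sigma_{\bf k},\Pi_\infty)'$ is a union of irreducible components of $\Spf(R_{\rhobar_p}^{\square,{\bf k}-{\rm cr}})^{\rm rig}\times\mathfrak{X}_{\rhobar^p}^\square\times\mathbb{U}^g$; the existence of such a point is \cite[Lem.~4.4]{Cheneviertriangulines}.
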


\begin{proof}
Soit $\mathfrak{X}^p$ une composante irr\'eductible de $\mathfrak{X}^\square_{\rhobar^p}$ et $\delta_{\bf k}$ un caract\`ere alg\'ebrique strictement dominant de $T_p$. Soit $\mathfrak{X}_p$ une composante irr\'eductible de $\mathfrak{X}_{\rhobar_p}^{\square,{\bf k}-{\rm cr}}$. D'apr\`es \cite[Lem.~4.4]{Cheneviertriangulines}, on peut choisir \[x_p=(x_v)_{v\in S_p}\in\mathfrak{X}_p\subset\mathfrak{X}_{\rhobar_p}^{\square,{\bf k}-{\rm cr}}\simeq\prod_{v\in S_p}\mathfrak{X}_{\rhobar_{\tilde{v}}}^{\square,k_v-{\rm cr}}\] tel que la repr\'esentation cristalline de $\mathcal{G}_{F_{\tilde{v}}}$ associ\'ee \`a $x_v$ soit g\'en\'erique pour tout $v\in S_p$ au sens de la D\'efinition \ref{generique}. Soit $\delta$ le caract\`ere $\delta_{\bf k}\eta$ de $T_p$ o\`u $\eta$ est le caract\`ere lisse non ramifi\'e de la forme $\prod_{v\in S_p}\eta_v$ avec $\eta_v$ l'unique caract\`ere lisse non ramifi\'e de $T_v$ envoyant $(x_1,\dots,x_n)$ sur $\prod_{i=1}^n\varphi_i^{v(x_i)}$ o\`u $(\varphi_1,\dots,\varphi_n)$ est une suite de valeurs propres du Frobenius cristallin de $x_v$. Le Lemme \ref{parametregenerique} implique en particulier que le point $(x_p,\delta)$ appartient \`a $U_{\rm tri}^\square(\rbar)^{\rm reg}$. Sous la Conjecture \ref{conjcomp}, on a donc $\{(x_p,\delta)\}\times\mathfrak{X}^p\times\mathbb{U}^g\subset X_p(\rhobar)$. Notons temporairement \[\mathcal{F}_{\overline{B}_p}^{G_p}(\delta):=\mathcal{F}_{\overline{B}_p}^{G_p}(U(\mathfrak{g}_L)\otimes_{U(\overline{\mathfrak{b}}_L)}(-\lambda),\eta\delta_{B_p}^{-1}),\] o\`u $\lambda=(k_{\tau,i}+i-1)_{\tau\in\Hom(F_v^+,L),1\leq i\leq n}$. Fixons $x\in\{(x_p,\delta)\}\times\mathfrak{X}^p\times\mathbb{U}^g$. Comme $x\in X_p(\rhobar)$, il existe un morphisme non trivial de $\mathcal{F}_{\overline{B}_p}^{G_p}(\delta)$ vers $\Pi_\infty[\mathfrak{p}_x]^{\rm an}$. Pour prouver que le point $y$, image de $x$ dans $\Spf(R_\infty)^{\rm rig}$ est dans le support de $\Hom_{K_p}(\sigma_\delta,\Pi_\infty)'$, il suffit de prouver que ce morphisme se factorise par le quotient localement alg\'ebrique de $\mathcal{F}_{\overline{B}_p}^{G_p}(\delta)$. Or si ce n'\'etait pas le cas, la description des facteurs de Jordan-H\"older de $\mathcal{F}_{\overline{B}_p}^{G_p}(\delta)$ (cf.~\cite[Cor.~4.6]{BreuilAnalytiqueII}) implique qu'il existerait un caract\`ere localement alg\'ebrique non dominant dans le socle de la $T_p$-repr\'esentation $J_{B_p}(\Pi_\infty[\mathfrak{p}_x]^{\rm an})$ et donc un point de $X_p(\rhobar)$ dont la projection dans $X_{\rm tri}^\square(\rhobar_p)$ est de la forme $(x_p,\delta')$ avec $\delta'$ un caract\`ere localement alg\'ebrique de poids non dominant, ce qui contredit le Lemme \ref{parametregenerique}.
\end{proof}

\section{Application aux vari\'et\'es de Hecke}\label{Global}

\subsection{L'espace des repr\'esentations galoisiennes de pente finie}

Rappelons que l'on a d\'efini au \S\ref{automorphe} le Banach $p$-adiques $\widehat{S}(U^p,L)$ de niveau mod\'er\'e $U^p$.

Soit $S$, $\rhobar$ et $\mathcal{S}$ comme aux \S\ref{automorphe} et \S\ref{HTW}, et $\mathfrak{X}_{\rhobar,\mathcal{S}}=\Spf(R_{\rhobar,\mathcal{S}})^{\rm rig}$. On d\'esigne par $Y(U^p,\rhobar)$ la sous-vari\'et\'e analytique rigide de $\mathfrak{X}_{\rhobar,\mathcal{S}}\times\widehat{T}_{p,L}$ support du faisceau coh\'erent $\mathcal{M}_{U^p}$ d\'efini par $J_{B_p}(\widehat{S}(U^p,L)_{\mathfrak{m}}^{\rm an})$. Il s'agit de la d\'efinition de la vari\'et\'e de Hecke (\cite{CheHecke}) propos\'ee dans \cite{Emint}. Les m\'ethodes de \cite[\S3.8]{CheJL} permettent de montrer qu'il s'agit d'un espace rigide analytique r\'eduit.

Notons $R_{\rhobar_S}^{\square}$ l'alg\`ebre $\widehat\bigotimes_{v\in S}R^{\square}_{\rhobar_{\tilde{v}}}$ et $R_{\rhobar_S}^{\bar\square}$ l'alg\`ebre $\widehat\bigotimes_{v\in S}R^{\bar\square}_{\rhobar_{\tilde{v}}}$ et $\mathfrak{X}^\square_{\rhobar_S}=\Spf(R_{\rhobar_S}^{\bar \square})^{\rm rig}$. On a alors une d\'ecomposition en produit $\mathfrak{X}^\square_{\rhobar_S}\simeq\mathfrak{X}_{\rhobar_p}^\square\times\mathfrak{X}_{\rhobar^p}^\square$ et on d\'efinit \[X_{\rm tri}^\square(\rhobar_S)=X_{\rm tri}^\square(\rhobar_p)\times\mathfrak{X}_{\rhobar^p}^\square\subset \mathfrak{X}^\square_{\rhobar_S}\times\widehat{T}_{p,L}.\] On d\'esigne par $X_{\rm tri}^\square(\rhobar_S)^{\rm aut}$ le sous-espace analytique rigide ferm\'e de $X_{\rm tri}^\square(\rhobar_p)\times\mathfrak{X}_{\rhobar^p}^\square$ union des composantes irr\'eductibles $X\times\mathfrak{X}^p$, o\`u $X$ parcourt l'ensemble des composantes irr\'eductibles $\mathfrak{X}^p$-automorphes (cf.~D\'ef.~\ref{compautomorphe}) et $\mathfrak{X}^p$ parcourt l'ensemble des composantes irr\'eductibles de $\mathfrak{X}_{\rhobar^p}^\square$. L'espace $X_{\rm tri}^\square(\rhobar_S)^{\rm aut}$ d\'epend donc des choix faits dans le Th\'eor\`eme \ref{TaylorWiles}.

Ce m\^eme choix donne lieu \`a un morphisme local de $\mathcal{O}_L$-alg\`ebres locales $R_{\rhobar_S}^{\bar\square}\rightarrow R_{\rhobar,\mathcal{S}}$ obtenu par composition de l'inclusion naturelle $R_{\rhobar_S}^{\bar\square}\hookrightarrow R_\infty$ avec le morphisme surjectif $R_\infty\twoheadrightarrow R_{\rhobar,\mathcal{S}}$ du Th\'eor\`eme \ref{TaylorWiles}. Passant aux espaces analytiques rigides associ\'es, on obtient une application $\mathfrak{X}_{\rhobar,\mathcal{S}}\rightarrow\mathfrak{X}_{\rhobar_S}^{\square}$.

Le but de cette partie est de comparer les espaces $Y(U^p,\rhobar)$ et
\[X(\rhobar)_{\rm fs}:=\iota(X^\square_{\rm tri}(\rhobar_S))\times_{\mathfrak{X}^\square_{\rhobar_S}}\mathfrak{X}_{\rhobar,\mathcal{S}}.\] Ce dernier espace est introduit dans \cite{HellmannFS} (avec fs pour \og finite slope\fg).

\begin{rema}
On a $R_{\rhobar,\mathcal{S}}=R_{\rhobar,S}\widehat\otimes_{R^\square_{\rhobar_S}}R^{\bar\square}_{\rhobar_S}$. En particulier on a \[\mathfrak{X}_{\rhobar,\mathcal{S}}=\Spf(R_{\rhobar,S})^{\rm rig}\times_{\Spf(R^\square_{\rhobar_S})^{\rm rig}}\Spf(R^{\bar\square}_{\rhobar_S})^{\rm rig}.\] Chaque $R^{\bar\square}_{\rhobar_{\tilde{v}}}$ est le plus grand quotient r\'eduit et sans $p$-torsion de $R^\square_{\rhobar_{\tilde{v}}}$. On d\'eduit alors de la preuve de \cite[Lem. 3.4.12]{KisinModularity} que $R^{\bar\square}_{\rhobar_S}$ est le plus grand quotient r\'eduit et sans $p$-torsion de $R^\square_{\rhobar_S}$. Ainsi $\Spf(R^\square_{\rhobar_S})^{\rm rig}$ et $\Spf(R^{\bar\square}_{\rhobar_S})^{\rm rig}$ ont m\^eme espace sous-jacent, et donc les espaces analytiques rigides $\Spf(R_{\rhobar,S})^{\rm rig}$ et $\mathfrak{X}_{\rhobar,\mathcal{S}}$ ont m\^eme espace sous-jacent.
\end{rema}

Nous aurons \'egalement besoin des notations suivantes. L'espace analytique rigide $\mathfrak{X}_{\rhobar^p}^\square$, \'etant un produit d'espaces analytiques rigides r\'eduits, est r\'eduit. L'ensemble de ses points lisses forme donc un ouvert au sens de Zariski qui est non vide et Zariski-dense. Notons $\mathfrak{X}_{\rhobar^p}^{\square,{\rm reg}}$ cet ouvert.
Soit \[X(\rhobar)_{\rm fs}^{\rm reg}=\mathfrak{X}_{\rhobar,\mathcal{S}}\times_{\mathfrak{X}_{\rhobar_p}^\square\times\mathfrak{X}_{\rhobar^p}^\square}(\iota(U_{\rm tri}^\square(\rhobar_p)^{\rm reg})\times\mathfrak{X}_{\rhobar^p}^{\square,{\rm reg}}).\] Il s'agit d'un ouvert au sens de Zariski de $X(\rhobar)_{\rm fs}$. On d\'efinit alors $Y(U_p,\rhobar)^{\rm reg}$ le sous-espace analytique ouvert (au sens de Zariski) de $Y(U^p,\rhobar)$ dont l'espace sous-jacent est donn\'e par $Y(U^p,\rhobar)\cap X(\rhobar)_{\rm fs}^{\rm reg}$ et muni de la structure d'espace analytique rigide induite par celle de $Y(U^p,\rhobar)$.

Remarquons que comme $X_{\rm tri}^{\square}(\rhobar_S)$ est un sous-espace analytique ferm\'e de $\mathfrak{X}^{\square}_{\rhobar_S}\times\widehat{T}_{p,L}$, l'espace $X(\rhobar)_{\rm fs}$ est un sous-espace analytique ferm\'e de $\mathfrak{X}_{\rhobar,\mathcal{S}}\times\widehat{T}_{p,L}$.

\begin{theo}\label{compagnons}
Dans $\mathfrak{X}_{\rhobar,\mathcal{S}}\times\widehat{T}_{p,L}$, on a une \'egalit\'e \emph{d'ensembles ferm\'es analytiques} $Y(U^p,\rhobar)=\mathfrak{X}_{\rhobar,\mathcal{S}}\times_{\mathfrak{X}_{\rhobar_S}^\square}\iota(X_{\rm tri}^\square(\rhobar_S)^{\rm aut})$.
\end{theo}

\begin{proof}
L'isomorphisme $R_{\rhobar,\mathcal{S}}\simeq R_\infty/\mathfrak{a}$ se traduit, en termes d'espaces analytiques rigides par un isomorphisme $\mathfrak{X}_{\rhobar,\mathcal{S}}\simeq\Sp(L)\times_{\Spf(S_\infty)^{\rm rig}}(\mathfrak{X}^\square_{\rhobar_S}\times\mathbb{U}^g)$, l'application $\Sp(L)\rightarrow\Spf(S_\infty)^{\rm rig}$ provenant de la sp\'ecialisation $y_i\mapsto0$, ce qui nous permet d'identifier $\mathfrak{X}_{\rhobar,\mathcal{S}}$ \`a un sous-espace analytique rigide ferm\'e de $\mathfrak{X}_\infty\times\widehat{T}_{p,L}$. Par exactitude \`a gauche du foncteur $J_{B_p}$ ainsi que du foncteur de passage aux vecteurs localement analytiques, on a $J_{B_p}(\Pi_\infty^{R_\infty-{\rm an}})[\mathfrak{a}]\simeq J_{B_p}(\Pi_\infty[\mathfrak{a}]^{\rm an})$. L'isomorphisme \eqref{isopatching} montre alors que le faisceau coh\'erent $\mathcal{M}_{U^p}$ sur $\mathfrak{X}_{\rhobar,\mathcal{S}}\times\widehat{T}_{p,L}^0$ est l'image inverse du faisceau coh\'erent $\mathcal{M}_\infty$ sur $\mathfrak{X}_\infty\times\widehat{T}_{p,L}^0$. On en conclut que le support de $\mathcal{M}_{U^p}$ \emph{en tant que partie ferm\'ee analytique} co\"incide avec l'image inverse du support de $\mathcal{M}_\infty$, c'est-\`a-dire est le ferm\'e analytique sous-jacent \`a $\Sp(L)\times_{\Spf(S_\infty)^{\rm rig}}X_p(\rhobar)$. Il suffit \`a pr\'esent de remarquer que $X_p(\rhobar)=\iota(X_{\rm tri}^\square(\rhobar_S)^{\rm aut})\times\mathbb{U}^g$ par le Th\'eor\`eme \ref{egalitecomposante} et que
\begin{align*}
\Sp(L)\times_{\Spf(S_\infty)^{\rm rig}}(\iota(X_{\rm tri}^\square(\rhobar_S)^{\rm aut})\times\mathbb{U}^g)&\simeq\Sp(L)\times_{\Spf(S_\infty)^{\rm rig}}\mathfrak{X}_{\rhobar_S}^\square\times_{\mathfrak{X}_{\rhobar_S}^\square}(\iota(X_{\rm tri}^\square(\rhobar_S)^{\rm aut})\times\mathbb{U}^g)\\
&\simeq(\Sp(L)\times_{\Spf(S_\infty)^{\rm rig}}\mathfrak{X}_\infty)\times_{\mathfrak{X}_{\rhobar_S}^\square}\iota(X_{\rm tri}^\square(\rhobar_S)^{\rm aut})\\
&\simeq\mathfrak{X}_{\rhobar,\mathcal{S}}\times_{\mathfrak{X}_{\rhobar_S}^\square}\iota(X_{\rm tri}^\square(\rhobar_S)^{\rm aut}).
\end{align*}
\end{proof}

\begin{coro}
La conjecture 4.16 de \cite{HellmannFS} est vraie lorsque $\rhobar$ v\'erifie les hypoth\`eses des \S\ref{automorphe} et \ref{HTW}. 
\end{coro}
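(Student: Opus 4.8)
The plan is to read off the statement from Th\'eor\`eme~\ref{compagnons} together with the very construction of $X_{\rm tri}^\square(\rhobar_S)^{\rm aut}$. Recall that Conjecture 4.16 of \cite{HellmannFS} predicts that the eigenvariety $Y(U^p,\rhobar)$, sitting as a closed analytic subset of $\mathfrak{X}_{\rhobar,\mathcal{S}}\times\widehat{T}_{p,L}$ via the triangulinity of its Galois representations, is (the reduced space underlying) a union of irreducible components of the finite slope space $X(\rhobar)_{\rm fs}=\iota(X^\square_{\rm tri}(\rhobar_S))\times_{\mathfrak{X}^\square_{\rhobar_S}}\mathfrak{X}_{\rhobar,\mathcal{S}}$. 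First I would invoke Th\'eor\`eme~\ref{compagnons}, which gives the equality of closed analytic sets $Y(U^p,\rhobar)=\mathfrak{X}_{\rhobar,\mathcal{S}}\times_{\mathfrak{X}^\square_{\rhobar_S}}\iota(X_{\rm tri}^\square(\rhobar_S)^{\rm aut})$; by D\'efinition~\ref{compautomorphe} (and Th\'eor\`eme~\ref{egalitecomposante}) the subspace $X_{\rm tri}^\square(\rhobar_S)^{\rm aut}$ is, tautologically, a union of irreducible components of $X_{\rm tri}^\square(\rhobar_S)=X_{\rm tri}^\square(\rhobar_p)\times\mathfrak{X}^\square_{\rhobar^p}$. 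Hence $Y(U^p,\rhobar)$ is exactly the trace on $X(\rhobar)_{\rm fs}$ of a union of irreducible components of $X_{\rm tri}^\square(\rhobar_S)$, which is already the content of \cite{HellmannFS} if that conjecture is understood at the level of underlying sets.

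To upgrade this to the assertion that $Y(U^p,\rhobar)$ is a union of irreducible components of $X(\rhobar)_{\rm fs}$ as reduced rigid spaces, I would proceed as follows. The morphism $\mathfrak{X}_{\rhobar,\mathcal{S}}\to\mathfrak{X}^\square_{\rhobar_S}$ is a closed immersion, since $R^{\bar\square}_{\rhobar_S}$ surjects onto $R_{\rhobar,\mathcal{S}}$ (one has $R_\infty=R^{\bar\square}_{\rhobar_S}\dbl x_1,\dots,x_g\dbr$ and $R_{\rhobar,\mathcal{S}}=R_\infty/\mathfrak a$), so that, as reduced spaces, $X(\rhobar)_{\rm fs}\simeq\Sp(L)\times_{\Spf(S_\infty)^{\rm rig}}(\iota(X^\square_{\rm tri}(\rhobar_S))\times\mathbb{U}^g)$ and $Y(U^p,\rhobar)\simeq\Sp(L)\times_{\Spf(S_\infty)^{\rm rig}}X_p(\rhobar)$, with $X_p(\rhobar)$ a union of irreducible components of $\iota(X^\square_{\rm tri}(\rhobar_S))\times\mathbb{U}^g$. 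Now $X_{\rm tri}^\square(\rhobar_p)$ is equidimensional (Th\'eor\`eme~\ref{ouvertsature}), hence so are $\iota(X^\square_{\rm tri}(\rhobar_S))\times\mathbb{U}^g$ and $X_p(\rhobar)$ (Corollaire~\ref{equidim}); moreover, by Corollaire~\ref{recouvrement}, the weight map of $X_p(\rhobar)$ is, on each irreducible component and locally on a suitable affinoid cover, finite and surjective onto an affinoid open of $\mathcal{W}_\infty$, and its composite with the projection $\mathcal{W}_\infty\to\Spf(S_\infty)^{\rm rig}$ is dominant onto a Zariski-open with equidimensional fibres. Passing to the fibre over the point $y_1,\dots,y_q\mapsto 0$, this forces each irreducible component of $X(\rhobar)_{\rm fs}$ to dominate a unique irreducible component of $X^\square_{\rm tri}(\rhobar_S)$, the ones dominating the automorphic components being precisely those contained in $Y(U^p,\rhobar)$; combined with the Zariski-density of classical points (Th\'eor\`eme~\ref{densite}), which prevents an automorphic component from being properly contained in a larger component of $X(\rhobar)_{\rm fs}$, one concludes.

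The point requiring real care — and the one I expect to be the main obstacle — is precisely this last bookkeeping of irreducible components through the fibre product with $\mathfrak{X}_{\rhobar,\mathcal{S}}$: a priori $X(\rhobar)_{\rm fs}$ need be neither reduced nor equidimensional, and the closed-analytic trace of a union of components of $X^\square_{\rm tri}(\rhobar_S)$ is not automatically a union of components of $X(\rhobar)_{\rm fs}$, so no bare dimension count suffices; one genuinely has to transport the component decomposition over $\Spf(R_\infty)^{\rm rig}\times\widehat{T}_{p,L}$ down to the fibre over $0\in\Spf(S_\infty)^{\rm rig}$ using the explicit local structure of Corollaire~\ref{recouvrement} and the reducedness of $Y(U^p,\rhobar)$ already recorded above. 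One should also match the exact formulation of Conjecture 4.16 of \cite{HellmannFS}: if it is stated for the regular loci $Y(U^p,\rhobar)^{\rm reg}\subset X(\rhobar)_{\rm fs}^{\rm reg}$ rather than the whole spaces, the same strategy applies after restricting along the Zariski-open $\iota(U^\square_{\rm tri}(\rhobar_p)^{\rm reg})\times\mathfrak{X}^{\square,{\rm reg}}_{\rhobar^p}$, where, by Th\'eor\`eme~\ref{ouvertsature}(iii) and Proposition~\ref{ouvertlisse}, the spaces in play are smooth and the identification of components is immediate.
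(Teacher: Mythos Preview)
The paper gives no separate proof of this corollary: it is recorded as an immediate consequence of Th\'eor\`eme~\ref{compagnons}. Since $X_{\rm tri}^\square(\rhobar_S)^{\rm aut}\subset X_{\rm tri}^\square(\rhobar_S)$ by construction, the equality of closed analytic sets $Y(U^p,\rhobar)=\mathfrak{X}_{\rhobar,\mathcal{S}}\times_{\mathfrak{X}^\square_{\rhobar_S}}\iota(X_{\rm tri}^\square(\rhobar_S)^{\rm aut})$ yields at once $Y(U^p,\rhobar)\subset X(\rhobar)_{\rm fs}$ and identifies it with the trace of a union of components of $X_{\rm tri}^\square(\rhobar_S)$. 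Your first paragraph is exactly this, and matches the paper's (implicit) argument.

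Your second and third paragraphs go further and try to show that $Y(U^p,\rhobar)$ is a union of irreducible components of $X(\rhobar)_{\rm fs}$ \emph{as reduced rigid spaces}. The paper does \emph{not} claim this here; the finer scheme-theoretic comparison is handled separately, and only on the regular locus, in Th\'eor\`eme~\ref{structreduite}. So either Conjecture~4.16 of \cite{HellmannFS} is the weaker (containment/closed-analytic-subset) statement and your extra work is unnecessary, or you are reading it more strongly than the authors do. The obstacle you correctly identify---that $X(\rhobar)_{\rm fs}$ need be neither reduced nor equidimensional, so a na\"ive dimension count does not transport components through the fibre over $\mathfrak a=0$---is real, and it is precisely why the paper restricts to the regular locus in Th\'eor\`eme~\ref{structreduite} rather than asserting the stronger statement globally. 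Your sketch for handling it (via Proposition~\ref{recouvrement} and Zariski-density of classical points) is plausible on the regular locus but would not go through as written on all of $X(\rhobar)_{\rm fs}$.
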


Nous allons d\'esormais montrer que la Conjecture \ref{conjcomp} implique une partie des conjectures de \cite{BreuilAnalytiqueII}. Pour cela, pla\c{c}ons dans le contexte de \emph{loc.~cit.} et supposons donc que $F_v^+=\Q_p$ pour tout $v\in S_p$. Remarquons qu'il n'y a pas de difficult\'e non typographique \`a \'etendre \cite{BreuilAnalytiqueII}, ainsi que le Corollaire \ref{remBreuilAnalytiqueII} ci-dessous, au cas g\'en\'eral.

Soit $\pi$ une repr\'esentation automorphe de $G$ qui v\'erifie les conditions suivantes :\\
(i) $\pi^{U^p}\neq0$, en particulier $\pi$ est non ramifi\'ee hors de $S$ ;\\
(ii) pour $v\in S_p$, $\rho_{\pi,\tilde{v}}:=\rho_\pi|_{\mathcal{G}_{\tilde{v}}}$ est cristalline g\'en\'erique au sens de \cite[D\'ef.~5.2]{BreuilAnalytiqueII}\footnote{Attention, ce n'est pas la D\'efinition \ref{generique} du pr\'esent article.};\\
(iii) $\rhobar_\pi$ est isomorphe \`a $\rhobar$.

On note $\mathfrak{p}_\pi$ l'id\'eal maximal de $R_{\rhobar,S}[1/p]$ correspondant \`a $\rho_\pi$.

\begin{coro}\label{remBreuilAnalytiqueII}
Supposons que $p\geq2n+2$ et $\rhobar|_{\mathcal{G}_{F(\zeta_p)}}$ est absolument irr\'eductible. Alors la partie \og existence\fg\, de la Conjecture 6.5 de \cite{BreuilAnalytiqueII} est vraie dans le cas cristallin. Plus pr\'ecis\'ement, pour tout caract\`ere $\eta=\prod_{v\in S_p}\eta(w_{\tilde{v}}^{\rm alg},w_{\tilde{v}})$ avec $w_{\tilde{v}}^{\rm alg}\leq w_{\tilde{v}}^{\rm alg}(w_{\tilde{v}})$ (les notations sont celles de \cite{BreuilAnalytiqueII}), on a
\[\Hom_{T_p}(\eta,J_{B_p}(\widehat{S}(U_p,L)^{\rm an}[\mathfrak{p}_\pi]))\neq0.\] 
\end{coro}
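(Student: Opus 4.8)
The strategy is to deduce the ``existence'' part of Conjecture~6.5 of \cite{BreuilAnalytiqueII} in the crystalline case from Conjecture~\ref{conjcomp} (which, being equivalent to Conjecture~\ref{modularite} by Propositions~\ref{implicationconj} and~\ref{reciproqueconj}, is itself a consequence of modularity) via the geometry of $X_p(\rhobar)$ and Theorem~\ref{egalitecomposante}. First I would translate the sought-after nonvanishing $\Hom_{T_p}(\eta,J_{B_p}(\widehat S(U^p,L)^{\rm an}[\mathfrak p_\pi]))\neq 0$ into a statement about points of $Y(U^p,\rhobar)$: by the definition of $Y(U^p,\rhobar)$ as the schematic support of $\mathcal M_{U^p}$ and by Proposition~\ref{points} (applied with $R_\infty/\mathfrak a R_\infty\simeq R_{\rhobar,\mathcal S}$ and the isomorphism~\eqref{isopatching}), this amounts to showing that the point $(\mathfrak p_\pi,\iota(\eta))$ lies in $Y(U^p,\rhobar)$. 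By Theorem~\ref{compagnons}, $Y(U^p,\rhobar)=\mathfrak X_{\rhobar,\mathcal S}\times_{\mathfrak X^\square_{\rhobar_S}}\iota(X_{\rm tri}^\square(\rhobar_S)^{\rm aut})$, so it suffices to show that the corresponding framed point $(r,\delta)$, with $\delta=\eta$ in the trianguline parametrization, lies in $X_{\rm tri}^\square(\rhobar_p)^{\rm aut}$ (times the relevant component of $\mathfrak X^\square_{\rhobar^p}$), i.e. that it lies on an $\mathfrak X^p$-automorphe irreducible component of $X_{\rm tri}^\square(\rhobar_p)$.

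Next I would produce the relevant irreducible component. The representation $\rho_\pi$ being crystalline and generic in the sense of \cite[D\'ef.~5.2]{BreuilAnalytiqueII} (so in particular the crystalline Frobenius eigenvalues are pairwise distinct and every refinement is noncritical, hence $\rho_\pi$ is also generic in the sense of Definition~\ref{generique}), each choice of refinement gives a parameter $\delta_0$ with $(r_{\pi,p},\delta_0)\in U_{\rm tri}^\square(\rhobar_p)^{\rm reg}$ by Lemma~\ref{parametregenerique}. Since $\pi$ contributes a classical point of $Y(U^p,\rhobar)$ (by (i), (iii) and the construction of $\widehat S(U^p,L)_{\mathfrak m}$, together with Proposition~\ref{tresclassique} read backwards), the point $(r_{\pi,p},\delta_0)$ lies in $X_p(\rhobar)$ after the coordinate change $\iota$, hence on an $\mathfrak X^p$-automorphe component $X$ of $X_{\rm tri}^\square(\rhobar_p)$; this component meets $U_{\rm tri}^\square(\rhobar_p)^{\rm reg}$ in the crystalline point $(r_{\pi,p},\delta_0)$. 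Assuming Conjecture~\ref{conjcomp}, every such $X$ is exactly characterized by containing such a crystalline point, so $X$ is $\mathfrak X^p$-automorphe for the component $\mathfrak X^p$ containing the image of $\rho_\pi$. It then remains to move, inside $X$, from the refinement parameter $\delta_0$ to the more general characters $\eta=\prod_v\eta(w_{\tilde v}^{\rm alg},w_{\tilde v})$ with $w_{\tilde v}^{\rm alg}\leq w_{\tilde v}^{\rm alg}(w_{\tilde v})$ predicted by \cite{BreuilAnalytiqueII}.

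For this last point I would argue as in the proof of Proposition~\ref{reciproqueconj}: having $(r_{\pi,p},\eta)\in X_{\rm tri}^\square(\rhobar_p)$ on an automorphe component means $(r_{\pi,p},\eta)\times\mathfrak X^p\times\mathbb U^g$ (after $\iota$) sits in $X_p(\rhobar)$, hence there is a nonzero map $\mathcal F_{\overline B_p}^{G_p}(U(\mathfrak g_L)\otimes_{U(\overline{\mathfrak b}_L)}(-\lambda),\eta'\delta_{B_p}^{-1})\to \Pi_\infty[\mathfrak p]^{\rm an}$ for a suitable smooth twist; specializing $\mathfrak a$ and using~\eqref{isopatching}, this gives a nonzero element of $\Hom_{T_p}(\eta,J_{B_p}(\widehat S(U^p,L)^{\rm an}[\mathfrak p_\pi]))$. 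The partial order condition $w_{\tilde v}^{\rm alg}\leq w_{\tilde v}^{\rm alg}(w_{\tilde v})$ is precisely the condition ensuring that the relevant character $w\cdot\lambda$ occurs (with the admissible norm/dominance constraints of \cite[\S3]{BreuilAnalytiqueI}) as a Jordan--H\"older constituent of the locally analytic principal series attached to $\delta_0$, so that the corresponding $(r_{\pi,p},\eta)$ indeed lies on $X$; one checks this by the description of Jordan--H\"older factors of $\mathcal F_{\overline B_p}^{G_p}(\delta)$ in \cite[Cor.~4.6]{BreuilAnalytiqueII} together with Theorem~\ref{OSJH}, exactly matching the combinatorics of \cite{BreuilAnalytiqueII}. \textbf{The main obstacle} will be the last step: verifying that \emph{every} character $\eta$ allowed by the partial order in \cite{BreuilAnalytiqueII} actually gives a point on the \emph{same} irreducible component $X$ (and not merely on some automorphe component, or on the Zariski closure without being in the support of $\mathcal M_\infty$ at that point) --- in other words, matching Breuil's explicit combinatorial prediction with the geometry of the fibre of $X_{\rm tri}^\square(\rhobar_p)$ over $\rho_\pi$. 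This requires knowing that all noncritical refinements and all ``companion'' parameters produced by the $w_{\tilde v}$ lie on $X$, which follows from the fact (as in the proof of Theorem~\ref{ouvertsature} and Lemma~\ref{parametregenerique}, using the properness of $f$ in \cite[Cor.~6.3.10]{KPX}) that all the refinements of a generic crystalline point lie in a single irreducible component, together with the local analytic socle computation; making this precise in the possibly-ramified-at-$p$ generality is the technical heart, though it presents ``no non-typographical difficulty'' beyond \cite{BreuilAnalytiqueII}.
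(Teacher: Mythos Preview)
Your overall architecture is correct and matches the paper: reduce the nonvanishing to membership in $Y(U^p,\rhobar)$, use Theorem~\ref{compagnons} to rewrite $Y(U^p,\rhobar)$ in terms of automorphe components, and invoke Conjecture~\ref{conjcomp} to land there. The divergence, and the gap, is in your final step.

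The paper does \emph{not} try to show that all the companion characters $\eta(w_{\tilde v}^{\rm alg},w_{\tilde v})$ give points on a single irreducible component of $X_{\rm tri}^\square(\rhobar_p)$. For a fixed refinement (fixed smooth part $\eta_v$), only the \emph{extremal} companion point --- the one with $w'_v=w_{\tilde v}^{\rm alg}(w_{\tilde v})$, i.e.\ the unique $w_v$ for which $\delta_{w_v(k_v)}\eta_v$ is an actual parameter of $\rho_{\pi,\tilde v}$ --- lies in $U_{\rm tri}^\square(\rhobar_p)^{\rm reg}$ (via Lemma~\ref{parametregenerique}). For $w'_v<w_v$ the character $\delta_{w'_v(k_v)}\eta_v$ is \emph{not} a parameter of $\rho_{\pi,\tilde v}$, so the point $(r_{\pi,p},\delta_{w'_v(k_v)}\eta_v)$ is outside $U_{\rm tri}^\square(\rhobar_p)^{\rm reg}$; Conjecture~\ref{conjcomp} says nothing about it, and your proposed mechanism (``all refinements of a generic crystalline point lie in a single irreducible component'') is both the wrong statement --- these are not different refinements but different algebraic twists of the \emph{same} refinement --- and unproved here. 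The Jordan--H\"older argument you sketch would, at best, show that the companion character occurs in the Jacquet module of some subquotient of the principal series, not that the corresponding point of $\mathfrak X^\square_{\rhobar_p}\times\widehat T_{p,L}$ lies on the component $X$.

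What the paper actually does is apply Conjecture~\ref{conjcomp} only to the extremal point (for each refinement separately), conclude via Theorem~\ref{compagnons} that $(\rho_\pi,\iota(\prod_v\delta_{w_v(k_v)}\eta_v))\in Y(U^p,\rhobar)$, and then invoke \cite[Prop.~8.1]{BreuilAnalytiqueII} with $\Sigma_p=S_p$. That proposition is a purely representation-theoretic ``Bruhat closure'' statement: once the extremal companion character appears in $J_{B_p}(\widehat S(U^p,L)^{\rm an}[\mathfrak p_\pi])$, all characters indexed by $w'_v\leq w_v$ appear as well. This sidesteps entirely the question of which irreducible component the non-extremal points live on. So your plan is salvageable, but you should replace the geometric ``same component'' argument by a direct citation of \cite[Prop.~8.1]{BreuilAnalytiqueII} applied after you have established the extremal point.
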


\begin{rema}\label{temperee}
La condition (ii) peut \^etre remplac\'ee par la condition plus faible suivante :\\
(ii)' pour tout $v\in S_p$, $\rho_{\pi,\tilde{v}}$ est cristalline, pour tout plongement de $F_v^+$ dans $L$, les poids de Hodge-Tate de $\rho_{\pi,\tilde{v}}$ sont deux-\`a-deux distincts, et si $\delta_v$ est un param\`etre de $\rho_{\pi,\tilde{v}}$, on a $\delta_{v,i}\neq\delta_{v,j}$ pour $i\neq j$. En effet, par compatibilit\'e local-global (cf.~\cite[Thm.~1.1]{caraianil=p}) on sait que $\pi_v$ est isomorphe \`a une repr\'esentation lisse de la forme $\Ind_{B_v}^{\GL_n(F_v^+)}(\eta_v)$ avec $\eta_{v,i}\eta_{v,j}^{-1}\neq|\cdot|_{F_v^+}^{i-j+1}$. C'est une cons\'equence du fait que $\pi_v$ est une repr\'esentation temp\'er\'ee de $\GL_n(F_v^+)$ (cf.~\cite[Thm.~1.2]{Caraiani}) ainsi que de la classification des repr\'esentations temp\'er\'ees non ramifi\'ees (\cite[\S2.1 et 2.2]{KudlaLocLan}).
\end{rema}

\begin{proof}
Soit $(\rho_\pi,\iota(\delta))\in Y(U^p,\rhobar)$ avec $\rho_\pi$ automorphe v\'erifiant les hypoth\`eses (i) \`a (iv) de \cite[\S6]{BreuilAnalytiqueII}, en particulier $\rho_\pi$ est cristalline aux places de $S_p$ et $\delta$ localement alg\'ebrique strictement dominant, i.e.~de la forme $\delta=\prod_{v\in S_p}(\delta_{{\bf k}_v}\eta_v)$ avec $\delta_{k_v}$ strictement dominant (au sens du d\'ebut du \S\ref{triangulines} avec $K=F^+_v$) et $\eta_v$ un caract\`ere lisse de $T_v$. Rappelons que, si $k_v=(k_{\tau,i})_{1\leq i\leq n, \tau:F_v^+\hookrightarrow L}\in(\mathbb{Z}^n)^{\mathrm{Hom}(F_v^+,L)}$, alors il existe $w_v=(w_\tau)_{\tau:F_v^+\hookrightarrow L}\in S_n^{\mathrm{Hom}(F_v^+,L)}$ tel que $\delta_{w_v(k_v)}\eta_v$ est un param\`etre de $\rho_\pi|_{G_{F_{\tilde{v}}}}$ o\`u $w_v(k_v)=(k_{\tau,w_\tau^{-1}(i)})_{1\leq i\leq n;\tau:F_v^+\hookrightarrow L}$ (cf.~par exemple \cite[Lem.~6.4]{BreuilAnalytiqueII} o\`u l'hypoth\`ese $F_v^+=\mathbb{Q}_p$ ne joue aucun r\^ole). En particulier, le point $(\rho_\pi,(\rho_\pi|_{G_{F_{\tilde{v}}}},\iota_{F_v^+}(\delta_{w_v(k_v)}\eta_v))_{v\in S_p}, (\rho_\pi|_{G_{F_{\tilde{v}}}})_{v\in S_p})$ est dans
 \[\mathfrak{X}_{\rhobar,\mathcal{S}}\times_{\mathfrak{X}^\square_{\rhobar_S}}(\iota(U_{\rm tri}^\square(\rhobar_p)^{\rm reg})\times\mathfrak{X}_{\rhobar^p}^\square)\subset \mathfrak{X}_{\rhobar,\mathcal{S}}\times_{\mathfrak{X}^\square_{\rhobar_S}}X_{\rm tri}^\square(\rhobar_S).\] 
 Si l'on suppose la Conjecture \ref{conjcomp}, il est donc dans $\mathfrak{X}_{\rhobar,\mathcal{S}}\times_{\mathfrak{X}^\square_{\rhobar_S}}X_{\rm tri}^\square(\rhobar_S)^{\rm aut}$. Par le Th\'eor\`eme \ref{compagnons}, on en d\'eduit que le point $(\rho_\pi, \iota(\prod_{v\in S_p}\delta_{w_v(k_v)}\eta_v))$ est dans $Y(U^p,\rhobar)$. Par \cite[Prop.~8.1]{BreuilAnalytiqueII} appliqu\'e avec $\Sigma_p=S_p=\{v|p\}$ (loc.~cit. suppose $F_v^+=\mathbb{Q}_p$, mais ce point est inutile dans la preuve ici aussi), on en d\'eduit que $Y(U^p,\rhobar)$ contient tous les points $(\rho_\pi, \iota(\prod_{v\in S_p}\delta_{w_v'(k_v)}\eta_v))$ pour tout $w_v'=(w_\tau')_{\tau:F_v^+\hookrightarrow L}$ dans $S_n^{\mathrm{Hom}(F_v^+,L)}$ tel que $w_\tau'\leq w_\tau$ pour tous $\tau$ et $v$ (pour l'ordre de Bruhat sur $S_n$). En particulier, lorsque $F_v^+=\mathbb{Q}_p$ pour tout $v\in S_p$, on voit que la Conjecture \ref{conjcomp} implique que tous les points compagnons de $(\rho_\pi,\iota(\delta))$ pr\'evus par \cite[Conj.~6.5]{BreuilAnalytiqueII} pour $\Sigma_p=S_p$ (aux changements de notations pr\`es) sont bien dans $Y(U^p,\rhobar)$.
\end{proof}

\subsection{Un r\'esultat de modularit\'e infinit\'esimale}

Soit ${\bf k}\in\prod_{v\in S_p}(\Z^n)^{[F_v^+:\Q_p]}$ un poids strictement dominant. Fixons $\pi$ une repr\'esentation automorphe de $G$ v\'erifiant les conditions suivantes :\\
(i) $\pi^{U^p}\neq0$ ;\\
(ii) pour $v\in S_p$, $\rho_{\pi,\tilde{v}}:=\rho_\pi|_{\mathcal{G}_{\tilde{v}}}$ est cristalline de poids ${\bf k}_v$ et admet un raffinement non critique;\\
(iii) pour $v\in S_p$, les valeurs propres du Frobenius cristallin de $\rho_{\pi,\tilde{v}}$ sont deux-\`a-deux distinctes ;\\
(iv) $\rhobar_\pi$ est isomorphe \`a $\rhobar$.
 
Rappelons que l'on note $\overline{R}_\infty^{\bf k}$ l'image de $R_\infty$ dans l'anneau des endomorphismes $S_\infty$-lin\'eaires de $\Hom_{K_p}(\sigma_{\bf k},\Pi_\infty)$. D'apr\`es \cite[Lem.~4.17]{CEGGPS}, le $R_\infty$-module $\Hom_{K_p}(\sigma_{\bf k},\Pi_\infty)'$ est un $\overline{R}_\infty^{\bf k}[1/p]$-module projectif de type fini. Comme par ailleurs $\Hom_{K_p}(\sigma_{\bf k},\Pi_\infty)'$ est un $S_\infty[1/p]$-module projectif de type fini, l'anneau $\overline{R}_\infty^{\bf k}[1/p]$ est une $S_\infty[1/p]$-alg\`ebre finie et plate. 

La repr\'esentation $\rho_\pi$ correspond alors \`a un point de $\mathfrak{X}_{\rhobar,\mathcal{S}}$. Notons $y$ son image dans $\Spf(R_\infty)^{\rm rig}$ ainsi que $(y_p,y^p,y_\infty)$ ses coordonn\'ees dans la d\'ecomposition \[\Spf(R_\infty)^{\rm rig}\simeq\mathfrak{X}_{\rhobar}^{\square,{\bf k}}\times\mathfrak{X}^\square_{\rhobar^p}\times\mathbb{U}^g.\] D'apr\`es \cite[Thm.~1.2]{Caraiani}, la repr\'esentation $\pi$ est temp\'er\'ee en toute place de $F^+$. En particulier, on d\'eduit de \cite[Lem.~1.3.2.(1)]{BLGGT} que le point $y^p$ est un point lisse de $\mathfrak{X}^\square_{\rhobar^p}$, il appartient donc \`a une unique composante irr\'eductible de cet espace. Fixons $\chi=\bigotimes_{v\in S_p}\chi_v$ un caract\`ere lisse non ramifi\'e de $T_p$ tel que $\delta_v:=\delta_{{\bf k}_v}\chi_v$ est un param\`etre non critique de $\rho_{\pi,\tilde{v}}$ pour tout $v\in S_p$, et donc, en posant $\delta=\bigotimes_{v\in S_p}\delta_v$, que $(\rho_\pi,\delta)\in Y(U^p,\rhobar)^{\rm reg}$. Si $F_v^+=\Q_p$, on d\'eduit de \cite[Prop.~6.4.8]{BelChe} et de la remarque \ref{temperee} qu'il existe toujours un tel caract\`ere et la partie non critique de l'hypoth\`ese (ii) ci-dessus est superflue. En particulier, pour tout $v\in S_p$, on a $(\rho_{\pi,\tilde{v}},\delta_v)\in U_{\rm tri}^\square(\rhobar_{\pi,\tilde{v}})^{\rm reg}$.

\begin{lemm}\label{lemetale}
Le sch\'ema $\Spec(\bar{R}_\infty^{\bf k}[1/p])$ est fini \'etale au-dessus d'un ouvert de Zariski de $\Spec(S_\infty[1/p])$. De plus, le point $y$ provenant de $\pi$ appartient \`a cet ouvert.
\end{lemm}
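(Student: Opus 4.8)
Le plan est de ramener l'énoncé à des propriétés classiques d'un morphisme fini plat à source réduite au‑dessus d'une base régulière intègre. On commencera par rassembler les ingrédients : par \cite[Lem.~4.17]{CEGGPS} (dont la preuve s'étend verbatim à notre situation) le module $\Hom_{K_p}(\sigma_{\bf k},\Pi_\infty)'$ est projectif de type fini sur $\bar{R}_\infty^{\bf k}[1/p]$, et c'est par ailleurs un $S_\infty[1/p]$-module projectif de type fini ; ainsi $\bar{R}_\infty^{\bf k}[1/p]$ est une $S_\infty[1/p]$-algèbre finie et localement libre. Comme $S_\infty=\mathcal{O}_L\dbl y_1,\dots,y_q\dbr$ est régulier, $S_\infty[1/p]$ est un anneau régulier intègre de dimension $q$, à spectre connexe. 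Enfin $\bar{R}_\infty^{\bf k}[1/p]$ est réduit : il s'identifie à l'image de $R_\infty[1/p]$ dans l'anneau des endomorphismes de $\Pi_\infty(\lambda)=\Hom_{K_p}(\sigma_{\bf k},\Pi_\infty)$, où $\lambda=(k_{\tau,i}+i-1)_{\tau,i}$, et cet anneau est réduit par le même argument que dans la preuve du Corollaire \ref{reduit}, reposant sur \cite[Lem.~4.16]{CEGGPS}.

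Pour la première assertion, on utilisera que $\bar{R}_\infty^{\bf k}[1/p]$, plat sur l'anneau intègre $S_\infty[1/p]$, est sans torsion, donc s'injecte dans sa fibre générique $\bar{R}_\infty^{\bf k}[1/p]\otimes_{S_\infty[1/p]}\mathrm{Frac}(S_\infty[1/p])$ ; cette dernière est une algèbre finie réduite sur un corps de caractéristique nulle, donc étale sur ce corps. Notons $Z$ le support du $\bar{R}_\infty^{\bf k}[1/p]$-module de type fini $\Omega_{\bar{R}_\infty^{\bf k}[1/p]/S_\infty[1/p]}$ : c'est un fermé de $\Spec(\bar{R}_\infty^{\bf k}[1/p])$, de complémentaire le lieu où le morphisme (fini et plat) vers $\Spec(S_\infty[1/p])$ est étale. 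Ce morphisme étant fini, donc fermé, l'image de $Z$ dans $\Spec(S_\infty[1/p])$ est un fermé ; d'après ce qui précède elle ne contient pas le point générique, donc son complémentaire $V$ est un ouvert de Zariski non vide. Au‑dessus de $V$ le morphisme est fini, plat et non ramifié, donc fini étale ; il est de plus surjectif car $\bar{R}_\infty^{\bf k}[1/p]$, localement libre de rang constant sur le spectre connexe $\Spec(S_\infty[1/p])$ et non nul (l'existence de $\pi$ force $\Pi_\infty(\lambda)\neq0$), est fidèlement plat sur $S_\infty[1/p]$.

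Pour la seconde assertion, on notera $z$ l'image de $y$ dans $\Spec(S_\infty[1/p])$ ; comme $y$ provient d'un point de $\mathfrak{X}_{\rhobar,\mathcal{S}}=\Spf(R_\infty/\mathfrak{a}R_\infty)^{\rm rig}$, le point $z$ est le point fermé défini par l'idéal $\mathfrak{a}S_\infty[1/p]$, de corps résiduel $L$. Il suffira de vérifier que la fibre $\bar{R}_\infty^{\bf k}[1/p]\otimes_{S_\infty[1/p]}L=\bar{R}_\infty^{\bf k}[1/p]/\mathfrak{a}\bar{R}_\infty^{\bf k}[1/p]$ est réduite (donc étale sur $L$), ce qui donnera $z\notin$ image de $Z$, i.e.\ $z\in V$. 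Par dualité (cf.\ \eqref{invariantsdual} et la preuve de la Proposition \ref{points}), cette fibre est le dual de $\Hom_{K_p}(\sigma_{\bf k},\Pi_\infty[\mathfrak{a}])$, lequel vaut $\Hom_{K_p}(\sigma_{\bf k},\widehat{S}(U^p,L)_{\mathfrak{m}})$ par l'isomorphisme \eqref{isopatching} : c'est un $L$-espace vectoriel de dimension finie, espace de formes automorphes classiques de poids $\sigma_{\bf k}$ et de niveau $U^pK_p$. La projectivité de $\Hom_{K_p}(\sigma_{\bf k},\Pi_\infty)'$ sur $\bar{R}_\infty^{\bf k}[1/p]$ (donc sa platitude fidèle) assure que $\bar{R}_\infty^{\bf k}[1/p]/\mathfrak{a}\bar{R}_\infty^{\bf k}[1/p]$ agit fidèlement sur ce dual. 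Or cette action se factorise, via l'isomorphisme $R_\infty/\mathfrak{a}R_\infty\simeq R_{\rhobar,\mathcal{S}}$ et le morphisme $\psi$, par l'algèbre de Hecke $\mathbb{T}^S$, laquelle agit de façon semi-simple sur $\varinjlim_U S(U,L)\otimes_L\bar\Q_p$ (comme rappelé au début du \S\ref{HTW}). Ainsi $\bar{R}_\infty^{\bf k}[1/p]/\mathfrak{a}\bar{R}_\infty^{\bf k}[1/p]$ est une $L$-algèbre commutative artinienne agissant fidèlement sur un module semi-simple ; son radical de Jacobson est donc nul et elle est réduite.

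La principale difficulté réside dans le contrôle de la fibre spéciale (seconde assertion) : la réduction de la fibre générique de $\bar{R}_\infty^{\bf k}[1/p]$ n'entraîne nullement celle de la fibre en $z$, et il faut faire intervenir de manière essentielle la semi-simplicité de l'action de Hecke sur les formes automorphes classiques, reliée à $\bar{R}_\infty^{\bf k}[1/p]/\mathfrak{a}\bar{R}_\infty^{\bf k}[1/p]$ par l'isomorphisme de recollement \eqref{isopatching}.
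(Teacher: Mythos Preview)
Your argument is correct and follows essentially the same route as the paper: use that $\bar{R}_\infty^{\bf k}[1/p]$ is finite flat over the integral regular ring $S_\infty[1/p]$, and for the point $y$ (which lies over the augmentation ideal $\mathfrak a$) show that the fibre $\bar{R}_\infty^{\bf k}[1/p]/\mathfrak a$ is reduced by identifying it with a Hecke algebra acting semisimply on classical automorphic forms via \eqref{isopatching}. The paper is slightly more economical in that it treats both assertions at once (exhibiting the fibre over $\mathfrak a$ as reduced already gives an \'etale point, hence a nonempty \'etale open), whereas you first argue generic \'etaleness separately; but this is only a matter of organisation.

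One slip to fix: in your third paragraph you write ``cette fibre est le dual de $\Hom_{K_p}(\sigma_{\bf k},\Pi_\infty[\mathfrak{a}])$'', but ``cette fibre'' there refers to the \emph{algebra} $\bar{R}_\infty^{\bf k}[1/p]/\mathfrak a$, not the module. What you mean (and what you use in the next sentence) is that the fibre of the \emph{module} $\Hom_{K_p}(\sigma_{\bf k},\Pi_\infty)'$ over $\mathfrak a$ is $\big(\Hom_{K_p}(\sigma_{\bf k},\Pi_\infty[\mathfrak a])\big)'$, and that $\bar{R}_\infty^{\bf k}[1/p]/\mathfrak a$ acts faithfully on it by local freeness. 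Once this is rephrased, the argument is fine.
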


\begin{proof}
Comme $\overline{R}_\infty^{\bf k}[1/p]$ est une $S_\infty[1/p]$-alg\`ebre finie et plate et que $S_\infty[1/p]$ est int\`egre, il suffit de montrer qu'il existe un point ferm\'e de $\Spec(\overline{R}_\infty^{\bf k}[1/p])$ o\`u le morphisme $\Spec(\overline{R}_\infty^{\bf k}[1/p])\rightarrow\Spec(S_\infty[1/p])$ est \'etale. Pour prouver les deux assertions simultan\'ement, il suffit de prouver que tous les points ferm\'es de $\Spec(\overline{R}_\infty^{\bf k}[1/p])$ au-dessus de l'id\'eal d'augmentation $\mathfrak{a}$ de $S_\infty[1/p]$ sont \'etales, autrement dit que l'anneau $\overline{R}_\infty^{\bf k}[1/p]/\mathfrak{a}$ est r\'eduit. Or, $\Hom_{K_p}(\sigma_{\bf k},\Pi_\infty)'/\mathfrak{a}\simeq\Hom_{K_p}(\sigma_{\bf k},\Pi_\infty[\mathfrak{a}])$ est l'espace de formes automorphes $S(U^pK_p,\sigma)_{\rhobar}$. Comme $\Hom_{K_p}(\sigma_{\bf k},\Pi_\infty)'$ est un $\overline{R}_\infty^{\bf k}[1/p]$-module localement libre de type fini et fid\`ele, $\overline{R}_\infty^{\bf k}[1/p]/\mathfrak{a}$ s'identifie \`a la sous-$L$-alg\`ebre des endomorphismes $L$-lin\'eaires de $S(U^pK_p,\sigma)_{\rhobar}$ engendr\'ee par les op\'erateurs de Hecke hors de $S$ qui est bien semi-simple.
\end{proof}

On note $X_{\rm tri}^\square(\rhobar_p)_{\bf k}$ la fibre $\omega^{-1}(\delta_{\bf k})$ et $U_{\rm tri}^\square(\rhobar_p)^{\rm reg}_{\bf k}$ son intersection avec $U_{\rm tri}^\square(\rhobar_p)^{\rm reg}$. De m\^eme, on note $X_p(\rhobar)^{\rm reg}$ et $X_p(\rhobar)^{\rm reg}_{\bf k}$ les intersections respectives de $X_p(\rhobar)$ avec l'espace $\iota(U_{\rm tri}^\square(\rhobar)^{\rm reg})\times\mathfrak{X}^{\square,{\rm reg}}_{\rhobar^p}\times\mathbb{U}^g$ et $\iota(U_{\rm tri}^\square(\rhobar)^{\rm reg}_{\bf k})\times\mathfrak{X}^{\square,{\rm reg}}_{\rhobar^p}\times\mathbb{U}^g$.

\begin{prop}\label{etale}
(i) L'ensemble $U_{\bf k}^{\rm \'et}\subset X_p(\rhobar)^{\rm reg}_{\bf k}$ des points de $X_p(\rhobar)_{\bf k}^{\rm reg}$ o\`u l'application $\omega_X$ de $X_p(\rhobar)$ dans $\mathcal{W}_\infty$ est \'etale forme un ouvert dense de Zariski de $X_p(\rhobar)^{\rm reg}_{\bf k}$.\\
(ii) Si $z\in Y(U^p,\rhobar)$ est le point associ\'e \`a $(\pi,\iota(\delta))$, alors l'image de $z$ dans $X_p(\rhobar)$ appartient \`a $U_{\bf k}^{\rm \'et}$.
\end{prop}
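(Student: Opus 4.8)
The plan for (i): since by Theorem~\ref{egalitecomposante} the space $X_p(\rhobar)$ is a union of irreducible components of the ambient space $\iota(X_{\rm tri}^\square(\rhobar_p))\times\mathfrak X^\square_{\rhobar^p}\times\mathbb U^g$, over the smooth open locus $Z:=\iota(U_{\rm tri}^\square(\rhobar_p)^{\rm reg})\times\mathfrak X^{\square,{\rm reg}}_{\rhobar^p}\times\mathbb U^g$ of that ambient space (smooth because each of the three factors is, the first by Theorem~\ref{ouvertsature}(iii)) the subspace $X_p(\rhobar)^{\rm reg}=X_p(\rhobar)\cap Z$ is open and closed in $Z$, hence smooth; likewise, using the smoothness of the weight map on $U_{\rm tri}^\square(\rhobar_p)^{\rm reg}$, the map $\mu\colon X_p(\rhobar)^{\rm reg}\to\widehat T_{p,L}^0$ is smooth, its fibre over $\delta_{\bf k}$ is smooth, and $X_p(\rhobar)^{\rm reg}_{\bf k}$ is open and closed in $\iota(U_{\rm tri}^\square(\rhobar_p)^{\rm reg}_{\bf k})\times\mathfrak X^{\square,{\rm reg}}_{\rhobar^p}\times\mathbb U^g$, hence smooth and equidimensional of dimension $q=\dim\Spf(S_\infty)^{\rm rig}$. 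Smoothness of $\mu$ also gives that $\omega_X$ is étale at a point of $X_p(\rhobar)^{\rm reg}_{\bf k}$ if and only if the restriction $\omega_X^S\colon X_p(\rhobar)^{\rm reg}_{\bf k}\to\Spf(S_\infty)^{\rm rig}$ of $\omega_X$ is; by Proposition~\ref{recouvrement} this $\omega_X^S$ is quasi-finite, so since $\Spf(S_\infty)^{\rm rig}$ is irreducible of dimension $q$ and the fibres are finite, each irreducible component of $X_p(\rhobar)^{\rm reg}_{\bf k}$ has $q$-dimensional image and hence dominates $\Spf(S_\infty)^{\rm rig}$. A dominant quasi-finite morphism between smooth equidimensional rigid spaces of the same dimension over a characteristic-$0$ field is étale on a Zariski-dense open, and the non-étale locus is Zariski-closed by the Jacobian criterion, so $U_{\bf k}^{\rm ét}$ is a Zariski-dense Zariski-open subset of $X_p(\rhobar)^{\rm reg}_{\bf k}$ (the assertion being empty when $X_p(\rhobar)^{\rm reg}_{\bf k}=\emptyset$).

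For (ii), I would first note that the image $\bar x$ of $z$ lies in $X_p(\rhobar)^{\rm reg}_{\bf k}$: the discussion preceding the statement gives $(\rho_{\pi,\tilde v},\delta_v)\in U_{\rm tri}^\square(\rhobar_{\pi,\tilde v})^{\rm reg}$ of weight ${\bf k}_v$ for each $v\mid p$, and temperedness of $\pi$ together with \cite[Lem.~1.3.2]{BLGGT} makes $y^p$ a smooth point of $\mathfrak X^\square_{\rhobar^p}$; thus $\bar x$ is a smooth point of $X_p(\rhobar)$ and, by (i), it suffices to show $\omega_X^S$ is étale at $\bar x$, which I would do by computing $\widehat{\mathcal O}_{X_p(\rhobar)_{\bf k},\bar x}$. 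Since $\bar x$ is a smooth point of the ambient space, which is then locally irreducible there, $X_p(\rhobar)$ coincides with $Z$ in a neighbourhood of $\bar x$; writing $x_p=(\rho_{\pi,p},\delta)$ this yields
\[\widehat{\mathcal O}_{X_p(\rhobar)_{\bf k},\bar x}\ \cong\ \widehat{\mathcal O}_{X_{\rm tri}^\square(\rhobar_p)_{\bf k},x_p}\ \widehat\otimes_L\ \widehat{\mathcal O}_{\mathfrak X^\square_{\rhobar^p},y^p}\ \widehat\otimes_L\ L\dbl x_1,\dots,x_g\dbr.\]
By the proof of Theorem~\ref{ouvertsature}, $\widehat{\mathcal O}_{X_{\rm tri}^\square(\rhobar_p),x_p}$ is the framed deformation ring of the triangulated pair $(\rho_{\pi,p},\mathcal F)$, so $\widehat{\mathcal O}_{X_{\rm tri}^\square(\rhobar_p)_{\bf k},x_p}$ is the corresponding ring with the triangulation parameter of fixed weight ${\bf k}$, which is regular since the weight map is smooth at $x_p$; because the crystalline Frobenius eigenvalues of $\rho_{\pi,\tilde v}$ are distinct and the refinement non-critical, every crystalline deformation of $\rho_{\pi,p}$ of weight ${\bf k}$ carries a unique triangulation deforming $\mathcal F$ of parameter-weight ${\bf k}$ (\cite[\S2.4]{BelChe}), giving a surjection from $\widehat{\mathcal O}_{X_{\rm tri}^\square(\rhobar_p)_{\bf k},x_p}$ onto $\widehat{\mathcal O}_{\Spf(R_{\rhobar_p}^{\square,{\bf k}-{\rm cr}})^{\rm rig},\rho_{\pi,p}}$, which is regular of the same dimension (\cite{Kisindef}, since ${\bf k}$ is strictly dominant), hence an isomorphism. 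Tensoring, one gets $\widehat{\mathcal O}_{X_p(\rhobar)_{\bf k},\bar x}\cong\widehat{\mathcal O}_{\Spf(R_\infty({\bf k}))^{\rm rig},y}$, a regular local ring of dimension $q$, where $R_\infty({\bf k})$ denotes the quotient of $R_\infty$ cutting out deformations crystalline of weight ${\bf k}$ at $p$.

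It then remains to route $\omega_X^S$ through $\overline R_\infty^{\bf k}$ and invoke Lemma~\ref{lemetale}: the action of $R_\infty$ on $\Hom_{K_p}(\sigma_{\bf k},\Pi_\infty)$ factors through $R_\infty({\bf k})$ and then through the $S_\infty$-subalgebra $\overline R_\infty^{\bf k}\subset\End_{S_\infty}\big(\Hom_{K_p}(\sigma_{\bf k},\Pi_\infty)\big)$, the surjection $R_\infty({\bf k})\twoheadrightarrow\overline R_\infty^{\bf k}$ is $S_\infty$-linear, and as $y$ lies over the point defined by $\pi$ it induces a surjection $b\colon\widehat{\mathcal O}_{X_p(\rhobar)_{\bf k},\bar x}\twoheadrightarrow\widehat{\mathcal O}_{\Spf(\overline R_\infty^{\bf k})^{\rm rig},y}$ of local $\widehat{\mathcal O}_{\Spf(S_\infty)^{\rm rig},s}$-algebras, where $s=\omega_X^S(\bar x)$. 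By Lemma~\ref{lemetale} the target of $b$ is finite étale over $\widehat{\mathcal O}_{\Spf(S_\infty)^{\rm rig},s}$, hence regular local of dimension $q$; since the source of $b$ is a domain of the same dimension, $b$ is an isomorphism, and the composite $\widehat{\mathcal O}_{\Spf(S_\infty)^{\rm rig},s}\to\widehat{\mathcal O}_{X_p(\rhobar)_{\bf k},\bar x}\xrightarrow{\ b\ }\widehat{\mathcal O}_{\Spf(\overline R_\infty^{\bf k})^{\rm rig},y}$ — which is the structure isomorphism provided by Lemma~\ref{lemetale} — then forces $\widehat{\mathcal O}_{\Spf(S_\infty)^{\rm rig},s}\to\widehat{\mathcal O}_{X_p(\rhobar)_{\bf k},\bar x}$ to be an isomorphism, i.e. $\omega_X^S$ (hence $\omega_X$) to be étale at $\bar x$. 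The main obstacle is the local identification of $\widehat{\mathcal O}_{X_p(\rhobar)_{\bf k},\bar x}$ in the second paragraph: it rests on a careful combination of the structure theorem for $X_{\rm tri}^\square$ (Theorem~\ref{ouvertsature}), the non-criticality input of \cite[\S2.4]{BelChe}, and the formal smoothness of the crystalline framed deformation ring with regular Hodge–Tate weights, and must be set up so as to pin the completed local ring down exactly.
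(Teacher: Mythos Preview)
Your argument is correct in outline, but it follows a genuinely different route from the paper, and the distribution of work between (i) and (ii) is reversed.

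For (i), the paper does \emph{not} argue by abstract generic étaleness. Instead it factors the map $X_p(\rhobar)^{\rm reg}_{\bf k}\to\Spf(S_\infty)^{\rm rig}$ through $\Spf(\overline R_\infty^{\bf k})^{\rm rig}$, first checking that points of $X_p(\rhobar)^{\rm reg}_{\bf k}$ are crystalline so that the diagram makes sense. The key input is that the first arrow $X_p(\rhobar)^{\rm reg}_{\bf k}\to\Spf(\overline R_\infty^{\bf k})^{\rm rig}$ is \emph{étale everywhere}, which the paper obtains from Theorem~\ref{egalitecomposante} together with the reasoning of \cite[Prop.~4.8]{HellmSchrDensity}. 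One then \emph{defines} $U_{\bf k}^{\rm ét}$ as the preimage of the étale locus of $\Spf(\overline R_\infty^{\bf k})^{\rm rig}\to\Spf(S_\infty)^{\rm rig}$, which is Zariski-open and dense by Lemma~\ref{lemetale}, and checks via Lemma~\ref{lissite} that this preimage is exactly where $\omega_X$ is étale. Your approach avoids \cite{HellmSchrDensity} entirely and argues directly that a quasi-finite map between smooth equidimensional rigid spaces of the same dimension over a characteristic-$0$ field is generically étale; this is correct, but note that to make the density step rigorous you should use the stronger fact, also provided by Proposition~\ref{recouvrement}, that $\omega_X^S$ is \emph{locally finite} (not merely quasi-finite), so that on each affinoid piece you have a finite injective ring map and can invoke generic separability.

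For (ii), the paper's proof is then a one-liner: Lemma~\ref{lemetale} says the point $y$ lies in the étale locus of $\Spf(\overline R_\infty^{\bf k})^{\rm rig}\to\Spf(S_\infty)^{\rm rig}$, so $\bar x$ lies in the preimage $U_{\bf k}^{\rm ét}$. Your argument instead computes $\widehat{\mathcal O}_{X_p(\rhobar)_{\bf k},\bar x}$ explicitly, identifying the fixed-weight trianguline local ring with the crystalline one via non-criticality and distinctness of Frobenius eigenvalues, and then comparing with $\overline R_\infty^{\bf k}$. This is correct and is essentially a pointwise version of what \cite[Prop.~4.8]{HellmSchrDensity} does uniformly; the paper's route is shorter precisely because that uniform étaleness was front-loaded into (i). What your approach buys is self-containment (no appeal to \cite{HellmSchrDensity}); what the paper's approach buys is a concrete description of $U_{\bf k}^{\rm ét}$ as a preimage, which makes (ii) immediate and is also what is used later in the proof of Theorem~\ref{structreduite}.
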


\begin{proof}
Par changement de base au dessus de l'inclusion canonique d'une fibre $\{\iota(\delta_{\bf k})\}\times\Spf(S_\infty)^{\rm rig}\hookrightarrow \mathcal{W}_\infty$, on sait qu'en un point de $X_p(\rhobar)^{\rm reg}_{\bf k}$ o\`u l'application $\omega_X$ est \'etale, l'application $X_p(\rhobar)^{\rm reg}_{\bf k}\rightarrow\Spf(S_\infty)^{\rm rig}$ (d\'eduite par changement de base) est a fortiori \'etale. Comme l'action de $S_\infty$ sur $\Pi_\infty$ se factorise par $R_\infty$, on a un diagramme commutatif
\begin{equation}\label{fibres}
\begin{aligned}
\xymatrix{X_p(\rhobar)^{\rm reg}_{\bf k}\ar[r]\ar[dr]&\Spf(\overline{R}_\infty^{{\bf k}-{\rm cr}})^{\rm rig}\ar[d]\\
&\Spf(S_\infty)^{\rm rig}.}
\end{aligned}\end{equation}
En effet, si $v\in S_p$, d'apr\`es \cite[Prop.~2.6]{HellmSchrDensity}, le rel\`evement $\rho_{\tilde{v}}$ de $\rhobar_{\tilde{v}}$ correspondant \`a un point de $X_p(\rhobar)^{\rm reg}_{\bf k}$ est potentiellement semi-stable. De plus comme $\rho_{\tilde{v}}$ est trianguline de param\`etre $\delta_v\in\mathcal{T}^n_{\rm reg}$, le quotient de deux valeurs propres distinctes de son Frobenius cristallin n'appartient pas \`a $\{1,q_v\}$, donc $\rho_{\tilde{v}}$ est n\'ecessairement de monodromie nulle, c'est-\`a-dire cristalline.
 
En utilisant le Th\'eor\`eme \ref{egalitecomposante} ainsi que (le raisonnement de) \cite[Prop.~4.8]{HellmSchrDensity}, on montre que la fl\`eche horizontale sup\'erieure est \'etale. En particulier, l'ensemble recherch\'e est contenu dans l'ensemble des points de $\Spf(\overline{R}_\infty^{{\bf k}-{\rm cr}})^{\rm rig}$ o\`u la fl\`eche verticale en \eqref{fibres} est \'etale. Remarquons que ce dernier ensemble co\"incide avec l'ensemble des points ferm\'es de $\Spec(\overline{R}_\infty^{{\bf k}-{\rm cr}}[1/p])$ o\`u $\Spec(\overline{R}_\infty^{{\bf k}-{\rm cr}}[1/p])$ est \'etale au-dessus de $\Spec(S_\infty[1/p])$. D\'efinissons alors $U_{\bf k}^{\rm \'et}\subset X_p(\rhobar)^{\rm reg}_{\bf k}$ comme l'image r\'eciproque de l'ensemble des points ferm\'es de $\Spec(\overline{R}_\infty^{{\bf k}-{\rm cr}}[1/p])$ pour lesquels l'application $\Spec(\overline{R}_\infty^{{\bf k}-{\rm cr}}[1/p])\rightarrow\Spec(S_\infty[1/p])$ est \'etale. Le Lemme \ref{lemetale} (avec l'int\'egrit\'e de $S_\infty[1/p]$) montre alors que pour prouver (i), il suffit de prouver qu'aux points de $U_{\bf k}^{\rm \'et}$, l'application $\omega_X$ de $X_p(\rhobar)$ dans $\mathcal{W}_\infty$ est \'etale.

Soit $x\in U_{{\bf k}}^{\rm \'et}$. Consid\'erons le diagramme suivant
\[\xymatrix{X_p(\rhobar)^{\rm reg}\ar^{\omega_X}[rr]\ar^{\omega}[rd]& &\mathcal{W}_\infty\ar[dl]\\
&\widehat{T}_{p,L}^0&}.\]
Les deux fl\`eches diagonales sont lisses. Celle de droite car il s'agit d'une projection de fibre lisse. Celle de gauche car il s'agit d'une projection de fibre lisse, compos\'ee avec l'application poids $X_{\rm tri}^\square(\rhobar_p)\rightarrow\widehat{T}_{p,L}^0$ qui est lisse au point $y_p$ d'apr\`es le (iii) du Th\'eor\`eme \ref{ouvertsature} et le fait que $X_p(\rhobar)^{\rm reg}$ est un ouvert de $\iota(U_{\rm tri}^\square(\rhobar_p)^{\rm reg})\times\mathfrak{X}^\square_{\rhobar^p}\times\mathbb{U}^g$ (ce qui se d\'eduit des Th\'eor\`emes \ref{ouvertsature} (ii) et \ref{egalitecomposante}). On d\'eduit alors du Lemme \ref{lissite} de l'appendice que l'application horizontale est lisse si et seulement si l'application obtenue par changement de base entre les fibres au-dessus de $\omega(y)$ est lisse. Or, par changement de base, $\omega_X$ devient la fl\`eche diagonale du diagramme \eqref{fibres} et par d\'efinition de $U_{\bf k}^{\rm \'et}$ l'application horizontale est lisse apr\`es changement de base, ce qui nous permet de conclure.

Le point (ii) est alors une application imm\'ediate de (i) et du fait que le point classique $z\in Y(U^p,\rhobar)\subset\iota(X^\square_{\rm tri}(\rhobar_p))\times\mathfrak{X}^\square_{\rhobar^p}\times\mathbb{U}^g$ est en fait dans $\iota(U^\square_{\rm tri}(\rhobar_p)^{\rm reg})\times\mathfrak{X}^{\square,{\rm reg}}_{\rhobar^p}\times\mathbb{U}^g$ (cf.~la discussion pr\'ec\'edant le Lemme \ref{lemetale}).
\end{proof}

On peut \'egalement s'int\'eresser \`a la structure \og sch\'ematique\fg\, de l'espace \\$\mathfrak{X}_{\rhobar,\mathcal{S}}\times_{\mathfrak{X}^\square_{\rhobar_S}}X_{\rm tri}^\square(\rhobar_S)$. Dans cette direction, on a le r\'esultat suivant.

\begin{theo}\label{structreduite}
Les espaces analytiques rigides $\mathfrak{X}_{\rhobar,\mathcal{S}}\times_{\mathfrak{X}^\square_{\rhobar_S}}\iota(X_{\rm tri}^\square(\rhobar_S)^{\rm aut, reg})$ et $Y(U^p,\rhobar)^{\rm reg}$  sont isomorphes.
\end{theo}

\begin{proof}
Comme ces deux espaces rigides analytiques ont le m\^eme espaces sous-jacent, il suffit de prouver qu'ils sont tous les deux r\'eduits. L'espace $Y(U^p,\rhobar)$ est le sous-espace ferm\'e de $\Spf(R_\infty)^{\rm rig}\times\widehat{T}_{p,L}$ d\'efini par l'id\'eal annulateur $\mathcal{I}_{U^p}$ du faisceau coh\'erent $\mathcal{M}_{U^p}$ d\'efini par $J_{B_p}(\widehat{S}(U^p,L)_{\rhobar}^{\rm an})$. Comme ce faisceau est annul\'e par l'id\'eal $\mathfrak{a}$, le faisceau d'id\'eaux $\mathcal{I}_{U^p}$ contient le sous-faisceau d'id\'eaux engendr\'e par $\mathfrak{a}$ et par l'id\'eal annulateur de $\mathcal{M}_\infty$. Ceci implique que l'inclusion $Y(U^p,\rhobar)\subset X(\rhobar)_{\rm fs}$ est en fait un morphisme d'espaces analytiques rigides. D'apr\`es le Th\'eor\`eme \ref{compagnons} les espaces $Y(U^p,\rhobar)^{\rm reg}$ et $\mathfrak{X}_{\rhobar,\mathcal{S}}\times_{\mathfrak{X}_{\rhobar_S}^\square}\iota(X^\square_{\rm tri}(\rhobar_S)^{\rm aut, reg})$ ont m\^eme espace sous-jacent. Il suffit donc de prouver que l'espace 
\[\mathfrak{X}_{\rhobar,\mathcal{S}}\times_{\mathfrak{X}_{\rhobar_S}^\square}\iota(X^\square_{\rm tri}(\rhobar_S)^{\rm aut, reg})=\Sp(L)\times_{\Spf(S_\infty)^{\rm rig}}X_p(\rhobar)^{\rm reg}\]
 est r\'eduit. Il r\'esulte du Corollaire \ref{equidim} et de \cite[\S3.2]{Emint} que les espaces $X_p(\rhobar)$ et $Y(U^p,\rhobar)$ sont tous deux \'equidimensionnels de dimensions respectives $g+[F^+:\Q]\frac{n(n+1)}{2}+|S|n^2$ et $[F^+:\Q]n$. Ainsi $\dim(X_p(\rhobar))-\dim(Y(U^p,\rhobar))$ est \'egal \`a $q$ o\`u $q$ est aussi le cardinal d'une suite de g\'en\'erateurs $y_1,\dots,y_q$ de $\mathfrak{a}$. Par ailleurs, comme $X_p(\rhobar)$ est r\'egulier aux points de $X_p(\rhobar)^{\rm reg}$, il est en particulier de Cohen-Macaulay en ces points, on d\'eduit donc de \cite[Cor.~16.5.6]{EGAIV1} et de l'\'egalit\'e ci-dessus que la suite $y_1,\dots,y_q$ est une suite r\'eguli\`ere sur $X_p(\rhobar_p)^{\rm reg}$ et que $\Sp(L)\times_{\Spf(S_\infty)^{\rm rig}}X_p(\rhobar)^{\rm reg}$ est de Cohen-Macaulay. Ceci implique en particulier que cet espace analytique rigide est sans composante immerg\'ee (cf.~\cite[Prop.~16.5.4]{EGAIV1}). Pour conclure qu'il est r\'eduit, il suffit donc de prouver que chaque composante irr\'eductible contient un point r\'eduit. C'est une cons\'equence de la Proposition \ref{etale} et du fait que pour toute composante irr\'eductible $X$ de $X_p(\rhobar)$, $X\cap X_p(\rhobar)^{\rm reg}$ contient un point de poids alg\'ebrique strictement dominant d'apr\`es le Corollaire \ref{ouvert}.
\end{proof}

\begin{rema}\label{remalissite}
Le point (ii) de la Proposition \ref{etale} ainsi que le Th\'eor\`eme \ref{structreduite} donnent une autre preuve du Th\'eor\`eme 4.10 de \cite{CheFougere}.
\end{rema}

Si $v\in S_p$, d\'esignons par $\mathcal{R}_v$ le raffinement de $\rho_{\pi,\tilde{v}}$ associ\'e au caract\`ere $\chi_v=\chi|_{T_v}$ de $T_v$ et $\mathcal{R}=(\mathcal{R}_v)_{v\in S_p}$. Soit $X_{\rho_{\pi,\tilde{v}}}$ le foncteur des d\'eformations de $\rho_{\pi,\tilde{v}}$ et $X_{\rho_{\pi,\tilde{v}},\mathcal{R}_v}$ le foncteur des d\'eformations de la paire $(\rho_{\pi,\tilde{v}},\mathcal{R}_v)$ tel que d\'efini dans \cite[\S2.5]{BelChe}. D'apr\`es \cite[Prop.~2.5.8]{BelChe}, la transformation naturelle $X_{\rho_{\pi,\tilde{v}},\mathcal{R}_v}\rightarrow X_{\rho_{\pi,\tilde{v}}}$ est relativement repr\'esentable, et on note $R^\square_{\rho_{\pi,\tilde{v}},\mathcal{R}_v}$ l'alg\`ebre locale compl\`ete pro-repr\'esentant le foncteur $X_{\rho_{\pi,\tilde{v}},\mathcal{R}_v}\times_{X_{\rho_{\pi,\tilde{v}}}}\Spf(R^{\bar\square}_{\rho_{\pi,\tilde{v}}})$.

Notons alors $R_{\rho_\pi,S,\mathcal{R}}$ l'anneau pro-repr\'esentant le foncteur des d\'eformations du couple $(\rho_\pi,\mathcal{R})$ d\'efini comme en \cite[\S2.6]{BelChe}. Plus pr\'ecis\'ement il s'agit de l'anneau $R_{\rho_\pi,S}\widehat{\otimes}_{R^\square_{\rho_{\pi,p}}}R^\square_{\rho_{\pi,p},\mathcal{R}}$ avec $R^\square_{\rho_{\pi,p},\mathcal{R}}=\widehat\bigotimes_{v\in S_p}R^\square_{\rho_{\pi,\tilde{v}},\mathcal{R}_v}$, o\`u $\rho_{\pi,S}$ d\'esigne le s\'epar\'e compl\'et\'e du localis\'e de $R_{\rhobar_\pi}[1/p]$ en l'id\'eal maximal associ\'e \`a $\rho_\pi$.

\begin{lemm}\label{globalreduit}
L'anneau $R_{\rho_\pi,S}$ est aussi le s\'epar\'e compl\'et\'e du localis\'e de $R_{\rhobar_\pi,\mathcal{S}}[1/p]$ en l'id\'eal associ\'e \`a $\rho_\pi$.
\end{lemm}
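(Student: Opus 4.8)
Le plan est de partir de la description $R_{\rhobar_\pi,\mathcal{S}}\simeq R_{\rhobar_\pi,S}\widehat{\otimes}_{R^\square_{\rhobar_S}}R^{\bar\square}_{\rhobar_S}$ rappel\'ee dans la Remarque pr\'ec\'edente, qui montre que l'immersion ferm\'ee $\mathfrak{X}_{\rhobar_\pi,\mathcal{S}}\hookrightarrow\Spf(R_{\rhobar_\pi,S})^{\rm rig}$ s'obtient par changement de base \`a partir de $\Spf(R^{\bar\square}_{\rhobar_S})^{\rm rig}\hookrightarrow\Spf(R^\square_{\rhobar_S})^{\rm rig}$. Comme $R^{\bar\square}_{\rhobar_S}$ est le plus grand quotient r\'eduit et sans $p$-torsion de $R^\square_{\rhobar_S}$, le morphisme $R^\square_{\rhobar_S}[1/p]\twoheadrightarrow R^{\bar\square}_{\rhobar_S}[1/p]$ n'est autre que le passage au quotient r\'eduit, de sorte que cette immersion induit un isomorphisme sur les anneaux locaux compl\'et\'es en tout point $x$ en lequel $R^\square_{\rhobar_S}[1/p]$ est r\'eduit. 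Puisque $R^\square_{\rhobar_S}=\widehat{\bigotimes}_{v\in S}R^\square_{\rhobar_{\tilde{v}}}$ et que $L$ a \'et\'e choisi assez grand, il suffit pour cela que $\rho_{\pi,\tilde{v}}$ soit un point lisse de $\mathfrak{X}^\square_{\rhobar_{\tilde{v}}}$ pour tout $v\in S$. Ce point acquis, le changement de base identifie l'anneau local compl\'et\'e de $\mathfrak{X}_{\rhobar_\pi,\mathcal{S}}$ en $\rho_\pi$ — c'est-\`a-dire le s\'epar\'e compl\'et\'e du localis\'e de $R_{\rhobar_\pi,\mathcal{S}}[1/p]$ en $\rho_\pi$ — \`a l'anneau local compl\'et\'e de $\Spf(R_{\rhobar_\pi,S})^{\rm rig}$ en $\rho_\pi$, c'est-\`a-dire \`a $R_{\rho_\pi,S}$ ; d'o\`u le lemme.

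Il reste donc \`a v\'erifier que $\rho_{\pi,\tilde{v}}$ est un point lisse de $\mathfrak{X}^\square_{\rhobar_{\tilde{v}}}$ pour tout $v\in S$, ce qui r\'esulte de la nullit\'e de $H^2(\mathcal{G}_{F_{\tilde{v}}},\mathrm{ad}\,\rho_{\pi,\tilde{v}})$, elle-m\^eme \'equivalente par dualit\'e locale de Tate \`a
\[\Hom_{\mathcal{G}_{F_{\tilde{v}}}}(\rho_{\pi,\tilde{v}},\rho_{\pi,\tilde{v}}(1))=0.\]
C'est la condition figurant dans \cite[Lem.~1.3.2.(1)]{BLGGT}. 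Aux places $v\in S\setminus S_p$, elle est satisfaite : par \cite[Thm.~1.2]{Caraiani} la repr\'esentation $\pi$ est temp\'er\'ee en toute place de $F^+$, et l'on conclut comme dans la preuve de la Proposition \ref{etale}, o\`u ce m\^eme argument est d\'ej\`a utilis\'e pour le point $y^p$. Aux places $v\in S_p$, on utilise que $\rho_{\pi,\tilde{v}}$ est cristalline \`a poids de Hodge-Tate distincts (hypoth\`ese (ii)), de Frobenius cristallin \`a valeurs propres distinctes (hypoth\`ese (iii)), et que $\pi$ est temp\'er\'ee : un morphisme non nul $\rho_{\pi,\tilde{v}}\to\rho_{\pi,\tilde{v}}(1)$ donnerait, via l'\'equivalence de Fontaine, un morphisme non nul de modules filtr\'es \`a Frobenius $D_{\rm cris}(\rho_{\pi,\tilde{v}})\to D_{\rm cris}(\rho_{\pi,\tilde{v}})(1)$, dont l'existence est exclue en comparant valeurs propres du Frobenius cristallin et poids de Hodge-Tate des sous-quotients, compte tenu de la puret\'e des valeurs propres du Frobenius qu'impose la temp\'erance.

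L'obstacle principal est l'entr\'ee aux places $v\in S_p$ : il faut s'assurer que la cristallinit\'e, jointe \`a la temp\'erance de $\pi$, suffit \`a annuler $H^2(\mathcal{G}_{F_{\tilde{v}}},\mathrm{ad}\,\rho_{\pi,\tilde{v}})$. La seule cristallinit\'e \`a poids de Hodge-Tate distincts n'y suffit pas — une repr\'esentation de la forme $\chi\oplus\chi\varepsilon$, avec $\chi$ cristallin, v\'erifie (ii) et (iii) mais a $H^2\neq0$ — et c'est pr\'ecis\'ement la puret\'e fournie par \cite[Thm.~1.2]{Caraiani} qui \'ecarte ce cas. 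Une fois \'etablie la lissit\'e des anneaux de d\'eformations cadr\'ees locaux en les $\rho_{\pi,\tilde{v}}$, le reste de la preuve se r\'eduit \`a une manipulation formelle de produits tensoriels compl\'et\'es et de la Remarque pr\'ec\'edente.
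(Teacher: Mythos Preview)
Your argument is correct and structurally matches the paper's: both reduce, via the tensor decomposition $R_{\rhobar_\pi,\mathcal{S}}=R_{\rhobar_\pi,S}\widehat\otimes_{R^\square_{\rhobar_S}}R^{\bar\square}_{\rhobar_S}$, to showing that the completed local ring of $R^\square_{\rhobar_{\tilde v}}[1/p]$ at $\rho_{\pi,\tilde v}$ is reduced (equivalently smooth, since both proceed via $H^2(\mathcal{G}_{F_{\tilde v}},\mathrm{ad}\,\rho_{\pi,\tilde v})=0$), and both handle $v\in S\setminus S_p$ by temper\-ed\-ness and \cite[Lem.~1.3.2(1)]{BLGGT}.

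The one genuine difference is at $v\in S_p$. The paper observes that $\rho_{\pi,\tilde v}$ is trianguline with parameter in $\mathcal{T}^n_{\rm reg}$, so the rank-one constituents of the $(\varphi,\Gamma)$-module of $\mathrm{ad}(\rho_{\pi,\tilde v})(1)$ all have vanishing $H^0$; this uses only data already set up in the surrounding section. You instead pass to $D_{\rm cris}$ and invoke purity of the crystalline Frobenius eigenvalues to rule out $\varphi_i=q_v^{\pm1}\varphi_j$. This is valid, but it imports an extra ingredient: purity at $p$ requires local-global compatibility at $\ell=p$ (Caraiani) on top of temperedness, whereas the paper's route stays entirely within the trianguline framework. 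Your appeal to hypotheses (ii)--(iii) and to the Hodge--Tate weights is in fact superfluous for this step: once purity is known, the Frobenius-eigenvalue comparison alone suffices. Either argument is fine; the paper's is slightly more self-contained.
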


\begin{proof}
Comme $R_{\rhobar_\pi,\mathcal{S}}=R_{\rhobar_\pi,S}\otimes_{R_{\rhobar_{\pi,S}}^\square}R_{\rhobar_{\pi,S}}^{\bar\square}$, il suffit de v\'erifier que pour $v\in S$, le s\'epar\'e compl\'et\'e du localis\'e de $R^\square_{\rhobar_{\pi,\tilde{v}}}$ au point correspondant \`a $\rho_{\pi,\tilde{v}}$ est r\'eduit. Pour $v\in S\backslash S_p$, c'est une cons\'equence de la discussion pr\'ec\'edant le Lemme \ref{etale}. Quant au cas $v\in S_p$, il suffit de v\'erifier que le probl\`eme de d\'eformations de $\rho_{\pi,\tilde{v}}$ est non obstru\'e, autrement dit que $H_{\rm cont}^2(\mathcal{G}_{F_{\tilde{v}}},{\rm ad}(\rho_{\pi,\tilde{v}}))=0$. Par dualit\'e de Tate, il suffit de prouver que $H^0(\mathcal{G}_{F_{\tilde{v}}},{\rm ad}(\rho_{\pi,\tilde{v}}(1)))=0$. Or il suffit de v\'erifier ceci au niveau des $(\varphi,\Gamma)$-modules sur $\mathcal{R}_{L,F_{\tilde{v}}}$, c'est alors une cons\'equence du fait que $\rho_{\pi,\tilde{v}}$ est trianguline de param\`etre dans $\mathcal{T}^n_{\rm reg}$.
\end{proof}

\begin{coro}\label{R=T}
Si $p>2n+1$, la Conjecture 7.6.12 (R=T) de \cite{BelChe} est vraie au point $y=(\rho_\pi,\iota(\delta))$ de $Y(U^p,\rhobar)$. Plus pr\'ecis\'ement, on a un isomorphisme d'anneaux locaux complets
\begin{equation*}
R_{\rho_\pi,S,\mathcal{R}}\xrightarrow{\sim}\widehat{\mathcal{O}}_{Y(U^p,\rhobar),y}.
\end{equation*}
\end{coro}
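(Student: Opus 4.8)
The strategy is to deduce the $R=T$ statement from the identification of completed local rings on both sides via the patching construction, exactly as the Proposition \ref{etale} and Theorem \ref{structreduite} have set up. Concretely, I would argue as follows. First, I complete and localize the isomorphism of Theorem \ref{structreduite}: since $Y(U^p,\rhobar)^{\rm reg}\simeq \mathfrak{X}_{\rhobar,\mathcal{S}}\times_{\mathfrak{X}^\square_{\rhobar_S}}\iota(X_{\rm tri}^\square(\rhobar_S)^{\rm aut,reg})$ as \emph{reduced} rigid spaces, and $(\rho_\pi,\iota(\delta))$ lies in the regular locus (by the discussion preceding Lemma \ref{lemetale}, $(\rho_{\pi,\tilde v},\delta_v)\in U_{\rm tri}^\square(\rhobar_{\pi,\tilde v})^{\rm reg}$ for all $v\in S_p$ and $y^p$ is a smooth point of $\mathfrak{X}^\square_{\rhobar^p}$), passing to completed local rings at $y$ yields
\[
\widehat{\mathcal{O}}_{Y(U^p,\rhobar),y}\simeq \widehat{\mathcal{O}}_{\mathfrak{X}_{\rhobar,\mathcal{S}},\rho_\pi}\,\widehat{\otimes}_{\widehat{\mathcal{O}}_{\mathfrak{X}^\square_{\rhobar_S},\rho_{\pi,S}}}\,\widehat{\mathcal{O}}_{\iota(X_{\rm tri}^\square(\rhobar_S)^{\rm aut,reg}),(\rho_{\pi,p},\iota(\delta))}.
\]
By Lemma \ref{globalreduit}, the first factor is $R_{\rho_\pi,S}$ (the completed localization of $R_{\rhobar_\pi,\mathcal{S}}[1/p]$), and $\widehat{\mathcal{O}}_{\mathfrak{X}^\square_{\rhobar_S},\rho_{\pi,S}}$ is the completed localization of $R^{\bar\square}_{\rhobar_{\pi,S}}[1/p]$, which equals $R^\square_{\rho_{\pi,S}}$ in the notation used after Lemma \ref{globalreduit} (again invoking that $\rho_{\pi,\tilde v}$ is unobstructed, hence the framed deformation ring is reduced and regular at that point).

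Second, I identify the remaining factor $\widehat{\mathcal{O}}_{X_{\rm tri}^\square(\rhobar_S)^{\rm aut,reg},(\rho_{\pi,p},\iota(\delta))}$ with the completed localization of $R^\square_{\rho_{\pi,p},\mathcal{R}}[1/p]$, the framed trianguline (refined) deformation ring of the pair $(\rho_{\pi,p},\mathcal{R})$. This is a purely local statement about the space $X_{\rm tri}^\square(\rhobar_{\tilde v})$ near a non-critical regular point: at such a point the space is smooth over $\Q_p$ by Theorem \ref{ouvertsature}(iii), and the argument in the proof of that theorem — namely the isomorphism $\widehat{\mathcal{O}}_{\mathcal{S}^\square(\rbar),x}\simeq \widehat{\mathcal{O}}_{X_{\rm tri}^\square(\rbar),y}\dbl x_1,\dots,x_n\dbr$ combined with the surjection $R^\square_{r_y}\twoheadrightarrow R^\square_{r_y,\mathcal{F}}$ from \cite[Prop.~2.3.6 et 2.3.9]{BelChe} — shows precisely that the completed local ring of $X_{\rm tri}^\square(\rhobar_{\tilde v})$ at a point of $U_{\rm tri}^\square(\rhobar_{\tilde v})^{\rm reg}$ pro-represents the framed deformations of $(\rho_{\pi,\tilde v},\mathcal{F}_{\delta_v})$, i.e.\ of $(\rho_{\pi,\tilde v},\mathcal{R}_v)$ after inverting $t$ — the unique triangulation with parameter $\delta_v$ corresponds to the refinement $\mathcal{R}_v$ via the dictionary recalled at the end of \S\ref{triangulines}. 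The automorphic-component and regularity conditions only shrink the space to an open-and-closed piece through $(\rho_{\pi,p},\iota(\delta))$, so they do not affect the completed local ring. Applying $\iota$ (an automorphism) changes nothing. Assembling the fibre products then gives
\[
\widehat{\mathcal{O}}_{Y(U^p,\rhobar),y}\simeq R_{\rho_\pi,S}\,\widehat{\otimes}_{R^\square_{\rho_{\pi,p}}}\,R^\square_{\rho_{\pi,p},\mathcal{R}} = R_{\rho_\pi,S,\mathcal{R}},
\]
which is the asserted isomorphism of complete local rings (the hypothesis $p>2n+1$ enters through the validity of the whole patching package and Theorem \ref{TaylorWiles}; one also uses here the version of Proposition \ref{etale}(ii) telling us $z$ maps into $U^{\rm \'et}_{\bf k}$, ensuring étaleness of $\omega_X$ and hence compatibility of the fibre-product structures as in the proof of Theorem \ref{structreduite}).

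\textbf{Main obstacle.} The delicate point is not the formal fibre-product bookkeeping but checking that the \emph{scheme-theoretic} (not merely topological) structure matches: one must know that $Y(U^p,\rhobar)$ carries its reduced structure near $y$ and that the fibre product $\mathfrak{X}_{\rhobar,\mathcal{S}}\times_{\mathfrak{X}^\square_{\rhobar_S}}X_{\rm tri}^\square(\rhobar_S)$ is \emph{already reduced} at $(\rho_{\pi,p},\iota(\delta))$, so that taking completed local rings commutes with the fibre product without introducing spurious nilpotents. This is exactly what Theorem \ref{structreduite} provides on the regular locus, via the Cohen–Macaulay-ness argument ($y_1,\dots,y_q$ a regular sequence on $X_p(\rhobar)^{\rm reg}$, using \cite[Cor.~16.5.6]{EGAIV1}) together with Proposition \ref{etale} supplying a reduced point in each irreducible component. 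So the proof is essentially a matter of correctly invoking Theorem \ref{compagnons}, Theorem \ref{structreduite}, Lemma \ref{globalreduit}, Proposition \ref{etale}, and the local description of $X_{\rm tri}^\square$ extracted from the proof of Theorem \ref{ouvertsature}, and then reading off the statement by comparing with the definition of $R_{\rho_\pi,S,\mathcal{R}}$ from \cite[\S2.5–2.6]{BelChe}.
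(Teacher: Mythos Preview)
Your proposal is correct and follows essentially the same route as the paper: use that the point lies in the smooth regular locus $U_{\rm tri}^\square(\rhobar_p)^{\rm reg}$ (hence on a unique irreducible component, necessarily automorphic), invoke Theorem~\ref{structreduite} to identify the completed local ring of $Y(U^p,\rhobar)$ with that of $X(\rhobar)_{\rm fs}$ at $z$, and then read off the latter as $R_{\rho_\pi,S}\widehat\otimes_{R^\square_{\rho_{\pi,p}}}R^\square_{\rho_{\pi,p},\mathcal{R}}$ via Lemma~\ref{globalreduit} and the local description of $X_{\rm tri}^\square$ from the proof of Theorem~\ref{ouvertsature}. The paper's proof is simply more terse; your invocation of Proposition~\ref{etale}(ii) is not needed here (it is used for the subsequent corollaries), but it does no harm.
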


\begin{proof}
La condition sur les valeurs de $\chi_i(\varpi_v)$ et le fait que $\chi$ corresponde \`a un raffinement non critique de $\pi$, implique que l'image de $z$ dans $X_{\rm tri}^\square(\rhobar_p)$ appartient \`a l'ouvert lisse $U_{\rm tri}^\square(\rhobar_p)^{\rm reg}$ et se situe donc sur une unique composante irr\'eductible de $X_{\rm tri}^\square(\rhobar_p)$. Ceci implique que l'anneau local compl\'et\'e de $Y(U^p,\rhobar)$ en $z$ est isomorphe \`a l'anneau local compl\'et\'e de $X_{\rm fs}(\rhobar)$ en $z$. Or cet anneau local compl\'et\'e est exactement $R_{S,\rho_\pi}\widehat{\otimes}_{R^\square_{\rho_{\pi,p}}}R^\square_{\rho_{\pi,p},\mathcal{R}}$ qui est canoniquement isomorphe \`a $R_{S,\rho_{\pi,p},\mathcal{R}}$.
\end{proof}

\begin{coro}
Si $A$ est une $L$-alg\`ebre locale de dimension finie, toute d\'eformation de $\rho_\pi$ sur $A$ v\'erifiant $\rhobar_\pi^\vee\circ c\simeq\rho_\pi(n-1)$ et cristalline en toute place divisant $p$ est triviale.
\end{coro}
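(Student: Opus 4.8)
The plan is to deduce the corollary from the infinitesimal $R=T$ isomorphism of Corollaire \ref{R=T} together with the étaleness of the weight map at $y=(\rho_\pi,\iota(\delta))$ proved in Proposition \ref{etale}(ii). After possibly enlarging $L$ we may assume that $A$ has residue field $L$ and that $\rho_\pi$ is defined over $L$; let $\rho_A$ be a deformation of $\rho_\pi$ over $A$ in the deformation problem $\mathcal{S}$ (so unramified hors de $S$ and with $\rho_A^\vee\circ c\simeq\rho_A\otimes\varepsilon^{n-1}$) which is crystalline at every place above $p$. The first step is to observe that such a $\rho_A$ automatically has fixed Hodge--Tate--Sen weights: for $v\mid p$ the Sen polynomial of $\rho_{A,\tilde v}$ lies in $A[X]$ and reduces modulo $\mathfrak{m}_A$ to $\prod_{\tau,i}(X-k_{\tau,i})$, a product of distinct integers; since $\rho_{A,\tilde v}$ is crystalline, hence de Rham, its Sen weights are integers, and by the congruence they must be the $k_{\tau,i}$. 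Thus $\rho_{A,\tilde v}$ is crystalline of Hodge--Tate weights ${\bf k}_v$. Moreover its crystalline Frobenius eigenvalues reduce to the distinct eigenvalues of $\rho_{\pi,\tilde v}$, hence are distinct, so there is a unique refinement $\mathcal{R}_{A,v}$ of $\rho_{A,\tilde v}$ lifting $\mathcal{R}_v$; put $\mathcal{R}_A=(\mathcal{R}_{A,v})_{v\in S_p}$.

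Next I would realise $(\rho_A,\mathcal{R}_A)$ as an $A$-valued point of $Y(U^p,\rhobar)$ near $y$. The triangulation attached to $\mathcal{R}_A$ (via Berger's equivalence and \cite[\S2.4]{BelChe}) gives a parameter $\delta_A\in\widehat{T}_p(A)$ lifting $\delta$, whose restriction to $T_p^0$ equals $\delta_{\bf k}$ because the Hodge--Tate weights are fixed and the smooth part of $\delta_A$ is unramified. The pair $(\rho_A,\iota(\delta_A))$ then lies in $X(\rhobar)_{\rm fs}^{\rm reg}$, which by Théorème \ref{compagnons} and Théorème \ref{structreduite} has, near $y$, the same underlying space as $Y(U^p,\rhobar)^{\rm reg}$ and is reduced there; hence $(\rho_A,\iota(\delta_A))$ defines a morphisme $\widehat{\mathcal{O}}_{Y(U^p,\rhobar),y}\to A$. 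By Corollaire \ref{R=T} this ring is $R_{\rho_\pi,S,\mathcal{R}}$, and since $\rho_A$ is recovered from $(\rho_A,\mathcal{R}_A)$ by forgetting the refinement, it suffices to show that the only deformation of $(\rho_\pi,\mathcal{R})$ of fixed weight ${\bf k}$ is the trivial one.

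For this last point I would use Proposition \ref{etale}(ii): the image of $y$ in $X_p(\rhobar)$ lies in the locus where $\omega_X\colon X_p(\rhobar)\to\mathcal{W}_\infty=\Spf(S_\infty)^{\rm rig}\times\widehat{T}_{p,L}^0$ is étale. Base-changing along $\Sp(L)\to\Spf(S_\infty)^{\rm rig}$ --- which, by the proof of Théorème \ref{compagnons}, turns $X_p(\rhobar)$ into $Y(U^p,\rhobar)\times\mathbb{U}^g$ and $\omega_X$ into the weight map --- shows that $\omega\colon Y(U^p,\rhobar)\to\widehat{T}_{p,L}^0$ is étale at $y$, so that $\widehat{\mathcal{O}}_{\widehat{T}_{p,L}^0,\omega(y)}\xrightarrow{\sim}\widehat{\mathcal{O}}_{Y(U^p,\rhobar),y}=R_{\rho_\pi,S,\mathcal{R}}$; in particular the quotient of $R_{\rho_\pi,S,\mathcal{R}}$ by the ideal generated by the image of $\mathfrak{m}_{\widehat{T}_{p,L}^0,\omega(y)}$ is $L$. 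Now the weight of the point $(\rho_A,\iota(\delta_A))$ is $\iota(\delta_A)|_{T_p^0}=\iota(\delta_{\bf k})|_{T_p^0}=\omega(y)$, constant; hence the map $R_{\rho_\pi,S,\mathcal{R}}\to A$ kills that ideal and therefore factors through $L$. Thus $(\rho_A,\mathcal{R}_A)$, and a fortiori $\rho_A$, is the trivial deformation $\rho_\pi\otimes_L A$.

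I expect the main difficulty to lie not in any isolated deep input but in checking that the refinement-theoretic constructions used in the second paragraph behave correctly over a general Artinian $L$-algebra rather than over a field: namely that a crystalline deformation over $A$ with distinct crystalline Frobenius eigenvalues genuinely defines a point of the refined (equivalently trianguline) deformation functor of \cite[\S2.4--2.6]{BelChe}, with parameter restricting to $\delta_{\bf k}$ on $T_p^0$, and that the Sen-weight rigidity invoked above is valid in infinitesimal families. Granting these, together with the compatibility of the various parametrisations ($\iota$, $\delta_B$, the normalisations of the local Langlands correspondence) already fixed in \S\ref{triangulines} and \S\ref{automorphe}, the argument is formal.
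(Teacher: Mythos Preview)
Your proposal is correct and follows essentially the same route as the paper: the paper's proof simply observes that the weight map is \'etale at $y$ (Proposition~\ref{etale}(ii), or equivalently \cite[Thm.~4.10]{CheFougere}) and then invokes \cite[Cor.~7.6.14]{BelChe} together with Corollaire~\ref{R=T}; you are just unpacking \cite[Cor.~7.6.14]{BelChe} inline (Sen--weight rigidity, lifting the refinement, then using \'etaleness to kill the fixed-weight fibre). One small slip: base-changing $X_p(\rhobar)$ along $\Sp(L)\to\Spf(S_\infty)^{\rm rig}$ yields $Y(U^p,\rhobar)$ (as in the proof of Th\'eor\`eme~\ref{compagnons} and Th\'eor\`eme~\ref{structreduite}), not $Y(U^p,\rhobar)\times\mathbb{U}^g$ --- the $\mathbb{U}^g$ factor is absorbed by the quotient by $\mathfrak{a}$ --- but this does not affect your conclusion.
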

\begin{proof}
D'apr\`es le th\'eor\`eme $4.10$ de \cite{CheFougere} (ou la Remarque \ref{remalissite}), l'application poids est \'etale au point $z=(\pi,\chi)$ de $Y(U^p,\rhobar)$. Le r\'esultat est alors une cons\'equence du corollaire 7.6.14 de \cite{BelChe} et du corollaire \ref{R=T}.
\end{proof}

Le m\^eme raisonnement que \cite[Thm. 8.2]{Kisinoverconvergent} montre alors le r\'esultat suivant sur la g\'eom\'etrie de l'espace $\Spec(R_S(\rhobar_\pi)[1/p])$ au point $\rho_\pi$.

\begin{coro}
L'espace $\Spec(R_S(\rhobar_\pi)[1/p])$ est lisse et de dimension $\frac{n(n+1)}{2}[F^+:\Q]$ au point $\rho_\pi$.
\end{coro}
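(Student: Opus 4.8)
The strategy is to follow \cite[Thm.~8.2]{Kisinoverconvergent}: combine the $R=T$ statement of Corollaire~\ref{R=T} with the étaleness of the weight map to see that the \emph{refined} (trianguline) deformation ring is regular of the expected dimension, and then extract the regularity of $R_S(\rhobar_\pi)$ itself by a dimension count against the patched object $R_\infty$. First, by Corollaire~\ref{R=T} the completed local ring $\widehat{\mathcal O}_{Y(U^p,\rhobar),y}$ at $y=(\rho_\pi,\iota(\delta))$ is isomorphic to $R_{\rho_\pi,S,\mathcal R}$. By \cite[Thm.~4.10]{CheFougere} --- equivalently by Proposition~\ref{etale}(ii) together with Théorème~\ref{structreduite}, see Remarque~\ref{remalissite} --- the weight map $\omega\colon Y(U^p,\rhobar)\to\widehat T_{p,L}^0$ is étale at $y$; since $\widehat T_{p,L}^0$ is smooth over $L$ of dimension $n[F^+:\Q]$, it follows that $R_{\rho_\pi,S,\mathcal R}$ is formally smooth over $L$ of dimension $n[F^+:\Q]$.

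Next I analyse the surjection $R_{\rho_\pi,S}\to R_{\rho_\pi,S,\mathcal R}$. Fix $v\mid p$. The non-criticality of $\mathcal R_v$ together with the fact that $\rho_{\pi,\tilde v}$ is crystalline with distinct Hodge--Tate weights and distinct Frobenius eigenvalues places $(\rho_{\pi,\tilde v},\delta_v)$ in $U_{\rm tri}^\square(\rhobar_{\tilde v})^{\rm reg}$ (as in the discussion preceding Lemme~\ref{lemetale}, via Proposition~\ref{tresclassique} and Lemme~\ref{parametregenerique}), so by Théorème~\ref{ouvertsature} the ring $R^\square_{\rho_{\pi,\tilde v},\mathcal R_v}$ --- the completed local ring of $X_{\rm tri}^\square(\rhobar_{\tilde v})$ at that point --- is regular of dimension $n^2+[F_v^+:\Q_p]\tfrac{n(n+1)}{2}$. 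On the other hand the local deformation problem of $\rho_{\pi,\tilde v}$ is unobstructed ($H^2_{\rm cont}(\mathcal G_{F_{\tilde v}},\mathrm{ad}\,\rho_{\pi,\tilde v})=0$, as in the proof of Lemme~\ref{globalreduit}), so by the local Euler characteristic formula $R^\square_{\rho_{\pi,\tilde v}}$ is formally smooth over $L$ of dimension $(1+[F_v^+:\Q_p])n^2$. Since the natural map $R^\square_{\rho_{\pi,\tilde v}}\to R^\square_{\rho_{\pi,\tilde v},\mathcal R_v}$ is surjective (\cite[Prop.~2.3.6 et 2.3.9]{BelChe}), it realises $R^\square_{\rho_{\pi,\tilde v},\mathcal R_v}$ as the quotient of a regular local ring by part of a regular system of parameters, of length $[F_v^+:\Q_p]\tfrac{n(n-1)}{2}$. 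Taking the completed tensor product over $v\mid p$ and base-changing along $R^\square_{\rho_{\pi,p}}\to R_{\rho_\pi,S}$, we get that $R_{\rho_\pi,S,\mathcal R}\cong R_{\rho_\pi,S}\widehat\otimes_{R^\square_{\rho_{\pi,p}}}R^\square_{\rho_{\pi,p},\mathcal R}$ is the quotient of $R_{\rho_\pi,S}$ by an ideal generated by $\ell:=[F^+:\Q]\tfrac{n(n-1)}{2}$ elements, whence $\dim R_{\rho_\pi,S}\le n[F^+:\Q]+\ell=\tfrac{n(n+1)}{2}[F^+:\Q]$.

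For the opposite inequality, recall from Théorème~\ref{TaylorWiles}(ii) that $R_{\rhobar,\mathcal S}\simeq R_\infty/\mathfrak a$ with $\mathfrak a$ generated by $q=g+[F^+:\Q]\tfrac{n(n-1)}{2}+|S|n^2$ elements, and from Lemme~\ref{globalreduit} that $R_{\rho_\pi,S}$ is the completed local ring of $\Spec(R_S(\rhobar_\pi)[1/p])$ at $\rho_\pi$, isomorphic to $A/\mathfrak a A$ where $A$ is the completed local ring of $\Spf(R_\infty)^{\rm rig}$ at $\rho_\pi$. Under the decomposition $\Spf(R_\infty)^{\rm rig}\simeq\mathfrak X^\square_{\rhobar_p}\times\mathfrak X^\square_{\rhobar^p}\times\mathbb U^g$ this point projects to a smooth point of each factor --- at the places $v\mid p$ because the local problem is unobstructed, at the places $v\in S\setminus S_p$ by \cite[Lem.~1.3.2(1)]{BLGGT} since $\pi_v$ is tempered by \cite[Thm.~1.2]{Caraiani}, and trivially in $\mathbb U^g$ --- so $A$ is regular, of dimension $g+|S|n^2+[F^+:\Q]n^2$ (using $\dim R^{\bar\square}_{\rhobar_{\tilde v}}[1/p]=(1+[F_v^+:\Q_p])n^2$ for $v\mid p$ and $=n^2$ for $v\nmid p$). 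Hence $\dim R_{\rho_\pi,S}\ge\dim A-q=\tfrac{n(n+1)}{2}[F^+:\Q]$, and combining with the previous paragraph $\dim R_{\rho_\pi,S}=\tfrac{n(n+1)}{2}[F^+:\Q]=\dim A-q$. As $A$ is regular, hence Cohen--Macaulay, the $q$ generators of $\mathfrak a$ form a regular sequence, so $R_{\rho_\pi,S}$ is a complete intersection, in particular Cohen--Macaulay. Finally $\ker(R_{\rho_\pi,S}\to R_{\rho_\pi,S,\mathcal R})$ is generated by $\ell$ elements which cut the dimension down by exactly $\ell$, hence form a regular sequence in the Cohen--Macaulay ring $R_{\rho_\pi,S}$; since the quotient $R_{\rho_\pi,S,\mathcal R}$ is regular, an induction on the length of the sequence --- using that a Noetherian local ring $R$ with $f\in\mathfrak m_R$ a non-zero-divisor and $R/(f)$ regular satisfies $f\notin\mathfrak m_R^2$ and is therefore regular --- shows that $R_{\rho_\pi,S}$ is regular of dimension $\tfrac{n(n+1)}{2}[F^+:\Q]$. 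This is precisely the assertion that $\Spec(R_S(\rhobar_\pi)[1/p])$ is smooth of dimension $\tfrac{n(n+1)}{2}[F^+:\Q]$ at $\rho_\pi$.

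The main obstacle is the numerical bookkeeping: one must verify that the dimensions line up so that $\mathfrak a$ acts as a regular sequence on $A$ and that $\ker(R_{\rho_\pi,S}\to R_{\rho_\pi,S,\mathcal R})$ is generated by exactly $\ell=[F^+:\Q]\tfrac{n(n-1)}{2}$ elements; this rests on the equidimensionality in Théorème~\ref{ouvertsature}(i) for $X_{\rm tri}^\square$, on the unobstructedness of the local-at-$p$ deformation problems at $\rho_\pi$, and on the surjectivity of $R^\square_{\rho_{\pi,\tilde v}}\to R^\square_{\rho_{\pi,\tilde v},\mathcal R_v}$. All the genuinely deep inputs --- the $R=T$ isomorphism, the étaleness of the weight map, Théorème~\ref{ouvertsature}, and the patching of Théorème~\ref{TaylorWiles} --- are already in hand.
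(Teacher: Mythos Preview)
Your argument is correct, but it follows a genuinely different route from the paper's. Both proofs begin in the same place: Corollaire~\ref{R=T} together with the \'etaleness of the weight map (Proposition~\ref{etale}(ii) / Remarque~\ref{remalissite}) shows that $R_{\rho_\pi,S,\mathcal R}$ is formally smooth over $L$ of dimension $n[F^+:\Q]$. From there the two proofs diverge.

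The paper passes to the \emph{crystalline} local deformation ring rather than the trianguline one: it argues (via \cite[Prop.~7.6.4]{BelChe}) that the map on tangent spaces from $R_{\rho_\pi,S}$ to $R^{\square,{\bf k}\text{-}\mathrm{cr}}_{\rho_{\pi,p}}$ is injective, and then invokes Kisin's tangent-space bound \cite[Thm.~8.2]{Kisinoverconvergent} to get $\dim_L(\mathfrak m/\mathfrak m^2)^\vee\le\tfrac{n(n+1)}{2}[F^+:\Q]$. The lower bound on the Krull dimension is taken from \cite[Cor.~2.2.12]{CHT}, and regularity follows immediately. So the paper's proof is a \emph{tangent-space} argument, pinned down by an external presentation-theoretic dimension bound.

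Your proof instead stays with the \emph{trianguline} deformation ring and is purely commutative-algebraic: you read off from Th\'eor\`eme~\ref{ouvertsature} and unobstructedness that $\ker(R^\square_{\rho_{\pi,p}}\to R^\square_{\rho_{\pi,p},\mathcal R})$ has $\ell=[F^+:\Q]\tfrac{n(n-1)}{2}$ generators, which gives a Krull-dimension upper bound; you extract the matching lower bound, together with the Cohen--Macaulay property, from the patching isomorphism $R_{\rhobar,\mathcal S}\simeq R_\infty/\mathfrak a$ of Th\'eor\`eme~\ref{TaylorWiles}; and you conclude by the lifting-of-regularity lemma for a regular sequence. This avoids both \cite[Thm.~8.2]{Kisinoverconvergent} and \cite[Cor.~2.2.12]{CHT}, replacing them by inputs already developed in the paper (the equidimensionality of $X_{\rm tri}^\square$ and the patching). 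The trade-off is that the paper's tangent-space route is shorter and conceptually closer to Kisin's original approach, while yours is more self-contained but requires the extra Cohen--Macaulay step and the inductive regularity lemma.

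Two minor remarks on presentation. First, your parenthetical identification of $R^\square_{\rho_{\pi,\tilde v},\mathcal R_v}$ with the completed local ring of $X_{\rm tri}^\square(\rhobar_{\tilde v})$ is not literally what the proof of Th\'eor\`eme~\ref{ouvertsature} establishes; what is shown there is that both become isomorphic after adjoining $n$ formal variables. This is harmless for your purposes, since you only need regularity and the dimension, both of which descend. Second, the step ``$\ell$ generators cutting dimension by $\ell$ in a Cohen--Macaulay local ring form a regular sequence'' uses that such rings are equidimensional and catenary, hence $\mathrm{ht}(I)=\dim R-\dim R/I=\ell$; you might cite \cite[Cor.~16.5.6]{EGAIV1} as the paper does elsewhere.
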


\begin{proof}
D'apr\`es le Lemme \ref{globalreduit}, on a un isomorphisme \[R_{\rho_\pi,S}\otimes_{R_{\rho_\pi,p}^\square}R_{\rho_\pi,p}^{\square,{\bf k}-{\rm cr}}\simeq R_{\rho_\pi,\mathcal{S}}\otimes_{R_{\rho_\pi,p}^\square}R_{\rho_\pi,p}^{\square,{\bf k}-{\rm cr}}.\] En raisonnant comme dans \cite[Prop.~7.6.4]{BelChe}, on peut alors d\'eduire du Corollaire \ref{R=T} et de \cite[Thm.~4.10]{CheFougere} (ou de la Remarque \ref{remalissite}) que
\[ {\rm Ker}(\Hom_{L-{\rm alg}}(R_{\rho_\pi,S},L[X]/(X^2))\rightarrow \Hom_{L-{\rm alg}}(R_{\rho_{\pi,p}}^{\square,{\bf k}-{\rm cris}},L[X]/(X^2)))=\{0\}.\]
En raisonnant exactement comme dans la preuve de \cite[Thm. 8.2]{Kisinoverconvergent}, on en d\'eduit que $\Hom_{L{\rm -alg}}(R_{\rho_\pi,S},L[X]/(X^2))$, autrement dit l'espace tangent de $\Spec(R_{\rhobar_\pi,S})$ au point $\rho_\pi$ est de dimension $\leq \frac{n(n+1)}{2}[F^+:\Q]$. Comme par ailleurs, d'apr\`es \cite[Cor.~2.2.12]{CHT}, on a $\dim(R_{\rhobar_\pi,S})\geq 1+\frac{n(n+1)}{2}[F^+:\Q]$, on en d\'eduit le r\'esultat.
\end{proof}

\begin{rema}
Sous des hypoth\`eses plus g\'en\'erales, essentiellement en ne supposant pas $\pi$ non ramifi\'ee en $p$, les deux corollaires pr\'ec\'edents ont \'et\'e prouv\'es de fa\c{c}on ind\'ependante par Allen dans \cite{Allen}.
\end{rema}

\section{Appendice}
\subsection{Un peu d'analyse fonctionnelle non archim\'edienne}

Soit $H$ un pro-$p$-groupe uniforme. Fixons $A$ une $\mathcal{O}_L$-alg\`ebre quotient sans $p$-torsion de l'alg\`ebre de groupe compl\'et\'ee $\mathcal{O}_L\dbl H\dbr$ (l'alg\`ebre $A$ n'est donc pas n\'ecessairement commutative). Si $\mathfrak{m}_A$ d\'esigne l'id\'eal maximal de $A$, on note, pour $n\geq1$, $A_n^\circ$ le compl\'et\'e $\mathfrak{m}_A$-adique de la $\mathcal{O}_L$-alg\`ebre $A[\frac{\mathfrak{m}_A^n}{\varpi}]\subset A[\frac{1}{p}]$ et on pose $A_n=A_n^\circ[\frac{1}{p}]$. On d\'efinit alors, comme dans le cas commutatif $A^{\rm rig}=\varprojlim_n A_n$. Notons $D(H)$ l'alg\`ebre des distributions de $H$ et $I$ le noyau de $\mathcal{O}_L\dbl H\dbr\rightarrow A$.

\begin{lemm}\label{quotient}
La $\mathcal{O}_L$-alg\`ebre $A^{\rm rig}$ est une alg\`ebre de Fr\'echet-Stein, qui est isomorphe \`a $D(H)/ID(H)$.
\end{lemm}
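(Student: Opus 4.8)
The plan is to show two things: first that $A^{\rm rig}$ is Fréchet-Stein, and second that it is isomorphic to $D(H)/ID(H)$. The non-commutativity of $A$ requires a small amount of care, but the essential input is De Jong's construction of $(-)^{\rm rig}$ together with the results of Schneider-Teitelbaum on the Fréchet-Stein structure of $D(H)$.

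First I would recall that, by \cite[Thm. 5.1]{STdist}, the distribution algebra $D(H)$ is a Fréchet-Stein algebra: one has $D(H)\simeq \varprojlim_n D_{r_n}(H)$ where the $D_{r_n}(H)$ are Noetherian Banach algebras and the transition maps are flat (with dense image). These $D_{r_n}(H)$ are, up to the norm used, exactly the Banach completions $\mathcal{O}_L\dbl H\dbr_n^\circ[1/p]$ appearing in the definition of $\mathcal{O}_L\dbl H\dbr^{\rm rig}$; more precisely, since $H$ is uniform, $\mathcal{O}_L\dbl H\dbr^{\rm rig}\simeq D(H)$ canonically (compare \cite[Thm. 2.2]{STFour} for $H=\Z_p^s$, and the general uniform case is identical). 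Now $A$ is a quotient of $\mathcal{O}_L\dbl H\dbr$ by an ideal $I$ which, $\mathcal{O}_L\dbl H\dbr$ being Noetherian, is finitely generated; since $A$ is moreover $p$-torsion free, we are in the situation of De Jong's Proposition \cite[Prop. 7.2.4.f)]{DeJong} (or the Lemma \ref{annexe} invoked earlier in the text), which gives that $A^{\rm rig}\simeq \mathcal{O}_L\dbl H\dbr^{\rm rig}\otimes_{\mathcal{O}_L\dbl H\dbr} A$, i.e. that forming $(-)^{\rm rig}$ commutes with this quotient. Combining these identifications yields $A^{\rm rig}\simeq D(H)\otimes_{\mathcal{O}_L\dbl H\dbr} A = D(H)/ID(H)$, where $ID(H)$ denotes the two-sided ideal generated by $I$ (left and right coincide up to the Fréchet-Stein transition maps because $I$ is generated by elements of $\mathcal{O}_L\dbl H\dbr$, which is a subalgebra, but in any case what we need is the coadmissible module $D(H)\otimes_{\mathcal{O}_L\dbl H\dbr}A$).

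It then remains to see that $D(H)/ID(H)$ is again Fréchet-Stein. This is a general fact: by \cite[\S3]{STdist}, if $D=\varprojlim_n D_n$ is Fréchet-Stein and $J\subset D$ is generated by finitely many elements (coming from the dense subalgebra $\mathcal{O}_L\dbl H\dbr$), then $D/JD\simeq \varprojlim_n (D_n/JD_n)$ is Fréchet-Stein: each $D_n/JD_n$ is Noetherian as a quotient of the Noetherian ring $D_n$, the transition maps $D_{n+1}/JD_{n+1}\to D_n/JD_n$ remain flat because $- \otimes_{D_{n+1}} D_n$ is exact and kills nothing needed, and one has $D_n/JD_n \simeq D_n\otimes_{D_{n+1}}(D_{n+1}/JD_{n+1})$. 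Concretely, the sequence
\[
0\longrightarrow D(H)I \longrightarrow D(H)\longrightarrow A^{\rm rig}\longrightarrow 0
\]
is exact (this uses the flatness of $D(H)$ over $\mathcal{O}_L\dbl H\dbr$ from \cite[Thm. 4.11]{STdist}, exactly as in the proof of Lemma \ref{isom} above), which identifies $A^{\rm rig}$ with a quotient of the Fréchet-Stein algebra $D(H)$ by the closure of the two-sided ideal generated by the finite set of generators of $I$, and such quotients are Fréchet-Stein.

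The main obstacle, I expect, is purely bookkeeping: one must check that the Banach algebra norms implicit in $A_n^\circ[1/p]$ match (up to equivalence) those of the $D_{r_n}(H)$, so that the two projective limits genuinely coincide rather than merely having isomorphic associated rigid spaces, and one must be slightly careful with left versus right ideals since $A$ is allowed to be non-commutative. Both points are handled by the uniformity of $H$ (which gives an explicit PBW-type basis and makes the norm comparison a direct computation) and by the observation that $I$ is generated by elements of the \emph{commutative-looking} completed group algebra, so left and right saturations agree after inverting $p$ and passing to the Banach completions. None of this requires anything beyond \cite{STdist} and \cite{DeJong}.
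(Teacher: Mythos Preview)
Your proposal is essentially correct and follows the same line as the paper, though the paper organizes it more efficiently. Rather than invoking De Jong (whose framework in \cite{DeJong} is for commutative formal schemes) or forward-referencing Lemma~\ref{annexe} to compute $A^{\rm rig}$ as a base change, the paper argues directly at the level of norms: fixing an ordered basis of the uniform group $H$ gives explicit norms $q_n$ on $D(H)$ (as in \cite[Thm.~5.4]{STdist}), and one checks that the unit ball of $\mathcal{O}_L\dbl H\dbr$ for $q_n$ is exactly the ring $\mathcal{O}_L\dbl H\dbr_n^\circ$ of the De Jong construction; hence $A_n^\circ$ is the unit ball of $A$ for the quotient seminorm, and \cite[Prop.~3.7]{STdist} then gives \emph{simultaneously} that $A^{\rm rig}$ is Fr\'echet--Stein and that it equals $D(H)/D(H)I$. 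This is precisely the ``bookkeeping'' norm comparison you flag at the end, but it is in fact the whole proof: once it is done there is no need for a separate argument that quotients of Fr\'echet--Stein algebras are Fr\'echet--Stein, nor for any flatness input, nor for worrying about left versus right ideals (Prop.~3.7 of \cite{STdist} is stated for two-sided ideals).
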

\begin{proof}
D'apr\`es le th\'eor\`eme $5.4$ de \cite{STdist}, l'alg\`ebre $D(H)$ est de Fr\'echet-Stein, munie d'une famille de normes $(q_n)_{n\geq1}$ d\'efinie comme suit. On choisit $(h_1,\cdots,h_d)$ une base ordonn\'ee de $H$, et pour tout $\alpha\in\mathbb{N}^d$, on pose $b^{\alpha}=b_1^{\alpha_1}\cdots b_d^{\alpha_d}\in \mathcal{O}_L[H]$ o\`u $b_i=h_i-1$. On pose alors, pour $x=\sum_{\alpha}x_{\alpha}b^{\alpha}$, $q_n(x)=\sup_{\alpha}(|a_\alpha|p^{-|\alpha|/n})$. La boule unit\'e de $\mathcal{O}_L\dbl H\dbr$ pour la norme $q_n$ est donc exactement $\mathcal{O}_L\dbl H\dbr_n^\circ$. Ceci implique que $A_n^\circ$ est la boule unit\'e de $A$ muni de la semi-norme quotient induite par $q_n$. La proposition $3.7$ de \cite{STdist} implique alors que $A^{\rm rig}$ est une alg\`ebre de Fr\'echet-Stein isomorphe \`a $\mathcal{O}_L\dbl H\dbr^{\rm rig}/I\mathcal{O}_L\dbl H \dbr^{\rm rig}=D(H)/D(H)I$.
\end{proof}

\begin{lemm}\label{annexe}
Supposons que $\varphi:\,A\rightarrow B$ soit un morphisme local fini entre deux alg\`ebres locales noeth\'eriennes compl\`ete sans $p$-torsion. Supposons que $B\varphi(\mathfrak{m}_A)=\varphi(\mathfrak{m}_A)B$. Alors le morphisme $A^{\rm rig}\otimes_A B\rightarrow B^{\rm rig}$ est un isomorphisme.
\end{lemm}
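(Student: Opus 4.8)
The plan is to compare the two Fréchet--Stein algebras $A^{\mathrm{rig}}$ and $B^{\mathrm{rig}}$ level by level, using the explicit description of the defining family of seminorms coming from Lemma~\ref{quotient}. First I would observe that since $\varphi$ is finite and $B$ is $\varpi$-torsion free, $B$ is a quotient of $A\dbl H'\dbr$ for a suitable uniform pro-$p$ group $H'$ (or more directly, $B$ is a finitely generated $A$-module, hence a finitely generated $\mathcal{O}_L\dbl H\dbr$-module via $\varphi$, hence a quotient of $\mathcal{O}_L\dbl H\dbr^m$), so that $B^{\mathrm{rig}}$ is again defined as $\varprojlim_n B_n$ with $B_n^\circ$ the $\mathfrak{m}_B$-adic completion of $B[\mathfrak{m}_B^n/\varpi]$. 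The crucial input is the hypothesis $B\varphi(\mathfrak{m}_A)=\varphi(\mathfrak{m}_A)B$: combined with the fact that $B$ is $\mathfrak{m}_A$-adically separated and finite over $A$ (so $\mathfrak{m}_B$ and $\mathfrak{m}_AB$ define the same topology, both being equal to $\mathfrak{m}_B$ up to taking powers, as $B/\mathfrak{m}_AB$ is Artinian local), it gives $\mathfrak{m}_B^n = \varphi(\mathfrak{m}_A)^n B = \varphi(\mathfrak{m}_A^n)B$ for all $n$, so the $\mathfrak{m}_B$-adic filtration on $B$ is exactly the image of the $\mathfrak{m}_A$-adic filtration on $A$ extended to $B$.

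Next I would use this to identify $B_n^\circ$ with $A_n^\circ\otimes_A B$ up to $p$-adic completion, i.e. to show that $B_n^\circ$ is the image of $A_n^\circ\otimes_A B$ (equivalently $A_n^\circ \widehat{\otimes}_{\mathcal{O}_L} $ finitely many generators) under the natural map, $\varpi$-adically completed. Concretely: $A[\mathfrak{m}_A^n/\varpi]\otimes_A B = B[\varphi(\mathfrak{m}_A^n)/\varpi] = B[\mathfrak{m}_B^n/\varpi]$ inside $B[1/p]$, where the last equality uses $\mathfrak{m}_B^n=\varphi(\mathfrak{m}_A^n)B$. Taking $\varpi$-adic completions and inverting $p$, and using that $B$ (being $A$-finite) gives $B_n^\circ = A_n^\circ\widehat{\otimes}_{A^\circ_{(n)}} B^\circ_{(n)}$-type identifications compatibly in $n$, one gets $B_n \simeq A_n\otimes_A B$ as $A_n$-modules (this tensor product is already complete because $B$ is a finite $A$-module and $A_n$ is noetherian). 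Passing to the inverse limit over $n$ then yields $B^{\mathrm{rig}}=\varprojlim_n B_n \simeq \varprojlim_n (A_n\otimes_A B) \simeq (\varprojlim_n A_n)\otimes_A B = A^{\mathrm{rig}}\otimes_A B$, where the interchange of $\varprojlim$ and the finite tensor product $-\otimes_A B$ is justified because $B$ is a finitely presented $A$-module (so $-\otimes_A B$ is a finite direct sum of copies of the functor modulo a finitely generated relation, commuting with the relevant limits) and the transition maps $A_{n+1}\to A_n$ are well behaved.

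The main obstacle I expect is the careful handling of the $\varpi$-adic completions: one must check that $A_n^\circ\otimes_{A} B$, or rather its $\varpi$-adic completion, really does coincide with $B_n^\circ$ and not just with some dense subring or a ring with extra $\varpi$-torsion. This is where the $\varpi$-torsion-freeness of $B$ and the identity $\mathfrak{m}_B^n=\varphi(\mathfrak{m}_A^n)B$ must be used together: torsion-freeness ensures $B[\mathfrak{m}_B^n/\varpi]\subset B[1/p]$ is the honest ring one expects, and the filtration identity ensures it equals $B\otimes_A A[\mathfrak{m}_A^n/\varpi]$. A clean way to organize this is to invoke \cite[Prop.~7.2.4]{DeJong} (already used elsewhere in the paper) for the behaviour of the Berthelot construction under finite morphisms, which gives $\Spf(B)^{\mathrm{rig}}\to\Spf(A)^{\mathrm{rig}}$ finite with $\mathcal{O}(\Spf(B)^{\mathrm{rig}})=B\otimes_A A^{\mathrm{rig}}$ — but one still needs the hypothesis $B\varphi(\mathfrak{m}_A)=\varphi(\mathfrak{m}_A)B$ to guarantee that the rigid generic fibre of $\Spf(B)$ is computed using the $\mathfrak{m}_A$-adic (rather than $\mathfrak{m}_B$-adic) topology, which is automatic here since the two topologies coincide. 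Once that point is settled the isomorphism $A^{\mathrm{rig}}\otimes_A B\xrightarrow{\sim}B^{\mathrm{rig}}$ follows formally.
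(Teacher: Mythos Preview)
Your overall strategy is the same as the paper's: write $A^{\mathrm{rig}}\otimes_A B$ as $\varprojlim_n(A_n\otimes_A B)$, identify each $A_n\otimes_A B$ (which is already complete since $B$ is $A$-finite) with $B_n$, and take the inverse limit. The paper does exactly this, citing \cite[Cor.~3.1]{STdist} (coadmissibility of the finite module $A^{\mathrm{rig}}\otimes_A B$) for the first step and the argument of \cite[Lem.~7.2.2]{DeJong} for the second.

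There is, however, a genuine slip in your argument. From the fact that $\mathfrak{m}_B$ and $\varphi(\mathfrak{m}_A)B$ define the same topology you conclude $\mathfrak{m}_B^n=\varphi(\mathfrak{m}_A)^nB$ for all $n$. This is false in general: take $A=\mathcal{O}_L\dbl x\dbr\to B=\mathcal{O}_L\dbl y\dbr$, $x\mapsto y^2$; then $\varphi(\mathfrak{m}_A)B=(\varpi,y^2)\subsetneq(\varpi,y)=\mathfrak{m}_B$. What you actually get is only \emph{cofinality}: there is $k$ with $\mathfrak{m}_B^k\subset\varphi(\mathfrak{m}_A)B\subset\mathfrak{m}_B$, hence for all $n$
\[
B[\mathfrak{m}_B^{kn}/\varpi]\ \subset\ B[(\varphi(\mathfrak{m}_A)B)^n/\varpi]\ \subset\ B[\mathfrak{m}_B^n/\varpi].
\]
This cofinality is enough to conclude that the two inverse systems of Banach algebras $(B_n)$ and $(A_n\widehat{\otimes}_A B)$ have the same limit, and this is precisely the content of the argument of \cite[Lem.~7.2.2]{DeJong} that the paper invokes. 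The hypothesis $B\varphi(\mathfrak{m}_A)=\varphi(\mathfrak{m}_A)B$ is used only to ensure that, in the possibly noncommutative setting, $\varphi(\mathfrak{m}_A)B$ is a two-sided ideal so that $(\varphi(\mathfrak{m}_A)B)^n=\varphi(\mathfrak{m}_A^n)B$ and the ring $B[(\varphi(\mathfrak{m}_A)B)^n/\varpi]$ makes sense; it does not upgrade cofinality to equality of filtrations. Once you replace the equality claim by cofinality, your proof goes through and coincides with the paper's.
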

\begin{proof}
Remarquons que $A^{\rm rig}\otimes_AB$ est un $A^{\rm rig}$-module coadmissible, on a donc d'apr\`es \cite[Cor. 3.1]{STdist}, un isomorphisme $A^{\rm rig}\otimes_A B\simeq\varprojlim_n A_n\otimes_A B$. En raisonnant comme dans la preuve du lemme 7.2.2 de \cite{DeJong}, on obtient l'isomorphisme
\begin{equation*}
B^{\rm rig}\simeq\varprojlim_n(A_n\widehat{\otimes}_A B).
\end{equation*}
\end{proof}

\subsection{Vecteurs localement analytiques}

Les r\'esultats pr\'esent\'es ici sont essentiellement issus de \cite[\S4]{EmertonJacquetI}. Nous en donnons ici une formulation un peu plus adapt\'ee \`a nos besoins.

Soit $K$ une extension finie de $\Q_p$ et $G$ le groupe des $K$-points d'un groupe r\'eductif quasi-d\'eploy\'e d\'efini sur $K$. Fixons $B$ (le groupe des $K$-points d') un sous-groupe de Borel de $G$. Soit $N$ le radical unipotent de $B$ et $M$ un sous-groupe de Levi de $B$ de sorte que $B=MN$. On note $\overline{B}$ le sous-groupe de Borel oppos\'e \`a $B$ relativement \`a $M$ et $\overline{N}$ son radical unipotent. Soit $M^0$ le sous-groupe compact maximal de $M$, il s'agit d'un tore.

Fixons $q\geq0$ et posons $\widetilde{G}=G\times\mathbb{Z}_p^q$. Si $H$ est un sous-groupe de $G$, on pose $\widetilde{H}=H\times\mathbb{Z}_p^q$. On note $\mathcal{W}$ l'espace des caract\`eres continus de $M^0\times\Z_p^q$. On choisit $H$ un sous-pro-$p$-groupe ouvert et uniforme de $G$, tel que la multiplication \[(\overline{N}\cap H)\times(M\cap H)\times(N\cap H)\rightarrow H\] soit un hom\'eomorphisme. On pose $N_0=N\cap H$, ainsi $N_0$ est stable sous l'action de $M^0$ par conjugaison. Soit $\Pi$ une repr\'esentation unitaire admissible de $\widetilde{G}$ sur un $L$-espace de Banach. On suppose par ailleurs que $\Pi|_{\widetilde{H}}$ est isomorphe \`a $\mathcal{C}(\widetilde{H},L)^m$ pour un certain $m\geq0$. On d\'efinit une action, dite de Hecke, du mono\"ide $M^+=\{z\in M|\, zN_0z^{-1}\subset N_0\}$ sur $\Pi^{N_0}$ par la formule
\begin{equation*}
z\cdot v=\sum_{n\in N_0/zN_0z^{-1}}nzv.
\end{equation*}

De plus on fixe un \'el\'ement $z\in M^+$ tel que $\bigcap_{n\geq 0} z^n N_0z^{-n}=\{1\}$.

\begin{prop}\label{existencefactorisation}
Il existe un recouvrement admissible de $\mathcal{W}$ par des ouverts affino\"{\i}des $U_1\subset U_2\subset\cdots\subset U_h\subset\cdots$ et, pour tout $h\geq1$,
\begin{itemize}
\item un $A_h=\mathcal{O}_{\mathcal{W}}(U_h)$-module de Banach $V_h$ v\'erifiant la condition $(Pr)$ de \cite{Buzzard} ;
\item un endomorphisme $A_h$-compact, not\'e $z_h$, du $A_h$-module $V_h$ (la notion d'endomorphisme $A_h$-compact d'un espace de type $(Pr)$ est d\'efinie dans \cite[\S2]{Buzzard}) ;
\item des applications $A_h$-lin\'eaires continues \[\alpha_h:\,V_h\longrightarrow V_{h+1}\widehat{\otimes}_{A_{h+1}}A_h\ \text{et}\ \beta_h:V_{h+1}\widehat{\otimes}_{A_{h+1}}A_h\rightarrow V_h\] telles que $\beta_h\circ\alpha_h=z_h$ et $\alpha_h\circ\beta_h=z_{h+1}\otimes1_{A_h}$;
\item un isomorphisme topologique $\mathcal{O}(\mathcal{W})$-lin\'eaire $((\Pi^{\rm an})^{N_0})'\simeq\varprojlim_h V_h$ commutant \`a l'action de $z$ \`a gauche et de $(z_h)_{h\geq1}$ \`a droite.
\end{itemize}
On peut r\'esumer toutes ces conditions dans le diagramme commutatif suivant
\begin{equation}
\begin{aligned}
\xymatrix{((\Pi^{\rm an})^{N_0})'\ar[r]\ar_{z}[d]&\cdots\ar[r]&V_{h+1}\ar_{z_{h+1}}[d]\ar[r]&V_{h+1}\widehat{\otimes}_{A_{h+1}}A_h\ar_{z_{h+1}\otimes1_{A_h}}[d]\ar^(.7){\beta_h}[r]&V_h\ar[r]\ar^{z_h}[d]\ar_{\alpha_h}[ld]&\cdots\\
((\Pi^{\rm an})^{N_0})'\ar[r]&\cdots\ar[r]&V_{h+1}\ar[r]&V_{h+1}\widehat{\otimes}_{A_{h+1}}A_h\ar[r]^(.7){\beta_h}&V_h\ar[r]&\cdots}
\end{aligned}
\end{equation}
\end{prop}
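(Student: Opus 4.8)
The statement is a spectral-theoretic decomposition of the dual of the locally analytic $N_0$-invariants of $\Pi$, viewed as a family over the weight space $\mathcal{W}$, and it is essentially the content of \cite[\S4]{EmertonJacquetI} repackaged. The plan is to follow Emerton's construction of the Jacquet module via the Hecke action of $z$ on $\Pi^{N_0}$, and to keep track of the $\mathcal{W}$-family structure, applying Buzzard's eigenvariety machinery \cite{Buzzard} to produce the tower of Banach modules $V_h$ with compact operators $z_h$. Concretely, the first step is to use the hypothesis $\Pi|_{\widetilde H}\simeq\mathcal{C}(\widetilde H,L)^m$ together with the Iwahori-type factorization $\overline N_0\times M_0\times N_0\xrightarrow{\sim}H$ to identify $\Pi^{N_0}$ (or rather a well-chosen model of its analytic vectors of each radius) with a space of functions on $\overline N_0\times M_0\times\mathbb{Z}_p^q$; restricting to $M_0\times\mathbb{Z}_p^q$-isotypic behaviour realizes $((\Pi^{\rm an})^{N_0})'$ as a coadmissible module over $D(M_0\times\mathbb{Z}_p^q,L)=\mathcal{O}(\mathcal{W})$ (quasi-Stein), hence as $\varprojlim_h$ of Banach modules $V_h$ over the affinoid algebras $A_h=\mathcal{O}_{\mathcal W}(U_h)$ attached to an exhaustion $U_1\subset U_2\subset\cdots$ of $\mathcal W$; the type $(Pr)$ property follows from the fact that each $\Pi^{N_0}$-analytic-vectors space of finite radius is, after the factorization, an orthonormalizable Banach module over $A_h$ (it is a space of functions on a product of a polydisc with $\overline N_0$), which is exactly what is recorded in the relevant lemmas of \cite{EmertonJacquetI}, \S4.2.

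**The operator $z$ and its compactness.** The second step is to analyze the Hecke operator $z$. The key point, due to Emerton, is that because $z\in M^+$ with $\bigcap_n z^nN_0z^{-n}=\{1\}$, conjugation by $z$ strictly shrinks $\overline N_0$ in the $\overline N$-direction: $z\overline N_0 z^{-1}\supsetneq \overline N_0$, and on the analytic-vectors model the action of $z$ therefore factors through a restriction map between function spaces on nested polydiscs, which is a nuclear (completely continuous) map. This is precisely the mechanism that produces, over each $A_h$, an $A_h$-compact endomorphism $z_h$ of $V_h$ once one arranges the radii of analyticity to match the affinoid $U_h$ — one takes $V_h$ to be the $A_h$-module of sections of the appropriate ``radius-$h$'' analytic-vectors sheaf, and $z_h$ the induced Hecke action. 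The transition maps $\alpha_h\colon V_h\to V_{h+1}\widehat\otimes_{A_{h+1}}A_h$ and $\beta_h$ going the other way are the ``refinement'' and ``inclusion'' maps between different radii, and the relations $\beta_h\circ\alpha_h=z_h$, $\alpha_h\circ\beta_h=z_{h+1}\otimes 1$ express the standard fact that the Hecke operator is the composite of a shrink-then-include pair in either order; this is the content of \cite[Prop.~4.2.x]{EmertonJacquetI}, and it is also the situation axiomatized in \cite[\S2]{Buzzard}. Compatibility of the isomorphism $((\Pi^{\rm an})^{N_0})'\simeq\varprojlim_hV_h$ with the $z$-action on the left and $(z_h)$ on the right is then immediate from the construction, since the Hecke action on $((\Pi^{\rm an})^{N_0})'$ is, by definition, the inverse limit of the $z_h$.

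**Main obstacle.** The genuinely delicate part is the bookkeeping that makes the radii of analyticity vary \emph{continuously in families}, i.e.\ the construction of the exhaustion $(U_h)$ of $\mathcal W$ together with $A_h$-module structures $V_h$ for which $z_h$ is $A_h$-compact and the transition maps exist — this is where one must invoke the precise form of Emerton's ``analytic vectors of level $h$'' and their behaviour under base change along $A_{h+1}\to A_h$, and verify that the resulting objects satisfy Buzzard's $(Pr)$ hypothesis uniformly. In our setting there is the extra twist that $\widetilde G=G\times\mathbb{Z}_p^q$ is not the group of $\Q_p$-points of a reductive group; but as already noted in Remark~\ref{addenda}, the construction of $J_{B_p}$ (hence of the whole apparatus of \cite[\S4]{EmertonJacquetI}) extends verbatim to $\widetilde G$, the extra factor $\mathbb{Z}_p^q$ simply contributing a polydisc factor to the weight space $\mathcal W$ and to every analytic-vectors space. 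Once this is in place, the Proposition is a direct transcription of \cite[\S4.2]{EmertonJacquetI} combined with \cite[\S2]{Buzzard}, and no further new input is needed.
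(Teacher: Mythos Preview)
Your proposal is correct and follows essentially the same approach as the paper's proof: both use the Iwahori factorization $(\overline{N}\cap H)\times(M\cap H)\times(N\cap H)\simeq H$ to identify the $N_0$-invariants of each finite-radius analytic-vectors space with a tensor product of function spaces on $\overline{N}\cap H$ and on $\widetilde{M}\cap\widetilde{H}$, and both obtain the compactness of $z_h$ from the fact that the Hecke action of $z$ improves analyticity in the $\overline{N}$-direction (the restriction map between nested polydiscs being compact). The only cosmetic difference is that the paper implements this using the Colmez--Dospinescu radii $r_h=1/(p^{h-1}(p-1))$ and the spaces $\mathcal{C}^{(h)}$ of \cite{CD} rather than Emerton's original notation, and makes explicit the passage from $M\cap H$ to $M^0$ via the finite \'etale extension $L[M^0]\otimes_{L[M\cap H]}B_h$ (together with the Amice transform) to obtain the $(Pr)$ property over $A_h=\mathcal{O}_{\mathcal{W}}(U_h)$; you gloss this step but it is routine.
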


\begin{proof}
Pour $\frac{1}{p}<r<1$, on note $D_r(H,L)$ et $D_{<r}(H,L)$ les alg\`ebres de Banach d\'efinies dans \cite[\S3]{STdist}. Si $h\geq1$, posons en suivant \cite[\S0.3]{CD}, $r_h=\frac{1}{p^{h-1}(p-1)}$ et \[B_h=D_{p^{r_h}}(M\cap H,L).\] On note $\Pi_{\widetilde{H}}^{(h)}\subset\Pi$ le sous-espace des vecteurs $r_h$-analytiques de $\Pi$ d\'efini dans \emph{loc.~cit.} Il s'agit d'un $L$-espace de Banach, isomorphe \`a $\mathcal{C}^{(h)}(\widetilde{H},L)^m$ comme repr\'esentation de $\widetilde{H}$, avec les notations de \cite{CD}. On v\'erifie facilement que l'action de $z$ sur $\Pi$ envoie $\Pi_{\widetilde{H}}^{(h)}$ dans $\Pi_{z\widetilde{H}z^{-1}}^{(h)}$. Par ailleurs, l'action de Hecke de $z$ pr\'eserve $\Pi^{N_0}$, on a en fait $z\cdot(\Pi_{\widetilde{H}}^{(h)})\subset\Pi_{z\widetilde{H}z^{-1}N_0}^{(h)}$. En identifiant 
\[(\Pi_{\widetilde{H}}^{(h)})^{N_0}=\mathcal{C}^{(h)}(\overline{N}\cap H,L)^m\widehat{\otimes}_L\mathcal{C}^{(h)}(\widetilde{M}\cap\widetilde{H},L),\] 
ceci prouve que si $v=\sum_{n\geq0}a_n\otimes b_n\in\mathcal{C}^{(h)}(\overline{N}\cap H,L)^m\widehat{\otimes}_L\mathcal{C}^{(h)}(\widetilde{M}\cap\widetilde{H},L)$, alors on peut \'ecrire $z(v)=\sum_{n\geq0}a_n'\otimes b_n'$, o\`u $b'_n\in\mathcal{C}^{(h)}(\widetilde{M}\cap\widetilde{H},L)$ et $a'_n$ est la restriction \`a $\overline{N}\cap H$ d'un \'el\'ement de $\mathcal{C}^{(h)}(z(\overline{N}\cap H)z^{-1},L)^m$. D'apr\`es \cite[Thm. 0.5.(ii)]{CD} et la condition $\bigcap_{n\geq0}z^nN_0z^{-n}=\{1\}$, l'application de restriction $\mathcal{C}^{(h)}(z(\overline{N}\cap H)z^{-1},L)\rightarrow\mathcal{C}^{(h)}(\overline{N}\cap H,L)$ se factorise \`a travers $\mathcal{C}^{(h-1)}(\overline{N}\cap H,L)$, ce qui prouve au final que
\begin{equation*}
z((\mathcal{C}^{(h)}(\widetilde{H},L)^m)^{N_0})\subset\mathcal{C}^{(h-1)}(\overline{N}\cap H,L)^m\widehat{\otimes}_L\mathcal{C}^{(h)}(\widetilde{M}\cap\widetilde{H},L).
\end{equation*}
Comme le dual de $\mathcal{C}^{(h)}(\widetilde{M}\cap\widetilde{H},L)$ est l'alg\`ebre $D_{<p^{r_h}}(\widetilde{H}\cap\widetilde{M},L)$ et que pour tout $h$ la fl\`eche d'inclusion $B_{h+1}\rightarrow B_h$ se factorise par $D_{<p^{r_{h+1}}}(\widetilde{H},L)$, on peut poser $V_h=W_h\widehat{\otimes}_LB_h$, o\`u $W_h=(\mathcal{C}^{(h)}(\overline{N}\cap H,L)^m)'$ de telle sorte que l'on ait un isomorphisme topologique de $B_h$-modules
\begin{equation*}
V_h\simeq((\mathcal{C}^{(h)}(\overline{N}\cap H,L)^m)\widehat{\otimes}_L\mathcal{C}^{(h+1)}(\widetilde{M}\cap\widetilde{H},L))'\widehat{\otimes}_{D_{<p^{r_{h+1}}}(\widetilde{H},L)}B_h.
\end{equation*}
Ainsi $V_h$ est un $B_h$-module orthonormalisable. On d\'efinit alors $\alpha_h$ comme la fl\`eche $\alpha_h'\otimes 1_{B_h}$ o\`u $\alpha_h'$ est l'application duale de l'application 
\begin{equation*}
\mathcal{C}^{(h+1)}(\overline{N}\cap H,L)^m\widehat{\otimes}_L\mathcal{C}^{(h+1)}(\widetilde{M}\cap\widetilde{H},L)\hookrightarrow\mathcal{C}^{(h)}(\overline{N}\cap H,L)^m\widehat{\otimes}_L\mathcal{C}^{(h+1)}(\widetilde{M}\cap\widetilde{H},L)
\end{equation*}
induite par l'action de Hecke de $z$, et $\beta_h$ comme $\beta_h'\otimes1_{B_h}$ o\`u $\beta_h'$ est l'application duale de l'application
\begin{equation*}
\mathcal{C}^{(h)}(\overline{N}\cap H,L)^m\widehat{\otimes}_L\mathcal{C}^{(h+1)}(\widetilde{M}\cap\widetilde{H},L)\rightarrow\mathcal{C}^{(h+1)}(\overline{N}\cap H,L)^m\widehat{\otimes}_L\mathcal{C}^{(h+1)}(\widetilde{M}\cap\widetilde{H},M)
\end{equation*}
induite de l'inclusion de $\mathcal{C}^{(h)}(\overline{N}\cap H,L)^m$ dans $\mathcal{C}^{(h+1)}(\overline{N}\cap L)^m$. Cette derni\`ere \'etant une application compacte, l'application $\beta_h$ est $B_h$-compacte. On pose alors $z_h=\beta_h\circ\alpha_h$, qui est $B_h$-compacte puisque $\beta_h$ l'est. Il est alors facile de v\'erifier la formule pour $\alpha_h\circ\beta_h$.

Remarquons \`a pr\'esent que l'action du groupe $M^0$ par conjugaison pr\'eserve le groupe $H$, ainsi que tous les espaces $\Pi^{(h)}_{\widetilde{H}}$. Ainsi chaque $V_h$ est muni d'une action $B_h$-lin\'eaire de $M^0$. En particulier, chaque espace $V_h$ est naturellement un $L[M^0]$-module. Comme le groupe $M^0/(M\cap H)$ est fini, chaque $V_h$ est un $L[M^0]\otimes_{L[M\cap H]}B_h$-module de Banach. Comme $L[M^0]\otimes_{L[M\cap H]}B_h$ est une $B_h$-alg\`ebre finie \'etale, le $L[M^0]\otimes_{L[M\cap H]}B_h$-module de Banach $V_h$ s'identifie donc \`a un facteur direct du $L[M^0]\otimes_{L[M\cap H]}B_h$-module orthonormalisable $L[M]\otimes_{L[M\cap H]} V_h$, et v\'erifie donc la propri\'et\'e $(Pr)$. Comme $M\cap H$ est un pro-$p$-groupe uniforme commutatif, il est isomorphe \`a une certaine puissance du groupe $\mathbb{Z}_p$, la transform\'ee d'Amice (cf.~\cite[Thm.~2.2]{STFour}) fournit alors un isomorphisme de $L$-alg\`ebre de Banach $B_h\cong \Gamma(U'_h,\mathcal{O}_Y)$ o\`u \[Y=\widehat{\widetilde{M}\cap\widetilde{H}}\]
est l'espace de caract\`eres continus de $(M\cap H)\times\Z_p^q$ et $U'_h$ est un certain ouvert affino\"ide de $Y$, les $U'_h$ formant un recouvrement admissible de $Y$. L'alg\`ebre $L[M]\otimes_{L[M\cap H]}B_h$ est alors isomorphe \`a l'anneau structural de l'ouvert affino\"ide \[U_h\subset \mathcal{W}=\widehat{M^0\times\Z_p^q}\] image r\'eciproque de $U'_h$, ce qui ach\`eve la preuve.
\end{proof}

\subsection{Un lemme sur les syst\`emes de racines de type $A$}

Soit $K$ une extension finie de $\Q_p$ et $L$ et une extension finie de $\Q_p$ telle qu'il existe $[K:\Q_p]$ plongements de $K$ dans $L$. 

Le foncteur $H\mapsto L\times_{\Q_p}\Res_{K/\Q_p}(H)$ de la cat\'egorie des groupes alg\'ebriques sur $K$ vers la cat\'egorie des groupes alg\'ebriques sur $L$ transforme un groupe d\'eploy\'e sur $K$ en un groupe d\'eploy\'e sur $L$.

\begin{prop}\label{racines}
Soit $\lambda$ un poids strictement dominant de $L\times_{\Q_p}\Res_{K/\Q_p}(\GL_{n,K})$. On fixe $P_I\subset\GL_{n,K}$ un sous-groupe parabolique standard, de sous-groupe de Levi $L_I$, et $w$ un \'el\'ement du groupe de Weyl de $L\times_{\Q_p}\Res_{K/\Q_p}(\GL_{n,K})$. Supposons que $w\lambda$ soit un poids dominant de \[L\times_{\Q_p}\Res_{K/\Q_p}(L_I)\cong\prod_{\tau:K\hookrightarrow L}L_{I,\tau}.\] Alors il existe un \'el\'ement $i\notin I$ tel que 
\[\langle w\lambda-\lambda,\tilde{\beta}_i\rangle\leq-\min_{\tau,i}(\lambda_{\tau,i}-\lambda_{\tau,i+1})\]
o\`u $\tilde\beta_i$ est la compos\'ee
\[\mathbb{G}_{m,L}\xrightarrow{\rm diag} L\times_{\Q_p}\Res_{K/\Q_p}\mathbb{G}_{m,K}\xrightarrow{{\rm id}\times\Res(\beta_i)} L\times_{\Q_p}\Res_{K/\Q_p} \GL_{n,K}.\]
\end{prop}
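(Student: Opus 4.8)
The plan is to rewrite the pairing $\langle w\lambda-\lambda,\tilde\beta_i\rangle$ in purely combinatorial terms and then run an elementary estimate based on a single transposition. Since $L\times_{\Q_p}\Res_{K/\Q_p}(\GL_{n,K})\simeq\prod_{\tau\colon K\hookrightarrow L}\GL_{n,L}$, its Weyl group is $\prod_\tau S_n$, so $w=(w_\tau)_\tau$; the cocharacter $\tilde\beta_i$ is the diagonal one, equal to $\beta_i$ in each factor, whence $\langle\mu,\tilde\beta_i\rangle=\sum_\tau(\mu_{\tau,1}+\cdots+\mu_{\tau,i})$ for any weight $\mu$, and $(w_\tau\lambda_\tau)_j=\lambda_{\tau,w_\tau^{-1}(j)}$. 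First I would record the identity
\[\langle w\lambda-\lambda,\tilde\beta_i\rangle=\sum_{\tau\colon K\hookrightarrow L}\Big(\sum_{j\in w_\tau^{-1}(\{1,\dots,i\})}\lambda_{\tau,j}-\sum_{j=1}^{i}\lambda_{\tau,j}\Big).\]
Because $\lambda$ is strictly dominant, for each $\tau$ the set $\{1,\dots,i\}$ maximises $\sum_{j\in T}\lambda_{\tau,j}$ among $i$-element subsets $T$ (rearrangement inequality), so every summand on the right is $\leq 0$.

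Next I would establish the quantitative refinement. Put $m=\min_{\tau,\,1\leq i\leq n-1}(\lambda_{\tau,i}-\lambda_{\tau,i+1})\geq 1$. I claim that whenever $S:=w_\tau^{-1}(\{1,\dots,i\})\neq\{1,\dots,i\}$, the corresponding $\tau$-summand is in fact $\leq -m$: one may then choose $a\in\{1,\dots,i\}\setminus S$ and $b\in S\setminus\{1,\dots,i\}$, so that $a\leq i<b$, and replacing $b$ by $a$ in $S$ raises $\sum_{j\in S}\lambda_{\tau,j}$ by $\lambda_{\tau,a}-\lambda_{\tau,b}=\sum_{j=a}^{b-1}(\lambda_{\tau,j}-\lambda_{\tau,j+1})\geq m$, while $\sum_{j=1}^i\lambda_{\tau,j}$ is at least the sum of $\lambda_{\tau,\bullet}$ over this new $i$-element set. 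Combining the two steps, it now suffices to exhibit one index $i\notin I$ and one embedding $\tau$ with $w_\tau^{-1}(\{1,\dots,i\})\neq\{1,\dots,i\}$: that single $\tau$-summand is then $\leq -m$, all the others are $\leq 0$, and the desired inequality follows.

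The last step, which is the only genuine point, is to produce such an $i$ and $\tau$, by contradiction. (Observe that if $I=\{1,\dots,n-1\}$ the hypothesis already forces $w=1$ by regularity of $\lambda$, so we lose nothing in assuming $w\neq 1$, which is anyway the case used in the proof of Theorem \ref{densite}; then $\{1,\dots,n-1\}\setminus I\neq\emptyset$.) Suppose $w_\tau^{-1}$ fixed $\{1,\dots,i\}$ for every $i\notin I$ and every $\tau$. The integers $i\notin I$ inside $\{1,\dots,n-1\}$ are exactly the boundaries between the blocks of the Levi $L_I$, so this would mean each $w_\tau^{-1}$ preserves every block of $L_I$, i.e. $w_\tau\in W_I$. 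But $w_\tau\lambda_\tau$ is $L_I$-dominant, hence weakly decreasing along each block, whereas on a given block $\lambda_\tau$ is strictly decreasing and $w_\tau$ merely permutes its pairwise distinct entries; the only permutation of the block compatible with this is the identity, so $w_\tau=\Id$ for every $\tau$ and $w=1$, a contradiction. Thus the heart of the argument is the interplay between the block decomposition attached to $I$ and the regularity of $\lambda$: it is what both yields the index $i\notin I$ and produces the uniform gap $m$, everything else being the rearrangement inequality.
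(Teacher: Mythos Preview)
Your argument is correct and rests on the same combinatorial core as the paper's: the pairing with $\tilde\beta_i$ splits as a sum over $\tau$ of partial sums, each of which is $\leq 0$ by strict dominance of $\lambda_\tau$, with at least one strictly negative by at least the minimal gap $m$. The organization differs, however. The paper is constructive: it introduces the index $j_\tau$ of the first Levi block on which $w_\tau^{-1}$ fails to be the identity, shows (via the increasing sequence $w_\tau^{-1}(a)<w_\tau^{-1}(a+1)<\cdots$ on that block, forced by $L_I$-dominance) that $w_\tau^{-1}(i)\geq i$ there with strict inequality somewhere, and then picks $i$ to be the boundary of the block indexed by $j=\min_\tau j_\tau$. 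You instead package the non-positivity as the rearrangement inequality, isolate the quantitative drop via a single swap $b\mapsto a$, and then produce the pair $(i,\tau)$ by contradiction from the block decomposition; this sidesteps the block-by-block induction and the (implicit) check that $j<r$ in the paper's argument. Both approaches tacitly assume $w\neq 1$, which you flag correctly and which is indeed the only case used in the application.
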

\begin{proof}
Notons $I=I_1\sqcup\cdots\sqcup I_r$ la d\'ecomposition de $I$ en parties orthogonales. Fixons $\tau$ un plongement de $K$ dans $L$. Comme $w_\tau\lambda_\tau$ est un poids strictement dominant de $L_{I,\tau}$, on a $\lambda_{w_\tau^{-1}(i_1)}>\cdots>\lambda_{w_\tau^{-1}(i_s)}$ pour $1\leq j\leq r$ et $I_j=\{i_1<\cdots <i_s\}$. Notons $j_\tau$ le plus petit $j$ tel que $w_\tau^{-1}|_{I_j}$ ne soit pas l'identit\'e. On a alors n\'ecessairement $\lambda_{\tau,w_\tau^{-1}(i)}\leq \lambda_{\tau,i}$ pour $i\in I_{j_\tau}$ et il existe $i_j\in I_{j_\tau}$ tel que $\lambda_{\tau,w_\tau^{-1}(i_j)}<\lambda_{\tau,i_j}$, ce qui implique alors $\lambda_{\tau,w_\tau^{-1}(i)}<\lambda_{\tau,i}$ (et donc $\lambda_{\tau,w_\tau^{-1}(i)}\leq\lambda_{\tau,i+1}$) pour $i\in I_{j_\tau}$ et $i\geq i_j$. On a alors
\begin{equation*}
\langle w_\tau\lambda_\tau-\lambda_\tau,\beta_{|I_1|+\cdots+|I_{j_\tau}|}\rangle=\sum_{i=1}^{|I_{j_\tau}|}(\lambda_{w_\tau^{-1}(i)}-\lambda_{\tau,i})\leq-\min_i(\lambda_{\tau,i}-\lambda_{\tau,i+1}).\end{equation*}
De plus, on a
\begin{equation*}
\langle w_\tau\lambda_\tau-\lambda_\tau,\beta_{|I_1|+\cdots+|I_{j'}|}\rangle=0
\end{equation*}
pour $j'<j_\tau$. En posant $j=\min_{\tau\in\Hom(K,L)}j_\tau$, on v\'erifie facilement que
\begin{equation*}
\langle w\lambda-\lambda,\tilde{\beta}_j\rangle=\sum_{\tau\in\Hom(K,L)}\langle w_\tau\lambda_\tau-\lambda_\tau,\beta_j\rangle\leq-\min_{\tau,i}(\lambda_{\tau,i}-\lambda_{\tau,i+1}).
\end{equation*}
\end{proof}

\subsection{Quelques r\'esultats de g\'eom\'etrie rigide}

Un espace analytique rigide $X$ est dit quasi-Stein (cf. \cite[\S2]{KiehlAB}) si il existe un recouvrement admissible de $X$ par des ouverts affines $U_1\subset U_2\subset\cdots\cdots U_n\subset\cdots$ tel que pour tout $i$, l'image de $\mathcal{O}_X(U_{i+1})$ dans $\mathcal{O}_X(U_i)$ soit dense. On v\'erifie imm\'ediatement qu'un sous-espace analytique ferm\'e d'un espace quasi-Stein est quasi-Stein.

\begin{lemm}\label{qs1}
Soit $\mathcal{M}$ est un faisceau coh\'erent sur $X$ espace quasi-Stein, muni d'un recouvrement admissible comme ci-dessus. On note $M$ l'espace de ses sections globales que l'on munit de la topologie de la limite projective donn\'ee par la formule $M=\varprojlim_i\Gamma(U_i,\mathcal{M})$. Alors pour tout ouvert admissible $U$ de $X$, l'application \[\mathcal{O}_X(U)\widehat{\otimes}_{\mathcal{O}(X)}M\rightarrow\Gamma(U,\mathcal{M})\] est un isomorphisme topologique.
\end{lemm}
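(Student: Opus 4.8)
The plan is to first reduce to the case of two nested affinoids and then use the density hypothesis to identify the completed tensor product with a suitable projective limit. Concretely, write $X=\bigcup_i U_i$ with $U_i=\Sp(A_i)$ and transition maps $A_{i+1}\to A_i$ of dense image, so that $\mathcal{O}(X)=\varprojlim_i A_i$ as Fréchet algebras. Since the coherent sheaf $\mathcal{M}$ corresponds on each $U_i$ to a finite $A_i$-module $M_i=\Gamma(U_i,\mathcal{M})$ and, by the theory of quasi-Stein spaces (\cite[Satz 2.4]{KiehlAB}), the natural maps induce $M_i\simeq A_i\otimes_{\mathcal{O}(X)}M$ (the global sections generate, and the higher cohomology vanishes), we have $M=\varprojlim_i M_i$ with each $M_i$ finite over $A_i$.

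First I would treat the case where $U=\Sp(B)$ is itself affinoid and the inclusion $U\hookrightarrow X$ factors through some $U_{i_0}$. Then for $i\ge i_0$ one has base-change maps $B\widehat{\otimes}_{A_i}M_i\to \Gamma(U,\mathcal{M})$; since $M_i$ is finite over the noetherian affinoid $A_i$ and $B$ is $A_i$-affinoid (via $A_i\to A_{i_0}\to B$), the module $B\otimes_{A_i}M_i$ is already finite over $B$, hence complete, so $B\widehat{\otimes}_{A_i}M_i=B\otimes_{A_i}M_i=\Gamma(U,\mathcal{M})$ by coherence of $\mathcal{M}|_U$ and the fact that $\mathcal{M}(U)$ only depends on $\mathcal{M}|_{U_{i_0}}$. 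It remains to commute this with the projective limit: $\mathcal{O}_X(U)\widehat{\otimes}_{\mathcal{O}(X)}M = B\widehat{\otimes}_{\varprojlim_i A_i}\varprojlim_i M_i$. Here I would use that $\widehat{\otimes}$ over a Fréchet--Stein algebra commutes with the coadmissible projective limit — this is exactly the content of \cite[\S3]{STdist} applied to the coadmissible $\mathcal{O}(X)$-module $M$ (note $\mathcal{O}(X)$ is Fréchet--Stein, being a closed subalgebra of a product of affinoids, and $M$ is coadmissible since it is the module of global sections of a coherent sheaf on a quasi-Stein space) — to get $B\widehat{\otimes}_{\mathcal{O}(X)}M\simeq\varprojlim_i(B\widehat{\otimes}_{A_i}M_i)\simeq\varprojlim_i\Gamma(U,\mathcal{M})=\Gamma(U,\mathcal{M})$, the last limit being eventually constant.

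For a general admissible open $U$, I would reduce to the affinoid case: choose an admissible affinoid covering $U=\bigcup_j V_j$ and observe that both sides are sheaves for the admissible topology on $U$ (the left side because $\widehat{\otimes}_{\mathcal{O}(X)}M$ of the restriction sheaf $\mathcal{O}_U$ can be computed affinoid-locally, using that $M$ is coadmissible so that forming $\mathcal{O}_U(-)\widehat{\otimes}_{\mathcal{O}(X)}M$ is exact on admissible coverings and satisfies the sheaf axiom — this is again \cite[\S3]{STdist}; the right side because $\mathcal{M}$ is a sheaf). Since each $V_j$, being affinoid inside a quasi-Stein space, is contained in some $U_i$, the affinoid case applies to each $V_j$ and to each $V_j\cap V_{j'}$ (again contained in some $U_i$), and the general statement follows by gluing. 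That the resulting bijection is a \emph{topological} isomorphism follows because on affinoids all the maps in sight are continuous bijections between Banach (resp. Fréchet) spaces and one invokes the open mapping theorem.

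The main obstacle I expect is the interchange of $\widehat{\otimes}$ with the projective limit, i.e. justifying rigorously that $B\widehat{\otimes}_{\mathcal{O}(X)}(\varprojlim_i M_i)\simeq\varprojlim_i(B\widehat{\otimes}_{A_i}M_i)$; this is where the coadmissibility of $M$ and the Fréchet--Stein structure of $\mathcal{O}(X)$ are essential, and one has to be careful that $B$ (when $U$ is not relatively compact in $X$) is still flat enough over the $A_i$ for the base-change maps to behave — but since $\mathcal{M}$ is coherent, $M_i$ is finite over $A_i$ and this finiteness makes all the completed tensor products ordinary tensor products, which sidesteps any flatness issue. The rest of the argument is formal manipulation with quasi-Stein spaces and coherent sheaves.
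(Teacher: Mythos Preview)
Your proposal is correct and follows essentially the same strategy as the paper: reduce to an affinoid $V$ contained in some $U_i$, use the Fr\'echet--Stein structure of $\mathcal{O}(X)$ and the coadmissibility of $M$ (via \cite[\S3]{STdist}) to identify $\mathcal{O}_X(V)\widehat\otimes_{\mathcal{O}(X)}M$ with $\Gamma(V,\mathcal{M})$, and then pass to a general admissible open $U$ by the sheaf axiom. The only minor difference is that the paper invokes \cite[Cor.~3.1]{STdist} directly to get $\Gamma(U_i,\mathcal{M})\simeq A_i\otimes_{\mathcal{O}(X)}M$ and then concludes for $V\subset U_i$ by associativity of the (already ordinary, since $M_i$ is finite over $A_i$) tensor product, which bypasses your explicit ``interchange of $\widehat\otimes$ with $\varprojlim$'' step; your eventually-constant-limit argument reaches the same conclusion but is slightly more roundabout.
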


\begin{proof}
L'alg\`ebre $\mathcal{O}_X(X)$ est une alg\`ebre de Fr\'echet-Stein, ainsi \cite[Cor. 3.1]{STdist} montre que, pour tout $i$, on a $\Gamma(U_i,\mathcal{M})\simeq\mathcal{O}_X(U_i)\otimes_{\mathcal{O}(X)}M$. Soit $V$ un ouvert affino\"ide de $X$. Par admissibilit\'e du recouvrement, il existe $i$ tel que $V\subset U_i$, ainsi on en d\'eduit que 
\[\Gamma(V,\mathcal{M})\simeq\mathcal{O}_X(V)\otimes_{\mathcal{O}_X(U_i)}\Gamma(U_i,\mathcal{M})\simeq\mathcal{O}_X(V)\otimes_{\mathcal{O}(X)}M.\] 
En utilisant, \cite[Prop. 9.1.4.2.(i)]{BGR}, $U$ poss\`ede un recouvrement admissible par des ouverts affino\"{\i}des. On conclut en utilisant la formule $\Gamma(U,\mathcal{M})\simeq\varprojlim_{V\subset U}\Gamma(V,\mathcal{M})$ la limite projective \'etant prise sur tous les ouverts affino\"{\i}des contenus dans $U$.
\end{proof}

\begin{lemm}\label{qs2}
Soit $f:\,X\rightarrow Y$ un morphisme d'espaces analytiques quasi-Stein. Si $V$ est un ouvert admissible de $Y$, alors \[\mathcal{O}_X(f^{-1}(V))\simeq\mathcal{O}(X)\widehat{\otimes}_{\mathcal{O}(Y)}\mathcal{O}_Y(V).\]
\end{lemm}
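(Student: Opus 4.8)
The plan is to localise on both $X$ and $Y$ so as to reduce to the familiar formula for the ring of functions on a fibre product of \emph{affinoids}, and then to recover the general case by passing to projective limits over the quasi-Stein exhaustions. Throughout I use that $f^{-1}(V)=X\times_Y V$ and that the spaces in play are separated (so that, once one has reduced to $Y$ affinoid, a fibre product of two affinoid opens over $Y$ is again affinoid, being closed in the product).

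First I would reduce to $Y$ affinoid. Choose an admissible covering $Y_1\subset Y_2\subset\cdots$ of $Y$ by affinoid opens with dense transition maps. Then $\{f^{-1}(Y_k)\}_k$ is an admissible increasing covering of $X$ by quasi-Stein opens and $\{V\cap Y_k\}_k$ one of $V$, so that $\mathcal{O}(X)=\varprojlim_k\mathcal{O}_X(f^{-1}(Y_k))$, $\mathcal{O}_Y(V)=\varprojlim_k\mathcal{O}_Y(V\cap Y_k)$ and $\mathcal{O}_X(f^{-1}(V))=\varprojlim_k\mathcal{O}_X(f^{-1}(V\cap Y_k))$. Granting the lemma for each morphism $f^{-1}(Y_k)\to Y_k$ and the admissible open $V\cap Y_k\subset Y_k$, the lemma for $f$ and $V$ follows once one knows that $\widehat{\otimes}$ commutes with these countable projective limits of nuclear Fréchet algebras; this is the same Fréchet–Stein input (\cite[Cor.~3.1]{STdist}) that underlies Lemma \ref{qs1}.

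Next, with $Y$ affinoid, I would reduce to $V$ affinoid: cover $V$ by affinoid opens $\{W_j\}$, refined so that the intersections $W_j\cap W_{j'}$ are affinoid. The sheaf property together with Tate acyclicity present $\mathcal{O}_Y(V)$ and $\mathcal{O}_X(f^{-1}(V))$ as equalizers of the corresponding \v{C}ech diagrams, and applying $\mathcal{O}(X)\,\widehat{\otimes}_{\mathcal{O}(Y)}(-)$ to the \v{C}ech complex of the structure sheaf of $Y$ reproduces that of $X$ — which is exactly what Lemma \ref{qs1} yields when fed the structure sheaf — so the case of general $V$ reduces to that of $V$ (and of each $W_j\cap W_{j'}$) affinoid. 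Then, with $Y$ and $V$ both affinoid, write $X=\bigcup_i U_i$ for a quasi-Stein exhaustion by affinoid opens and $A_i=\mathcal{O}_X(U_i)$, so that $f^{-1}(V)=\bigcup_i(U_i\times_Y V)$ with each $U_i\times_Y V$ affinoid and $\mathcal{O}_X(U_i\times_Y V)\simeq A_i\,\widehat{\otimes}_{\mathcal{O}(Y)}\,\mathcal{O}_Y(V)$ by the standard affinoid fibre-product formula; since $A_{i+1}\to A_i$ has dense image and $\mathcal{O}_Y(V)$ is a Banach $\mathcal{O}(Y)$-algebra, these affinoids form a quasi-Stein exhaustion of $f^{-1}(V)$, and passing to $\varprojlim_i$ gives
\[\mathcal{O}_X(f^{-1}(V))\;\simeq\;\varprojlim_i\bigl(A_i\,\widehat{\otimes}_{\mathcal{O}(Y)}\,\mathcal{O}_Y(V)\bigr)\;\simeq\;\mathcal{O}(X)\,\widehat{\otimes}_{\mathcal{O}(Y)}\,\mathcal{O}_Y(V).\]

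The affinoid fibre-product identity and the sheaf/acyclicity facts are standard; the step I expect to require the most care, and would write out in detail, is the repeated interchange of the completed tensor product $\widehat{\otimes}_{\mathcal{O}(Y)}$ with the countable projective limits defining the quasi-Stein structure rings. Making this rigorous amounts to checking, in each inverse system involved, that the transition maps have dense image and that the modules are sufficiently nuclear, so that the Fréchet–Stein formalism of \cite{STdist} applies exactly as in the proof of Lemma \ref{qs1}; this is the technical heart of the argument.
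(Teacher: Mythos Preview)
Your proposal is correct and follows the same overall strategy as the paper --- reduce to the affinoid fibre-product formula and recover the general case by projective limits --- but the paper organises the reduction more economically. Rather than first reducing $Y$ (which forces you to argue that each $f^{-1}(Y_k)$ is again quasi-Stein) and then $V$ via a \v{C}ech argument, the paper synchronises the exhaustions from the outset: since each $U_i$ is quasi-compact, after passing to a subsequence one may assume $f(U_i)\subset V_i$, so that $\mathcal{O}_X(f^{-1}(V))\simeq\varprojlim_i\mathcal{O}_X\bigl(f_i^{-1}(V\cap V_i)\bigr)$ with $f_i:U_i\to V_i$ already a morphism of affinoids. This collapses your first two reductions into a single step and avoids the extra structural claims about $f^{-1}(Y_k)$. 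For the remaining reduction in $V$ (with $X,Y$ affinoid), the paper simply writes $\mathcal{O}_X(f^{-1}(V))\simeq\varprojlim_{U\subset V}\mathcal{O}_X(f^{-1}(U))$ over affinoid $U\subset V$, rather than invoking Tate acyclicity on a \v{C}ech complex; this is lighter and sidesteps any worry about exactness of $\mathcal{O}(X)\,\widehat{\otimes}_{\mathcal{O}(Y)}(-)$. You correctly identify the commutation of $\widehat{\otimes}$ with the relevant projective limits as the point needing care; the paper is equally terse there, so your instinct to flag it is sound.
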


\begin{proof}
On fixe $(U_i)_{i\geq1}$ et $(V_i)_{i\geq1}$ des recouvrements admissibles de $X$ et $Y$ v\'erifiant la propri\'et\'e ci-dessus. Par admissibilit\'e du recouvrement $(V_i)_{i\geq1}$, on peut, quitte \`a en extraire une sous-suite, supposer que pour tout $i\geq1$, on a $f(U_i)\subset V_i$. Soit $f_i:U_i\rightarrow V_i$ le morphisme induit. La propri\'et\'e de faisceau nous donne un isomorphisme $\mathcal{O}_X(f^{-1}(V))\simeq\varprojlim_{i}\mathcal{O}_X(f_i^{-1}(V\cap V_i))$. On peut donc supposer $X$ et $Y$ affino\"{\i}des.
On utilise alors la propri\'{e}t\'{e} de faisceau pour \'ecrire $\mathcal{O}_X(f^{-1}(V))\simeq\varprojlim_{U\subset V}\mathcal{O}_X(f^{-1}(U))$, o\`u la limite projective est prise sur les ouverts affino\"ide contenus dans $V$. Lorsque $X$, $Y$ et $U$ sont affino\"{\i}des, on a alors $\mathcal{O}_X(f^{-1}(U))\simeq\mathcal{O}(X)\widehat{\otimes}_{\mathcal{O}(Y)}\mathcal{O}_Y(U)$.
\end{proof}

\begin{lemm}\label{lissite}
Soit $X$, $Y$ et $Z$ trois espaces rigides analytiques lisses, ainsi que $f:\,X\rightarrow Y$, $g:\,Y\rightarrow Z$ et $h=g\circ f$ des applications rigides analytiques. Si $x\in X$, que $h$ est lisse au point $x$ et $g$ lisse au point $f(x)$. Alors l'application $f$ est lisse au point $x$ si et seulement si l'application $f_{h(x)}$ de $X\otimes k(h(x))$ dans $Y\otimes k(h(x))$ est lisse au point $x$.
\end{lemm}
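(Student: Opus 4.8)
The statement is a local criterion for smoothness: given $h = g \circ f$ smooth at $x$, $g$ smooth at $f(x)$, and all three spaces smooth, the map $f$ is smooth at $x$ if and only if the fibered map $f_{h(x)} : X \otimes k(h(x)) \to Y \otimes k(h(x))$ is smooth at $x$. Since smoothness of a morphism between rigid spaces is a condition on completed local rings (flatness plus geometric regularity of the fiber), I would translate everything into commutative algebra. Write $A = \widehat{\mathcal{O}}_{Z,h(x)}$, $B = \widehat{\mathcal{O}}_{Y,f(x)}$, $C = \widehat{\mathcal{O}}_{X,x}$, so we have local homomorphisms $A \to B \to C$ of complete Noetherian local rings, all regular. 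Let $\mathfrak{m}_A$ be the maximal ideal of $A$ and $k = k(h(x))$ its residue field; the fibers at $h(x)$ correspond to $B/\mathfrak{m}_A B$ and $C/\mathfrak{m}_A C$, and $f_{h(x)}$ corresponds to $B/\mathfrak{m}_A B \to C/\mathfrak{m}_A C$.

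First I would recall that a morphism of rigid spaces is smooth at a point precisely when the induced map on completed local rings is formally smooth, equivalently (since everything is Noetherian complete local) flat with formally smooth — hence, as the fiber rings here are themselves regular local, geometrically regular — closed fiber. The hypotheses give: $A \to C$ is flat with geometrically regular closed fiber $C/\mathfrak{m}_A C$ (from $h$ smooth at $x$), and $A \to B$ is flat with geometrically regular closed fiber $B/\mathfrak{m}_A B$ (from $g$ smooth at $f(x)$). The ``only if'' direction is immediate: if $B \to C$ is flat with geometrically regular closed fiber (i.e. $f$ smooth at $x$), then base change along $A \to k$ shows $B/\mathfrak{m}_A B \to C/\mathfrak{m}_A C$ is flat with the same closed fiber, hence $f_{h(x)}$ is smooth at $x$. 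For the ``if'' direction, suppose $B/\mathfrak{m}_A B \to C/\mathfrak{m}_A C$ is flat with geometrically regular closed fiber. The closed fiber of $B \to C$ coincides with that of $B/\mathfrak{m}_A B \to C/\mathfrak{m}_A C$ (both are $C/\mathfrak{m}_B C$), so it is already geometrically regular; it remains to prove $B \to C$ is flat. For this I would use the local criterion of flatness in the form: since $A \to C$ is flat and $C/\mathfrak{m}_A C$ is flat over $B/\mathfrak{m}_A B$, one deduces $C$ is flat over $B$ — this is exactly \cite[Th\'eor\`eme 11.3.10]{EGAIV1} (``flatness is checked fiberwise'' over the base $\Spec A$ with respect to the $A$-flat family), applied to the situation $A \to B \to C$ with $C$ and $B$ flat? — here one must be a little careful, as $B$ need not be flat over... but in fact $B$ \emph{is} flat over $A$ by hypothesis, so \cite[Th\'eor\`eme 11.3.10]{EGAIV1} applies directly and yields $C$ flat over $B$.

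The main obstacle, and the step requiring the most care, is precisely this last flatness descent: one needs the statement that if $A \to B \to C$ are local maps of Noetherian local rings with $B$ flat over $A$ and $C$ flat over $A$, and $C \otimes_A k(\mathfrak{p})$ is flat over $B \otimes_A k(\mathfrak{p})$ for the closed point (equivalently for all $\mathfrak{p}$, but the local criterion via the closed point combined with $A$-flatness of $C$ suffices), then $C$ is flat over $B$. I would cite the fiberwise flatness criterion \cite[Th\'eor\`eme 11.3.10]{EGAIV1} (or its local complete Noetherian incarnation). Once flatness of $B \to C$ is established and the geometric regularity of the closed fiber has been transported, one concludes that $B \to C$ is formally smooth, i.e. $f$ is smooth at $x$, completing the proof. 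The regularity hypotheses on $X$, $Y$, $Z$ are used to identify ``geometrically regular fiber'' conditions cleanly and to know all the rings in sight are regular (hence Cohen--Macaulay, reduced, etc.), which streamlines the dimension bookkeeping but is not where the real content lies.
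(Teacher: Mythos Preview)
Your argument is correct, but it takes a different route from the paper's. You translate everything into commutative algebra of the completed local rings $A \to B \to C$, reduce the ``if'' direction to proving flatness of $B \to C$, and obtain this from the fiberwise flatness criterion (using that $A \to B$ and $A \to C$ are flat and that $C/\mathfrak{m}_A C$ is flat over $B/\mathfrak{m}_A B$). This is valid; the only quibble is that EGA~IV, Th\'eor\`eme~11.3.10 is stated for schemes locally of finite presentation, so for the complete local version you would more naturally cite something like Matsumura's \emph{Commutative Ring Theory}, Theorem~22.3 and its corollary, or EGA~IV, 0\textsubscript{IV}.10.2.

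The paper's proof is shorter and exploits more directly the hypothesis that $X$, $Y$, $Z$ are smooth. Because all three spaces are smooth, a morphism between them is smooth at a point if and only if the induced map on tangent spaces is surjective (the Jacobian criterion). The smoothness of $h$ and $g$ then gives two short exact sequences
\[
0 \to T_x(X \otimes k(h(x))) \to T_xX \to T_{h(x)}Z \to 0,\qquad
0 \to T_{f(x)}(Y \otimes k(h(x))) \to T_{f(x)}Y \to T_{h(x)}Z \to 0,
\]
linked by $df_{h(x)}$, $df$, and the identity on $T_{h(x)}Z$. Smoothness of $f_{h(x)}$ at $x$ makes the left vertical arrow surjective, and the five lemma gives surjectivity of $df_x$, hence smoothness of $f$ at $x$.

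Your approach trades elegance for generality: it barely uses the smoothness of $X$, $Y$, $Z$ (only to interpret the fiber conditions cleanly), so it would adapt to situations where the ambient spaces are not smooth, whereas the tangent-space argument needs the Jacobian criterion and hence the smoothness of the spaces.
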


\begin{proof}
Il suffit de prouver que si $f_{h(x)}$ est lisse en $x$ alors il en est de m\^eme de $x$, l'implication inverse r\'esultat de la pr\'eservation de la lissit\'e par changement de base. Supposons donc $f_{h(x)}$ lisse. Il suffit de prouver que l'application $df_x$ sur les plans tangents $T_xX\rightarrow T_{f(x)}Y$ est surjective. Dans le diagramme commutatif ci-dessous, les lignes sont exactes, ce qui r\'esulte de la lissit\'e de $g$ et $h$.
\[\xymatrix{0\ar[r]&T_x(X\otimes k(h(x)))\ar[r]\ar^{df_{h(x)}}[d]&T_xX\ar[r]\ar^{df}[d]&T_{h(x)}Z\ar^{=}[d]\ar[r]&0\\
0\ar[r]&T_{f(x)}(Y\otimes k(h(x)))\ar[r]&T_{f(x)}Y\ar[r]&T_{h(x)}Z\ar[r]&0}\]
La lissit\'e de $df_{h(x)}$ implique alors la surjectivit\'e de la fl\`eche verticale de gauche. Comme la fl\`eche verticale de droite est l'identit\'e, le lemme des cinq implique que la fl\`eche verticale du milieu est surjective.
\end{proof}

\subsection{Descente de la lissit\'e en g\'eom\'etrie rigide}

\begin{lemm}\label{lissite2}
Soit $X$, $Y$, $Z$ trois espaces rigides analytiques sur $\Q_p$, ainsi que $f:\,X\rightarrow Y$ et $g:\,Y\rightarrow Z$ des applications rigides analytiques. Posons $h=g\circ f$. Soit $x\in X$, $y=f(x)$ et $z=g(y)$. Si $f$ est plat et $h$ est lisse en $x$, alors $g$ est lisse en $y$.
\end{lemm}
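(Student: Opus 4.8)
Here is the final statement again for reference: we have rigid analytic spaces $X, Y, Z$ over $\Q_p$, morphisms $f : X \to Y$, $g : Y \to Z$ with $h = g \circ f$, a point $x \in X$ with $y = f(x)$, $z = g(y)$, and we assume $f$ is flat and $h = g \circ f$ is smooth at $x$; we want to conclude $g$ is smooth at $y$.

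\begin{proof}
La question \'etant locale, on peut supposer $X=\Sp(B)$, $Y=\Sp(A)$ et $Z=\Sp(C)$ affino\"ides, et il suffit de travailler au niveau des anneaux locaux compl\'et\'es $\widehat{\mathcal O}_{X,x}$, $\widehat{\mathcal O}_{Y,y}$, $\widehat{\mathcal O}_{Z,z}$ (la lissit\'e d'un morphisme d'espaces rigides en un point se lit sur la lissit\'e formelle du morphisme correspondant d'anneaux locaux complets au sens de \cite[D\'ef.~1.7.10]{Huber}, et la platitude de $f$ en $x$ donne la platitude de $\widehat{\mathcal O}_{Y,y}\rightarrow\widehat{\mathcal O}_{X,x}$). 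On est donc ramen\'e \`a l'\'enonc\'e d'alg\`ebre commutative suivant : si $C\rightarrow A\rightarrow B$ sont des morphismes locaux d'anneaux locaux noeth\'eriens complets de corps r\'esiduels des extensions finies de $\Q_p$, si $A\rightarrow B$ est plat et si $C\rightarrow B$ est formellement lisse (pour les topologies pr\'eadiques), alors $C\rightarrow A$ est formellement lisse. Le plan est d'utiliser le crit\`ere jacobien, ou de fa\c con \'equivalente la caract\'erisation par l'annulation des modules de cohomologie d'Andr\'e--Quillen (ou la suite exacte cotangente), pour descendre la lissit\'e le long de la fl\`eche fid\`element plate $A\rightarrow B$.

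Concr\`etement, je proc\`ederais ainsi. D'abord, on rappelle la suite exacte de transitivit\'e pour les complexes cotangents (ou, ce qui suffit ici, la suite exacte \`a cinq termes des diff\'erentielles et la suite de Jacobi--Zariski en basse dimension) associ\'ee \`a $C\rightarrow A\rightarrow B$. Comme $C\rightarrow B$ est formellement lisse, on a $L_{B/C}\simeq \Omega_{B/C}$ concentr\'e en degr\'e $0$ et c'est un $B$-module projectif (de type fini, dans le cadre noeth\'erien complet qui nous int\'eresse). La suite de transitivit\'e donne alors une suite exacte
\begin{equation*}
H_1(L_{B/C})\longrightarrow H_1(L_{B/A})\longrightarrow \Omega_{A/C}\otimes_A B\longrightarrow \Omega_{B/C}\longrightarrow \Omega_{B/A}\longrightarrow 0 .
\end{equation*}
Par ailleurs, la platitude de $A\rightarrow B$ entra\^ine que le changement de base $L_{A/C}\otimes_A^{\mathbb L} B\simeq L_{A/C}\otimes_A B$ est concentr\'e en degr\'e $0$ lorsque $L_{A/C}$ l'est, et plus pr\'ecis\'ement on a un isomorphisme $H_i(L_{A/C})\otimes_A B\simeq$ (terme correspondant dans la suite de transitivit\'e). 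On en d\'eduit que $\Omega_{A/C}\otimes_A B$ est facteur direct d'un $B$-module projectif, donc est lui-m\^eme projectif sur $B$ ; par fid\`ele platitude de $A\rightarrow B$, $\Omega_{A/C}$ est alors un $A$-module projectif de type fini. Il reste \`a v\'erifier que $H_1(L_{A/C})=0$ : la platitude donne $H_1(L_{A/C})\otimes_A B\simeq H_1(L_{A/C}\otimes_A^{\mathbb L}B)=H_1(L_{A_B/B})$ pour $A_B=A\widehat\otimes$ \dots, mais le point propre est que $H_1(L_{B/A})$ s'injecte (via la suite ci-dessus, une fois \'etablie la projectivit\'e de $\Omega_{A/C}\otimes_A B$) dans $H_1(L_{B/C})=0$, donc $H_1(L_{B/A})=0$, ce qui force, \`a nouveau par la suite de transitivit\'e et fid\`ele platitude, $H_1(L_{A/C})\otimes_A B=0$ puis $H_1(L_{A/C})=0$. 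La conjonction de $\Omega_{A/C}$ projectif de type fini et $H_1(L_{A/C})=0$ est exactement la formelle lissit\'e de $C\rightarrow A$, donc la lissit\'e de $g$ en $y$.

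L'obstacle principal me semble \^etre d'ordre technique plut\^ot que conceptuel : il faut s'assurer que le formalisme du complexe cotangent (ou \`a d\'efaut des arguments de d\'eformations infinit\'esimales \`a la Huber/Hartshorne) s'applique correctement aux morphismes d'espaces rigides, c'est-\`a-dire aux morphismes topologiquement de type fini entre affino\"ides, et que ``lisse en un point'' au sens de \cite[D\'ef.~1.7.10]{Huber} est bien \'equivalent \`a ``formellement lisse sur l'anneau local compl\'et\'e, de dimension relative localement constante''. Une fois ce dictionnaire en place, et une fois la fid\`ele platitude de la fl\`eche sur les anneaux locaux compl\'et\'es obtenue \`a partir de la platitude de $f$ en $x$ (en utilisant que $\widehat{\mathcal O}_{Y,y}\rightarrow\widehat{\mathcal O}_{X,x}$ est local, donc fid\`element plat d\`es qu'il est plat), l'argument de descente ci-dessus est essentiellement formel. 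On peut aussi, si l'on pr\'ef\`ere \'eviter le complexe cotangent, raisonner directement par rel\`evement d'homomorphismes le long d'une petite extension : \'etant donn\'ee une $C$-alg\`ebre locale artinienne $A'\twoheadrightarrow A''$ de noyau de carr\'e nul et un $C$-morphisme $A\rightarrow A''$, on veut le relever \`a $A'$ ; on le rel\`eve apr\`es application de $-\otimes_A B$ gr\^ace \`a la lissit\'e de $C\rightarrow B$, puis on utilise la platitude de $A\rightarrow B$ et la suite exacte $0\rightarrow I\otimes_{A}B\rightarrow \cdots$ pour redescendre le rel\`evement \`a $A$, l'obstruction et les rel\`evements vivant dans des groupes de cohomologie qui se comportent bien par le changement de base fid\`element plat.
\end{proof}
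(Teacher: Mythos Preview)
Your cotangent-complex approach is more elaborate than needed and, as written, has genuine gaps. The paper's proof is much shorter: first, $g$ is flat at $y$ by descent of flatness along the flat map $f$ (\cite[Prop.~5.10.3(ii)]{Abbes}); then by the criterion ``flat with smooth fibers implies smooth'' (\cite[Prop.~6.4.22]{Abbes}) one reduces to the case $Z=\Sp(k)$ with $k$ a finite extension of $\Q_p$. Over such a $k$ (characteristic zero), smoothness of a rigid space at a point is equivalent to regularity of the local ring (\cite[Prop.~6.4.12]{Abbes} together with \cite[Thm.~19.6.4]{EGAIV1}), and regularity of $B_{\mathfrak p_x}$ forces regularity of $A_{\mathfrak p_y}$ since $A_{\mathfrak p_y}\to B_{\mathfrak p_x}$ is a flat local map (\cite[Prop.~17.3.3(i)]{EGAIV1}). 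No cotangent complex enters.

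In your argument, the transitivity triangle yields a map $H_1(L_{B/C})\to H_1(L_{B/A})$, not the reverse; so from $H_1(L_{B/C})=0$ you get an injection $H_1(L_{B/A})\hookrightarrow \Omega_{A/C}\otimes_A B$, not an injection of $H_1(L_{B/A})$ into $H_1(L_{B/C})$. Your claim that $\Omega_{A/C}\otimes_A B$ is a direct summand of the projective module $\Omega_{B/C}$ is circular: it needs both that $\Omega_{A/C}\otimes_A B\to\Omega_{B/C}$ be injective (equivalently $H_1(L_{B/A})=0$, which is precisely what you are trying to establish) and that $\Omega_{B/C}\twoheadrightarrow\Omega_{B/A}$ split, which you have not shown. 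Even granting $H_1(L_{B/A})=0$, deducing $H_1(L_{A/C})\otimes_A B=0$ would require control of $H_2(L_{B/A})$ from the next piece of the long exact sequence, which you do not address. The descent-of-liftings sketch at the end is too vague to stand on its own: one cannot simply ``redescendre'' a lifting along a faithfully flat map without a genuine descent argument for the relevant torsor, and none is indicated.
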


\begin{proof}
D'apr\`es \cite[Prop.~5.10.3.(ii)]{Abbes}, le morphisme $g$ est plat en $y$. D'apr\`es \cite[Prop.~6.4.22]{Abbes}, il suffit de prouver que $Y\otimes_Z k(z)$ est lisse en $y$. On est donc ramen\'e \`a consid\'erer la situation suivante : $Z=\Sp(k)$ pour $k$ une extension finie de $\Q_p$, $X$ et $Y$ sont deux espaces rigides sur $k$ et $f:\, X\rightarrow Y$ est un morphisme plat en $x\in X$. Il suffit de prouver que si $X$ est $k$-lisse en $x$, $Y$ est $k$-lisse en $y=f(x)$. Soit $\mathcal{O}$ l'anneau des entiers de $k$. L'assertion \'etant locale en $x$, on peut supposer que $X=\Spf(B)^{\rm rig}$ et $Y=\Sp(A)^{\rm rig}$ avec $A$ et $B$ deux $\mathcal{O}$-alg\`ebres topologiquement de type fini sur $\mathcal{O}$. On note $\mathfrak{p}_y\subset A$ et $\mathfrak{p}_x\subset B$ les id\'eaux premiers correspondants aux points $y$ et $x$. D'apr\`es \cite[Prop.~6.4.12]{Abbes}, $X$ est lisse en $x$ si et seulement si l'anneau local $B_{\mathfrak{p}_x}$ est une $k$-alg\`ebre formellement lisse (\cite[Prop.~1.14.5]{Abbes}). Comme $k$ est de caract\'eristique $0$, l'anneau local $B_{\mathfrak{p}_x}$ est une $k$-alg\`ebre formellement lisse si et seulement si c'est un anneau r\'egulier (\cite[Prop.~Thm.~19.6.4]{EGAIV1}). Comme de plus $f$ est plat en $x$, on d\'eduit de \cite[Prop.~5.5.1]{Abbes} que l'anneau $B_{\mathfrak{p}_x}$ est une $A_{\mathfrak{p}_y}$-alg\`ebre plate. Ainsi si $X$ est $k$-lisse en $x$, l'anneau $B_{\mathfrak{p}_x}$ est r\'egulier et la Proposition \cite[17.3.3.(i)]{EGAIV1} permet alors de conclure que $A_{\mathfrak{p}_y}$ est un anneau local r\'egulier. On d\'eduit alors comme pr\'ec\'edemment que $Y$ est lisse sur $k$ en $f(x)$.
\end{proof}

\def\cprime{$'$} \def\cprime{$'$} \def\cprime{$'$} \def\cprime{$'$}
\providecommand{\bysame}{\leavevmode ---\ }
\providecommand{\og}{``}
\providecommand{\fg}{''}
\providecommand{\smfandname}{\&}
\providecommand{\smfedsname}{\'eds.}
\providecommand{\smfedname}{\'ed.}
\providecommand{\smfmastersthesisname}{M\'emoire}
\providecommand{\smfphdthesisname}{Th\`ese}

\end{document}